\numberwithin{equation}{section}
\DeclareMathAlphabet{\pazocal}{OMS}{zplm}{m}{n}
\renewcommand\d{\partial}
\def\eps{\varepsilon }
\renewcommand\d{\partial}
\def\eps{\varepsilon}
\newcommand\br{\begin{remark}}
	\newcommand\er{\end{remark}}
\newcommand\bp{\begin{pmatrix}}
	\newcommand\ep{\end{pmatrix}}
\newcommand{\be}{\begin{equation}}
	\newcommand{\ee}{\end{equation}}
\newcommand\ba{\begin{equation}\begin{aligned}}
		\newcommand\ea{\end{aligned}\end{equation}}
\newcommand{\bap}{\begin{app}}
	\newcommand{\eap}{\end{app}}
\newcommand{\begs}{\begin{exams}}
	\newcommand{\eegs}{\end{exams}}
\newcommand{\beg}{\begin{example}}
	\newcommand{\eeg}{\end{exaplem}}
\newcommand{\bpr}{\begin{proposition}}
	\newcommand{\epr}{\end{proposition}}
\newcommand{\bt}{\begin{theorem}}
	\newcommand{\et}{\end{theorem}}
\newcommand{\bc}{\begin{corollary}}
	\newcommand{\ec}{\end{corollary}}
\newcommand{\bl}{\begin{lemma}}
	\newcommand{\el}{\end{lemma}}
\newcommand{\bd}{\begin{definition}}
	\newcommand{\ed}{\end{definition}}
\newcommand{\brs}{\begin{remarks}}
	\newcommand{\ers}{\end{remarks}}
\newcommand{\A }{\mathcal{A}}
\newcommand{\RR}{{\mathbb R}}
\newcommand{\NN}{{\mathbb N}}
\newcommand{\ZZ}{{\mathbb Z}}
\newcommand{\CC}{{\mathbb C}}
\newcommand{\oM}{{\overline{M}}}
\newcommand{\onu}{{\overline{\nu}}}
\newcommand{\omm}{{\overline{\omega}}}
\newcommand{\tom}{{\widetilde{\omega}}}
\newcommand{\teps}{{\widetilde{\varepsilon}}}
\newcommand{\tih}{{\widetilde{h}}}
\newcommand{\oor}{\mathrm{ord}}
\newcommand{\tcS}{{\widetilde{\mathcal{S}}}}
\newcommand{\rml}{{\mathrm{l}}}
\newcommand{\ta}{{\widetilde{a}}}
\newcommand{\ttr}{{\widetilde{r}}}
\newcommand{\tp}{{\widetilde{p}}}
\newcommand{\tc}{{\widetilde{c}}}
\newcommand{\ts}{{\widetilde{s}}}
\newcommand{\tx}{{\widetilde{x}}}
\newcommand{\tA}{{\widetilde{A}}}
\newcommand{\tF}{{\widetilde{F}}}
\newcommand{\tf}{{\widetilde{f}}}
\newcommand{\tM}{{\widetilde{M}}}
\newcommand{\tN}{{\widetilde{N}}}
\newcommand{\tQ}{{\widetilde{Q}}}
\newcommand{\tGamma}{{\widetilde{\Gamma}}}
\newcommand{\nK}{{K}}
\newcommand{\mfkp}{{\mathfrak{p}}}
\newcommand{\mfkq}{{\mathfrak{q}}}
\newcommand{\tmfkp}{{\widetilde{\mathfrak{p}}}}
\newcommand{\tmfkq}{{\widetilde{\mathfrak{q}}}}
\newcommand{\tcE}{{\widetilde{\mathcal{E}}}}
\newcommand{\hcE}{{\widehat{\mathcal{E}}}}
\newcommand{\rmi}{{\mathrm{i}}}
\newcommand{\rmd}{{\mathrm{d}}}
\newcommand{\rms}{{\mathrm{s}}}
\newcommand{\rmu}{{\mathrm{u}}}
\newcommand{\rmc}{{\mathrm{c}}}
\newcommand{\os}{{\overline{s}}}
\newcommand{\na}{{\alpha}}
\newcommand{\oa}{{\overline{a}}}
\newcommand{\PV}{{\mathit{p.v.}}}
\newcommand{\vk}{{\varkappa}}
\newcommand{\Ooor}{{\Omega_*\setminus\big(\{\mu_k:k=1,\cdots,m\}\cup\{\nu_\ell:\ell=1,\dots,p\}\big)}}
\newcommand{\re}{{\mathrm{Re}}}
\newcommand{\im}{{\mathrm{Im}}}
\newcommand{\Ker}{{\mathrm{Ker}}}
\newcommand{\Range}{{\rm Range }}
\newcommand{\Graph}{{\rm Graph }}
\newtheorem{theorem}{Theorem}[section]
\newtheorem{proposition}[theorem]{Proposition}
\newtheorem{corollary}[theorem]{Corollary}
\newtheorem{lemma}[theorem]{Lemma}
\theoremstyle{remark}
\newtheorem{remark}[theorem]{Remark}
\theoremstyle{definition}
\newtheorem{definition}[theorem]{Definition}
\newtheorem{example}[theorem]{Example}
\newcommand\cA{{\mathcal A}}
\newcommand\cB{{\mathcal B}}
\newcommand\cU{{\mathcal U}}
\newcommand\cW{{\mathcal W}}
\newcommand\cC{{\mathcal C}}
\newcommand\cR{{\mathcal R}}
\newcommand\cG{{\mathcal G}}
\newcommand\cK{{\mathcal K}}
\newcommand\cL{{\mathcal L}}
\newcommand\cE{{\mathcal E}}
\newcommand\cF{{\mathcal F}}
\newcommand\cN{{\mathcal N}}
\newcommand\cM{{\mathcal M}}
\newcommand\cT{{\mathcal T}}
\newcommand\cS{{\mathcal S}}
\newcommand\bH{{\mathbb H}}
\newcommand\bV{{\mathbb V}}
\newcommand\bW{{\mathbb W}}
\newcommand\bL{{\mathbb L}}
\newcommand\bX{{\mathbb X}}
\newcommand\bY{{\mathbb Y}}
\newcommand{\M}{\,\mbox{\bf M}}
\newcommand{\tE}{{\widetilde{E}}}
\newcommand{\tR}{{\widetilde{R}}}
\newcommand{\tP}{{\widetilde{P}}}
\newcommand{\tG}{{\widetilde{G}}}
\newcommand{\oP}{{\widehat{P}}}
\newcommand{\oN}{{\widehat{N}}}
\newcommand{\oE}{{\overline{E}}}
\newcommand{\tcK}{{\widetilde{\mathcal{K}}}}
\newcommand{\tcJ}{{\widetilde{\mathcal{J}}}}
\newcommand{\tbX}{{\widetilde{\mathbb{X}}}}
\newcommand{\Res}{{\mathrm{Res}}}
\newcommand{\mi}{{\mathrm{mid}}}
\newcommand{\up}{{\mathrm{up}}}
\newcommand{\down}{{\mathrm{down}}}
\newcommand{\out}{{\mathrm{out}}}
\newcommand{\loc}{\text{\rm{loc}}}
\newcommand{\dom}{\text{\rm{dom}}}
\newcommand{\beq}{\begin{equation}}
	\newcommand{\eeq}{\end{equation}}
\title{Solutions of abstract wave equations, eigenvalues and resonances}
\author{ Yuri Latushkin}
\address{University of Missouri, Columbia, MO 65211}
\email{latushkiny@missouri.edu}
\thanks{\hspace*{-0.17in}\textbf{AMS MSC 2020 Mathematics Subject Classification:} 37L15, 81U24, 47D06, 34G10\\\textbf{Keywords:} Cosine families, integral representations, eigenvalues and resonances, meromorphic extensions in Frechet spaces. \\Y.L. was supported by the NSF grants DMS-2106157.\\A. P. research  was partially supported under the Simons Foundation Grant nr. 524928.}
\author{Alin Pogan}
\address{Miami University, Oxford, OH 45056}
\email{pogana@miamioh.edu}
\begin{document}

\begin{abstract}
We prove general representation formulas for strongly continuous cosine and sine operator families in terms of scattering resonances of their generators. This generalizes known results related to decay, growth and oscillatory behavior of solutions of abstract wave equations to a wide class of non-self-adjoint operators in Banach spaces. Inspired by the classical results on scattering resonances for Schr\"odinger operators with compactly supported potentials, we develop quite general abstract scheme of resonances that involves extensions of the resolvent operators from Banach to Frechet spaces. We split the solutions of the wave equations in two parts: The first part is related to finite rank operators induced by the eigenvalues and resonances while the second part involves a partial inversion of the Laplace transform whose exponential behavior is effectively controlled. Illustrations and applications cover a wide class 
of generators including the Schr\"odinger operators with non-symmetric complex matrix potentials, linearizations of nonlinear wave equations, Aharonov-Bohm and block-box Hamiltonians, etc.
\end{abstract}
	
\maketitle

	\vspace{0.3cm}
	\begin{minipage}[h]{0.48\textwidth}
		\begin{center}
			University of Missouri \\
			Department of Mathematics\\
			810 East Rollins Street\\ Columbia, MO 65211, USA
		\end{center}
	\end{minipage}
	\begin{minipage}[h]{0.48\textwidth}
		\begin{center}
			Miami University\\
			Department of Mathematics\\
			100 Bishop Circle\\
			Oxford, OH 45056, USA
		\end{center}
	\end{minipage}
	
	\vspace{0.3cm}


\section{Introduction }\label{s1}

In this paper we use eigenvalues and scattering resonances to 
build a general representation formula for the mild solutions 
of the abstract second order Cauchy problem,
\begin{equation}\label{abstract-Cauchy}
\begin{cases}
u''(t)=Au(t), &  t\in\RR,\\
\,\, u(0)=x_0, \,
u'(0)=x_1, & x_0,x_1\in\bX, \end{cases}
\end{equation}
where $A$ is a linear operator acting in a Banach space $\bX$. We develop a general abstract scheme, involving Frechet spaces, that allows us to study meromorphic extensions of the resolvent of $A$ and to represent  the solution of the second order  Cauchy problem as a sum of two parts: The first part is given by effectively computed finite rank operators induced by the eigenvalues and resonances of $A$ while the second part involves partial inverse Laplace transforms and has effectively controlled exponential behavior. The representation formula generalizes known results to a wide class of non-self-adjoint operators $A$.

To describe our strategy, we first notice that a fundamental problem in the study of second order abstract Cauchy problems in Banach spaces is indeed to find representations of solutions that help to describe their oscillations and long time growth or decay. In the case of the classical wave equation 
$u_{tt}=u_{xx}$ on a compact interval one can use  Fourier series to find a representation of solutions. If a real-valued potential is added in the equation, the problem can still be solved using the same method by passing to the eigenvalues of the  associated Schr\"odinger operator. More generally, in the case of non-compact odd-dimensional domains, expansions of scattered waves are obtained by studying scattering resonances and their properties. 
This topic has a decades long history and can be traced to Lax and Phillips \cite{LaxPhillips} or Vainberg \cite{V}. 
Results of this type are ubiquitous in the literature, and have been proven  for various classes of the second order wave equations  \eqref{abstract-Cauchy}  with $A$ being a differential operator, see \cite{BZ,CD1,CDY1,CDY2,CZ,Sjostrand,Z1,Z2,Z3} and references therein. For a very interesting and comprehensive treatment of these problems we refer to the recent book by Dyatlov and Zworski \cite{DZ} which was our inspiration in writing the current paper.

In this paper we assume that $A:\dom(A)\subseteq \bX\to\bX$ is the generator of a \textit{cosine family}, denoted $\{C(t)\}_{t\in\RR}$, on the Banach space $\bX$, that is,  $C:\RR\to\cB(\bX)$ is a strongly continuous operator-valued function so that  $(\omega^2,\infty)\subset\rho(A)$ for some $\omega>0$, $\lambda R_A(\lambda)x=\displaystyle\int_0^\infty e^{-\lambda t}C(t)x\rmd t$ for any $x\in\bX$ and
\begin{equation}\label{cosine-def}
	C(0)=I_\bX,\; C(t+s)+C(t-s)=2C(t)C(s)\;\mbox{for any}\; t,s\in\RR;	
\end{equation}	
here and in what follows we denote  the resolvent of $A$ at $\lambda^2$ by
\begin{equation}\label{def-RA}
	R_A(\lambda):=(\lambda^2-A)^{-1},\, \lambda\in\CC_\omega^+:=\{\lambda\in\CC: \re(\lambda)>\omega\}.
\end{equation}
The mild solution of \eqref{abstract-Cauchy} 
is then given by the formula $u(t)=C(t)x_0+S(t)x_1$, $t\in\RR$, where $S:\RR\to\mathcal{B}(\bX)$
is the associated \textit{sine family}  defined by
\begin{equation}\label{sine-def}
	S(t)x=\int_0^tC(s)x\rmd s \text{ for any $x\in\bX$}.	
\end{equation}	
There is a vast literature on well-posedness of the second order Cauchy problems and related topics. We refer to \cite[Section 3.14]{ABHN} for a concise introduction and to \cite{CiKe,deLP,Fattorini,Ki,Kr,TW} for more detailed exposition. The main facts that we need are also reminded in the next section.

To find the
expansions of the solutions $u$ of the wave equation in terms of the eigenvalues and resonances of the generator $A$, we therefore need to find expansions for the cosine family $\{C(t)\}_{t\in\RR}$ and its associated sine family $\{S(t)\}_{t\in\RR}$. To achieve this we need more information on the function
$R_A(\cdot)$ originally defined in \eqref{def-RA}. Since $A$ is a generator of a cosine family, 
it is well-known that $R_A(\cdot)$ can be defined on a set that contains the open right-half plane $\CC_\omega^+$ for some $\omega\geq 0$. 
Also, in many instances one can show that the operator-valued function $R_A(\cdot)$ could be defined on a subset of $\CC$ which is larger than $\CC_\omega^+$. In particular, $R_A(\cdot)$ can be extended \textit{meromorphically} to a set of the form $\{\lambda\in\CC:\re\lambda>g_0(\im\lambda)\}$, where $g_0:\RR\to\RR$ is a smooth function of our choosing such that $\sup g_0<\omega$. In this larger set $R_A(\cdot)$ has only finitely many \textit{meromorphic poles} the set of which we denote by $\cM$. Typically, the function $g_0$ is chosen such that the curve $\{\lambda\in\CC: \re\lambda=g_0(\im\lambda)\}$ is contained in $\{\lambda\in\CC:\lambda^2\in\rho(A)\}$, and is located to the right of (and as close as possible to) the set $\{\lambda\in\CC:\lambda^2\in\sigma_{\mathrm{ess}}(A)\}$.  
In most examples (think $\sigma_{\mathrm{ess}}(\partial_{xx})=(-\infty,0]$ for the Laplacian in $L^2(\RR)$), we expect the set $\{\lambda\in\CC:\lambda^2\in\sigma_{\mathrm{ess}}(A)\}$ to contain at least parts of the imaginary axis. This affects the choice of $g_0$ and so we expect that $g_0(s)$ takes positive values at least for  large $|s|$.

One of our important tasks is to identify the leading order in $t$ terms in the representation of the cosine and sine functions. A typical (and the simplest) formula proved in this paper is of the form
\begin{equation}\label{representation-aim1}
	C(t)x=\sum_{\mu\in\cM}e^{\mu t}P^\rmc_\mu(t)x +\chi_{[0,\infty)}(g_0(0))\displaystyle\sum\limits_{j=0}^{n-1}\frac{t^{2j}}{(2j)!}A^jx+\frac{1}{2\pi\rmi}\displaystyle\int_{\Lambda_0^\eps}\frac{e^{\lambda t}}{\lambda^{2n-1}}R_A(\lambda)A^nx\rmd\lambda,
\end{equation}
where $t\in\RR$, $x\in\dom(A^n)$, $\eps>0$ is small enough and $\chi_{[0,\infty)}$ is the characteristic function of $[0,\infty)$. Here $\Lambda_0^\eps$ is the path in $\CC$ along the curve  $\{\lambda\in\CC:\re\lambda=g_0(\im\lambda)+\eps\}$ and $P^\rmc_\mu:\RR\to\cB(\bX)$
is a polynomial whose coefficients are finite rank operators.
The first sum in \eqref{representation-aim1} is the fastest growing in $t$ term, while the integral term in this representation has the slower growth rate. The positive integer $n$ is fixed such that $|\lambda|^{-(2n-1)}\|R_A(\lambda)\|=o(|\lambda|)$ as $|\im\lambda|\to\infty$ for $\lambda$ in the set $\{\lambda\in\CC: g_0(\im\lambda)<\re\lambda\leq\omega\}$ and so in formula \eqref{representation-aim1} we have to balance the ``smoothness'' $n$ of the initial condition and the behavior of $\|R_A(\lambda)\|$ for large values of $|\im\lambda|$.

To prove \eqref{representation-aim1} we proceed as follows. First, if $\mu\in\cM$ then $\mu^2$ belongs to the discrete spectrum of the generator $A$ which allows us to show the existence of and give a precise formula  for the polynomial $P^\rmc_\mu(t)$ that depends on the spectral projection of finite rank associated to the isolated eigenvalue $\mu^2$ of $A$, on the algebraic multiplicity of the eigenvalue,  and the structure of the respective  Jordan blocks. Next, we find an integral representation of $C(t)$ by integrating along a vertical line located to the right of the line $\re\lambda=\omega$. Furthermore, we show that we can switch the contour of integration to a path parallel to the curve $\re\lambda=g_0(\im\lambda)$; this leads to the appearance of the first sum in \eqref{representation-aim1}. Finally, we estimate the integral term in \eqref{representation-aim1} as follows,
\begin{equation}\label{est-cosine1-intro}
	\Big\|\displaystyle\int_{\Lambda_0^\eps}\frac{e^{\lambda t}}{\lambda^{2n-1}}R_A(\lambda)A^nx\rmd\lambda\Big\|\lesssim e^{(\sup g_0+\eps)t}(\|A^{n-1}x\|+\|A^nx\|)\;\mbox{for any}\;x\in\dom(A^n),\eps\thicksim0.
\end{equation}
This estimate shows that the integral term in \eqref{representation-aim1} grows as $t\to\infty$ at most as $\mathcal{O}(e^{(\sup g_0+\eps)t})$ for any $\eps>0$, that is,  has a slower exponential growth rate than the first sum in \eqref{representation-aim1}.

To introduce a representation of the sine function similar to formula \eqref{representation-aim1} we recall the definition of the phase space associated to a cosine family, sometime also called the Kisyński space (see, e.g., \cite{Ki}). We introduce notation
\begin{equation}\label{Kisynski}
\bW=\Big\{x\in\bX:S(\cdot)x\in\mathcal{C}\big([0,1],\dom(A)\big)\Big\},\;\bW_{n-1}=\{x\in\dom(A^{n-1}):A^{n-1}x\in\bW\}
\end{equation}
for $n\in\NN$. A discussion on the norms on the spaces and their properties can be found in Section~\ref{sec2}. Our second aim is to prove the following formula,
\begin{equation}\label{representation-aim2}
S(t)w=\sum_{\mu\in\cM}e^{\mu t}P^\rms_\mu(t)w +\chi_{[0,\infty)}(g_0(0))\displaystyle\sum\limits_{j=0}^{n-1}\frac{t^{(2j+1)}}{(2j+1)!}A^jw+\frac{1}{2\pi\rmi}A\displaystyle\int_{\Lambda_0^\eps}\frac{e^{\lambda t}}{\lambda^{2n}}R_A(\lambda)A^{n-1}w\rmd\lambda,
\end{equation}
for $t\in\RR$, $w\in\bW_{n-1}$ and $\eps>0$ small enough. Here $P^\rms_\mu:\RR\to\cB(\bX)$
is a polynomial whose coefficients are finite rank operators. The formula for $P^\rms_\mu(t)$ is similar to that of $P^\rmc_\mu(t)$  in representation \eqref{representation-aim1} and uses the spectral projection of finite rank associated to the isolated eigenvalue $\mu^2$ of $A$, its algebraic multiplicity and the Jordan blocks.
The integral term can be estimated as follows,
\begin{equation}\label{est-sine1-intro}
\Big\|A\displaystyle\int_{\Lambda_0^\eps}\frac{e^{\lambda t}}{\lambda^{2n}}R_A(\lambda)A^{n-1}w\rmd\lambda\Big\|\lesssim e^{(\sup g_0+\eps)t}\|A^{n-1}w\|_\bW\;\mbox{for any}\;x\in\bW_{n-1}
\end{equation}
and any $\eps>0$ small enough. Therefore, similarly to the representation of the cosine function, in \eqref{representation-aim2} the first sum is the fastest growing term, while the integral term is again of order $\mathcal{O}(e^{(\sup g_0+\eps)t})$ for any $\eps>0$.
Representation \eqref{representation-aim2} works for $w\in\bW_{n-1}$ which is a larger subspace than $\dom(A^n)$ from where we required to take the initial condition in the case of the cosine function. This fact confirms the intuition that the sine functions is smoother than the cosine function, as shown by \eqref{sine-def}.  Also, the positive integer $n$ in formula \eqref{representation-aim2} is the same as the one used in formula \eqref{representation-aim1} and is related to the order of magnitude of $\|R_A(\lambda)\|$ whenever $g_0(\im\lambda)<\re\lambda\leq \omega$ and $|\im\lambda|\to\infty$. To prove \eqref{representation-aim2} we rely on the following property of the phase-space $\bW$: if $w\in\bW$ then $\|AR_A(\lambda)w\|\thicksim{\|R_A(\lambda)w\|}/{|\lambda|}$, whenever $|\im\lambda|$ is large enough, see Lemma~\ref{l3.7} below.

Our next goal is to ``extract" more finite rank terms from the integrals in \eqref{representation-aim1} and \eqref{representation-aim2}. To this end we refine the representations \eqref{representation-aim1} and \eqref{representation-aim2} by finding the leading order finite rank terms in the integrals therein. To achieve this we need to involve the \textit{scattering resonances} of $A$. To explain this, we first note that the function $R_A(\cdot)$ cannot be extended meromorphically as a $\cB(\bX)$-valued function to a set intersecting $\{\lambda\in\CC:\lambda^2\in\sigma_{\mathrm{ess}}(A)\}$. To overcome this obstacle, many methods have been developed, in particular, in the special case when $A$ is a differential operator in $L^2(\RR^m)$. In this case the function $R_A(\cdot)$ is an integral operator whose Green's integral kernel is often meromorphic on a large subset of $\CC$, thus allowing us to extend the operator-valued function $R_A(\cdot)_{|L^2_{\mathrm{comp}}(\RR^m)}$ to a meromorphic function defined on a subset of $\CC$ containing $\{\lambda\in\CC:\lambda^2\in\sigma_{\mathrm{ess}}(A)\}$. The extended function, however,  is
taking values in the set of continuous linear operators $\cC\cL\big(L^2_{\mathrm{comp}}(\RR^m),L^2_{\mathrm{loc}}(\RR^m)\big)$, that is, even in the case of Schr\" odinger operators acting in $L^2(\RR)$, the values of the meromorphic extension are not bounded linear operators from say, $L^2_{\mathrm{comp}}(\RR)$ to $L^2(\RR)$ or to some other normed space. 

A proper abstract generalization of this idea is one of the main points of the current paper. In the general case  when $A$ is the generator of an abstract cosine family in a Banach space $\bX$, we need to find $\bX_\infty$, a ``localizing'' subspace of $\bX$, and a \textit{Frechet} space $\bY$, such that $\bX\hookrightarrow\bY$ and the function $R_A(\cdot)_{|\bX_\infty}$ has a meromorphic extension, denoted $R_A^\infty(\cdot)$, to a subset of $\CC$ of the form $\{\lambda\in\CC:\re\lambda>g_*(\im\lambda)\}$; here, the extended function takes values $R_A^\infty(\lambda)$ that are operators from $\cC\cL(\bX_\infty,\bY)$. The function $g_*$ is chosen such that $g_*<g_0$, the meromorphic extension $R_A^\infty(\cdot)$ has finitely many meromorphic poles of finite algebraic multiplicity in the set $\{\lambda\in\CC:g_*(\im\lambda)<\re\lambda<g_0(\im\lambda)\}$ and the growth of $R_A^\infty(\cdot)$ for large $|\lambda|$ can be controlled in the natural Frechet seminorms of the space $\cC\cL(\bX_\infty,\bY)$. We have less restrictions in the choice of $g_*$ than that of $g_0$ since we may now include parts of the set $\{\lambda\in\CC:\lambda^2\in\sigma_{\mathrm{ess}}(A)\}$ in the set $\{\lambda\in\CC:\re\lambda>g_*(\im\lambda)\}$.

As a result, in this paper we propose a general framework that assumes that 
the Banach space $\bX$ can be embedded continuously into a Frechet space $\bY$ and there exists a sequence $\{Q_n\}_{n\in\NN}$ of linear continuous operators from $\bY$ to $\bX$ such that the topology on $\bY$ is equivalent to the topology induced by the family of seminorms $\{\mfkq_n\}_{n\in\NN}$ defined by $\mfkq_n:\bY\to[0,\infty)$, $\mfkq_n(y):=\|Q_ny\|_\bX$, $n\in\NN$. Metaphorically, the operators $Q_n$ mimic the operators of multiplication by smooth scalar cut-off  functions on $L^p(\RR^m)$ or other spaces of functions;  they must have several compatibility properties with respect to the generator $A$, see Hypothesis (Q) and Example~\ref{e4.1} in Section~\ref{sec4}. Also, we show that one can always choose the localizing subspace $\bX_\infty$ by letting $\bX_\infty=\bigcup_{n=1}^{\infty}\Ker(Q_n-I)$. The general framework of this paper covers important classes of non-self-adjoint differential operators acting in $L^p(\RR^m)$ or its subspaces so that  $\bX_\infty$ and $\bY$ mimic $L^p_{\mathrm{comp}}(\RR^m)$ and $L^p_{\mathrm{loc}}(\RR^m)$, respectively. 

Next, to construct the desired representation of the cosine and sine functions, we need to show that the generator $A$ has an extension, denoted by $\tA$, to the Frechet space $\bY$, which is a closed operator in the topology of the Frechet space. It is important to show that $\dom(\tA)$,  the domain of $\tA$, contains the range of the operator $R_A^\infty(\lambda)$ whenever $\re\lambda>g_*(\im\lambda)$. In addition, it is crucial to show that $Q_n(\dom(\tA))\subseteq\dom(A)$ for any $n\in\NN$, that is, that $Q_n$ ``localizes" the domain of $\tA$ by mapping it into the domain of $A$, similarly to how $Q_n$ localizes the Frechet space $\bY$ into $\bX$. To achieve this, we show that $A$ is closable in the Frechet space topology on $\bY$ by using a classical result by Browder \cite{B}. We refer to Section~\ref{sec4} for details.

Once the construction outlined above is set up, we are ready to refine the expansions for the cosine and sine functions. We denote the set of finitely many \textit{meromorphic poles} of the extension $R_A^\infty(\cdot)$ by $\cN$. Our final goal is to improve formula \eqref{representation-aim1} and to show that
\begin{equation}\label{representation-aim3}\begin{split}
		C(t)x=\sum_{\mu\in\cM}e^{\mu t}P^\rmc_\mu(t)x
		&+\sum_{\nu\in\cN}e^{\nu t}\tP^\rmc_\nu(t)x\\&+\chi_{[0,\infty)}(g_*(0))
		\displaystyle\sum\limits_{j=0}^{n-1}\frac{t^{2j}}{(2j)!}A^jx+\frac{1}{2\pi\rmi}\displaystyle\int_{\Lambda_*^\eps}\frac{e^{\lambda t}}{\lambda^{2n-1}}R_A^\infty(\lambda)A^nx\rmd\lambda\end{split}
\end{equation}
for all $t\in\RR$, $x\in\dom(A^n)\cap\bX_\infty$ and $\eps>0$ small enough.
Here $\Lambda_*^\eps$ is the path along the curve  $\re\lambda=g_*(\im\lambda)+\eps$. The polynomial $P^\rmc_\mu$ is the same as in formula \eqref{representation-aim1}, $\tP^\rmc_\nu:\RR\to\cC\cL(\bX_\infty,\bY)$
is yet another polynomial whose coefficients are finite rank operators. Unlike the coefficients of $P_\mu^\rmc$, the coefficients of $\tP_\nu^\rmc$ cannot be expressed  via some spectral projections. This difference is due to the fact that, in general, if $\nu\in\cN$ is a finitely meromorphic resonance then $\nu^2$ is not necessarily a discrete eigenvalue of $A$, see, e.g., \cite{Hislop}. However, for any $\nu\in\cN$, we can still find finitely many generalized eigenvectors of the extension $\tA$ of $A$ which allows us to conclude that the coefficients of $\tP_\nu^\rmc$ are operators of finite rank.  When $0$ is a discrete eigenvalue of $A$ then $P_0^\rmc(t)$ has only even in $t$ terms, while if $0$ is a resonance then $\tP_0^\rmc(t)$ might have even and odd in $t$ terms (this does happen when $A$ is a one dimensional Schr\" odinger operator). The precise formula for $\tP_\nu^\rmc$ is given in Section~\ref{sec5}.

The first two sums  in formula \eqref{representation-aim3}
make up the finite rank leading order in $t$ part of the decomposition of $C(t)$, while the integral part is of lower order.
More precisely, under natural assumptions, we prove the estimate
\begin{equation}\label{est-cosine2-intro}
	\Big\|Q_i\displaystyle\int_{\Lambda_*^\eps}\frac{e^{\lambda t}}{\lambda^{2n-1}}R_A^\infty(\lambda)A^nx\rmd\lambda\Big\|\lesssim e^{(\sup g_*+\eps)t}\|A^{n-1}x\|
\end{equation}
that holds for any $x\in\dom(A^n)\cap\Ker(Q_j-I)$, $i,j\in\NN$, $\eps>0$ small enough and $t$ large enough. 
Generically, we expect the function $g_*$ to be chosen such that $\{\lambda\in\CC:\re\lambda>g_*(\im\lambda)\}$ contains some parts of the set $\{\lambda\in\CC:\lambda^2\in\sigma_{\mathrm{ess}}(A)\}$. Therefore,
in many examples the function $g_*$ can be chosen such that $\sup g_*<0$, thus making the integral term in \eqref{representation-aim3} exponentially decaying as $|t|\to\infty$. This last estimate is in an appropriate seminorm, since the meromorphic extension $R_A^\infty(\cdot)$ is a function that takes values in the Frechet space of linear and continuous operators from $\bX_\infty$ to $\bY$. In addition, if $\sup g_*<0$, then the third term in \eqref{representation-aim3} vanishes. The positive integer $n$ is the same as in \eqref{representation-aim1}. Depending on the shape of the graph of the scalar function $g_*$, formula \eqref{representation-aim3} might hold only provided we multiply all terms from the left by the operator $Q_i$, $i\in\NN$, and for $x\in\dom(A^n)\cap\Ker(Q_j-I_\bY)$, $j\in\NN$, and only when $|t|$ is bigger than a positive constant $\beta_{i,j}$. This is a common occurrence in the theory of scattering resonances for differential operators, see, e.g., \cite{CD1, CDY1,CDY2, DZ, LaxPhillips,Sjostrand,TZ,V,Z1,Z2,Z3}. In Section~\ref{sec6} we give an estimate for the integral term in \eqref{representation-aim1} in the natural Frechet space seminorm for two distinct cases: First, when  $g_*(s)$ approaches $-\infty$ as $|s|\to\infty$ at least logarithmically and, second, in the case when $g_*,g_*'\in L^\infty(\RR)$.

Finally, we can refine representation \eqref{representation-aim2} using the resonances of the operator $A$ and the meromorphic extension $R_A^\infty(\cdot)$ as follows,
\begin{equation}\label{representation-aim4}\begin{split}
		S(t)w&=\sum_{\mu\in\cM}e^{\mu t}P^\rms_\mu(t)w
		+\sum_{\nu\in\cN}e^{\nu t}\tP^\rms_\nu(t)w+\chi_{[0,\infty)}(g_*(0))\displaystyle\sum\limits_{j=0}^{n-1}\frac{t^{(2j+1)}}{(2j+1)!}A^jw\\&\qquad\qquad\qquad\qquad+\tA\displaystyle\int_{\Lambda_*^\eps}\frac{e^{\lambda t}}{\lambda^{2n}}R_A^\infty(\lambda)A^{n-1}w\rmd\lambda,
\end{split}\end{equation}
for $t\in\RR$ and $w\in\bW_{n-1}\cap\bX_\infty$ and $\eps\thicksim0$.
The polynomial $P^\rms_\mu(t)$ is the same as in formula \eqref{representation-aim2}, $\tP^\rms_\nu:\RR\to\cC\cL(\bX_\infty,\bY)$
is a polynomial whose coefficients are finite rank operators. An important difference from representation \eqref{representation-aim2} is that in representation \eqref{representation-aim4} we have the operator $\tA$ instead of $A$ in front of the integral. This is necessary since the meromorphic extension $R_A^\infty(\cdot)$ takes values in the Frechet space of linear and continuous operators from $\bX_\infty$ to $\bY$.  There are several similarities between formulas \eqref{representation-aim2} and \eqref{representation-aim3}. The formula for the coefficients $\tP_\nu^\rms$ is similar to the one for $\tP_\nu^\rmc$, including the case when $\nu=0$ is a resonance. The smoothness of the initial condition $w$ in \eqref{representation-aim4} is the same as in \eqref{representation-aim2}, with the added condition that $w$ needs to belong to the localizing subspace $\bX_\infty$. The estimate similar to \eqref{est-cosine2-intro} is as follows, 
\begin{equation}\label{est-sine2-intro}
	\Big\|Q_i\tA\displaystyle\int_{\Lambda_*^\eps}\frac{e^{\lambda t}}{\lambda^{2n}}R_A^\infty(\lambda)A^{n-1}w\rmd\lambda\Big\|\lesssim e^{(\sup g_*+\eps)t}\|A^{n-1}w\|;\end{equation}
it holds for any $x\in\bW_{n-1}\cap\Ker(Q_j-I)$, $i,j\in\NN$,
$\eps>0$ small enough and $t$ large enough.  We see from \eqref{est-sine2-intro} that the first two sums in \eqref{representation-aim4} are of leading order in $t$ while the integral term is of lower order. Similarly to representation \eqref{representation-aim3}, when $\sup g_*<0$ the last term of \eqref{representation-aim4} is exponentially decaying as $|t|\to\infty$ and the third term vanishes. In addition, depending on the behavior of $g_*(s)$ at $s\to\pm\infty$, formula \eqref{representation-aim4} holds only  provided we multiply all terms from the left by the operator $Q_i$, $i\in\NN$, and for $w\in\bW_{n-1}\cap\Ker(Q_j-I_\bY)$, $j\in\NN$, and only when $|t|$ is larger than a positive constant $\beta_{i,j}$. The Frechet space estimates for the last term of \eqref{representation-aim4} are more involved due to the fact that we multiply the integral term by the closed but not bounded operator $\tA$.

One of the big advantages in using formulas proved in this paper is that they allow one to identify in the representations of the cosine and sine functions as many  finite rank pieces as possible and estimate the exponential growth and decay of the remaining terms. 
Indeed, it is possible that  $\re\mu_k<\sup g_0$ for some pole $\mu_k\in\cM$. Therefore, $e^{\mu_k t}P^{\rmc}_{\mu_k}(t)$, respectively, $e^{\mu_k t}P^{\rms}_{\mu_k}(t)$ for this particular $\mu_k$ might grow slower that the integral term in representations \eqref{representation-aim1}, respectively, \eqref{representation-aim2}. However, the overall sum $\sum_{\mu\in\cM}e^{\mu t}P^{\rmc/\rms}_\mu(t)$ is growing faster than the integral term. Since $P^{\rmc/\rms}_{\mu_k}(t)$ is a polynomial with operator coefficients of finite rank  it is advantageous to isolate the term $e^{\mu_k t}P^{\rmc/\rms}_{\mu_k}(t)$ from the integral part of the cosine function representation. Similarly,  if $\re\nu_j<\sup g_*$ for some resonance $\nu_j\in\cN$ then $e^{\nu_j t}\tP^{\rmc/\rms}_{\nu_j}(t)$ might be a lower order in $t$ term when compared to the integral terms in \eqref{representation-aim3}/\eqref{representation-aim4}. But, again, the overall sum $\sum_{\nu\in\cN}e^{\nu t}\tP^{\rmc/\rms}_\nu(t)$ is of higher order in $t$ when compared to the integral terms in the representations. 

Formulas \eqref{representation-aim1}, \eqref{representation-aim2}, \eqref{representation-aim3} and \eqref{representation-aim4} show that we can evaluate the leading order terms of the cosine and sine functions, and therefore of the solution of \eqref{abstract-Cauchy}, by studying the spectral properties of the generator $A$. Indeed, by finding the eigenvalues, the resonances, their multiplicities, identifying generalized sets of eigenfunctions, we can compute the leading order terms in the formulas for cosine and sine functions and then use \eqref{est-cosine1-intro}, \eqref{est-sine1-intro}, \eqref{est-cosine2-intro} and \eqref{est-sine2-intro} to estimate the error terms. This is very useful especially in the case when there are no simple formulas readily available for the cosine and sine functions. Interestingly, the representations for the sine function are slightly more involved than the corresponding representations for the cosine functions. This fact is somewhat counter intuitive since the sine functions is smoother due to \eqref{sine-def}. The main explanation of this phenomenon is the fact that \eqref{representation-aim2} and \eqref{representation-aim4} involve the unbounded operators $A$ and $\tA$, respectively. In addition, in both representations we need to work on the phase-space $\bW$ and its generalization $W_{n-1}$ defined in \eqref{Kisynski}. 

When $A$ acts in a Hilbert space and is self-adjoint, many authors have obtained representations of the solutions of \eqref{abstract-Cauchy} using the spectral decomposition of $A$, in particular, Stone's formula. This method was also generalized to the case of black box Hamiltonians, see, e.g., \cite[Chapter 4]{DZ} or \cite{CD1}. Instead, in the current paper in the more general case of the generators of cosine families in a Banach space, in Subsection~\ref{subsec2.1} we use complex inversion results for the Laplace transform to obtain a representation of the cosine and sine function as complex integrals along the vertical line $\re\lambda=\omega$.  Once this representation is established, we use the residues theorem to shift the contour of integration first to the curve $\Lambda_0^\eps$ and then to the curve $\Lambda_*^\eps$. This leads to representations \eqref{representation-aim1}, \eqref{representation-aim2}, \eqref{representation-aim3} and \eqref{representation-aim4}. The advantage of this method is that we impose only minimal conditions on the generator $A$. 

We chose to distinguish between the poles $\mu$'s of the resolvent  $R_A(\cdot)$ located in the set $\Omega_0$ defined as the maximal subset of $\CC$ where the function  $R_A(\cdot)$ is meromorphic and takes values in $\cB(\bX)$,  and the poles  $\nu$'s of its extension $R_A^\infty(\cdot)$ located outside of $\Omega_0$. We use the term \textit{resonances} only for the
latter poles. We make this distinction because if $\mu$ is a pole of $R_A(\cdot)$ then $\mu^2$ is a discrete eigenvalue of the generator $A$, a fact that has consequences for the coefficients of the operator-valued polynomials $P^{\rmc/\rms}_\mu(t)$ in \eqref{representation-aim1}, \eqref{representation-aim2}, \eqref{representation-aim3} and \eqref{representation-aim4}. 

In the current paper we work with generators $A$ of cosine families.  As it is well known, see, e.g., \cite[Theorem 3.14.17]{ABHN}, each such $A$ generates an analytic semigroups. By this reason we write, say,  $A=\partial_{xx}+V$ rather then $A=-\partial_{xx}+V$ as it is done in, e.g., \cite{DZ}. As a consequence, the spectral parameter $z$ with $\im z>0$ in the eigenvalue equation $Au=z^2u$ in, say, \cite{DZ}
and the spectral parameter $\lambda$ with $\re\lambda>0$ in the eigenvalue equation $Au=\lambda^2u$ in the current paper are related via $\lambda=-\rmi z$. Because of  this change of variables, one should keep in mind to rotate counterclockwise by $\pi/2$ the graphs of functions of the spectral parameter found in the literature on scattering resonances to match them with the graphs used in the current paper.  

\noindent\textbf{Plan of the paper.} In Section~\ref{sec2} we review several elementary properties of cosine families, the associated sine families and their generators. In particular, we introduce the phase space associated to a cosine family, and prove integral formulas for the cosine and sine functions. In Section~\ref{sec3} we prove \eqref{representation-aim1} and \eqref{representation-aim2}. In Section~\ref{sec4} we prove the functional analytic results that allow us to define the ``localizing'' space $\bX_\infty$, which, in our abstract setting, plays the role of $L^p_{\mathrm{loc}}(\RR^m)$ for the case of differential operators. Also, we discuss the extension of the generator $A$ to a closed linear operator $\tA$ acting in the Frechet space $\bY$, as explained above. In Section~\ref{sec5} we study the properties of the finitely meromorphic extension $R_A^\infty(\cdot)_{|\bX_\infty}$ and prove \eqref{representation-aim3} and \eqref{representation-aim4}. In Section~\ref{sec6} we prove the estimates  \eqref{est-cosine2-intro} and \eqref{est-sine2-intro}. 
The main results of this paper are summarized in Theorem~\ref{t4.24}, Theorem~\ref{t4.25}, Theorem~\ref{t4.31} and Theorem~\ref{t4.32}. In Section~\ref{sec7} we discuss several classes of examples that fit our framework. In Appendix~\ref{Appendix A} we present some Laplace transform results needed to prove the cosine and sine family representations from Section~\ref{sec2}. In Appendix~\ref{Appendix B} we review results concerning holomorphic functions taking values in Frechet spaces. 

\noindent\textbf{A glossary of notation.}
$\bX$ is a Banach space with norm $\|\cdot\|$, $L^p(\RR^m,\bX)$, $p\geq 1$, denotes the Lebesgue space of functions on $\RR^m$ with values in $\bX$ with the Lebesgue measure $\rmd x$, 
$W^{s,p}(\RR^m,\bX)$, $s> 0$, $p\geq 1$, is the Sobolev space of $\bX$-valued functions, $\mathcal{C}^m(\Omega,\bX)$ is the space of functions on $\Omega$ having continuous derivatives of order up to $m\in\NN\cup\{0\}$. The space of locally $p$-integrable functions with values in a Frechet space $\bY$ is denoted by $L^p_{\mathrm{loc}}(\RR^m,\bY)$. We denote by $L^p_{\mathrm{comp}}(\RR^m,\bY)$ the set of $p$-integrable, $\bY$-valued functions having compact support. 
The identity operator on the Banach space $\bX$ is denoted by $I_\bX$. The set of bounded linear operators in the Banach space $\bX$ is denoted by $\cB(\bX)$. The space of continuous linear operator between two topological vector spaces $\bV_1$ and $\bV_2$ is denoted by $\cC\cL(\bV_1,\bV_2)$. For an operator $B$ on a Frechet space $\bY$, we use  $\dom(B)$, $\Ker B$, $\Range B$, $\mathrm{Graph}(B)$, $\sigma(B)$, $\rho(B)$, and $B_{|\bW}$ to denote the domain, kernel, range, graph, spectrum, resolvent set, and the restriction of $B$ to a subspace $\bW$ of $\bY$. The adjoint operator is denoted by $B^*$. We denote by $\sigma_{\mathrm{disc}}(B)$ the set of isolated eigenvalues of finite algebraic multiplicity of the linear operator $B$, and by $\sigma_{\mathrm{ess}}(B)$ its complement in the spectrum of $B$. We use notation $R(\mu,B)=(\mu I_\bX-B)^{-1}$ for the resolvent of $B$ at $\mu\in\rho(B)$. For any Frechet space $\bY$ we use notation $\langle y,y^*\rangle:=y^*(y)$ for any $(y,y^*)\in\bY\times\bY^*$.  The weak$^*$ topology on $\bY^*$ is denoted $\sigma(\bY^*,\bY)$. The direct sum of two subspaces $\bW_1$ and $\bW_2$ is denoted by $\bW_1\oplus \bW_2$. The operator of multiplication by a function $g$ is denoted by $M_g$. We use $\omega_0(T)$ or $\omega_0(A)$ to denote the growth bound of a semigroup $\{T(t)\}_{t\geq 0}$ with generator $A$. The spectral bound of the generator $A$ is defined by $\rms(A)=\sup\mathrm{Re}\,\sigma(A)$. The open disc in $\CC$ centered at $a$ of radius $\eps>0$ is denoted by $D(a,\eps)$. We denote by $\CC_\omega^+$ the set of all $\lambda\in\CC$ such that $\mathrm{Re}\lambda>\omega$, by $\Sigma_{a,\theta}$ the set of all $\lambda\in\CC\setminus\{0\}$ such that $|\arg(\lambda-a)|<\theta$, by $\chi_\Omega$ the characteristic function of the set $\Omega\subset\CC$,
by $\sqrt{z}$ the brunch of the square root obtained by cutting $\CC$ along $(-\infty,0]$ so that $\re\sqrt{z}>0$ for $z\notin(-\infty,0]$ and $\sqrt{z}=\sqrt{z+\rmi 0}$ for $z\in(-\infty,0]$.

\noindent\textbf{Acknowledgments.} We are thankful to T. Christiansen, F. Gesztesy and I. M. Karabash for many useful discussions and for pointing out many interesting results in the resonance literature. Y.\ L.\  would like to thank the
Courant Institute of Mathematical Sciences at NYU and especially Prof.\ Lai-Sang Young for their hospitality. This work began as a part of IMPRESS-U activities at the Institute of Mathematics of Polish Academy of Sciences; the hospitality of IMPAN is gratefully acknowledged.

\section{Preliminaries}\label{sec2}
\subsection{A growth estimate}\label{new2.1}
In this section we assume that $\bX$ is a Banach space and $A:\rm{dom}(A)\subseteq\bX\to\bX$ is the generator of a cosine operator family $\{C(t)\}_{t\in\RR}$. We denote by$\{S(t)\}_{t\in\RR}$ the associated sine family defined by \eqref{sine-def}. We recall that the cosine family has exponential growth, that is, there exist $M\geq 1$ and $\omega\geq 0$ such that
\begin{equation}\label{exp-growth}
\|C(t)\|\leq Me^{\omega|t|}\;\mbox{for any}\;t\in\RR.	
\end{equation}
Similar to the case of semigroups of linear operators there is an extensive literature on the properties of cosine operator families, the associated sine families, various connections of these families to the generator $A$ via the Laplace transform, etc.  We refer to the references \cite{ABHN,Bob,Fattorini,Goldstein,TW} for details and many important results.

One classical method  of treating a second order Cauchy problem is to reduce it to a first order Cauchy problem, and ideally prove that it is well-posed. Introducing the new variable $v=u'$, we can convert \eqref{abstract-Cauchy} into the first order Cauchy problem,
 \begin{equation}\label{abstract-Cauchy-first-order}
	\left\{\begin{array}{ll}
		\left(\begin{matrix}u'(t)\\
			v'(t)
		\end{matrix}\right)=\cA \left(\begin{matrix}u(t)\\
		v(t)
		\end{matrix}\right),\,t\in\RR,\\
			\left(\begin{matrix}u\\
			v
		\end{matrix}\right)(0)=\left(\begin{matrix}x_0\\
		x_1
		\end{matrix}\right).\end{array}\right.
\end{equation}
Here the linear operator $\cA:\dom(A)\times\bX\to\bX\times\bX$ is defined by the block-matrix formula
\begin{equation}\label{def-cA}
\cA=\begin{bmatrix} 0 & I_\bX\\
A& 0\end{bmatrix}.	
\end{equation}	
It is natural to ask if the linear operator $\cA$ generates a $C_0$-semigroup on $\bX\times\bX$ or on a subspace of $\bX\times\bX$ invariant under $\cA$. In the next remark we collect the most important results on this topic, c.f., e.g. \cite{ABHN}.
\begin{remark}\label{r2.1}
\begin{enumerate}
\item[(i)] The linear operator $A$ generates a cosine family on $\bX$ if and only if the linear operator $\cA$ generates a \textit{once integrated $C_0$-semigroup} on $\bX\times\bX$, see e.g, \cite{KH};
\item[(ii)] The linear operator $\cA$ generates a $C_0$-semigroup on $\bX\times\bX$ if and only if the linear operator $A$ is \textit{bounded};
\item[(iii)] The exists a unique Banach space $\bW$, called the \textit{phase-space}, such that $\dom(A)\hookrightarrow \bW\hookrightarrow\bX$ and $\cA_{|\bW\times\bX}$ generates a $C_0$-semigroup on $\bW\times\bX$ denoted $\{\cT(t)\}_{t\geq 0}$.  The phase-space $\bW$ is defined by
\begin{equation}\label{def-bW}
\bW=\{x\in\bX:S(\cdot)x\in\mathcal{C}([0,1],\dom(A))\},\;\;\|x\|_\bW=\|x\|+\sup_{0\leq s\leq1}\|AS(s)x\|.
\end{equation}
Moreover, the semigroup $\cT$ can be extended to a $C_0$-group given by
\begin{equation}\label{def-cT}
	\cT(t)=\begin{bmatrix} C(t)_{|\bW} & S(t)\\
		AS(t)_{|\bW}& C(t)\end{bmatrix},\; t\in\RR.		
\end{equation}	
We refer to \cite{Ki} for the details of this celebrated result.
\end{enumerate}	
\end{remark}	
In the sequel it will be important to show that the $C_0$-group $\cT$ defined above is of order at most $\mathcal{O}(e^{\omega|t|})$, whenever $|t|\to\infty$. To prove this  result we recall some important properties of trajectories of the cosine and sine functions.
\begin{remark}\label{r2.2}
If $x\in\bX$ and $w\in\bW$ then the following properties hold true:
\begin{enumerate}
	\item[(i)]  $C(\cdot)w\in\mathcal{C}(\RR,\bW)\cap\mathcal{C}^1(\RR,\bX)$ and $\bigl(C(\cdot)w\bigr)'=AS(\cdot)w$;
	
	\item[(ii)] $S(\cdot)x\in\mathcal{C}(\RR,\bW)$;
	
	\item[(iii)] $S(\cdot)w\in\mathcal{C}(\RR,\dom(A))\cap\mathcal{C}^2(\RR,\bX)$ and $\bigl(S(\cdot)w\bigr)'=C(\cdot)w$,  $\bigl(S(\cdot)w\bigr)''=AS(\cdot)w$.
\end{enumerate}
\end{remark}
\begin{lemma}\label{l2.3}
If $M\geq 1$ and $\omega>0$ are the constants from \eqref{exp-growth}, then
\begin{equation}\label{exp-growth-cT}
\|\cT(t)\|_{\mathcal{B}(\bW\times\bX)}\leq M_1e^{\omega|t|}\;\mbox{for any}\;t\in\RR,\;\mbox{where}\;M_1:=M\max\Bigl\{2+\frac{M}{\sinh(\omega/2)},1+\omega^{-1}+e^{\omega}\Bigr\}.	
\end{equation}		
\end{lemma}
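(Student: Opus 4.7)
The proof approach is to treat $\cT(t)$ as a $2\times 2$ block matrix on the product space $\bW\times\bX$ (equipped with the norm $\|(w,x)\|_{\bW\times\bX}=\|w\|_\bW+\|x\|$) and to estimate each of its four entries in the appropriate operator norm. The two entries landing in $\bX$ are immediate: \eqref{exp-growth} gives $\|C(t)x\|\le Me^{\omega|t|}\|x\|$, and the representation $S(t)=\int_0^tC(s)\,ds$ together with \eqref{exp-growth} yields $\|S(t)x\|\le M\omega^{-1}(e^{\omega|t|}-1)\|x\|$.

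The central tool for the remaining, $\bW$-valued, estimates would be the identity
\[
2AS(s)S(t)x \;=\; C(t+s)x - C(t-s)x,\qquad x\in\bX,\ s,t\in\RR,
\]
which I would derive by differentiating the sine addition formula $2C(s)S(t)=S(t+s)+S(t-s)$ in the variable $s$: by Remark~\ref{r2.2}(ii) we have $S(t)x\in\bW$, so $(C(s)S(t)x)'_s=AS(s)S(t)x$, while $(S(t\pm s)x)'_s=\pm C(t\pm s)x$. Combining this identity with \eqref{exp-growth} gives $\sup_{s\in[0,1]}\|AS(s)S(t)x\|\le Me^{\omega(|t|+1)}\|x\|$, and adding the bound on $\|S(t)x\|$ yields $\|S(t)x\|_\bW\le Me^{\omega|t|}(\omega^{-1}+e^\omega)\|x\|$. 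Together with $\|C(t)x\|\le Me^{\omega|t|}\|x\|$, this accounts for the $M(1+\omega^{-1}+e^\omega)$ branch of the maximum defining $M_1$.

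The first column of $\cT(t)$ requires more care. For $C(t):\bW\to\bW$ I would use the pairwise commutativity of $A$, $C(t)$, $S(s)$ on $\bW$, giving $AS(s)C(t)w=C(t)AS(s)w$, whence $\sup_{s\in[0,1]}\|AS(s)C(t)w\|\le Me^{\omega|t|}\|w\|_\bW$ and therefore $\|C(t)w\|_\bW\le 2Me^{\omega|t|}\|w\|_\bW$. The hard step---and, I expect, the main obstacle---is bounding $\|AS(t)w\|$ sharply for $w\in\bW$: a naive iteration of $AS(t+s)w+AS(t-s)w=2C(s)AS(t)w$ produces an exponential rate strictly larger than $\omega$ and spoils the constant. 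To avoid this loss I would integrate this relation in $s$ over $[0,a]$ to obtain the operator identity $2S(a)AS(t)w=C(t+a)w-C(t-a)w$, then recover $AS(t)w$ via a weighted mean-value representation in $a\in[-1/2,1/2]$; the weight $e^{\omega a}$ is chosen so that the normalizing factor becomes $\int_{-1/2}^{1/2}e^{\omega s}\,ds=2\sinh(\omega/2)/\omega$, which is exactly how $\sinh(\omega/2)$ enters $M_1$. This yields a bound of the form $\|AS(t)w\|\le (M^2/\sinh(\omega/2))\,e^{\omega|t|}\|w\|_\bW$.

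Combining the four block estimates gives
\[
\|\cT(t)(w,x)\|_{\bW\times\bX}\le Me^{\omega|t|}\bigl[(2+M/\sinh(\omega/2))\|w\|_\bW+(1+\omega^{-1}+e^\omega)\|x\|\bigr],
\]
and taking the maximum of the two coefficients produces the stated bound $\|\cT(t)\|_{\cB(\bW\times\bX)}\le M_1e^{\omega|t|}$ with $M_1$ as in \eqref{exp-growth-cT}.
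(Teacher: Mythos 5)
Your overall strategy---estimating the four block entries of $\cT(t)$ separately and combining---is exactly the paper's, and your bounds on $\|S(t)x\|$, $\|C(t)x\|$, $\|S(t)x\|_\bW$, and $\|C(t)w\|_\bW$ are all sound (the paper's $\|C(t)w\|_\bW\le Me^{\omega|t|}\|w\|_\bW$ is slightly sharper than your $2Me^{\omega|t|}\|w\|_\bW$, but this is absorbed elsewhere and your final arithmetic still produces $M_1$). You also correctly identify the crux: proving $\|AS(t)w\|\lesssim e^{\omega|t|}\|w\|_\bW$ with rate exactly $\omega$, where the naive iteration of $AS(t+s)w+AS(t-s)w=2C(s)AS(t)w$ overshoots.

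The gap is in how you handle that crux. You propose integrating to get $2S(a)AS(t)w=C(t+a)w-C(t-a)w$ and then ``recovering $AS(t)w$ via a weighted mean-value representation in $a$'' with weight $e^{\omega a}$. But there is no inversion available here: $S(a)$ is not invertible, and neither is the smoothed operator $\int_{-1/2}^{1/2}e^{\omega a}S(a)\,da$ in general, so one cannot pass from a bound on $S(a)AS(t)w$ (or any weighted average of such) to a bound on $AS(t)w$ itself. The fact that $\int_{-1/2}^{1/2}e^{\omega a}\,da=2\sinh(\omega/2)/\omega$ is a scalar identity and does not provide the needed operator-level cancellation; the presence of $\sinh(\omega/2)$ in $M_1$ is a coincidence of the target, not evidence that the averaging scheme closes. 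The paper instead uses the \emph{difference} identity $S(k)-S(k-1)=2C(k-\tfrac12)S(\tfrac12)$, which after applying $A$ gives the telescoping recursion $AS(k)w=AS(k-1)w+2C(k-\tfrac12)AS(\tfrac12)w$; summing the geometric series $\sum_{j=1}^k e^{\omega(j-1/2)}$ produces $1/(e^\omega-1)=1/(2e^{\omega/2}\sinh(\omega/2))$, which is where $\sinh(\omega/2)$ actually comes from. The final step then interpolates $t=m+\tau$ via $S(m+\tau)=C(m)S(\tau)+C(\tau)S(m)$. You would need to replace your averaging idea with this discrete telescoping (or something equivalent) to close the argument.
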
	
\begin{proof} Since $\omega>0$ from \eqref{sine-def} and \eqref{exp-growth} one can readily check that
\begin{equation}\label{l2.3.1}	
\|S(t)\|\leq M\omega^{-1}(e^{\omega|t|}-1)\;\mbox{for any}\; t\in\RR.	
\end{equation}
Fix $x\in\bX$. We recall from \cite{ABHN} that $S(s)S(t)x\in\dom(A)$ and $AS(s)S(t)x=\frac{1}{2}\bigl(C(t+s)x-C(t-s)x\bigr)$ for any $t\in\RR$, $s\in[0,1]$. Using again \eqref{exp-growth} we obtain
\begin{equation}\label{l2.3.2}	
\|AS(s)S(t)x\|=\frac{1}{2}\|C(t+s)x-C(t-s)x\|\leq\frac{M}{2}\bigl(e^{\omega|t-s|}+e^{\omega|t+s|}\bigr)\leq Me^\omega e^{\omega|t|}\|x\|
\end{equation}
for any $t\in\RR$, $s\in[0,1]$. From \eqref{l2.3.1} and \eqref{l2.3.2} we infer that
\begin{equation}\label{l2.3.3}	
\|S(t)x\|_{\bW}=\|S(t)x\|+\sup_{0\leq s\leq 1}{\|AS(s)S(t)x\|}\leq M\Bigl(\omega^{-1}+e^\omega\Bigr)e^{\omega|t|}\|x\|\;\mbox{for any}\; t\in\RR.
\end{equation}
Next, we fix $w\in\bW$. From the definition of the phase-space, we have $S(s)w\in\dom(A)$ for any $s\in [0,1]$. Moreover, since $C(t)S(s)=S(s)C(t)$, it follows that $S(s)C(t)w=C(t)S(s)w\in\dom(A)$ and  $AS(s)C(t)w=AC(t)S(s)w=C(t)AS(s)w$ for any $t\in\RR$, $s\in[0,1]$. Hence,
\begin{align}\label{l2.3.4}
\|C(t)w\|_{\bW}&=\|C(t)w\|+\sup_{0\leq s\leq 1}{\|AS(s)C(t)w\|}=\|C(t)w\|+\sup_{0\leq s\leq 1}{\|C(t)AS(s)w\|}\nonumber\\
&\leq Me^{\omega|t|}\|w\|+Me^{\omega|t|}\sup_{0\leq s\leq 1}\|AS(s)w\|=Me^{\omega|t|}\|w\|_{\bW}\;\mbox{for any}\; t\in\RR.		
\end{align}	
We recall that $S(k)-S(k-1)=2C(k-\frac{1}{2})S(\frac{1}{2})$ for any $k\in\NN$. Since $S(\frac{1}{2})w\in\dom(A)$, we obtain that $C(k-\frac{1}{2})S(\frac{1}{2})w\in\dom(A)$ and $AC(k-\frac{1}{2})S(\frac{1}{2})w=C(k-\frac{1}{2})AS(\frac{1}{2})w$, hence
\begin{equation}\label{l2.3.5}
AS(k)w=AS(k-1)w+2C(k-\frac{1}{2})AS(\frac{1}{2})w\;\mbox{for any}\; k\in\NN.	
\end{equation}	
From \eqref{exp-growth}, \eqref{def-bW} and \eqref{l2.3.5} it follows that
\begin{equation}\label{l2.3.6}
\|AS(k)w\|\leq\|AS(k-1)w\|+2Me^{\omega(k-\frac{1}{2})}(\|w\|_\bW-\|w\|)\;\mbox{for any}\; k\in\NN.	
\end{equation}	
Since $S(0)=0$, from\eqref{l2.3.6} we conclude that
\begin{align}\label{l2.3.7}
\|AS(k)w\|&=\sum_{j=1}^{k}\Bigl(\|AS(j)w\|-\|AS(j-1)w\|\Bigr)\leq 2M\bigl(\|w\|_\bW-\|w\|\bigr)\sum_{j=1}^{k}e^{\omega(j-\frac{1}{2})}\nonumber\\
&=2Me^{-\omega/2}\frac{e^{\omega k}-1}{e^\omega-1}\bigl(\|w\|_\bW-\|w\|\bigr)\leq\frac{2M}{e^{3\omega/2}-e^{\omega/2}}e^{\omega k}\bigl(\|w\|_\bW-\|w\|\bigr)\;\mbox{for any}\; k\in\NN.
\end{align}	
Using again the definition of the phase-space $\bW$ in \eqref{def-bW} we have $S(k)w,S(s)w\in\dom(A)$, thus $C(k)S(s)w,C(s)S(k)w\in\dom(A)$ and
$AC(k)S(s)w=C(k)AS(s)w$, $AC(s)S(k)w=C(s)AS(k)w$ for any $k\in\NN$ and $s\in [0,1]$. Fix $t\geq1$ and let $m=[t]\in\NN$ be the integer part of $t$ and $\tau=t-m$.  From \eqref{l2.3.7} we conclude that
\begin{align}\label{l2.3.8}
\|AS(t)w\|&=\|AS(m+\tau)w\|=\Bigl\| A\Bigl(C(m)S(\tau)w+C(\tau)S(k)w \Bigr)\Bigr\|=\|C(m)AS(\tau)w+C(\tau)AS(k)w \|\nonumber\\
&\leq\|C(m)AS(\tau)w\|+\|C(\tau)AS(k)w\|\leq Me^{\omega m}\|AS(\tau)w\|+Me^{\omega\tau}\|AS(m)w\|\nonumber\\
&\leq\Bigl(M+\frac{M^2}{\sinh{(\omega/2)}}\Bigr)e^{\omega m}\bigl(\|w\|_\bW-\|w\|\bigr)\leq M\Bigl(1+\frac{M}{\sinh{(\omega/2)}}\Bigr)e^{\omega t}\bigl(\|w\|_\bW-\|w\|\bigr).
\end{align}	
Using the definition of the norm in $\bW$ given in \eqref{def-bW} one can readily see that \eqref{l2.3.8} trivially holds true for any $t\in[0,1]$. Since the sine function is odd  it follows that \eqref{l2.3.8} holds for any $t\in\RR$, and so
\begin{equation}\label{l2.3.9}
\|AS(t)w\|\leq M\Bigl(1+\frac{M}{\sinh{(\omega/2)}}\Bigr)e^{\omega |t|}\bigl(\|w\|_\bW-\|w\|\bigr)\;\mbox{for any}\;t\in\RR.
\end{equation}
Recalling the definition of $M_1$ in \eqref{exp-growth-cT} and summarizing, from \eqref{exp-growth}, \eqref{def-cT},  \eqref{l2.3.1}, \eqref{l2.3.3}, \eqref{l2.3.4} and \eqref{l2.3.9} we conclude that
\begin{align}\label{l2.3.10}
\Bigl\|\cT(t) \left(\begin{matrix}w\\x\end{matrix}\right) &\Bigr\|_{\bW\times\bX}=\Bigl\|\left(\begin{matrix}C(t)w+S(t)x\\AS(t)w+C(t)x\end{matrix}\right) \Bigr\|_{\bW\times\bX}=\|C(t)w+S(t)x\|_{\bW}+\|AS(t)w+C(t)x\|\nonumber\\
&=Me^{\omega |t|}\|w\|_\bW+M\Bigl(\omega^{-1}+e^\omega\Bigr)e^{\omega|t|}\|x\|+M\Bigl(1+\frac{M}{\sinh{(\omega/2)}}\Bigr)e^{\omega |t|}\|w\|_\bW+Me^{\omega |t|}\|x\|\nonumber\\
&=M\Bigl(2+\frac{M}{\sinh{(\omega/2)}}\Bigr)e^{\omega |t|}\|w\|_\bW+M(1+\omega^{-1}+e^\omega)e^{\omega|t|}\|x\|\leq M_1e^{\omega|t|}\Bigl\| \left(\begin{matrix}w\\x\end{matrix}\right) \Bigr\|_{\bW\times\bX}
\end{align}	
for any $t\in\RR$, $x\in\bX$, $w\in\bW$, proving the lemma.
\end{proof}
Next, we recall on some well-known spectral properties of the cosine family generator $A$ and its resolvent operator function, see e.g., \cite{ABHN,Bob,Fattorini,Goldstein}.
\begin{remark}\label{r2.4}
\begin{enumerate}
	\item[(i)] If $M\geq 1$ and  $\omega\geq0$ are the constants from \eqref{exp-growth}, then $\{\lambda^2:\lambda\in\CC_\omega^+\}\subset\rho(A)$ and
\begin{equation}\label{res-omega-est}
\|\lambda R(\lambda^2,A)\|\leq\frac{M}{\re\lambda-\omega}\;\mbox{for any}\;\lambda\in\CC_\omega^+;	
\end{equation}		
\item[(ii)] The linear operator $A$ generates on $\bX$ an analytic semigroup of angle $\frac{\pi}{2}$ denoted $\{T(z)\}_{z\in\CC_0^+}$ that can be expressed via the Weierstrass formula
\begin{equation}\label{Weierstrss}
T(z)x=\frac{1}{\sqrt{\pi z}}\int_0^\infty e^{-\frac{t^2}{4z}}C(t)x\rmd t,\; z\in\CC_0^+, x\in\bX.
\end{equation}
\item[(iii)] The analytic semigroup $\{T(z)\}_{z\in\CC_0^+}$ satisfies the following estimates,
\begin{align}\label{sem-est}
\|T(z)\|\leq &M \sqrt{\frac{|z|}{\re z}} \mathrm{exp}\Bigl({\omega^2\frac{|z|^2}{\re z}}\Bigr)\;\mbox{for any}\;z\in\CC_0^+,\nonumber\\
\|T(z)\|\leq &\oM(\varphi)\mathrm{exp}\Bigl({\omm(\varphi)\re z}\Bigr)\;\mbox{for any}\;z\in\Sigma_{0,\varphi},\,\varphi\in\bigl(0,\frac{\pi}{2}\bigr),
\end{align}
where the functions $\oM,\omm:\bigl(0,\frac{\pi}{2}\bigr)\to[0,\infty)$ are defined by
\begin{equation}\label{def-oM-omm}
\oM(\varphi)=\frac{M}{\sqrt{\cos{(\varphi)}}}  \mathrm{exp}\Bigl({\frac{\omega}{\cos{(\varphi)}}}\Bigr),\quad \omm(\varphi)=\frac{\ln(\oM(\varphi))}{\cos{(\varphi)}}.	
\end{equation}	
\item[(iv)] The linear operator $A$ is sectorial, more precisely, $\Sigma_{\omm(\varphi),\frac{\pi}{2}+\varphi}\subset\rho(A)$ and
\begin{equation}\label{sectorial-est}
\tM(\varphi):=\sup\bigl\{\big\| \big(\mu-\tom(\varphi)\big)R(\mu,A)\big\|:\mu\in\Sigma_{\omm(\varphi),\frac{\pi}{2}+\varphi}\bigr\}<\infty\;\mbox{for any}\;\varphi\in\bigl(0,\frac{\pi}{2}\bigr), 	 
\end{equation}
where $\tom:\bigl(0,\frac{\pi}{2}\bigr)\to[0,\infty)$ is defined by $\tom(\varphi)=\omm(\frac{\pi}{4}+\frac{\varphi}{2})$.
\end{enumerate}
\end{remark}	
We note that some of the spectral properties of generators of cosine families are different from those of generators of analytic semigroups. For example, in the case of analytic semigroups generators one has that the least exponential bound of the semigroup is equal to the spectral bound of the generator, that is $\omega_0(A)=\rms(A)$. However, it is possible to construct an example of a cosine family generator such that $\rms(A)<0$, and yet the norm of the cosine family has polynomial growth, see \cite[Example 0.6]{deLP}.

We note that \eqref{res-omega-est} is equivalent to
\begin{equation}\label{RA-est}
	\|R_A(\lambda)\|\leq\frac{M}{|\lambda|(\re\lambda-\omega)}\;\mbox{for any}\;\lambda\in\CC_\omega^+.
\end{equation}			
We are interested in meromorphic extensions of the operator-valued function $R_A$ to other than $\CC_\omega^+$ regions of the complex plane. The best for our aims extension is not necessarily defined on the largest possible domain,  but on a domain where the norm of the operator-valued function $R_A$ can be estimated by a product of a function of $|\lambda|$ and a function of $\re\lambda$, that mimics estimate \eqref{RA-est} as close as possible. One can readily check that, with $\tom$ defined above,
\begin{equation}\label{lambda-square-domain}
\lambda^2\in\Sigma_{\tom(\varphi),\frac{\pi}{2}+\varphi}\;\mbox{if and only if}\;|\re\lambda|>\sqrt{ \sec^2{(\varphi)}(\im\lambda)^2+\tom(\varphi)}-\tan{(\varphi)}|\im\lambda|  	
\end{equation}
for any $\varphi\in\bigl(0,\frac{\pi}{2}\bigr)$. This implies that the operator-valued function $R_A$ can be extended analytically to the open set
\begin{equation}\label{def-Omega-phi}
\Omega(\varphi)=\{\lambda\in\CC:|\re\lambda|>\sqrt{ \sec^2{(\varphi)}(\im\lambda)^2+\tom(\varphi)}-\tan{(\varphi)}|\im\lambda|\},\;\varphi\in\bigl(0,\frac{\pi}{2}\bigr).
\end{equation}
Moreover, from Remark~\ref{r2.4}(iv) and \eqref{lambda-square-domain} we infer that
\begin{equation}\label{RA-Omega-phi-region-est}
\|R_A(\lambda)\|\leq\frac{\tM(\varphi)}{|\lambda^2-\tom(\varphi)|}\;\mbox{for any}\;\lambda\in\Omega(\varphi)\;\mbox{for each}\; \varphi\in\bigl(0,\frac{\pi}{2}\bigr).
\end{equation}
From \eqref{def-oM-omm} one can readily check that $\lim\limits_{\varphi\nearrow\frac{\pi}{2}}\omm(\varphi)=\lim\limits_{\varphi\nearrow\frac{\pi}{2}}\tom(\varphi)=\infty$. This implies that the estimate \eqref{RA-Omega-phi-region-est} is useful only in the case when the angle $\varphi$ stays away from some neighborhood of $\pi/2$. In addition, it is straightforward to check that
$\lim\limits_{s\to\infty}\Bigl(\sqrt{ \sec^2{(\varphi)}s^2+\tom(\varphi)}-\tan{(\varphi)}|s|\Bigr)=\infty$. It follows that the set $\Omega(\varphi)$ defined in \eqref{def-Omega-phi} does not contain an entire neighborhood of $\CC_\omega^+$. To prove our results, below we will need to have a meromorphic extension of the operator-valued function $R_A$ to the set located to the right of the graph $\re\lambda=g_0(\im\lambda)$, of a smooth, real-valued function $g_0$, satisfying the condition $\sup g_0<\omega$.
In general, we will need to assume the existence of such an extension. However, in many instances one expects that the set $\{\lambda\in\CC:\lambda^2\in\sigma_{\mathrm{ess}}(A)\}$ to be located in a neighborhood of the imaginary axis. Hence, we expect the set $\{\lambda\in\CC:\lambda^2\in\rho_{\mathrm{ess}}(A)\}$ to contain the  open set $\{\lambda\in\CC:\re\lambda>g_0(\im\lambda)\}$, on which $R_A$ is meromorphic and has only finitely many poles of finite multiplicity, related to the discrete eigenvalues of $A$.

\subsection{Integral Representations of Trajectories of Cosine and Sine functions }\label{subsec2.1} Next, we look for representations of trajectories of the cosine and sine functions, using complex integration involving the operator-valued function $R_A$ along vertical lines contained in $\CC_\omega^+$. The intuition behind such representations are the various forms of complex inversion results for the Laplace Transform and the fact that $\cL\{C(\cdot)x\}=\lambda R(\lambda^2,A)x$ and  $\cL\{S(\cdot)x\}= R(\lambda^2,A)x$ for any $\lambda\in\CC_\omega^+$, $x\in\bX$. Also, we note that in the case of trajectories of $C_0$-semigroups, that is, solutions of well-posed first order Cauchy problems, such results are well-known, see, e.g., \cite{ABHN,CL,DK,EN,lunardi,P}. These types of results have been discussed in \cite{Fattorini} for the case of cosine families. In the next two lemmas we give all the details for completeness. First, we discuss the basic result to better grasp the nature of the representation.
\begin{lemma}\label{l2.5}
Assume that $\{C(t)\}_{t\in\RR}$ is a strongly continuous cosine family satisfying \eqref{exp-growth}. Then,
\begin{enumerate}
\item[(i)] $\displaystyle\int_{\re\lambda=\gamma}\frac{e^{\lambda t}}{\lambda}R_A(\lambda)Ax\rmd\lambda$ is absolutely convergent for any $t\geq 0$, $x\in\dom(A)$, $\gamma>\omega$;
\item[(ii)] $C(t)x=x+\frac{1}{2\pi\rmi}\displaystyle\int_{\re\lambda=\gamma}\frac{e^{\lambda t}}{\lambda}R_A(\lambda)Ax\rmd\lambda$ for any $t\geq 0$, $x\in\dom(A)$, $\gamma>\omega$.	
\end{enumerate}
\end{lemma}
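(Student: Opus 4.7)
The plan is to establish (i) by a direct integrability estimate, and then prove (ii) via uniqueness of the Laplace transform. The starting observation is the resolvent identity $R_A(\lambda)Ax = \lambda^2 R_A(\lambda)x - x$ for $x \in \dom(A)$, which rearranges to $\lambda R_A(\lambda)x = \frac{x}{\lambda} + \frac{1}{\lambda}R_A(\lambda)Ax$. Since the left-hand side is the Laplace transform of $C(\cdot)x$ while $\lambda R_A(\lambda)x$ itself decays only as $O(1/|\lambda|)$ on vertical lines (not integrable), the point of subtracting $x/\lambda$ is to obtain a function $\frac{1}{\lambda}R_A(\lambda)Ax$ that is $O(1/|\lambda|^2)$ and thus amenable to complex inversion.

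For part (i), I would parametrize $\lambda = \gamma + is$ and apply the bound \eqref{RA-est}, which gives
\[
\Big\|\frac{e^{\lambda t}}{\lambda}R_A(\lambda)Ax\Big\| \leq \frac{M\,e^{\gamma t}\,\|Ax\|}{|\lambda|^2(\gamma-\omega)}.
\]
Since $|\lambda|^2 = \gamma^2 + s^2$, the integrand is dominated (uniformly on bounded $t$-intervals) by an integrable function of $s$, yielding both absolute convergence and, by dominated convergence, continuity in $t$ of the resulting function.

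For part (ii), I would define
\[
h(t) := x + \frac{1}{2\pi\rmi}\int_{\re\lambda = \gamma}\frac{e^{\lambda t}}{\lambda}R_A(\lambda)Ax\,\rmd\lambda,\quad t\geq 0,
\]
and compute $\cL\{h\}(\mu)$ for $\re\mu > \gamma$. By the estimate from (i), Fubini applies and
\[
\cL\{h\}(\mu) = \frac{x}{\mu} + \frac{1}{2\pi\rmi}\int_{\re\lambda=\gamma}\frac{R_A(\lambda)Ax}{\lambda(\mu-\lambda)}\,\rmd\lambda.
\]
The next step is to evaluate the remaining contour integral by closing to the right with a semicircle of radius $R$. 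On the semicircle, $\re\lambda \geq \gamma > \omega$ gives $\|R_A(\lambda)\| \leq M/(|\lambda|(\gamma-\omega))$, and for $|\lambda| \geq 2|\mu|$ one has $|\mu-\lambda|\geq |\lambda|/2$, so the integrand is $O(|\lambda|^{-3})$ and the arc contribution is $O(R^{-2})\to 0$. The only pole enclosed is $\lambda = \mu$ with residue $-R_A(\mu)Ax/\mu$; accounting for the clockwise orientation gives
\[
\frac{1}{2\pi\rmi}\int_{\re\lambda=\gamma}\frac{R_A(\lambda)Ax}{\lambda(\mu-\lambda)}\,\rmd\lambda = \frac{R_A(\mu)Ax}{\mu}.
\]
Combining with the resolvent identity,
\[
\cL\{h\}(\mu) = \frac{x}{\mu} + \frac{\mu^2 R_A(\mu)x - x}{\mu} = \mu R_A(\mu)x = \cL\{C(\cdot)x\}(\mu).
\]
Since both $h$ and $C(\cdot)x$ are continuous and of exponential type $\leq \omega < \gamma$, uniqueness of the Laplace transform (to be invoked from Appendix~\ref{Appendix A}) forces $h(t) = C(t)x$ for all $t \geq 0$.

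The main technical obstacle will be the careful uniform bookkeeping when closing the contour and sending $R\to\infty$: although the decay on the semicircle is straightforward from \eqref{RA-est}, one must verify the bound uniformly in the angle and control $|\mu-\lambda|$ once $R$ exceeds a threshold depending on $|\mu|$. The rest is routine Fubini/residue manipulation.
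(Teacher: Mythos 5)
Your proof is correct, but it follows a genuinely different route than the paper's. The paper proves Lemma~\ref{l2.5} as the $n=1$ case of Lemma~\ref{l2.10}: it starts from the cosine-family Taylor identity $C(t)x = x + \int_0^t(t-s)\,C(s)Ax\,\rmd s$ (Lemma~\ref{l2.8}), identifies $\cL\{C(\cdot)Ax\}(\lambda)=\lambda R_A(\lambda)Ax$, and then invokes the complex inversion formula for convolutions (Lemma~\ref{lAI7} with $k=1$, which is built on the Fourier-inversion argument of Lemma~\ref{lAI4}). In contrast, you define the candidate $h$, compute its Laplace transform directly by Fubini and residue calculus on a rightward semicircle, show it matches $\mu R_A(\mu)x$ via the resolvent identity $R_A(\mu)A x = \mu^2 R_A(\mu)x - x$, and then appeal to uniqueness of the Laplace transform. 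Both approaches hinge on the same algebraic fact (the resolvent identity in the frequency domain versus the Taylor-with-remainder formula in the time domain), and both work; yours avoids Lemma~\ref{l2.8} entirely at the cost of the residue computation. One small caveat: Appendix~\ref{Appendix A} does not state a uniqueness-of-Laplace-transform result as such — Lemma~\ref{lAI4} is an inversion formula with an $L^1$-on-vertical-lines hypothesis. Uniqueness for continuous, exponentially bounded vector-valued functions is standard (it can be extracted from Lemma~\ref{lAI4} applied to the difference, whose transform vanishes identically, or cited from \cite[Theorem~1.7.3]{ABHN}), but it would be cleaner to cite it explicitly rather than point at the appendix. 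Also note that the estimate from part (i) gives $h$ exponential growth of order $\gamma$, not $\omega$; this is harmless for uniqueness, since both $\cL h$ and $\cL\{C(\cdot)x\}$ exist and agree on $\re\mu>\gamma$, but the phrasing "exponential type $\le\omega$" for $h$ is not quite what you proved.
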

\begin{proof}
See the proof of Lemma~\ref{l2.10} below for a more general setting.
\end{proof}
It is well-known that in the case of first order Cauchy problems, one can use the complex Laplace transform inversion formula in general only for classical solutions. In the case of mild, or general trajectories of $C_0$-semigroups, inversion formulas can be proved in the case $\mathrm{UMD}$-spaces. In the case of cosine families in $\mathrm{UMD}$-spaces a similar result is proved in \cite{CiKe}. In the case of general Banach spaces if $x\in\bX$ then the term $\frac{e^{\lambda t}}{\lambda}AR_A(\lambda)x=\lambda e^{\lambda t}R_A(\lambda)x-\frac{e^{\lambda t}}{\lambda}x$ might not be integrable, not even in the principal value sense.

To formulate the representation for the trajectories of the sine function we need to present some additional estimates of the operator-valued function $R_A$.
\begin{remark}\label{r2.6}
Taking Laplace Transform in \eqref{def-cT} we infer that $\CC_\omega^+\subset\rho(\cA)$ and
\begin{equation}\label{r2.6.1}
R(\lambda,\cA_{|\bW\times\bX})=\begin{bmatrix} \lambda R_A(\lambda)_{|\bW} &  R_A(\lambda)\\
		AR_A(\lambda)_{|\bW}& \lambda R_A(\lambda)\end{bmatrix}\;\mbox{for any}\;\lambda\in\CC_\omega^+.		
\end{equation}	
Moreover, from Lemma~\ref{l2.3} it follows that
\begin{equation}\label{r2.6.2}
\|R(\lambda,\cA_{|\bW\times\bX})\|_{\mathcal{B}(\bW\times\bX)}\leq\frac{M_1}{\re\lambda-\omega}\;\mbox{for any}\;\lambda\in\CC_\omega^+.
\end{equation}
From \eqref{r2.6.1} and \eqref{r2.6.2} we conclude that
\begin{equation}\label{r2.6.3}
\|\lambda R_A(\lambda)_{|\bW}\|_{\mathcal{B}(\bW)}+\|	AR_A(\lambda)_{|\bW}\|_{\mathcal{B}(\bW,\bX)}\leq\frac{M_1}{\re\lambda-\omega}\;\mbox{for any}\;\lambda\in\CC_\omega^+.
\end{equation}	
\end{remark}
\begin{lemma}\label{l2.7}
Assume that $\{C(t)\}_{t\in\RR}$ is a strongly continuous cosine family, $\{S(t)\}_{t\in\RR}$ is the associate sine family defined by \eqref{sine-def}, $\bW$ is the phase-space defined in \eqref{def-bW} and assume that \eqref{exp-growth} holds. Then,
\begin{enumerate}
\item[(i)] The integral $\displaystyle\int_{\re\lambda=\gamma}\frac{e^{\lambda t}}{\lambda^2}AR_A(\lambda)w\rmd\lambda$ is absolutely convergent for any $t\geq 0$, $w\in\bW$, $\gamma>\omega$;
\item[(ii)] The integral $\displaystyle\int_{\re\lambda=\gamma}\frac{e^{\lambda t}}{\lambda^2}R_A(\lambda)x\rmd\lambda$ is absolutely convergent for any $t\geq 0$, $x\in\bX$, $\gamma>\omega$;
\item[(iii)] $\displaystyle\int_{\re\lambda=\gamma}\frac{e^{\lambda t}}{\lambda^2}R_A(\lambda)w\rmd\lambda\in\dom(A)$ and
\begin{equation}\label{A-switch}	
A\int_{\re\lambda=\gamma}\frac{e^{\lambda t}}{\lambda^2}R_A(\lambda)w\rmd\lambda=\displaystyle\int_{\re\lambda=\gamma}\frac{e^{\lambda t}}{\lambda^2}AR_A(\lambda)w\rmd\lambda\;\mbox{for any}\;w\in\bW, \gamma>\omega;	
\end{equation}	
\item[(iv)] $S(t)w=tw+\frac{1}{2\pi\rmi}\displaystyle\int_{\re\lambda=\gamma}\frac{e^{\lambda t}}{\lambda^2}AR_A(\lambda)w\rmd\lambda=tw+\frac{1}{2\pi\rmi}A\displaystyle\int_{\re\lambda=\gamma}\frac{e^{\lambda t}}{\lambda^2}R_A(\lambda)w\rmd\lambda$ for any $t\geq 0$, $w\in\bW$, $\gamma>\omega$.	
\end{enumerate}	
\end{lemma}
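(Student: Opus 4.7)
The strategy combines the phase-space resolvent estimate \eqref{r2.6.3} with the standard bound \eqref{RA-est} to obtain absolute integrability along the contour, and then invokes the closedness of $A$ together with a complex Laplace inversion argument. Throughout, parameterize $\lambda=\gamma+\rmi s$ on the line $\re\lambda=\gamma$, so that $|\lambda|^2=\gamma^2+s^2$.

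For (i), estimate \eqref{r2.6.3} gives $\|AR_A(\lambda)w\|_\bX\leq M_1(\gamma-\omega)^{-1}\|w\|_\bW$ uniformly on the contour, so the integrand is dominated by $e^{\gamma t}M_1\|w\|_\bW/[(\gamma^2+s^2)(\gamma-\omega)]$, which is integrable in $s\in\RR$. For (ii), \eqref{RA-est} gives $\|R_A(\lambda)\|\leq M|\lambda|^{-1}(\gamma-\omega)^{-1}$, so the integrand is dominated by $Me^{\gamma t}\|x\|/[|\lambda|^3(\gamma-\omega)]$, again integrable. For (iii), observe that $R_A(\lambda)\bX\subseteq\dom(A)$ forces the integrand to take values in $\dom(A)$ pointwise; parts (i) and (ii) show that this integrand and its image under $A$ are both Bochner integrable along $\re\lambda=\gamma$, and the closedness of $A$ therefore permits the interchange \eqref{A-switch}.

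For (iv), set $h(t):=S(t)w-tw$. By Remark~\ref{r2.2}(iii) and \eqref{sine-def}, $h\in\mathcal{C}^2([0,\infty),\bX)$ with $h(0)=h'(0)=0$ and $h''(t)=AS(t)w$. Using $\cL\{C(\cdot)w\}(\lambda)=\lambda R_A(\lambda)w$ together with $S(t)=\int_0^t C(s)\,\rmd s$ gives $\cL\{S(\cdot)w\}(\lambda)=R_A(\lambda)w$, hence
\begin{equation*}
\cL\{h\}(\lambda)=R_A(\lambda)w-\frac{w}{\lambda^2}=\frac{\lambda^2 R_A(\lambda)w-w}{\lambda^2}=\frac{AR_A(\lambda)w}{\lambda^2},\qquad\lambda\in\CC_\omega^+,
\end{equation*}
where the last equality uses the resolvent identity $\lambda^2 R_A(\lambda)-I_\bX=AR_A(\lambda)$ on $\bX$. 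By part (i) this transform is absolutely integrable along $\re\lambda=\gamma$; combined with the $\mathcal{C}^2$-regularity of $h$, this allows application of the complex Laplace inversion theorem deferred to Appendix~\ref{Appendix A}, yielding the first equality of (iv). The second equality is then immediate from (iii).

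The only genuine subtlety is the last step: one must legitimately invoke a pointwise Bromwich-type inversion theorem in a Banach-space setting. The enabling ingredients — absolute integrability of $\cL\{h\}$ furnished by (i), and the $\mathcal{C}^2$-smoothness of $h$ furnished by the phase-space regularity $w\in\bW$ — are precisely what the inversion result in Appendix~\ref{Appendix A} will require. All other steps reduce to routine estimates from Remark~\ref{r2.6} and to the closedness of $A$.
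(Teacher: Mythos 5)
Your proposal is correct, and parts (i)--(iii) coincide with the paper's argument: the estimates from \eqref{r2.6.3} and \eqref{RA-est} give absolute convergence, and closedness of $A$ together with \cite[Proposition 1.1.7]{ABHN} yields \eqref{A-switch}. For part (iv) you take a slightly different, and arguably cleaner, route. The paper (see the proof of Lemma~\ref{l2.11} at $n=1$) first proves the Taylor-type formula $S(t)w=tw+\int_0^t(t-s)\,AS(s)w\,\rmd s$ (Lemma~\ref{l2.9}), identifies $\cL\{AS(\cdot)w\}(\lambda)=AR_A(\lambda)w$ via \eqref{def-cT} and \eqref{r2.6.1}, and then applies the weighted inversion Lemma~\ref{lAI7} to the convolution. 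You instead compute $\cL\{S(\cdot)w-\,\cdot\,w\}(\lambda)=R_A(\lambda)w-\lambda^{-2}w=\lambda^{-2}AR_A(\lambda)w$ directly and apply the basic Bromwich inversion Lemma~\ref{lAI4} to $h(t)=S(t)w-tw$, avoiding Lemma~\ref{l2.9} altogether. Both routes rest on the same Appendix~\ref{Appendix A} inversion machinery, and the paper's more indirect path pays off only in the general $n$-case where the $(t-s)^{2n-1}$ Taylor kernel is the natural object. One small inaccuracy: you suggest the $\mathcal{C}^2$-smoothness of $h$ is needed for the inversion, but Lemma~\ref{lAI4} requires only continuity, $h(0)=0$, the exponential bound (which for $h=S(\cdot)w-\,\cdot\,w$ follows from \eqref{l2.3.1} and $t\le\omega^{-1}e^{\omega t}$), and absolute integrability of the transform; the $\mathcal{C}^2$-regularity from Remark~\ref{r2.2}(iii) is available but not used.
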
	
\begin{proof}
	See the proof of Lemma~\ref{l2.10} below for a more general setting.
\end{proof}
Next, we aim to generalize the integral representations of trajectories $C(\cdot)x$ and $S(\cdot)w$, given in Lemma~\ref{l2.5} and Lemma~\ref{l2.7}, respectively, to the case when $x$ belongs to a subspace of $\dom(A)$ and $w$ belongs to a subspace of the phase-space $\bW$. In particular we are interested in representations that involve an integral with a higher order decay rate as $|\lambda|\to\infty$. We start with the following lemma.
\begin{lemma}\label{l2.8}
Let $\{C(t)\}_{t\in\RR}$ be a strongly continuous cosine family generated by $A$. If $n\in\NN$, then
\begin{equation}\label{higher-order-cos-int}
C(t)x=\sum_{j=0}^{n-1}\frac{t^{2j}}{(2j)!}A^jx+\frac{1}{(2n-1)!}\int_0^t(t-s)^{2n-1}C(s)A^nx\rmd s\;\mbox{for any}\;t\in\RR,x\in\dom(A^n).
\end{equation}
\end{lemma}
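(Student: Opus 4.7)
The plan is to prove \eqref{higher-order-cos-int} by induction on $n\in\NN$, noting that this is essentially Taylor's formula with integral remainder applied to the function $t\mapsto C(t)x$. Throughout, the key input is the standard fact (recorded in Remark~\ref{r2.2} together with the basic theory of cosine families referenced in \cite{ABHN,Fattorini,Goldstein}) that for $x\in\dom(A)$ the map $t\mapsto C(t)x$ belongs to $\mathcal{C}^2(\RR,\bX)$ with
\[
(C(\cdot)x)''=C(\cdot)Ax,\qquad C(0)x=x,\qquad (C(\cdot)x)'(0)=0.
\]

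For the base case $n=1$, I would apply this identity: since $(C(\cdot)x)'(0)=0$, integrating $(C(\cdot)x)''=C(\cdot)Ax$ twice on $[0,t]$ gives
\[
C(t)x-x=\int_0^t\!\!\int_0^s C(r)Ax\,\rmd r\,\rmd s=\int_0^t(t-r)C(r)Ax\,\rmd r,
\]
where the second equality is Fubini. Since the sine family is odd and the cosine family is even, the formula extends from $t\geq 0$ to all $t\in\RR$.

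For the inductive step, suppose the formula holds for some $n$ and let $x\in\dom(A^{n+1})$. Then $A^nx\in\dom(A)$, so the base case applied to $A^nx$ yields
\[
C(s)A^nx=A^nx+\int_0^s(s-r)C(r)A^{n+1}x\,\rmd r.
\]
Substituting this into the integral remainder in \eqref{higher-order-cos-int} and splitting, the first piece contributes
\[
\frac{1}{(2n-1)!}\int_0^t(t-s)^{2n-1}\rmd s\cdot A^nx=\frac{t^{2n}}{(2n)!}A^nx,
\]
exactly completing the polynomial sum to the $(n+1)$-level. For the remaining piece I would switch the order of integration and compute the inner Beta-type integral: with the substitution $u=t-s$,
\[
\int_r^t(t-s)^{2n-1}(s-r)\,\rmd s=\int_0^{t-r}u^{2n-1}\bigl((t-r)-u\bigr)\rmd u=\frac{(t-r)^{2n+1}}{2n(2n+1)},
\]
so the remaining piece becomes $\tfrac{1}{(2n+1)!}\int_0^t(t-r)^{2n+1}C(r)A^{n+1}x\,\rmd r$, which is precisely the integral remainder at level $n+1$.

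I do not expect any serious obstacle here: the only small things to verify are that $C(\cdot)A^kx$ is continuous on $\RR$ (which is immediate from strong continuity) so that all integrals are well defined, and that the change in the order of integration is justified by Fubini applied to the jointly continuous $\bX$-valued integrand. The exchange $C(s)A^nx = A^n C(s)x$ in the base case, which would be needed if one wanted to read the induction hypothesis at level $n$ applied to a different vector, is not actually required in the argument I have outlined, since the induction is advanced by using the base case on $A^nx$ rather than by re-deriving the identity.
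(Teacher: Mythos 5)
Your proof is correct and follows essentially the same inductive route as the paper: the inductive step --- applying the base case to $A^nx$, splitting the integral, and evaluating $\int_\tau^t(t-s)^{2n-1}(s-\tau)\,\rmd s=\frac{(t-\tau)^{2n+1}}{2n(2n+1)}$ --- is identical, down to the change of variables $u=t-s$. The only (cosmetic) difference is in the base case: you integrate the ODE $(C(\cdot)x)''=C(\cdot)Ax$ twice and apply Fubini, whereas the paper starts from the known identity $C(t)x=x+A\int_0^t(t-s)C(s)x\,\rmd s$ and uses the closedness of $A$ to pull it under the integral and commute it with $C(s)$; both routes rest on the same classical cosine-family facts.
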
	
\begin{proof} Since the cosine family $\{C(t)\}_{t\in\RR}$ is an even function of $t\in\RR$, using a simple change of variables one can readily check that all functions in \eqref{higher-order-cos-int} are even. Hence, it is enough to prove the identity for $t\geq 0$. We use induction to prove the lemma.
Fix $x\in\dom(A)$. Since $\displaystyle\int_0^t(t-s)C(s)x\rmd s\in\dom(A)$ and the linear operator $A$ is closed, it follows that
\begin{align}\label{l2.5.2}
C(t)x&=x+A\int_0^t(t-s)C(s)x\rmd s=x+\int_0^t(t-s)AC(s)x\rmd s\nonumber\\
&=x+\int_0^t(t-s)C(s)Ax\rmd s=x+\int_0^t(t-s)f_x(s)\rmd s\;\mbox{for any}\; t\geq 0, x\in\dom(A).	
\end{align}	
We note that for $n=1$ formula \eqref{higher-order-cos-int} follows from  \eqref{l2.5.2}. Next, we fix $n\in\NN$, $x\in\dom(A^{n+1})$ and assume that \eqref{higher-order-cos-int} holds true. From \eqref{l2.5.2} we have	
\begin{equation}\label{l2.8.1}
C(s)A^nx=A^nx+\int_0^s(s-\tau)C(\tau)A^{n+1}x\rmd\tau\;\mbox{for any}\;s\in\RR. 	
\end{equation}		
Moreover, an elementary computation shows that
\begin{equation}\label{l2.8.2}	
\int_\tau^t(t-s)^{2n-1}(s-\tau)\rmd s=\int_0^{t-\tau}\xi^{2n-1}(t-\tau-\xi)\rmd\xi=\frac{(t-\tau)^{2n+1}}{2n(2n+1)}\;\mbox{for any}\;t,\tau\in\RR.	
\end{equation}	
From \eqref{higher-order-cos-int}, \eqref{l2.8.1} and \eqref{l2.8.2} we obtain
\begin{align}\label{l2.8.3}
C(t)x=&\sum_{j=0}^{n-1}\frac{t^{2j}}{(2j)!}A^jx+\frac{1}{(2n-1)!}\int_0^t(t-s)^{2n-1}\Bigl(A^nx+\int_0^s(s-\tau)C(\tau)A^{n+1}x\rmd\tau\Bigr)\rmd s\nonumber\\
&=\sum_{j=0}^{n-1}\frac{t^{2j}}{(2j)!}A^jx+\frac{1}{(2n)!}A^nx
+\frac{1}{(2n-1)!}\int_0^t\int_0^s(t-s)^{2n-1}(s-\tau)C(\tau)A^{n+1}x\rmd\tau\rmd s\nonumber\\
&=\sum_{j=0}^{n}\frac{t^{2j}}{(2j)!}A^jx
+\frac{1}{(2n-1)!}\int_0^t\Bigl(\int_\tau^t(t-s)^{2n-1}(s-\tau)\rmd s\Bigr)C(\tau)A^{n+1}x\rmd\tau\nonumber\\
&=\sum_{j=0}^{n}\frac{t^{2j}}{(2j)!}A^jx
+\frac{1}{(2n+1)!}\int_0^t(t-\tau)^{2n+1}C(\tau)A^{n+1}x\rmd\tau.
\end{align}	
Hence formula \eqref{higher-order-cos-int} holds when $n$ is replaced by $n+1$, proving the lemma.
\end{proof}
In order to formulate a representation of trajectories of $\{S(t)\}_{t\in\RR}$ similar to Lemma~\ref{l2.8} we recall \eqref{def-bW} and introduce the space
\begin{equation}\label{def-bW-n}
\bW_n=\{x\in\dom(A^n):A^nx\in\bW\},\; n\in\NN\cup\{0\}.	
\end{equation}	
Since $A$ is a closed linear operator with non-empty resolvent set, $A^n$ is a closed linear operator. Hence,  $\bW_n$ is a Banach space when endowed with the norm
\begin{equation}\label{norm-bW-n}
\|x\|_{\bW_n}=\|x\|_{\dom(A^n)}+\|A^nx\|_{\bW},\;\; x\in\bW_n, n\in\NN\cup\{0\}.	
\end{equation}	
From Remark~\ref{r2.1}(iii), \eqref{def-bW-n} and \eqref{norm-bW-n} we infer that
\begin{equation}\label{domain-A-W-n}
\bW_n\hookrightarrow\dom(A^n)\hookrightarrow\bW_{n-1}\hookrightarrow\dom(A^{n-1})\;\mbox{for any}\;n\in\NN.
\end{equation}
\begin{lemma}\label{l2.9}
Let $\{C(t)\}_{t\in\RR}$ be a strongly continuous cosine family generated by $A$, and  $\{S(t)\}_{t\in\RR}$ be the associate sine family defined by \eqref{sine-def}. If $n\in\NN$, then
\begin{equation}\label{higher-order-sine-int}
S(t)w=\sum_{j=0}^{n-1}\frac{t^{2j+1}}{(2j+1)!}A^jw+\frac{1}{(2n-1)!}\int_0^t(t-s)^{2n-1}AS(s)A^{n-1}w\rmd s\;\mbox{for any}\;t\in\RR,w\in\bW_{n-1}.
\end{equation}	
\end{lemma}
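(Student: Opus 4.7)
The plan is to imitate the proof of Lemma~\ref{l2.8} and argue by induction on $n$. First, since $S(\cdot)$ is odd and, under the substitution $s \mapsto -s$ combined with $S(-s) = -S(s)$, both the polynomial part and the integral remainder in \eqref{higher-order-sine-int} are odd in $t$, it suffices to establish the identity for $t \geq 0$. For the base case $n=1$, one takes $w \in \bW_0 = \bW$. Remark~\ref{r2.2}(iii) ensures that $S(\cdot)w \in \mathcal{C}^2(\RR,\bX)$ with $(S(\cdot)w)'(s) = C(s)w$ and $(S(\cdot)w)''(s) = AS(s)w$. Taylor's formula with integral remainder on $[0,t]$ together with $S(0)w = 0$ and $S'(0)w = C(0)w = w$ then yields
$$S(t)w = tw + \int_0^t (t-s)\, AS(s)w\, \rmd s,$$
which is \eqref{higher-order-sine-int} at $n=1$.

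For the inductive step, I assume the formula at level $n$ and fix $w \in \bW_n$. Since $\bW_n \hookrightarrow \bW_{n-1} \hookrightarrow \dom(A^{n-1})$ by \eqref{domain-A-W-n}, the hypothesis applies to such $w$. The key observation is the commutation
$$AS(s)A^{n-1}w = S(s)A^n w\quad\text{for all }s\in\RR,$$
which holds because $A^{n-1}w \in \dom(A)$, and on $\dom(A)$ the identity $AS(s)y = S(s)Ay$ follows by closedness of $A$ applied to \eqref{sine-def} together with the standard commutation $AC(s) = C(s)A$ on $\dom(A)$. Moreover $A^n w \in \bW$ since $w \in \bW_n$, so the base case $n=1$ is applicable to $y = A^n w$, yielding
$$S(s)A^n w = sA^n w + \int_0^s (s-\tau)\, AS(\tau)A^n w\, \rmd\tau.$$
Substituting this expression for $AS(s)A^{n-1}w = S(s)A^n w$ into the inductive hypothesis splits the remainder into two pieces.

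The first piece contributes $\frac{1}{(2n-1)!}\int_0^t (t-s)^{2n-1} s\, \rmd s \cdot A^n w = \frac{t^{2n+1}}{(2n+1)!}\, A^n w$, which augments the sum to run up to $j=n$. The second piece, after interchanging the order of integration by Fubini, becomes
$$\frac{1}{(2n-1)!}\int_0^t \Bigl[\int_\tau^t (t-s)^{2n-1}(s-\tau)\, \rmd s\Bigr]\, AS(\tau)A^n w\, \rmd\tau = \frac{1}{(2n+1)!}\int_0^t (t-\tau)^{2n+1}\, AS(\tau)A^n w\, \rmd\tau,$$
where the inner integral is evaluated via the formula \eqref{l2.8.2} already established in the proof of Lemma~\ref{l2.8}. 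Combining both pieces gives precisely the statement at level $n+1$ for $w \in \bW_n$, closing the induction. The main obstacle is not computational but conceptual: one must verify that the hypothesis $w \in \bW_{n-1}$ (or $\bW_n$ at the next step) is exactly the regularity required to make sense of, and freely manipulate, the term $AS(s)A^{n-1}w$ together with the commutation relation above; once this is set up carefully, the remainder of the argument is a direct adaptation of the computation \eqref{l2.8.3}.
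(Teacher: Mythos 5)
Your proof is correct. The base case is essentially the same computation as the paper's (Taylor's formula with integral remainder for $S(\cdot)w$ is nothing but the integration by parts the paper performs in \eqref{l2.7.4}, where incidentally the left-hand side should read $\int_0^t(t-s)AS(s)w\,\rmd s$). Where you genuinely diverge is the inductive step. The paper starts from the $(n+1)$-level remainder $\int_0^t(t-s)^{2n+1}AS(s)A^nw\,\rmd s$, rewrites the integrand as $\bigl(S(s)A^nw\bigr)''$ via Remark~\ref{r2.2}(iii), integrates by parts twice to lower the power to $(t-s)^{2n-1}$, and then invokes the inductive hypothesis. You instead go forward: starting from the $n$-level formula, you replace $AS(s)A^{n-1}w$ by $S(s)A^nw$ (justified by the closedness of $A$ and the commutation $AC(\tau)=C(\tau)A$ on $\dom(A)$), apply the base case to $A^nw\in\bW$, and then split and Fubini, exactly mirroring the mechanism of the paper's own proof of the cosine analogue, Lemma~\ref{l2.8}. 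Both routes are valid and of comparable length; yours has the aesthetic advantage of running the cosine and sine inductions in parallel, while the paper's double integration by parts avoids having to re-derive the convolution kernel identity \eqref{l2.8.2} (which you correctly borrow) and makes more direct use of $\bigl(S(\cdot)A^nw\bigr)''=AS(\cdot)A^nw$. Either way, the hypothesis $w\in\bW_{n-1}$ (resp. $w\in\bW_n$ at the next level) is precisely what is needed to give meaning to $AS(s)A^{n-1}w$ and the commutation, which you identify correctly as the conceptual crux.
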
	
\begin{proof} As in the proof of Lemma~\ref{l2.8} we use induction.
Integrating by parts, from Remark~\ref{r2.2} and \eqref{sine-def} we obtain
\begin{equation}\label{l2.7.4}
\int_0^t(t-s)S(s)w\rmd s=\int_0^t(t-s)\bigl(C(s)w\bigr)'\rmd s=-tw+\int_0^tC(s)w\rmd s=S(t)w-tw
\end{equation}	
for any $t\geq0$, $w\in\bW$. Since $\bW_0=\bW$, for $n=1$ formula \eqref{higher-order-sine-int} follows from \eqref{l2.7.4}. Fix $n\in\NN$, $w\in\bW_n$ and assume \eqref{higher-order-sine-int}. From \eqref{domain-A-W-n} we have $w\in\bW_{n-1}$ and $A^nw\in\bW$. From Remark~\ref{r2.2}(iii) it follows that
\begin{equation}\label{l2.9.1}
S(\cdot)A^nw\in\mathcal{C}^2(\RR,\bX)\;\mbox{and}\;\bigl(S(\cdot)A^nw\bigr)'=C(\cdot)A^nw, \bigl(S(\cdot)A^nw\bigr)''=AS(\cdot)A^nw.	
\end{equation}		
Integrating by parts twice, from \eqref{higher-order-sine-int} and \eqref{l2.9.1} we obtain
\begin{align}\label{l2.9.2}
\int_0^t&(t-s)^{2n+1}AS(s)A^nw\rmd s=\int_0^t(t-s)^{2n+1}\bigl(S(s)A^nw\bigr)''\rmd s\nonumber\\
&=-t^{2n +1}A^nw+(2n+1)\int_0^t(t-s)^{2n}\bigl(S(s)A^nw\bigr)'\rmd s\nonumber\\
&=-t^{2n +1}A^nw+2n(2n+1)\int_0^t(t-s)^{2n-1}S(s)A^nw\rmd s\nonumber\\
&=-t^{2n +1}A^nw+2n(2n+1)\int_0^t(t-s)^{2n-1}AS(s)A^{n-1}w\rmd s\nonumber\\
&=-t^{2n+1}A^nw+(2n+1)!\Bigg(S(t)w-\sum_{j=0}^{n-1}\frac{t^{2j+1}}{(2j+1)!}A^jw\Bigg)\nonumber\\
&=(2n+1)!\Bigg(S(t)w-\sum_{j=0}^{n}\frac{t^{2j+1}}{(2j+1)!}A^jw\Bigg),
\end{align}
and so \eqref{higher-order-sine-int} holds with $n$ is replaced by $n+1$, proving the lemma.
\end{proof}
\begin{lemma}\label{l2.10}
Assume that $\{C(t)\}_{t\in\RR}$ is a strongly continuous cosine family satisfying \eqref{exp-growth} and $n\in\NN$. Then,
\begin{enumerate}
\item[(i)] $\displaystyle\int_{\re\lambda=\gamma}\frac{e^{\lambda t}}{\lambda^{2n-1}}R_A(\lambda)A^nx\rmd\lambda$ is absolutely convergent for any $t\geq 0$, $x\in\dom(A^n)$, $\gamma>\omega$;
\item[(ii)] $C(t)x=\displaystyle\sum\limits_{j=0}^{n-1}\frac{t^{2j}}{(2j)!}A^jx+\frac{1}{2\pi\rmi}\displaystyle\int_{\re\lambda=\gamma}\frac{e^{\lambda t}}{\lambda^{2n-1}}R_A(\lambda)A^nx\rmd\lambda$ for any $t\geq 0$, $x\in\dom(A^n)$, $\gamma>\omega$.	
\end{enumerate}
\end{lemma}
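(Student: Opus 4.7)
The statement is the natural generalization of Lemma~\ref{l2.5} and is proved by combining the polynomial identity from Lemma~\ref{l2.8} with a complex Laplace inversion applied to the residual convolution term.

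For (i), I would use the resolvent bound \eqref{RA-est}. Parametrizing the contour as $\lambda=\gamma+\rmi s$, $s\in\RR$, one has $|\lambda|^{2}=\gamma^{2}+s^{2}$ and
\[
\Big\|\frac{e^{\lambda t}}{\lambda^{2n-1}}R_A(\lambda)A^nx\Big\|\leq\frac{Me^{\gamma t}\|A^nx\|}{|\lambda|^{2n}(\gamma-\omega)}\leq\frac{Me^{\gamma t}\|A^nx\|}{(\gamma-\omega)(\gamma^2+s^2)^n}.
\]
Since $n\geq 1$ the factor $(\gamma^2+s^2)^{-n}\leq(\gamma^2+s^2)^{-1}$ is integrable on $\RR$, giving absolute convergence.

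For (ii), the plan is to identify the complex integral with the remainder appearing in the polynomial expansion of Lemma~\ref{l2.8}. By that lemma,
\[
C(t)x-\sum_{j=0}^{n-1}\frac{t^{2j}}{(2j)!}A^jx=\frac{1}{(2n-1)!}\int_0^t(t-s)^{2n-1}C(s)A^nx\,\rmd s=:h(t),\quad t\geq 0.
\]
Now $h=g_n*\bigl(C(\cdot)A^nx\bigr)$ where $g_n(t)=\frac{t^{2n-1}}{(2n-1)!}\chi_{[0,\infty)}(t)$ has scalar Laplace transform $\lambda^{-2n}$ on $\CC_0^+$. Because $\cL\{C(\cdot)A^nx\}(\lambda)=\lambda R_A(\lambda)A^nx$ on $\CC_\omega^+$, the convolution theorem yields
\[
\cL\{h\}(\lambda)=\frac{1}{\lambda^{2n-1}}R_A(\lambda)A^nx,\qquad \lambda\in\CC_\omega^+.
\]
By construction $h\in\mathcal{C}([0,\infty),\bX)$ with $h(0)=0$, and by (i) its Laplace transform is absolutely integrable on any vertical line $\re\lambda=\gamma>\omega$. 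Invoking the complex inversion formula for the Banach space valued Laplace transform (Appendix~\ref{Appendix A}) yields, for each such $\gamma$,
\[
h(t)=\frac{1}{2\pi\rmi}\int_{\re\lambda=\gamma}e^{\lambda t}\cL\{h\}(\lambda)\,\rmd\lambda=\frac{1}{2\pi\rmi}\int_{\re\lambda=\gamma}\frac{e^{\lambda t}}{\lambda^{2n-1}}R_A(\lambda)A^nx\,\rmd\lambda,
\]
which is the claimed identity for $t\geq 0$. (The case $t=0$ can alternatively be checked by shifting the contour to $\re\lambda\to\infty$ and using the $O(|\lambda|^{-2n})$ decay to conclude that the integral vanishes, matching $h(0)=0$.)

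The main technical point is the application of the complex inversion theorem: it requires both the absolute integrability supplied by part (i) and enough regularity of $h$ to guarantee pointwise recovery. The factor $\lambda^{-(2n-1)}$ combined with the $|\lambda|^{-1}$ decay built into \eqref{RA-est} is exactly what upgrades the naive integrand $\lambda R_A(\lambda)A^nx$ (which is only conditionally convergent on vertical lines, as noted after Lemma~\ref{l2.5}) into an absolutely integrable one, so that standard Laplace inversion applies without any UMD or other geometric hypothesis on $\bX$. This same mechanism will reappear in the sine family version and, with the Frechet-valued extension $R_A^\infty$, in the resonance-based refinements \eqref{representation-aim3} and \eqref{representation-aim4}.
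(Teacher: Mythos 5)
Your proof is correct and follows essentially the same route as the paper: Lemma~\ref{l2.8} supplies the polynomial-plus-convolution identity, and the identity in (ii) is obtained by applying the complex Laplace inversion results of Appendix~\ref{Appendix A} to the remainder term. The paper compresses your explicit convolution argument by invoking Lemma~\ref{lAI7}(ii) (with $f(t)=C(t)A^nx$ and $k=2n-1$), which is precisely the package $\cL\{g_n*f\}=\lambda^{-2n}\cL f$ plus Lemma~\ref{lAI4} that you unfold by hand; one small point worth making explicit is that $h$ inherits the exponential bound $\|h(t)\|\lesssim e^{\gamma t}$ needed for Lemma~\ref{lAI4}, which follows trivially from \eqref{exp-growth}.
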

\begin{proof} Fix $x\in\dom(A^n)$ and $\gamma>\omega$. Let $f_{n,x}:[0,\infty)\to\bX$ be defined by $f_{n,x}(t)=C(t)A^nx$. Then, $f_{n,x}\in\mathcal{C}([0,\infty),\bX)$ and  $\|f_{n,x}(t)\|\leq M\|A^nx\|e^{\omega t}$ for any $t\geq 0$, by \eqref{exp-growth}. Taking Laplace Transform we have
\begin{equation}\label{l2.10.1}
(\cL f_{n,x})(\lambda)=\lambda R_A(\lambda)A^nx\;\mbox{for any}\;\lambda\in\CC_\omega^+.
\end{equation}
From \eqref{l2.10.1} and Lemma~\ref{lAI7}(i) with $k=2n-1$  it follows that
$\displaystyle\int_{\re\lambda=\gamma}\frac{e^{\lambda t}}{\lambda^{2n-1}}R_A(\lambda)A^nx\rmd\lambda=\displaystyle\int_{\re\lambda=\gamma}\frac{e^{\lambda t}}{\lambda^{2n}}(\cL f_{n,x})(\lambda)\rmd\lambda$ is absolutely convergent for any $t\geq 0$, proving (i). Assertion (ii) follows from Lemma~\ref{l2.8} and Lemma~\ref{lAI7}(ii) with $k=2n-1$.		   	
\end{proof}
\begin{lemma}\label{l2.11}
Assume that $\{C(t)\}_{t\in\RR}$ is a strongly continuous cosine family satisfying \eqref{exp-growth} and let $\{S(t)\}_{t\in\RR}$ the associate sine family defined by \eqref{sine-def},  $n\in\NN$. Then,
\begin{enumerate}
\item[(i)] $\displaystyle\int_{\re\lambda=\gamma}\frac{e^{\lambda t}}{\lambda^{2n}}AR_A(\lambda)A^{n-1}w\rmd\lambda$ is absolutely convergent for any $t\geq 0$, $w\in\bW_{n-1}$, $\gamma>\omega$;
\item[(ii)] $\displaystyle\int_{\re\lambda=\gamma}\frac{e^{\lambda t}}{\lambda^{2n}}R_A(\lambda)x\rmd\lambda$ is absolutely convergent for any $t\geq 0$, $x\in\bX$, $\gamma>\omega$;
\item[(iii)] $\displaystyle\int_{\re\lambda=\gamma}\frac{e^{\lambda t}}{\lambda^{2n}}R_A(\lambda)A^{n-1}w\rmd\lambda\in\dom(A)$ and
\begin{equation}\label{A-switch-bis}	
A\int_{\re\lambda=\gamma}\frac{e^{\lambda t}}{\lambda^{2n}}R_A(\lambda)A^{n-1}w\rmd\lambda=\displaystyle\int_{\re\lambda=\gamma}\frac{e^{\lambda t}}{\lambda^{2n}}AR_A(\lambda)A^{n-1}w\rmd\lambda\;\mbox{for any}\;w\in\bW_{n-1}, \gamma>\omega;	
\end{equation}	
\item[(iv)] $S(t)w=\sum\limits_{j=0}^{n-1}\frac{t^{2j+1}}{(2j+1)!}A^jw+\frac{1}{2\pi\rmi}\displaystyle\int_{\re\lambda=\gamma}\frac{e^{\lambda t}}{\lambda^{2n}}AR_A(\lambda)A^{n-1}w\rmd\lambda$
		
		$\quad\;\;\;=\sum\limits_{j=0}^{n-1}\frac{t^{2j+1}}{(2j+1)!}A^jw+\frac{1}{2\pi\rmi}A\displaystyle\int_{\re\lambda=\gamma}\frac{e^{\lambda t}}{\lambda^{2n}}R_A(\lambda)A^{n-1}w\rmd\lambda$ for any $t\geq 0$, $w\in\bW_{n-1}$, $\gamma>\omega$.	
\end{enumerate}	
\end{lemma}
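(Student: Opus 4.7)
The plan is to parallel the proof of Lemma~\ref{l2.10}, starting from Lemma~\ref{l2.9} in place of Lemma~\ref{l2.8} and exploiting the phase-space estimate from Remark~\ref{r2.6}, whose bound $\|AR_A(\lambda)_{|\bW}\|_{\cB(\bW,\bX)}\leq M_1/(\re\lambda-\omega)$ on $\CC_\omega^+$ is what tames the unbounded factor $A$ in the first integrand.

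For (i), the hypothesis $w\in\bW_{n-1}$ gives $A^{n-1}w\in\bW$, so Remark~\ref{r2.6} yields $\|AR_A(\lambda)A^{n-1}w\|\leq M_1(\re\lambda-\omega)^{-1}\|A^{n-1}w\|_\bW$; on the line $\re\lambda=\gamma$ the integrand is then bounded by a constant multiple of $e^{\gamma t}|\lambda|^{-2n}$, which is integrable since $n\geq 1$. For (ii), \eqref{RA-est} gives an integrand bounded by a constant multiple of $e^{\gamma t}|\lambda|^{-(2n+1)}$, likewise integrable. Part (iii) then follows from the closedness of $A$: for each $\lambda$ on the contour one has $R_A(\lambda)A^{n-1}w\in\dom(A)$, and $A$ applied to the integrand in (ii) (with $x=A^{n-1}w$) yields precisely the integrand of (i); since both integrals converge absolutely, a standard Bochner/Pettis argument lets us pull $A$ outside.

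For (iv), set $g(s):=AS(s)A^{n-1}w = (S(\cdot)A^{n-1}w)''(s)$, which is continuous in $s$ by Remark~\ref{r2.2}(iii) applied to $A^{n-1}w\in\bW$, and satisfies $\|g(s)\|\lesssim e^{\omega|s|}$ by estimate \eqref{l2.3.9}. A short computation using $S(0)=0$, $(S(\cdot)A^{n-1}w)'(0)=C(0)A^{n-1}w=A^{n-1}w$ and the resolvent identity $\lambda^2 R_A(\lambda)=I_\bX+AR_A(\lambda)$ shows that $(\cL g)(\lambda)=AR_A(\lambda)A^{n-1}w$ for $\lambda\in\CC_\omega^+$. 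Lemma~\ref{l2.9} then yields
\[S(t)w = \sum_{j=0}^{n-1}\frac{t^{2j+1}}{(2j+1)!}A^jw + \frac{1}{(2n-1)!}\int_0^t(t-s)^{2n-1}g(s)\rmd s,\]
and applying Lemma~\ref{lAI7}(ii) with $k=2n-1$ to the convolution in the second term produces the first equality in (iv); the second equality then follows from part (iii).

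The main obstacle I anticipate is the rigorous justification that $(\cL g)(\lambda)=AR_A(\lambda)A^{n-1}w$, since one is exchanging the unbounded operator $A$ with the Laplace integral. The hypothesis $w\in\bW_{n-1}$ is precisely what unlocks this: it ensures $A^{n-1}w\in\bW$, so that $S(\cdot)A^{n-1}w$ takes values in $\dom(A)$ by Remark~\ref{r2.2}(iii), making $AS(\cdot)A^{n-1}w$ a well-defined continuous $\bX$-valued function of exponential type $\omega$ whose Laplace transform may be computed by differentiating the identity $\cL\{S(\cdot)A^{n-1}w\}(\lambda)=R_A(\lambda)A^{n-1}w$ twice in the transform variable, rather than formally moving $A$ past the integral.
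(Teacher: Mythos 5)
Your proposal is correct and follows essentially the same route as the paper: the bound from Remark~\ref{r2.6} for (i), \eqref{RA-est} for (ii), closedness of $A$ plus the closed-operator/Bochner-integral interchange for (iii), and Lemma~\ref{l2.9} together with Lemma~\ref{lAI7}(ii) for (iv). Your value $k=2n-1$ in Lemma~\ref{lAI7}(ii) is the correct one (so that $\lambda^{k+1}=\lambda^{2n}$), matching the exponent $(t-s)^{2n-1}$ in Lemma~\ref{l2.9}. The only cosmetic difference is how you verify $(\cL g)(\lambda)=AR_A(\lambda)A^{n-1}w$: you invoke the formula $\cL\{f''\}(\lambda)=\lambda^2\cL\{f\}(\lambda)-\lambda f(0)-f'(0)$ applied to $f=S(\cdot)A^{n-1}w$ together with the resolvent identity, whereas the paper reads the same fact off the block resolvent identity \eqref{r2.6.1} for $\cA_{|\bW\times\bX}$; both justifications are valid and sidestep the unbounded operator $A$ in the same way (your parenthetical phrase ``differentiating \ldots in the transform variable'' is a slight misnomer---what you actually use is the Laplace transform of a second time-derivative---but the computation you sketch is the right one).
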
	
\begin{proof} Fix $w\in\bW_{n-1}$, $\gamma>\omega$.  If $\lambda=\gamma+\rmi s$, $s\in\RR$, then \eqref{r2.6.3} yields
\begin{equation}\label{l2.11.1}
\Bigl\|\frac{e^{\lambda t}}{\lambda^{2n}}AR_A(\lambda)A^{n-1}w\Bigr\|=\frac{e^{(\re\lambda)t}}{|\lambda|^{2n}}\|	AR(\lambda^2,A)A^{n-1}w\|\leq\frac{M_1e^{\gamma t}}{(\gamma-\omega)(\gamma^2+s^2)^n}\|A^{n-1}w\|_\bW,
\end{equation}
proving assertion (i). Next, we fix $x\in\bX$. If $\lambda=\gamma+\rmi s$, $s\in\RR$, then from \eqref{RA-est} we obtain
\begin{equation}\label{l2.11.2}
\Bigl\|\frac{e^{\lambda t}}{\lambda^{2n}}R_A(\lambda)x\Bigr\|=\frac{e^{(\re\lambda)t}}{|\lambda|^{2n}}\|R(\lambda^2,A)x\|\leq\frac{Me^{\gamma t}}{(\gamma-\omega)(\gamma^2+s^2)^{\frac{2n+1}{2}}}\|x\|_\bX,
\end{equation}
proving assertion (ii). Since $A$ is closed, \eqref{A-switch-bis} follows by \cite[Proposition 1.1.7]{ABHN}. $A^{n-1}w\in\bW$ implies that the function $\tf_{n,w}:[0,\infty)\to\bX$ defined by $\tf_{n,w}(t)=AS(t)A^{n-1}w$ is continuous.
From Lemma~\ref{l2.3} and \eqref{def-cT} we have
\begin{equation}\label{l2.11.3}
\|\tf_{n,w}(t)\|=\|AS(t)_{|\bW}A^{n-1}w\|\leq M_1e^{\omega t}\|A^{n-1}w\|_\bW \leq M_1e^{\omega t}\|w\|_{\bW_{n-1}} \;\mbox{for any}\; t\geq 0.
\end{equation}
Moreover, from \eqref{def-cT} and \eqref{r2.6.1} we infer that
\begin{equation}\label{l2.11.4}
	(\cL \tf_w)(\lambda)=AR_A(\lambda)A^{n-1}w\;\mbox{for any}\;\lambda\in\CC_\omega^+.
\end{equation}
Assertion (iv) follows from Lemma~\ref{l2.9} and Lemma~\ref{lAI7}(ii) with $k=2n$.		   		
\end{proof}

\section{Leading Order Terms of Cosine and Sine Functions}\label{sec3}
In this section we aim to identify the leading order terms in the representations given in Lemma~\ref{l2.5}(ii) and Lemma~\ref{l2.7}(iv), for the  Cosine and Sine Functions, respectively. Estimating directly the integral terms of these representations we can readily obtain a growth rate of order $\mathcal{O}(e^{\gamma t})$ as $t\to\infty$, for any $\gamma>\omega$, where $\omega$ is the constant from \eqref{exp-growth}. Our goal is to find some sufficient conditions that would allow us to replace integration along the vertical line $\re\lambda=\gamma$ by integration along a path contained to the left of the vertical line $\re\lambda=\omega$.  First, we recall the definition of an important type of singular points of a meromorphic function. For more information on finitely meromorphic point and its applications we refer to  \cite{GHR,GGK,GL-book,GSig,How1}.
\begin{definition}\label{d3.1}
Let $\Xi\subseteq\CC$ be an open, connected set and suppose that $E:\Xi\to\mathcal{B}(\bX)$ is an operator-valued analytic function on $\Xi$ except for isolated singularities.  We say that $E$ is a \textit{finitely meromorphic} at $\xi_0\in\Xi$ and that $\xi_0\in\Xi$ is a finitely meromorphic point of $E$, if $E$ is analytic on $D(\xi_0,r_0)\setminus\{\xi_0\}\subset\Xi$ for some $r_0>0$ and there exists $\tE:D(\xi_0,r_0)\to\mathcal{B}(\bX)$  an analytic function, a number $m_E(\xi_0)\in\NN$, and \textit{finite rank operators} $G_j\in\mathcal{B}(\bX)$, $j=1,\cdots,m_E(\xi_0)$, such that
\begin{equation}\label{finite-meromorphic}
E(\xi)=\sum_{j=1}^{m_E(\xi_0)}\frac{1}{(\xi-\xi_0)^j}G_j+\tE(\xi)\;\mbox{for any}\;\xi\in D(\xi_0,r_0)\setminus\{\xi_0\}.
\end{equation}
\end{definition}
Next, we formulate the main assumptions on the resolvent function $R_A(\cdot)$ for the generator of the cosine family satisfying \eqref{exp-growth} for some $\omega>0$.

\noindent\textbf{Hypothesis (H).}  We assume that there exists a function $g_0:\RR\to\RR$, \textit{piecewise} of class $\mathcal{C}^1$, with $\sup_{s\in\RR}g_0(s)<\omega$ such that
\begin{enumerate}
\item[(i)] The operator valued-function $R_A(\cdot)$ has a meromorphic extension from $\CC_\omega^+$ to the set
\begin{equation}\label{def-Omega-0}
\Omega_0=\{\lambda\in\CC:\re\lambda>g_0(\im\lambda)\},
\end{equation}
also denoted $R_A(\cdot)$, slightly abusing the notation;
\item[(ii)] The operator-valued function $R_A(\cdot)$ has finitely many singularities in $\Omega_0\setminus\CC_\omega^+$, denoted $\mu_k$, $k=1,\dots,m$. All singularities are finitely meromorphic points of $R_A(\cdot)$;
\item[(iii)] There exist two piecewise continuous functions $h_1,h_2:[0,\infty)\to[0,\infty)$ such that
\begin{equation}\label{RA-h1-h2-est}
\|R_A(\lambda)\|\leq h_1(|\lambda|)h_2(\re\lambda)\;\mbox{for any}\;\lambda\in\Omega_0\setminus\CC_\omega^+\;\mbox{such that}\;|\im\lambda|>\max_{1\leq k\leq m}|\im\mu_k|;
\end{equation}	
\item[(iv)] There exists $n_0\in\NN$ such that
\begin{equation}\label{limit-n-zero}
\lim\limits_{s\to\infty}\frac{h_1(s)}{s^{2n_0-1}}=0.
\end{equation}
Moreover, the function $h_2$ is bounded on any compact interval.
\end{enumerate}
In the sequel we denote the set of singularities by
\begin{equation}\label{def-calM}
\cM=\{\mu_k:k=1,\dots,m\}.	
\end{equation}	
In general one expects the set $\Omega_0$ to be a subset of the set $\{\lambda\in\CC:\lambda^2\in\rho_{\mathrm{ess}}(A)\}$ for which the estimate \eqref{RA-h1-h2-est} holds true at the right of the curve $\re\lambda=g_0(\im\lambda)$.
\begin{figure}[h]
	\begin{center}
		\includegraphics[width=0.6\textwidth]{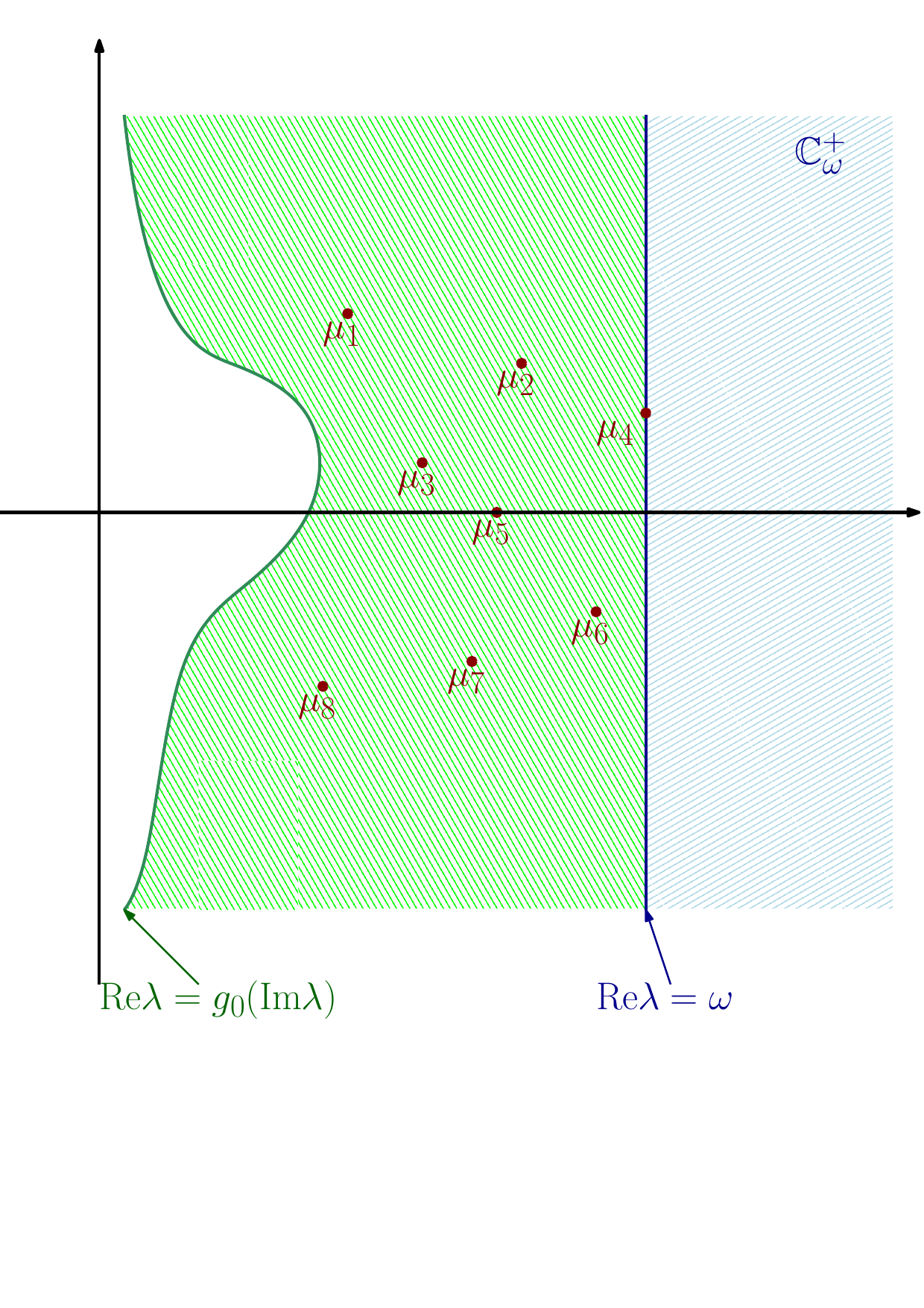}
		
		Figure 1. A generic depiction of the sets introduced in Hypothesis (H). The set $\Omega_0$ consists of the union of the regions depicted in green and blue. The finitely meromorphic points are depicted in dark red and the graph $\re\lambda=g_0(\im\lambda)$ in dark green.
	\end{center}
\end{figure}
Our next task is to study the consequences of Hypothesis (H). First, we describe the properties of the regular points of the operator-valued function $R_A(\cdot)$. Then, we will show how the finitely meromorphic points $\mu_k$, $k=1,\dots m$, are connected to the discrete spectrum of the linear operator $A$.
\begin{lemma}\label{l3.2}
Assume Hypothesis (H). Then, the following assertions hold true:
\begin{enumerate}
\item[(i)] $\big\{\lambda^2:\lambda\in\Omega_0\setminus\cM\big\}\subset\rho(A)$;
\item[(ii)] $R(\lambda^2,A)=R_A(\lambda)$ for any $\lambda\in\Omega_0\setminus\cM$;
\item[(iii)] For any $n\in\NN$, $x\in\dom(A^n)$ we have
\begin{equation}\label{RA-reduction}
R_A(\lambda)A^nx=\lambda^{2n}R_A(\lambda)x-\displaystyle\sum_{j=0}^{n-1}\lambda^{2n-2-2j}A^jx\;\mbox{for any}\; \lambda\in\Omega_0\setminus\cM.	
\end{equation}		
\end{enumerate}
\end{lemma}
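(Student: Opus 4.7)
\noindent\textbf{Proof proposal for Lemma~\ref{l3.2}.}
I would prove (i) and (ii) together via analytic continuation, then derive (iii) by a short induction. The starting observation is that, by the definition \eqref{def-RA}, the identities
\ba
(\lambda^2-A)R_A(\lambda)x &= x\quad\text{for all }x\in\bX,\\
R_A(\lambda)(\lambda^2-A)y &= y\quad\text{for all }y\in\dom(A),
\ea
hold on the open set $\CC_\omega^+\subset\Omega_0\setminus\cM$. The plan is to push these identities to all of $\Omega_0\setminus\cM$ using the fact that $R_A(\cdot)$ is by hypothesis analytic there (the $\mu_k$ having been removed), together with the closedness of the graph of $A$.

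For (i) and (ii), fix $x\in\bX$ and consider the map
\be
\Phi_x:\Omega_0\setminus\cM\to\bX\times\bX,\qquad \Phi_x(\lambda):=\bigl(R_A(\lambda)x,\,\lambda^2 R_A(\lambda)x-x\bigr),
\ee
which is analytic on $\Omega_0\setminus\cM$. Since $\Gamma:=\mathrm{Graph}(A)$ is a closed linear subspace of $\bX\times\bX$, the quotient map $\pi:\bX\times\bX\to(\bX\times\bX)/\Gamma$ is continuous and linear, so $\pi\circ\Phi_x$ is analytic. On $\CC_\omega^+$ we have $R_A(\lambda)x\in\dom(A)$ and $AR_A(\lambda)x=\lambda^2 R_A(\lambda)x-x$, so $\pi\circ\Phi_x\equiv 0$ there. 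Because $\Omega_0$ is connected (it is the region to the right of the graph of the continuous function $g_0$) and $\cM$ is a finite set of points, $\Omega_0\setminus\cM$ is still connected. The identity principle for Banach space valued analytic functions then forces $\pi\circ\Phi_x\equiv0$ on $\Omega_0\setminus\cM$, i.e.\ $R_A(\lambda)x\in\dom(A)$ and $(\lambda^2-A)R_A(\lambda)x=x$ for every $\lambda\in\Omega_0\setminus\cM$. A parallel argument applied to $\lambda\mapsto R_A(\lambda)(\lambda^2-A)y-y$ for $y\in\dom(A)$ (an analytic $\bX$-valued function vanishing on $\CC_\omega^+$) gives $R_A(\lambda)(\lambda^2-A)y=y$ on $\Omega_0\setminus\cM$. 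Together these show $\lambda^2\in\rho(A)$ and $R(\lambda^2,A)=R_A(\lambda)$, establishing (i) and (ii).

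For (iii) I would induct on $n$. The base case $n=1$ is immediate: by (ii), $(\lambda^2-A)R_A(\lambda)x=x$, and since $R_A(\lambda)$ commutes with $A$ on $\dom(A)$ (standard resolvent property, which transfers from $\CC_\omega^+$ to $\Omega_0\setminus\cM$ by the same identity-principle argument, or directly from (ii)), one obtains $R_A(\lambda)Ax=\lambda^2R_A(\lambda)x-x$, which is \eqref{RA-reduction} for $n=1$. Assuming \eqref{RA-reduction} for some $n\ge 1$ and $x\in\dom(A^{n+1})$, one applies the hypothesis to $Ax\in\dom(A^n)$ and then uses the base case:
\ba
R_A(\lambda)A^{n+1}x
&=\lambda^{2n}R_A(\lambda)(Ax)-\sum_{j=0}^{n-1}\lambda^{2n-2-2j}A^{j+1}x\\
&=\lambda^{2n}\bigl(\lambda^2R_A(\lambda)x-x\bigr)-\sum_{k=1}^{n}\lambda^{2n-2k}A^{k}x
=\lambda^{2(n+1)}R_A(\lambda)x-\sum_{j=0}^{n}\lambda^{2(n+1)-2-2j}A^jx,
\ea
closing the induction.

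The only nontrivial point is the analytic continuation step, and even there the only thing to verify carefully is connectedness of $\Omega_0\setminus\cM$ and the fact that the graph of a closed operator is a closed subspace to which the identity theorem applies; the rest is bookkeeping. I do not anticipate any serious obstacle, and in particular no use of Hypothesis~(H)(iii)--(iv) is needed for this lemma---only the meromorphicity assertions (i)--(ii) of Hypothesis~(H).
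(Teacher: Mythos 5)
Your proposal is correct and follows essentially the same route as the paper's own proof: the paper establishes the ``right inverse'' identity $R_A(\lambda)Ax=\lambda^2R_A(\lambda)x-x$ by analytically continuing the scalar identity from $\CC_\omega^+$, and establishes $R_A(\lambda)x\in\dom(A)$ with $AR_A(\lambda)x=\lambda^2R_A(\lambda)x-x$ by exactly your graph-of-$A$ argument, invoking their Lemma~\ref{lB3} (which proves that an analytic function landing in a closed subspace on an open set lands there everywhere by pairing against annihilating functionals and using Hahn--Banach); your quotient-map formulation of that lemma is equivalent. The induction for (iii) is the same, and your closing remark that only Hypothesis (H)(i)--(ii) is needed is accurate.
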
	
\begin{proof}
Remark~\ref{r2.4}(i) yields the inclusion $\{\lambda^2:\lambda\in\CC_\omega^+\}\subset\rho(A)$ and
\begin{equation}\label{l3.2.1}	
R(\lambda^2,A)Ax=\lambda^2R(\lambda^2,A)x-x\;\mbox{for any}\;\lambda\in\CC_\omega^+,\,x\in\dom(A).
\end{equation}
Since $R_A(\cdot)$ is an analytic extension to $\Omega_0\setminus\cM$ of the function $\lambda\to R(\lambda^2,A):\CC_\omega^+\to\mathcal{B}(\bX)$ by Hypothesis (H)(i) and both sides of \eqref{l3.2.1} are analytic, we infer that
\begin{equation}\label{l3.2.2}	
	R_A(\lambda)Ax=\lambda^2R_A(\lambda)x-x\;\mbox{for any}\;\lambda\in\Omega_0\setminus\cM, x\in\dom(A).
\end{equation}
Next, for each $x\in\bX$, we define the function $F_x:\Omega_0\setminus\cM\to\bX\times\bX$ by $F_x(\lambda)=\bigl(R_A(\lambda)x,\lambda^2R_A(\lambda)x-x\bigr)$. By Hypothesis (H)(i), $F_x$ is analytic. Moreover, from Remark~\ref{r2.4}(i) it follows that $\mathrm{Range}(R_A(\lambda))=\mathrm{Range}(R(\lambda^2,A))=\dom(A)$ and $AR_A(\lambda)x=\lambda^2R_A(\lambda)x-x$ for any $\lambda\in\CC_\omega^+$. Hence,
\begin{equation}\label{l3.2.3}
F_x(\lambda)=\bigl(R_A(\lambda)x,\lambda^2R_A(\lambda)x-x\bigr)\in\mathrm{Graph}(A)\;\mbox{for any}\;\lambda\in\CC_\omega^+.
\end{equation}	
 Since $\mathrm{Graph}(A)$ is a closed subspace of $\bX\times\bX$ as $A$ is closed,  from Lemma~\ref{lB3} and \eqref{l3.2.3} we conclude that $F_x(\lambda)\in\mathrm{Graph}(A)$ for any $\lambda\in\Omega_0\setminus\cM$. Reformulating, we have
 \begin{equation}\label{l3.2.4}
R_A(\lambda)x\in\dom(A),\,AR_A(\lambda)x=\lambda^2R_A(\lambda)x-x\;\mbox{for any}\;\lambda\in\Omega_0\setminus\cM, x\in\bX.
\end{equation}
Assertions (i) and (ii) follow from \eqref{l3.2.2} and \eqref{l3.2.4}, while assertion (iii) follows  from \eqref{l3.2.2} by induction. 	
\end{proof}	
\begin{lemma}\label{l3.3}
Assume Hypothesis (H). Then, the following assertions hold true:
\begin{enumerate}
\item[(i)] $\mu_k^2\in\sigma_{\mathrm{disc}}(A)$ for any $k=1,\dots,m$;
\item[(ii)] There exists $r_*>0$, small enough, such that the spectral projection relative to the spectral subset $\{\mu_k^2\}$ of $\sigma(A)$ is given by
\begin{equation}\label{spectral-projection}
P_k=\frac{1}{\pi\rmi}\int_{\partial D(\mu_k,r_*)}\lambda R_A(\lambda)\rmd\lambda\;\mbox{for any}\;k=1,\dots,m;	
\end{equation}	
\item[(iii)]  For any $k\in\{1,\dots,m\}$ there exists $N_k\in\NN$ and an analytic function $E_k:D(\mu_k,2r_*)\to\mathcal{B}(\bX)$ such that $\Range(P_k)\subset\dom(A^{N_k})$ and
\begin{equation}\label{Laurent-muk}
R_A(\lambda)=\sum_{j=1}^{N_k}\frac{1}{(\lambda^2-\mu_k^2)^j}(A-\mu_k^2 I_\bX)^{j-1}P_k+E_k(\lambda^2)\;\mbox{for any}\;\lambda\in D(\mu_k,2r_*);
\end{equation}
\item[(iv)] For any $k\in\{1,\dots,m\}$ we have the following residues formula
\begin{equation}\label{res-cos-sin}
\Res\Bigl(\lambda^\kappa e^{\lambda t}R_A(\lambda),\lambda=\mu_k\Bigr)=\sum_{j=1}^{N_k}\Bigg(\frac{1}{2\pi\rmi}\int_{\partial D(\mu_k,r_*)}\frac{\lambda^\kappa e^{\lambda t}}{(\lambda^2-\mu_k^2)^j}\rmd\lambda\Bigg)(A-\mu_k^2 I_\bX)^{j-1}P_k,\;\kappa=0,1.
\end{equation}	
\end{enumerate}
\end{lemma}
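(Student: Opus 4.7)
The plan is to use Lemma~\ref{l3.2}(ii) to identify $R_A(\lambda)=R(\lambda^2,A)$ on $\Omega_0\setminus\cM$, transfer the spectral analysis to the variable $\xi=\lambda^2$ where the classical Riesz--Dunford theory of isolated eigenvalues applies directly, and then pull the resulting formulas back to the $\lambda$-variable.

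For part (i), fix $\mu_k\in\cM$ and first suppose $\mu_k\neq 0$. Choose $r_*>0$ so small that $0\notin\overline{D(\mu_k,2r_*)}\subset\Omega_0$ and $\overline{D(\mu_k,2r_*)}\cap\cM=\{\mu_k\}$. Then $\lambda\mapsto\lambda^2$ is a biholomorphism of $D(\mu_k,2r_*)$ onto an open neighborhood $V_k$ of $\mu_k^2$, with an analytic inverse $\phi_k\colon V_k\to D(\mu_k,2r_*)$. Composing \eqref{finite-meromorphic} with $\phi_k$ and using $R(\xi,A)=R_A(\phi_k(\xi))$ on $V_k\setminus\{\mu_k^2\}$ produces a Laurent expansion of $R(\cdot,A)$ at $\mu_k^2$ with finite-rank principal part; hence $\mu_k^2$ is an isolated pole of $R(\cdot,A)$ of finite algebraic multiplicity, that is, $\mu_k^2\in\sigma_{\mathrm{disc}}(A)$. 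The degenerate case $\mu_k=0$ is the main technical obstacle, since $\lambda\mapsto\lambda^2$ is two-to-one at the origin; the fix is to exploit that $R_A$ is automatically even in $\lambda$ (so the odd-order coefficients in the Laurent expansion \eqref{finite-meromorphic} must vanish), and rewrite the remaining even principal part directly in the variable $\xi=\lambda^2$ to reach the same conclusion.

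For part (ii), I would invoke the Riesz--Dunford formula $P_k=\frac{1}{2\pi\rmi}\oint_{\gamma_k}R(\xi,A)\,\rmd\xi$ for a small positively oriented contour $\gamma_k$ around $\mu_k^2$. Taking $\gamma_k$ to be the image of $\partial D(\mu_k,r_*)$ under $\lambda\mapsto\lambda^2$ (which, for $\mu_k\neq 0$ and $r_*$ small, is a simple closed positively oriented curve) and substituting $\xi=\lambda^2$, $\rmd\xi=2\lambda\,\rmd\lambda$, yields \eqref{spectral-projection}. For part (iii), I would apply the classical Laurent expansion of a resolvent at an isolated eigenvalue of finite algebraic multiplicity: there is $N_k\in\NN$ with $(A-\mu_k^2 I_\bX)^{N_k}P_k=0$ (so in particular $\Range(P_k)\subset\dom(A^{N_k})$) and an operator-valued function $F_k$ analytic in a neighborhood of $\mu_k^2$ satisfying
\[
R(\xi,A)=\sum_{j=1}^{N_k}\frac{(A-\mu_k^2 I_\bX)^{j-1}P_k}{(\xi-\mu_k^2)^j}+F_k(\xi).
\]
Substituting $\xi=\lambda^2$ on $D(\mu_k,2r_*)$ and setting $E_k:=F_k$ yields \eqref{Laurent-muk}.

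Finally, for part (iv), I would insert \eqref{Laurent-muk} into $\lambda^\kappa e^{\lambda t}R_A(\lambda)$: the term $\lambda^\kappa e^{\lambda t}E_k(\lambda^2)$ is analytic on $D(\mu_k,2r_*)$ and contributes zero to the residue at $\mu_k$, while in each polar term the constant operator coefficient $(A-\mu_k^2 I_\bX)^{j-1}P_k$ factors out of the residue computation, leaving the scalar contour integral $\frac{1}{2\pi\rmi}\int_{\partial D(\mu_k,r_*)}\frac{\lambda^\kappa e^{\lambda t}}{(\lambda^2-\mu_k^2)^j}\,\rmd\lambda$ as the only $\lambda$-dependent factor; summing over $j$ produces \eqref{res-cos-sin}.
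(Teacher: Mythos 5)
Your proposal takes essentially the same route as the paper: use Lemma~\ref{l3.2}(ii) to identify $R_A(\lambda)=R(\lambda^2,A)$, transfer to the variable $\xi=\lambda^2$, and invoke the classical Riesz--Dunford theory of the resolvent at an isolated eigenvalue of finite algebraic multiplicity. The difference is presentational: you cite the standard Laurent expansion at such a point as a black box, whereas the paper rebuilds it by integrating along the explicit path $\Lambda_{k,r}:\xi=(\mu_k+re^{\rmi t})^2$, showing via \eqref{finite-meromorphic} that the Laurent coefficients $G_{k,j}$ of $R(\cdot,A)$ at $\mu_k^2$ vanish for $j>N_k$ and are finite rank, and establishing the recursion $(A-\mu_k^2 I_\bX)G_{k,j}=G_{k,j+1}$.

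There is a concrete gap in your part~(ii) at $\mu_k=0$, which you should address: you restrict the Riesz--Dunford contour substitution to $\mu_k\neq 0$ (where $\lambda\mapsto\lambda^2$ maps $\partial D(\mu_k,r_*)$ to a simple closed positively oriented curve) and leave $\mu_k=0$ untreated. At $\mu_k=0$ the image circle is traversed twice, so the substitution $\xi=\lambda^2$, $\rmd\xi=2\lambda\,\rmd\lambda$ produces $2P_k$ rather than $P_k$. Your own evenness observation from part~(i) confirms this: the residue of $\lambda R_A(\lambda)$ at $\lambda=0$ is $G_2$, which is precisely the coefficient of $\xi^{-1}$ in the Laurent expansion of $R(\xi,A)$ at $0$, hence $\frac{1}{\pi\rmi}\int_{\partial D(0,r_*)}\lambda R_A(\lambda)\,\rmd\lambda=2G_2=2P_k$. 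The same obstruction appears silently in the paper's proof (the asserted simplicity of $\Lambda_{k,r}$ fails at $\mu_k=0$). Either exclude $\mu_k=0$ from part~(ii) or record that the prefactor there must be halved in that case. Parts (iii) and (iv) of your argument, which work directly from the abstract Laurent decomposition of $R(\xi,A)$, are unaffected by this issue.
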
	
\begin{proof} First, we fix $s_0>0$ independent of $k\in\{1,\dots,m\}$  such that the elements of the set $\{\mu_k^2: k=1,\dots,m\}$ are separated by disks of radius $2s_0$, that is
\begin{equation}\label{l3.3.1}	
D(\xi,2s_0)\cap\{\mu_k^2: k=1,\dots,m\}=\{\xi\}\;\mbox{for any}\;\xi\in\{\mu_k^2: k=1,\dots,m\}.	
\end{equation}	
From \eqref{l3.3.1} it follows that there exists $r_0>0$, small enough, such that
\begin{equation}\label{l3.3.2}
D(\mu_k,2r_0)\subset\Omega_0,\;D(\mu_k,2r_0)\cap D(\mu_\ell,2r_0)=\emptyset,\;\bigl\{\lambda^2:\lambda\in D(\mu_k,2r_0)\bigr\}\subset D(\mu_k^2, s_0)
\end{equation}
for any $k,\ell\in\{1,\dots,m\}$, $k\ne\ell$. Since the entire transformation $\lambda\to\lambda^2:\CC\to\CC$ is non-constant, from the Open Mapping Theorem we infer that the set $\{\lambda^2:\lambda\in D(\mu_k,r_0)\}$ is an open set for any $k=1,\dots,m$. We infer that there exists $s_*\in(0,\frac{s_0}{2})$, independent of $k\in\{1,\dots,m\}$, such that
\begin{equation}\label{l3.3.3}
D(\mu_k^2,2s_*)\subset\bigl\{\lambda^2:\lambda\in D(\mu_k,2r_0)\bigr\}\subset D(\mu_k^2, s_0)\;\mbox{for any}\; k=1,\dots,m.	
\end{equation}	
Next, we fix $\xi\in D(\mu_k^2,2s_*)\setminus\{\mu_k^2\}$. From \eqref{l3.3.3} we have $\xi=\lambda^2$ for some $\lambda\in D(\mu_k, 2r_0)\subset\Omega_0$. Clearly, $\lambda\ne\mu_k$. Moreover, from \eqref{l3.3.2} we infer that $\lambda\ne\mu_\ell$ for any $\ell\in\{1,\dots,m\}$, hence $\lambda\in\Omega_0\setminus\cM$. From Lemma~\ref{l3.2} we conclude that $\xi=\lambda^2\in\rho(A)$. Summarizing, we have
\begin{equation}\label{l3.3.4}
D(\mu_k^2,2s_*)\setminus\{\mu_k^2\}\subset\rho(A)\;\mbox{for any}\;k=1,\dots,m.	
\end{equation}
From \eqref{l3.3.2}, we obtain that there exists $r_*\in(0,r_0)$, small enough, such that
\begin{equation}\label{l3.3.5}
D(\mu_k,2r_*)\subset\Omega_0,\;D(\mu_k,2r_*)\cap D(\mu_\ell,2r_*)=\emptyset,\;\bigl\{\lambda^2:\lambda\in D(\mu_k,2r_*)\bigr\}\subset D(\mu_k^2, s_*)	
\end{equation}	
for any $k,\ell\in\{1,\dots,m\}$, $k\ne\ell$. From \eqref{l3.3.4} it follows that $R(\cdot,A)$ is analytic on $D(\mu_k^2,2s_*)\setminus\{\mu_k^2\}$ for any $k=1,\dots,m$, hence it has a Laurent series representation, see, e.g. \cite{GL-book}. That is, there exists $E_k:D(\mu_k^2,2s_*)\to\mathcal{B}(\bX)$ an analytic operator-valued function and $G_{k,j}\in\mathcal{B}(\bX)$, $k=1,\dots,m$, $j\in\NN$ such that
\begin{equation}\label{l3.3.6}
R(\xi,A)=E_k(\xi)+\sum_{j=1}^{\infty}\frac{1}{(\xi-\mu_k^2)^j}G_{k,j}\;\mbox{for any}\;\xi\in D(\mu_k^2,2s_*)\setminus\{\mu_k^2\}, k=1,\dots,m.
\end{equation}
The coefficients of the principal part of the Laurent series are given by
\begin{equation}\label{l3.3.7}
G_{k,j}=\frac{1}{2\pi\rmi}\int_\Lambda(\xi-\mu_k^2)^{j-1}R(\xi,A)\rmd\xi\;\mbox{for any}\;k=1,\dots,m,\,j\in\NN,
\end{equation}
where $\Lambda$ is closed, simple, counter-clockwise oriented path, surrounding $\mu_k^2$, whose range is contained in $D(\mu_k^2,2s_*)\setminus\{\mu_k^2\}$. Next, we define the path
\begin{equation}\label{l3.3.8}
\Lambda_{k,r}: \xi=(\mu_k+re^{\rmi t})^2,\, t\in [0,2\pi],\, k=1,\dots,m,\, r\in(0,2r_*). 	
\end{equation}	
The radius $r_*>0$ can be chosen small enough such that the path $\Lambda_{k,r}$ is simple.
From \eqref{l3.3.5} we infer that we can use the path $\Lambda_{k,r}$ in formula \eqref{l3.3.7} for any $r\in (0,2r_*)$ and any $k=1,\dots,m$. It follows that
\begin{align}\label{l3.3.9}
G_{k,j}&=\frac{1}{\pi\rmi}\int_{0}^{2\pi}r\rmi e^{\rmi t}(\mu_k+re^{\rmi t})\Bigl((\mu_k+re^{\rmi t})^2-\mu_k^2\Bigr)^{j-1}R\Bigl((\mu_k+re^{\rmi t})^2,A\Bigr)\rmd t\nonumber\\
&=\frac{1}{\pi\rmi}\int_{\partial D(\mu_k,r)}\lambda\bigl(\lambda^2-\mu_k^2\bigr)^{j-1}R_A(\lambda)\rmd\lambda\;\;\mbox{for any}\;k=1,\dots,m, j\in\NN, r\in(0,2r_*).
\end{align}	
Next, we will show that the principal part of the Laurent series in \eqref{l3.3.6} has finitely many terms and all coefficients are of finite rank. Indeed, since $\mu_k$ is a finitely meromorphic point by Hypothesis (H) (ii), from Definition~\ref{d3.1} we know that for each $k\in\{1,\dots,m\}$ there exists $N_k\in\NN$, an analytic operator-valued function $F_k:D(\mu_k,2r_*)\to\mathcal{B}(\bX)$  and  finite rank operators $R_{k,j}\in\mathcal{B}(\bX)$, $j=1,\dots,N_k$, such that
\begin{equation}\label{l3.3.10}
R_A(\lambda)=F_k(\lambda)+\sum_{\ell=1}^{N_k}\frac{1}{(\lambda-\mu_k)^\ell}R_{k,\ell}\;\mbox{for any}\;\lambda\in D(\mu_k,2r_*)\setminus\{\mu_k\}, k=1,\dots,m.
\end{equation}
From \eqref{l3.3.9} and \eqref{l3.3.10} we obtain that
\begin{equation}\label{l3.3.11}
G_{k,j}=\frac{1}{\pi\rmi}\sum_{\ell=1}^{N_k}\Bigg(\int_{\partial D(\mu_k,r_*)}\frac{\lambda\bigl(\lambda^2-\mu_k^2\bigr)^{j-1}}{(\lambda-\mu_k)^\ell}\rmd\lambda\Bigg)R_{k,\ell}\;\mbox{for any}\;k=1,\dots,m, j\in\NN.
\end{equation}
Since $R_{k,j}$ are finite rank operators, from \eqref{l3.3.11} we conclude that
\begin{equation}\label{l3.3.12}
G_{k,j}=0\;\mbox{for any}\; j\geq 1+N_k,\;\;G_{k,j}\;\mbox{is of finite rank for any}\; j=1,\dots,N_k,\; k=1,\dots,m.	
\end{equation}
From \eqref{l3.3.6} and \eqref{l3.3.12} we infer that $\mu_k^2$ is a finitely meromorphic point of $R(\cdot,A)$. From \eqref{l3.3.9}, the resolvent equation and \cite[Proposition 1.1.7]{ABHN} it follows that $\mathrm{Range}(G_{k,j})\subseteq\dom(A)$ and
\begin{align}\label{l3.3.13}
(A-\mu_k^2I_\bX)G_{k,j}x&=\frac{1}{\pi\rmi}\int_{\partial D(\mu_k,r_*)}\lambda\bigl(\lambda^2-\mu_k^2\bigr)^{j-1}(A-\mu_k^2I_\bX)R_A(\lambda)x\rmd\lambda\nonumber\\
&=\frac{1}{\pi\rmi}\int_{\partial D(\mu_k,r_*)}\lambda\bigl(\lambda^2-\mu_k^2\bigr)^{j-1}\Bigl((\lambda^2-\mu_k^2)R_A(\lambda)x-x\Bigr)\rmd\lambda\nonumber\\
&=\frac{1}{\pi\rmi}\int_{\partial D(\mu_k,r_*)}\lambda\bigl(\lambda^2-\mu_k^2\bigr)^jR_A(\lambda)x\rmd\lambda=G_{k,j+1}x
\end{align}	
for any $j=1,\dots,N_k$, $k=1,\dots,m$, $x\in\bX$.  One can use another contour integration argument to show that $P_k:=G_{k,1}$, $k=1,\dots,m$ is a projection that commutes with the linear operator $A$, see e. g. \cite {GGK,Kato}. Since $R_A(\cdot)$ has a singularity at $\mu_k$, we have that $P_k\ne0$. Moreover, from \eqref{l3.3.12} we have $P_k$ is of finite rank for any $k=1,\dots,m$. From \eqref{l3.3.13} it follows that
\begin{equation}\label{l3.3.14}
\mathrm{Range}(P_k)\subset\dom(A^{N_k})\;\mbox{and}\;G_{k,j}=(A-\mu_k^2I_\bX)^{j-1}P_k\;\mbox{for any}\;k=1,\dots,m, j=1,\dots,N_k+1.
\end{equation}
From \eqref{l3.3.12} and \eqref{l3.3.14} we obtain that $(A-\mu_k^2I_\bX)^{N_k}P_k=0$ for any $k=1,\dots,m$. Since the projection $P_k$ commutes with $A$ we conclude that $(A-\mu_k^2I_\bX)_{|\mathrm{Range}(P_k)}$ is a nilpotent linear operator on the non-trivial, finite dimensional space $\mathrm{Range}(P_k)$ for any $k=1,\dots,m$, which implies assertion (i). Assertion (ii) follows from Lemma~\ref{l3.2}(i), (i), \eqref{l3.3.9} for $j=1$ and $r=r_*$. Assertion (iii) follows from \eqref{l3.3.5}, \eqref{l3.3.6}, \eqref{l3.3.12} and \eqref{l3.3.14}. Assertion (iv) follows immediately from representation \eqref{Laurent-muk}.
\end{proof}
\begin{remark}\label{r3.4}
An elementary computation shows for any $\kappa=0,1$, $k=1,\dots,m$, $j\in\NN$ there exists a polynomial $p_{\kappa,k,j}$ of degree at most $j-1$ such that
\begin{equation}\label{polynomial-representation}
\frac{1}{2\pi\rmi}\int_{\partial D(\mu_k,r_*)}\frac{\lambda^\kappa e^{\lambda t}}{(\lambda^2-\mu_k^2)^j}\rmd\lambda=p_{\kappa,k,j}(t)e^{\mu_k t}\;\mbox{for any}\; t\in\RR.	
\end{equation}
This fact allows us to infer that the norm of the residue from \eqref{res-cos-sin} is of order $\mathcal{O}(t^{N_k-1}e^{\mu_k t})$. 	 	
\end{remark}	
Next, we study the properties of residues of the operator-valued functions involved in the representations of cosine family and its associated sine family from Section~\ref{sec2}.
\begin{lemma}\label{l3.5}
Assume Hypothesis (H) and let $n\in\NN$ and $x\in\dom(A^n)$. Then,
\begin{enumerate}
\item[(i)]$\Res\Bigl(\frac{e^{\lambda t}}{\lambda^{2n-1}}R_A(\lambda)A^nx,\lambda=\mu_k\Bigr)=\Res\Bigl(\lambda e^{\lambda t}R_A(\lambda)x,\lambda=\mu_k\Bigr)$, if $\mu_k\ne0$;
\item[(ii)] $\Res\Bigl(\frac{e^{\lambda t}}{\lambda^{2n-1}}R_A(\lambda)A^nx,\lambda=0\Bigr)=\Res\Bigl(\lambda e^{\lambda t}R_A(\lambda)x,\lambda=0\Bigr)-\displaystyle\sum_{j=0}^{n-1}\frac{t^{2j}}{(2j)!}A^jx$, if $0\in\cM$;
\item[(iii)] $\Res\Bigl(\frac{e^{\lambda t}}{\lambda^{2n-1}}R_A(\lambda)A^nx,\lambda=0\Bigr)=-\displaystyle\sum_{j=0}^{n-1}\frac{t^{2j}}{(2j)!}A^jx$, if $0\in\Omega_0\setminus\cM$.
\end{enumerate}
\end{lemma}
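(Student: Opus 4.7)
The plan is to reduce all three claims to the single algebraic identity \eqref{RA-reduction} from Lemma~\ref{l3.2}(iii). Applied with $x \in \dom(A^n)$ and any $\lambda \in \Omega_0 \setminus \cM$, this gives
\[
\frac{e^{\lambda t}}{\lambda^{2n-1}}R_A(\lambda)A^nx \;=\; \lambda e^{\lambda t}R_A(\lambda)x \;-\; \sum_{j=0}^{n-1}\frac{e^{\lambda t}}{\lambda^{2j+1}}A^jx,
\]
valid as meromorphic identity on $\Omega_0$ (and in particular in a punctured neighborhood of each $\mu_k$). The whole lemma will be obtained by taking residues of this identity at $\mu_k$ and collecting the three cases.

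For part (i), when $\mu_k\ne 0$, each scalar function $\lambda\mapsto e^{\lambda t}/\lambda^{2j+1}$, $j=0,\dots,n-1$, is holomorphic in a neighborhood of $\mu_k$, so the finite-rank operator-valued terms $e^{\lambda t}A^jx/\lambda^{2j+1}$ contribute zero residue. Taking the residue at $\mu_k$ of the displayed identity gives the claim immediately. For part (iii), $0\in\Omega_0\setminus\cM$ means $R_A(\cdot)$ is holomorphic at $0$, so $\lambda e^{\lambda t}R_A(\lambda)x$ is holomorphic at $0$ and contributes zero residue. All that remains is to evaluate
\[
\Res\Bigl(\frac{e^{\lambda t}}{\lambda^{2j+1}},\lambda=0\Bigr) = \frac{t^{2j}}{(2j)!},
\]
which is simply the coefficient of $\lambda^{2j}$ in the Taylor expansion of $e^{\lambda t}$. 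Summing over $j$ and negating yields exactly $-\sum_{j=0}^{n-1}\frac{t^{2j}}{(2j)!}A^jx$.

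Part (ii) is the combination: $0\in\cM$ means $R_A(\cdot)$ has a pole at $0$, so $\lambda e^{\lambda t}R_A(\lambda)x$ has a nonzero residue which is exactly the first term on the right-hand side, while the subtracted sum contributes the same scalar residues computed in part (iii). Adding these two pieces yields the asserted formula.

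There is no real obstacle here; the only point requiring minor care is that \eqref{RA-reduction} was proved on the punctured domain $\Omega_0\setminus\cM$, which is precisely where one needs it for a residue computation on small circles $\partial D(\mu_k,r_*)$ (with $r_*$ as in Lemma~\ref{l3.3}(ii), so that $D(\mu_k,2r_*)\setminus\{\mu_k\}\subset\Omega_0\setminus\cM$). Aside from invoking this, the argument is a straightforward bookkeeping of residues of scalar meromorphic functions and the fact that residues extract coefficients of $\lambda^{-1}$ in Laurent expansions. Consequently, the proof can be written in a few lines per case, with part (ii) being the combined statement whose two ingredients are supplied by the analyses used for (i) and (iii).
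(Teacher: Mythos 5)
Your proof is correct and follows essentially the same route as the paper: both apply the algebraic reduction from Lemma~\ref{l3.2}(iii) to split $\frac{e^{\lambda t}}{\lambda^{2n-1}}R_A(\lambda)A^nx$ into $\lambda e^{\lambda t}R_A(\lambda)x$ minus scalar terms $\frac{e^{\lambda t}}{\lambda^{2j+1}}A^jx$, and then compute residues term by term using $\Res\bigl(e^{\lambda t}/\lambda^{2j+1},\lambda=0\bigr)=t^{2j}/(2j)!$. Your write-up spells out the three cases more explicitly than the paper, but the underlying argument is identical.
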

\begin{proof} From Lemma~\ref{l3.2}(iii) we obtain that
\begin{equation}\label{l3.5.1}	
\frac{e^{\lambda t}}{\lambda^{2n-1}}R_A(\lambda)A^nx=\lambda R_A(\lambda)x-\displaystyle\sum_{j=0}^{n-1}\frac{e^{\lambda t}}{\lambda^{2j+1}}A^jx\;\mbox{for any}\; \lambda\in\Omega\setminus\bigl(\cM\cup\{0\}\bigr),
\end{equation}
while an elementary computation shows that
\begin{equation}\label{l3.5.2}
\Res\Bigl(\frac{e^{\lambda t}}{\lambda^\ell},\lambda=\mu\Bigr)=\left\{\begin{array}{ll} 0,& \mu\ne0,\\
\frac{t^{\ell-1}}{(\ell-1)!},&\mu=0, \end{array}\right.\;\mbox{for any}\;t\in\RR,\ell\in\NN,
\end{equation}	
thus proving the lemma.	
\end{proof}
\begin{lemma}\label{l3.6}
Assume Hypothesis (H) and let $n\in\NN$, $x\in\bX$ and $y\in\dom(A^{n-1})$. Then,
\begin{enumerate}
\item[(i)] $\Res\Bigl(\frac{e^{\lambda t}}{\lambda^{2n}}R_A(\lambda)x,\lambda=\mu\Bigr)\in\dom(A)$ for any $\mu\in\cM\cup\{0\}$;
\item[(ii)]$A\Res\Bigl(\frac{e^{\lambda t}}{\lambda^{2n}}R_A(\lambda)A^{n-1}y,\lambda=\mu_k\Bigr)=\Res\Bigl(e^{\lambda t}R_A(\lambda)y,\lambda=\mu_k\Bigr)$, if $\mu_k\ne0$;
\item[(iii)] $A\Res\Bigl(\frac{e^{\lambda t}}{\lambda^{2n}}R_A(\lambda)A^{n-1}y,\lambda=0\Bigr)=\Res\Bigl(e^{\lambda t}R_A(\lambda)y,\lambda=0\Bigr)-\displaystyle\sum_{j=0}^{n-1}\frac{t^{2j+1}}{(2j+1)!}A^jy$, if $\mu_k=0$;
\item[(iv)] $A\Res\Bigl(\frac{e^{\lambda t}}{\lambda^{2n}}R_A(\lambda)A^{n-1}y,\lambda=0\Bigr)=-\displaystyle\sum_{j=0}^{n-1}\frac{t^{2j+1}}{(2j+1)!}A^jy$, if $0\in\Omega_0\setminus\cM$.
\end{enumerate}
\end{lemma}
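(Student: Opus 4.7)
The plan is to parallel the proof of Lemma~\ref{l3.5} while carefully tracking the action of the closed operator $A$ on the contour integrals defining the residues. The central tool will be the extension of the resolvent identity provided by Lemma~\ref{l3.2}, namely $R_A(\lambda)z\in\dom(A)$ and $AR_A(\lambda)z=\lambda^2R_A(\lambda)z-z$ for every $z\in\bX$ and every $\lambda\in\Omega_0\setminus\cM$.

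For part (i), I will express the residue as a contour integral $\frac{1}{2\pi\rmi}\int_{\partial D(\mu,r)}\frac{e^{\lambda t}}{\lambda^{2n}}R_A(\lambda)x\,\rmd\lambda$ over a circle of sufficiently small radius $r$ whose closed disk contains no other element of $\cM\cup\{0\}$. Since $R_A(\lambda)x$ is continuous and $\dom(A)$-valued on the contour, and $AR_A(\lambda)x=\lambda^2R_A(\lambda)x-x$ is also continuous there, the closed-operator interchange result (Proposition~1.1.7 of~\cite{ABHN}, used in the same spirit as in Lemma~\ref{l2.11}(iii) and in the proof of Lemma~\ref{l3.3}) immediately yields that the residue lies in $\dom(A)$ and that $A$ passes under the contour integral.

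For parts (ii)--(iv), I will first invoke part (i) with $x=A^{n-1}y$ to move $A$ under the residue, then apply the resolvent identity above with $z=A^{n-1}y$ to rewrite
\begin{equation*}
\frac{e^{\lambda t}}{\lambda^{2n}}AR_A(\lambda)A^{n-1}y=\frac{e^{\lambda t}}{\lambda^{2n-2}}R_A(\lambda)A^{n-1}y-\frac{e^{\lambda t}}{\lambda^{2n}}A^{n-1}y.
\end{equation*}
Next I will feed the identity \eqref{RA-reduction} with $n$ replaced by $n-1$ into the first term, obtaining
\begin{equation*}
\frac{e^{\lambda t}}{\lambda^{2n-2}}R_A(\lambda)A^{n-1}y=e^{\lambda t}R_A(\lambda)y-\sum_{j=0}^{n-2}\frac{e^{\lambda t}}{\lambda^{2j+2}}A^jy.
\end{equation*}
Taking residues and applying the elementary formula already used in Lemma~\ref{l3.5} (namely $\Res(e^{\lambda t}/\lambda^\ell,\mu)=0$ when $\mu\neq 0$ and $t^{\ell-1}/(\ell-1)!$ when $\mu=0$) will immediately give (ii) at any $\mu_k\neq 0$, since all polynomial-in-$\lambda^{-1}$ contributions are then analytic. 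At $\mu=0$ the second residue above contributes $\frac{t^{2n-1}}{(2n-1)!}A^{n-1}y$ while the sum contributes $\sum_{j=0}^{n-2}\frac{t^{2j+1}}{(2j+1)!}A^jy$, and these combine into the single sum $\sum_{j=0}^{n-1}\frac{t^{2j+1}}{(2j+1)!}A^jy$. This establishes (iii) when $0\in\cM$, where the extra term $\Res(e^{\lambda t}R_A(\lambda)y,0)$ is generally nonzero, and (iv) when $0\in\Omega_0\setminus\cM$, in which case that residue vanishes by analyticity of $R_A$ at $0$.

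The main technical hurdle will be the rigorous interchange of $A$ with the contour integral defining the residue in part (i). This reduces to the norm continuity of $\lambda\mapsto AR_A(\lambda)z=\lambda^2R_A(\lambda)z-z$ on the contour, which is immediate from the analyticity of $R_A$ on $\Omega_0\setminus\cM$ guaranteed by Hypothesis~(H)(i). Once this is granted, the remainder of the argument is routine residue bookkeeping using the elementary formula already established in the proof of Lemma~\ref{l3.5}.
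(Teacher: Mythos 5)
Your proof is correct and follows essentially the same route as the paper: establish part (i) via the closed-operator interchange from \cite[Proposition 1.1.7]{ABHN} applied to the contour integral, then reduce $\frac{e^{\lambda t}}{\lambda^{2n}}AR_A(\lambda)A^{n-1}y$ using the resolvent identity and \eqref{RA-reduction}, and finish with the elementary residue formula \eqref{l3.5.2}. Your bookkeeping for $\mu=0$ (combining the $\frac{t^{2n-1}}{(2n-1)!}A^{n-1}y$ term with the sum over $j\le n-2$ into a single sum up to $n-1$) matches \eqref{l3.6.5}--\eqref{l3.6.6} in the paper exactly.
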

\begin{proof} Assertion \eqref{l3.2.4} yields
\begin{equation}\label{l3.6.1}
\frac{e^{\lambda t}}{\lambda^{2n}}R_A(\lambda)x\in\dom(A)\;\mbox{and}\;A\Bigl(\frac{e^{\lambda t}}{\lambda^{2n}}R_A(\lambda)x\Bigr)=\frac{e^{\lambda t}}{\lambda^{2n-2}}R_A(\lambda)x-\frac{e^{\lambda t}}{\lambda^{2n}}x	
\end{equation}
for any $\lambda\in\Omega_0\setminus\bigl(\cM\cup\{0\}\bigr)$, and so (i) follows from \eqref{l3.6.1} and \cite[Proposition 1.1.7]{ABHN}. Moreover,
\begin{equation}\label{l3.6.2}
A\Res\Bigl(\frac{e^{\lambda t}}{\lambda^{2n}}R_A(\lambda)x,\lambda=\mu\Bigr)=\Res\Bigl(\frac{e^{\lambda t}}{\lambda^{2n-2}}R_A(\lambda)x,\lambda=\mu\Bigr)-\Res\Bigl(\frac{e^{\lambda t}}{\lambda^{2n}},\lambda=\mu\Bigr)x
\end{equation}
for any $\mu\in\cM\cup\{0\}$. Using Lemma~\ref{l3.2}(iii),
\begin{equation}\label{l3.6.3}	
\frac{e^{\lambda t}}{\lambda^{2n-2}}R_A(\lambda)A^{n-1}y=e^{\lambda t} R_A(\lambda)y-\sigma_n(\lambda,t,y)\;\mbox{for any}\; \lambda\in\Omega_0\setminus\Bigl(\{\mu_k:k=1,\dots,m\}\cup\{0\}\Bigr),	
\end{equation}
where the function $\sigma_n:\CC\setminus\{0\}\times\RR\times\dom(A^{n-1})\to\bX$ is defined by
\begin{equation}\label{l3.6.4}
\sigma_n(\lambda,t,y)=\left\{\begin{array}{ll} 0,& n=1,\\
	\displaystyle\sum_{j=0}^{n-2}\frac{e^{\lambda t}}{\lambda^{2j+2}}A^jy,&n\geq 2. \end{array}\right.
\end{equation}
We note that
\begin{equation}\label{l3.6.5}
\sigma_n(\lambda,t,y)+\frac{e^{\lambda t}}{\lambda^{2n}}A^{n-1}y=\sum_{j=0}^{n-1}\frac{e^{\lambda t}}{\lambda^{2j+2}}A^jy	
\end{equation}	
for any $\lambda\in\Omega_0\setminus\bigl(\cM\cup\{0\}\bigr)$, $t\in\RR$, $y\in\dom(A^{n-1})$. From \eqref{l3.6.2}, \eqref{l3.6.3}, \eqref{l3.6.4} and \eqref{l3.6.5} we obtain that
\begin{equation}\label{l3.6.6}
A\Res\Bigl(\frac{e^{\lambda t}}{\lambda^{2n}}R_A(\lambda)A^{n-1}y,\lambda=\mu\Bigr)=\Res\Bigl(e^{\lambda t}R_A(\lambda)y,\lambda=\mu\Bigr)-\sum_{j=0}^{n-1}\Res\Bigl(\frac{e^{\lambda t}}{\lambda^{2j+2}},\lambda=\mu\Bigr)A^jy	
\end{equation}
for any $\mu\in\cM\cup\{0\}$. Assertions (ii)-(iv) follow from \eqref{l3.5.2} and \eqref{l3.6.6}.		
\end{proof}
In the sequel we need an estimate of  $\|AR_A(\lambda)_{|\bW}\|_{\mathcal{B}(\bW,\bX)}$ for $\lambda\in\Omega_0\setminus\CC_\omega^+$ analogous to \eqref{r2.6.3} . We recall the definition of the space $\bW$ in \eqref{def-bW}.
\begin{lemma}\label{l3.7}
Assume Hypothesis (H) and let $M_1$ is the constant defined in Lemma~\ref{l2.3}. Then,
\begin{equation}\label{ARA-est}
\|AR_A(\lambda)w\|\leq M_1\Bigl(\omega^{-1}+4\bigl(|\lambda|+\omega\bigr)h_1(|\lambda|)h_2(\re\lambda)\Bigr)\|w\|_\bW
\end{equation}	
for any $w\in\bW$, $\lambda\in\Omega_0\setminus\CC_\omega^+$ such that $|\im\lambda|>\max\limits_{1\leq k\leq m}|\im\mu_k|$.		
\end{lemma}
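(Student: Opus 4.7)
My approach is to apply the resolvent identity to transport the bound on $AR_A(\lambda_0)_{|\bW}$ from Remark~\ref{r2.6} at a point $\lambda_0\in\CC_\omega^+$ to the target point $\lambda\in\Omega_0\setminus\CC_\omega^+$. By Lemma~\ref{l3.2} we have $R_A(\lambda)=R(\lambda^2,A)$ throughout $\Omega_0\setminus\cM$, so the usual resolvent identity
\[
R_A(\lambda)=R_A(\lambda_0)+(\lambda_0^2-\lambda^2)R_A(\lambda)R_A(\lambda_0)
\]
holds, and since $R_A(\lambda_0)\bX\subset\dom(A)$ while $A$ commutes with $R_A(\lambda)$ on $\dom(A)$, applying $A$ from the left yields
\[
AR_A(\lambda)w=AR_A(\lambda_0)w+(\lambda_0^2-\lambda^2)R_A(\lambda)\,AR_A(\lambda_0)w,\qquad w\in\bW.
\]
The triangle inequality then reduces the problem to estimating the three factors $\|AR_A(\lambda_0)w\|$, $|\lambda_0^2-\lambda^2|$, and $\|R_A(\lambda)\|$.

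The crucial step is the choice of $\lambda_0$: it must lie in $\CC_\omega^+$ with $\re\lambda_0-\omega\geq\omega$ (so that $\|AR_A(\lambda_0)w\|\leq M_1\omega^{-1}\|w\|_\bW$ follows from Remark~\ref{r2.6}), and simultaneously $|\lambda_0^2-\lambda^2|$ must grow only linearly in $|\lambda|$ to match the $(|\lambda|+\omega)$ factor in the conclusion. I would take $\lambda_0=2\omega+\lambda$ when $\re\lambda\geq 0$ and $\lambda_0=2\omega-\lambda$ when $\re\lambda<0$. A direct algebraic computation gives $\re\lambda_0\geq 2\omega$ in both cases, together with $\lambda_0^2-\lambda^2=4\omega(\omega\pm\lambda)$ and hence $|\lambda_0^2-\lambda^2|\leq 4\omega(|\lambda|+\omega)$.

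Combining these with the bound $\|R_A(\lambda)\|\leq h_1(|\lambda|)h_2(\re\lambda)$ from Hypothesis~(H)(iii), applicable thanks to the standing assumption $|\im\lambda|>\max_k|\im\mu_k|$, I obtain
\[
\|AR_A(\lambda)w\|\leq \frac{M_1}{\omega}\bigl(1+4\omega(|\lambda|+\omega)h_1(|\lambda|)h_2(\re\lambda)\bigr)\|w\|_\bW,
\]
which is exactly the claimed inequality after factoring $M_1$. The main subtlety, and the step that requires the most care, is the piecewise choice of $\lambda_0$: any choice independent of $\lambda$ produces the weaker quadratic bound $|\lambda_0^2-\lambda^2|\lesssim(|\lambda|+\omega)^2$ and fails to yield the linear factor stated in the lemma. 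Everything else---the commutativity $AR_A(\lambda)R_A(\lambda_0)=R_A(\lambda)AR_A(\lambda_0)$, which holds because $R_A(\lambda_0)\bX\subset\dom(A)$ and $A$ commutes with $R_A(\lambda)$ there, together with the direct application of Remark~\ref{r2.6} and Hypothesis~(H)(iii)---is routine.
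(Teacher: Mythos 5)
Your proof is correct and follows essentially the same route as the paper: both apply the resolvent identity to a $\lambda$-dependent auxiliary point with real part at least $2\omega$, chosen precisely so that $|\mu^2-\lambda^2|$ grows only linearly in $|\lambda|$. The paper's reference point is $\mu=2\omega+|\re\lambda|+\rmi\im\lambda$ (which keeps $\im\mu=\im\lambda$ and makes $\mu-\lambda$ real), whereas you use the piecewise choice $\lambda_0=2\omega\pm\lambda$; this is a cosmetic difference, and both yield the same constant $4(|\lambda|+\omega)$ and hence the identical final bound.
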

\begin{proof}
Fix $w\in\bW$ and $\lambda\in\Omega_0\setminus\CC_\omega^+$ such that $|\im\lambda|>\max\limits_{1\leq k\leq m}|\im\mu_k|$ and set $\mu=2\omega+|\re\lambda|+\rmi\im\lambda$. One can readily check that
\begin{equation}\label{l3.7.1}
\re\mu\geq 2\omega,\;|\mu|\leq 2\omega+|\lambda|\;\mbox{and}\;\frac{\re\mu-\re\lambda}{\re\mu-\omega}=\frac{2\omega+|\re\lambda|-\re\lambda}{\omega+|\re\lambda|}\leq 2.
\end{equation}
Since $\im\mu=\im\lambda$, by Lemma~\ref{l3.2}(ii)	and the resolvent identity we obtain
\begin{equation}\label{l3.7.2}
AR_A(\lambda)w=AR(\mu^2,A)w+(\re\mu-\re\lambda)(\lambda+\mu)R_A(\lambda)AR(\mu^2,A)w.
\end{equation}
From Hypothesis (H), \eqref{r2.6.3}, \eqref{l3.7.1} and \eqref{l3.7.2} we conclude that
\begin{align}\label{3.7.3}
\|AR_A(\lambda)w\|&\leq \|AR(\mu^2,A)w\|+(\re\mu-\re\lambda)|(\lambda+\mu)|\,\|R_A(\lambda)AR(\mu^2,A)w\|\nonumber\\
&\leq\frac{M_1}{\re\mu-\omega}\|w\|_\bW+M_1\bigl(|\lambda|+|\mu|\bigr)\frac{\re\mu-\re\lambda}{\re\mu-\omega}h_1(|\lambda|)h_2(\re\lambda)\|w\|_\bW\nonumber\\
&\leq M_1\Bigl(\omega^{-1}+4\bigl(|\lambda|+\omega\bigr)
h_1(|\lambda|)h_2(\re\lambda)\Bigr)\|w\|_\bW,
\end{align}
proving the lemma.
\end{proof}
We are now ready to see what are the conditions needed to switch integration in the representation formulas for the cosine family and its associated sine family given in
Lemma~\ref{l2.10} and Lemma~\ref{l2.11} to the path along the complex curve $\re\lambda=g_0(\im\lambda)+\eps$ for some $\eps>0$ small enough. The idea is to integrate along a curve parallel to  the left boundary of the set $\Omega_0$, such that all the singularities of the operator-valued function $R_A(\cdot)$ are located to the right of the curve cf. Figure 2. Since $\re\mu_k>g_0(\im\mu_k)$ for any $k=1,\dots,m$ by Hypothesis H(ii), we observe that
\begin{equation}\label{def-eps-o}
\eps_0:=\frac{1}{4}\min\bigl\{\omega-\sup_{s\in\RR}g_0(s),\re\mu_1-g_0(\im\mu_1),\dots,\re\mu_m-g_0(\im\mu_m)\bigr\}>0.	
\end{equation}	
For any $a>\max_{1\leq k\leq m}|\im\mu_k|$ and $\eps\in(0,\eps_0)$ we consider the closed simple path $\Gamma_a^\eps$ defined as the union $\Gamma_{a,\gamma}\cup\Gamma_{a,\up}^\eps\cup\Gamma_{a,\down}^\eps\cup-\Gamma_{a,\mi}^\eps$ and oriented counterclockwise, where
\begin{align}\label{def-Gamma-a-eps}
&\Gamma_{a,\gamma}:=\overline{\gamma-\rmi a,\gamma+\rmi a},\;\Gamma_{a,\up}^\eps:=\overline{\gamma+\rmi a,g_0(a)+\eps+\rmi a},\;\Gamma_{a,\down}^\eps=\overline{g_0(-a)+\eps-\rmi a,\gamma-\rmi a},\nonumber\\
&\Gamma_{a,\mi}^\eps: \lambda=g_0(s)+\eps+\rmi s,\; s\in[-a,a].
\end{align}
\begin{figure}[h]
	\begin{center}
		\includegraphics[width=0.6\textwidth]{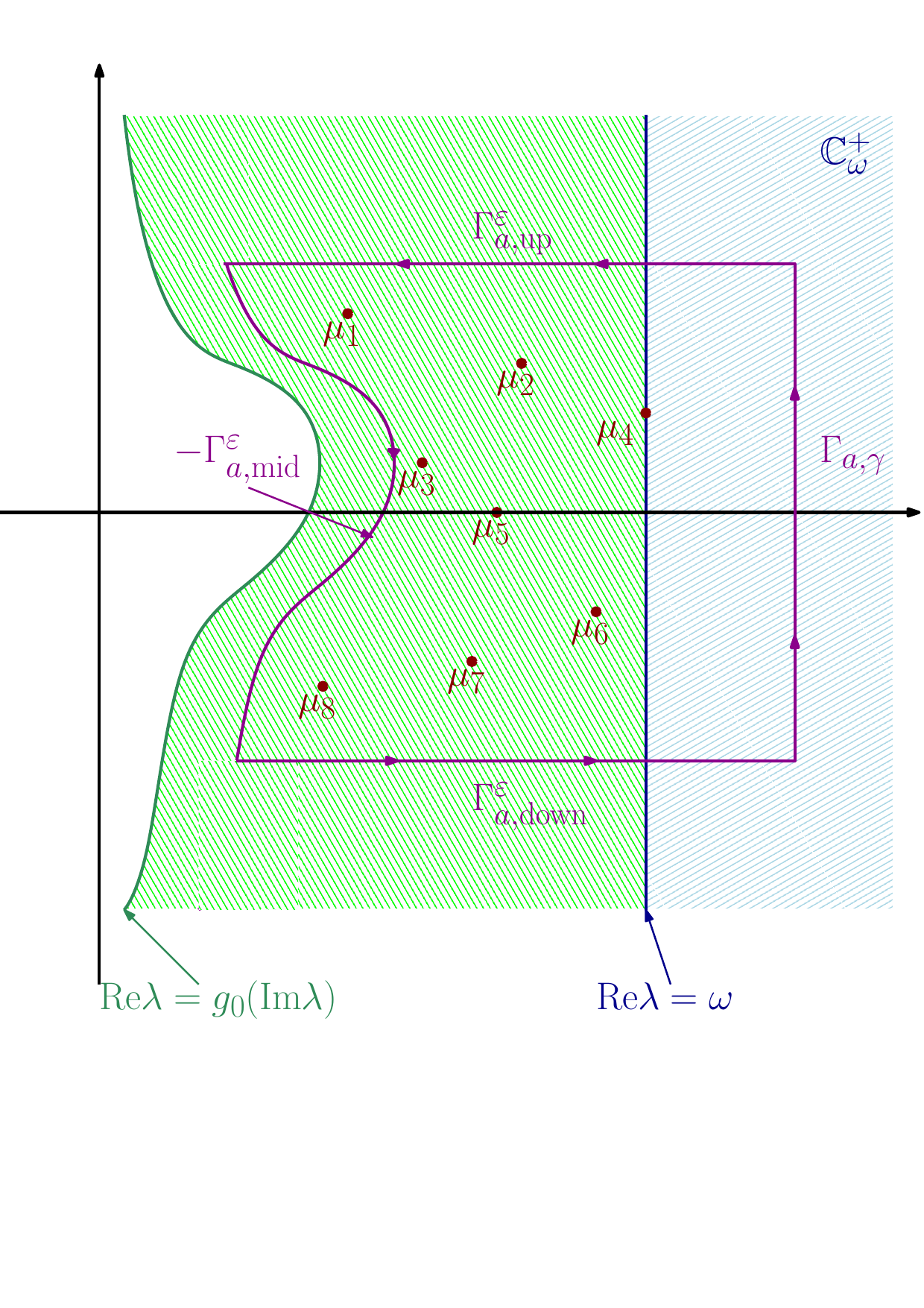}
		
		Figure 2. The path $\Gamma_a^\eps$ superimposed over the set $\Omega_0$.
	\end{center}
\end{figure}
\begin{remark}\label{r3.8}
We recall an elementary analysis result needed in the sequel. Let $G_1:[0,\infty)\to\RR$ be a measurable function such that $\lim\limits_{s\to\infty}G_1(s)=0$ and $G_2\in L^1(\RR)$. Then, by  Lebesgue's Dominated Convergence Theorem,
\begin{equation}\label{aux-limit}
\lim_{a\to\infty}\int_\RR G_1(\sqrt{\xi^2+a^2})G_2(\xi)\rmd\xi=0.	
\end{equation}	
\end{remark}
\begin{lemma}\label{l3.9}
Assume Hypothesis (H) and, in addition that $b:=\inf\limits_{s\in\RR}g_0(s)>-\infty$. Then,
\begin{equation}\label{Gamma-2-3-lim}
\lim_{a\to\infty}\int_{\Gamma_{a,\up}^\eps}\frac{e^{\lambda t}}{\lambda^k}R_A(\lambda)\rmd\lambda=\lim_{a\to\infty}\int_{\Gamma_{a,\down}^\eps}\frac{e^{\lambda t}}{\lambda^k}R_A(\lambda)\rmd\lambda=0\;\mbox{for any}\; t\geq 0,\eps\in(0,\eps_0),	
\end{equation}
in the $\mathcal{B}(\bX)$-norm for any $k\in\NN$ with $k\geq 2n_0-1$, where $n_0$ is introduced in \eqref{limit-n-zero}.	
\end{lemma}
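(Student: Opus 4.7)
The plan is to parameterize $\Gamma_{a,\up}^\eps$ by its real part and then estimate the operator-norm integrand uniformly, exploiting (a) Hypothesis (H)(iii) to control $\|R_A(\lambda)\|$, (b) the finiteness of $b$ together with $\sup g_0<\omega$ to keep the range of $\re\lambda$ in a fixed compact interval, and (c) Hypothesis (H)(iv) together with $k\geq 2n_0-1$ to get decay of the entire integrand. The $\Gamma_{a,\down}^\eps$ case is then symmetric.

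Concretely, for $a$ large enough so that $a>\max_{1\leq k\leq m}|\im\mu_k|$, points on $\Gamma_{a,\up}^\eps$ are of the form $\lambda=\sigma+\rmi a$ with $\sigma\in[g_0(a)+\eps,\gamma]\subseteq[b+\eps,\gamma]$, and the length of $\Gamma_{a,\up}^\eps$ is therefore bounded by $L:=\gamma-b-\eps<\infty$. Since $t\geq 0$, we have $|e^{\lambda t}|=e^{\sigma t}\leq e^{\gamma t}$. By Hypothesis (H)(iii),
\[
\|R_A(\lambda)\|\leq h_1(|\lambda|)h_2(\re\lambda)=h_1(\sqrt{\sigma^2+a^2})\,h_2(\sigma),
\]
and by Hypothesis (H)(iv), $h_2$ is bounded on the compact interval $[b,\gamma]$, say by $C_1:=\sup_{\sigma\in[b,\gamma]}h_2(\sigma)<\infty$.

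Combining these bounds, and noting that $|\lambda|^k=(\sigma^2+a^2)^{k/2}\geq a^k$, we obtain
\[
\Big\|\int_{\Gamma_{a,\up}^\eps}\frac{e^{\lambda t}}{\lambda^k}R_A(\lambda)\rmd\lambda\Big\|
\leq C_1 e^{\gamma t}\int_{g_0(a)+\eps}^{\gamma}\frac{h_1(\sqrt{\sigma^2+a^2})}{(\sigma^2+a^2)^{k/2}}\rmd\sigma.
\]
For $k\geq 2n_0-1$, write
\[
\frac{h_1(\sqrt{\sigma^2+a^2})}{(\sigma^2+a^2)^{k/2}}
=\frac{h_1(\sqrt{\sigma^2+a^2})}{(\sqrt{\sigma^2+a^2})^{2n_0-1}}\cdot\frac{1}{(\sigma^2+a^2)^{(k-2n_0+1)/2}}
\leq \phi(\sqrt{\sigma^2+a^2})\cdot\frac{1}{a^{k-2n_0+1}},
\]
where $\phi(s):=h_1(s)/s^{2n_0-1}$ satisfies $\phi(s)\to 0$ as $s\to\infty$ by \eqref{limit-n-zero}. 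Since $\sqrt{\sigma^2+a^2}\geq a$, it follows that $\sup_{\sigma\in[g_0(a)+\eps,\gamma]}\phi(\sqrt{\sigma^2+a^2})\leq\sup_{s\geq a}\phi(s)\to 0$ as $a\to\infty$.

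Putting everything together,
\[
\Big\|\int_{\Gamma_{a,\up}^\eps}\frac{e^{\lambda t}}{\lambda^k}R_A(\lambda)\rmd\lambda\Big\|
\leq C_1 L\,e^{\gamma t}\cdot\frac{1}{a^{k-2n_0+1}}\cdot\sup_{s\geq a}\phi(s),
\]
which tends to $0$ as $a\to\infty$. The argument for $\Gamma_{a,\down}^\eps$ is identical after replacing $a$ by $-a$ in the imaginary part and using the same bound on the real-part range. The only step worth double-checking carefully is the use of $b>-\infty$ and $\sup g_0<\omega$ to pin $\sigma$ in a fixed compact interval, as both the boundedness of $h_2$ and the finiteness of the segment length rely on it; without this hypothesis one would need a different argument (and in general this uniform bound could fail), which is precisely why the lemma is stated under this additional assumption.
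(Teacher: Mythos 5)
There is a genuine gap: you apply Hypothesis (H)(iii) on the whole real-part range $\sigma\in[g_0(a)+\eps,\gamma]$, but that hypothesis only provides the bound $\|R_A(\lambda)\|\leq h_1(|\lambda|)h_2(\re\lambda)$ for $\lambda\in\Omega_0\setminus\CC_\omega^+$, i.e.\ for $\re\lambda\leq\omega$. Since $\gamma>\omega$, the portion of $\Gamma_{a,\up}^\eps$ with $\sigma\in(\omega,\gamma]$ lies in $\CC_\omega^+$, where (H)(iii) is silent; in particular $h_2$ need not control anything there, and writing $C_1:=\sup_{\sigma\in[b,\gamma]}h_2(\sigma)$ implicitly extends the bound past $\omega$ without justification. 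This is not a cosmetic issue: the only estimate you have on $\CC_\omega^+$ is \eqref{RA-est}, $\|R_A(\lambda)\|\leq M/(|\lambda|(\re\lambda-\omega))$, and integrating this over $\sigma\in[\omega,\gamma]$ produces the divergent $\int_\omega^\gamma(\sigma-\omega)^{-1}\rmd\sigma$, so a direct substitution does not close the argument.

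The paper's proof handles exactly this by splitting the horizontal segment at $\re\lambda=\omega$. On $[g_0(\pm a)+\eps,\omega]$ it argues essentially as you do (using Remark~\ref{r3.8} rather than your $\sup_{s\geq a}\phi(s)$ bound, though both are valid, and your version is arguably cleaner). On $[\omega,\gamma]$ it instead uses the resolvent identity to write $R_A(\xi\pm\rmi a)$ in terms of $R_A(\omega\pm\rmi a)$, then bounds $\|(\xi-\omega)R_A(\xi\pm\rmi a)\|$ via \eqref{RA-est} — the factor $(\xi-\omega)$ cancels the singular denominator — and bounds $\|R_A(\omega\pm\rmi a)\|$ via (H)(iii) on the boundary $\re\lambda=\omega$, which yields a factor $h_1(\sqrt{\omega^2+a^2})/(\omega^2+a^2)^{k/2}\to0$. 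To repair your proof you would need to insert this (or an equivalent) treatment of the segment $\sigma\in[\omega,\gamma]$; without it, the claimed inequality in your central display does not hold.
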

\begin{proof} Fix $t\geq0$, $\eps\in(0,\eps_0)$ and $a>\max_{1\leq k\leq m}|\im\mu_k|$. From \eqref{def-Gamma-a-eps} we have
\begin{align}\label{l3.9.1}
\int_{\Gamma_{a,\up}^\eps}\frac{e^{\lambda t}}{\lambda^k}R_A(\lambda)\rmd\lambda&=-\int_{g_0(a)+\eps}^\gamma\frac{e^{(\xi+\rmi a)t}}{(\xi+\rmi a)^k}R_A(\xi+\rmi a)\rmd\xi,\nonumber\\
\int_{\Gamma_{a,\down}^\eps}\frac{e^{\lambda t}}{\lambda^k}R_A(\lambda)\rmd\lambda&=\int_{g_0(-a)+\eps}^\gamma\frac{e^{(\xi-\rmi a)t}}{(\xi-\rmi a)^k}R_A(\xi-\rmi a)\rmd\xi.
\end{align}		
From Hypothesis (H)(iii) we obtain
\begin{equation}\label{l3.9.2}
\Bigl\|\frac{e^{\lambda t}}{\lambda^k}R_A(\lambda)\Bigr\|=\frac{e^{(\re\lambda)t}}{|\lambda|^k}\|R_A(\lambda)\|\leq\frac{e^{\omega t}}{|\lambda|^k}h_1(|\lambda|)h_2(\re\lambda)\leq e^{\omega t}\bigl(\sup_{b\leq s\leq\gamma}h_2(s)\bigr)\frac{h_1(|\lambda|)}{|\lambda|^k},
\end{equation}
whenever $\lambda\in\mathrm{Range}(\Gamma_{a,\up}^\eps)\cup\mathrm{Range}(\Gamma_{a,\down}^\eps)$ and $\re\lambda\leq\omega$. It follows that
\begin{equation}\label{l3.9.3}
\Bigg\|\int_{g_0(\pm a)+\eps}^\omega\frac{e^{(\xi\pm\rmi a)t}}{(\xi\pm\rmi a)^k}R_A(\xi\pm\rmi a)\rmd\xi\Bigg\|	\leq e^{\omega t}\bigl(\sup_{b\leq s\leq\gamma}h_2(s)\bigr)\int_{b}^\omega\frac{h_1(\sqrt{\xi^2+a^2})}{(\xi^2+a^2)^{\frac{k}{2}}}\rmd\xi.	
\end{equation}
Since $\lim\limits_{s\to\infty}\frac{h_1(s)}{s^k}=0$ by the assumption and $\chi_{[b,\omega]}\in L^1(\RR)$, from Remark~\ref{r3.8} we infer
\begin{equation}\label{l3.9.4}
\lim_{a\to\infty}\int_{b}^\omega\frac{h_1(\sqrt{\xi^2+a^2})}{(\xi^2+a^2)^{\frac{k}{2}}}\rmd\xi=0.	
\end{equation}
From \eqref{l3.9.3} and \eqref{l3.9.4} we conclude that
\begin{equation}\label{l3.9.5}
\lim_{a\to\infty}\int_{g_0(a)+\eps}^\omega\frac{e^{(\xi+\rmi a)t}}{(\xi+\rmi a)^k}R_A(\xi+\rmi a)\rmd\xi=\lim_{a\to\infty}\int_{g_0(-a)+\eps}^\omega\frac{e^{(\xi-\rmi a)t}}{(\xi-\rmi a)^k}R_A(\xi-\rmi a)\rmd\xi=0.
\end{equation}
Since $a>\max_{1\leq k\leq m}|\im\mu_k|$ from Lemma~\ref{l3.2}(i) we have $(\omega\pm\rmi a)^2\in\rho(A)$. Moreover, from Hypothesis (H)(iii), \eqref{RA-est} and the resolvent equation we estimate:
\begin{align}\label{l3.9.6}
\|R_A(\xi\pm\rmi a)\|&\leq\|R_A(\omega\pm\rmi a)\|+\bigl\|(\xi-\omega)(\xi+\omega\pm 2\rmi a)R_A(\omega\pm\rmi a)R_A(\xi\pm\rmi a)\bigr\|\nonumber\\
&\leq\|R_A(\omega\pm\rmi a)\|+|\xi+\omega+2\rmi a|\,\|R_A(\omega\pm\rmi a)\|\,\|(\xi-\omega)R_A(\xi\pm\rmi a)\|\nonumber\\
&\leq\Bigg(1+M\sqrt{\frac{(\xi+\omega)^2+4a^2}{\xi^2+a^2}}\Bigg)h_1(\sqrt{\omega^2+a^2})h_2(\omega)\nonumber\\
& \leq\Bigg(1+M\sqrt{\frac{(\gamma+\omega)^2+4a^2}{\omega^2+a^2}}\Bigg)h_1(\sqrt{\omega^2+a^2})h_2(\omega)	
\end{align}	
for any $\xi\in[\omega,\gamma]$. From \eqref{l3.9.6} it follows that
\begin{equation}\label{l3.9.7}
\Bigg\|\int_{\omega}^\gamma\frac{e^{(\xi\pm\rmi a)t}}{(\xi\pm\rmi a)^k}R_A(\xi\pm\rmi a)\rmd\xi\Bigg\|\leq(\gamma-\omega)h_2(\omega)e^{\gamma t}\Bigg(1+M\sqrt{\frac{(\gamma+\omega)^2+4a^2}{\omega^2+a^2}}\Bigg)\frac{h_1(\sqrt{\omega^2+a^2})}{(\omega^2+a^2)^{\frac{k}{2}}}.
\end{equation}
Since $\lim\limits_{a\to\infty}\frac{h_1(\sqrt{\omega^2+a^2})}{(\omega^2+a^2)^{\frac{k}{2}}}=0$ from \eqref{l3.9.7} we obtain that
\begin{equation}\label{l3.9.8}
\lim_{a\to\infty}\int_\omega^\gamma\frac{e^{(\xi+\rmi a)t}}{(\xi+\rmi a)^k}R_A(\xi+\rmi a)\rmd\xi=\lim_{a\to\infty}\int_\omega^\gamma\frac{e^{(\xi-\rmi a)t}}{(\xi-\rmi a)^k}R_A(\xi-\rmi a)\rmd\xi=0.
\end{equation}
The lemma follows shortly from \eqref{l3.9.1}, \eqref{l3.9.5} and \eqref{l3.9.8}.	
\end{proof}
\begin{remark}\label{r3.10}
From \eqref{def-eps-o} and Hypothesis (H) one can readily check that $\cM$ is contained in the interior of the set enclosed by the path $\Gamma_a^\eps$ for any $a>\max\limits_{1\leq k\leq m}|\im\mu_k|$, $\eps\in(0,\eps_0)$. Similarly, $0\notin\mathrm{Range}(\Gamma_{a,\mi}^\eps)$. Moreover, $0$ belongs to the interior of the set enclosed by $\Gamma_a^\eps$	if and only if $g_0(0)<0$.
\end{remark}
Next, we recall the definition of convergence in the \textit{principal value sense}. Let $h:\Omega\to\bX$ be a continuous function and $\Psi:\RR\to\CC$ a piecewise smooth path. We say that $\displaystyle\int_\Psi h(\lambda)\rmd\lambda$ is \textit{convergent in the principal value sense} if $\lim\limits_{a\to\infty}\displaystyle\int_{\Psi_a}h(\lambda)\rmd\lambda$ exists in $\bX$ norm, where $\Psi_a=\Psi_{|[-a,a]}$. In this case we use the notation  $\PV\displaystyle\int_\Psi h(\lambda)\rmd\lambda=\lim\limits_{a\to\infty}\displaystyle\int_{\Psi_a}h(\lambda)\rmd\lambda$.	

We are ready to formulate the main results of this section. We introduce the path
\begin{equation}\label{def-Lambda-zero}	
\Lambda_0^\eps: \lambda=g_0(s)+\eps+\rmi s,\; s\in\RR.	
\end{equation}	
\begin{theorem}\label{t3.11}
Assume Hypothesis (H) and, in addition, that  $b:=\inf\limits_{s\in\RR}g_0(s)>-\infty$. Then,
\begin{enumerate}
\item[(i)]  The integral $\displaystyle\int_{\Lambda_0^\eps}\frac{e^{\lambda t}}{\lambda^{2n-1}}R_A(\lambda)A^nx\rmd\lambda$ is convergent in the principal value sense in $\bX$ for any $t\geq 0$, $x\in\dom(A^n)$, $\eps\in(0,\eps_0)$, $n\in\NN$, with $n\geq n_0$;
\item[(ii)] $C(t)x=\displaystyle\sum\limits_{k=1}^{m} \sum_{j=1}^{N_k}p_{1,k,j}(t)e^{\mu_k t}(A-\mu_k^2 I_\bX)^{j-1}P_kx  +\frac{1}{2\pi\rmi}\PV\displaystyle\int_{\Lambda_0^\eps}\frac{e^{\lambda t}}{\lambda^{2n-1}}R_A(\lambda)A^nx\rmd\lambda+$

$+\chi_{[0,\infty)}(g_0(0))\displaystyle\sum\limits_{j=0}^{n-1}\frac{t^{2j}}{(2j)!}A^jx$ for any $t\geq 0$, $x\in\dom(A^n)$, $\eps\in(0,\eps_0)$, $n\in\NN$, with $n\geq n_0$.
\end{enumerate}
The positive integer $n_0$ is introduced in Hypothesis (H)(iv),the projection $P_k$ and $N_k\in\NN$, $k=1,\dots,m$, were introduced in Lemma~\ref{l3.3}, and $p_{1,k,j}$ is defined in \eqref{polynomial-representation}.
\end{theorem}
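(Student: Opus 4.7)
The plan is to start from the integral representation of $C(t)x$ given by Lemma~\ref{l2.10}(ii) along the vertical line $\re\lambda=\gamma$, and to shift the contour of integration to $\Lambda_0^\eps$ by invoking the residue theorem on the closed path $\Gamma_a^\eps$ introduced in \eqref{def-Gamma-a-eps}. The key ingredients are Lemma~\ref{l3.9} (to discard the top and bottom horizontal segments of $\Gamma_a^\eps$ as $a\to\infty$) together with the residue bookkeeping supplied by Lemma~\ref{l3.5}, Lemma~\ref{l3.3}(iv), and Remark~\ref{r3.4}.

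More concretely, fix $n\geq n_0$, $x\in\dom(A^n)$ and $\eps\in(0,\eps_0)$, and pick $a>\max_{1\leq k\leq m}|\im\mu_k|$ large enough that every $\mu_k$ lies in the interior of $\Gamma_a^\eps$. By Remark~\ref{r3.10}, the singularities of the map $\lambda\mapsto\lambda^{-(2n-1)}e^{\lambda t}R_A(\lambda)A^nx$ enclosed by $\Gamma_a^\eps$ are exactly the points of $\cM$ together with $\lambda=0$ when (and only when) $g_0(0)<0$. Applying the residue theorem and decomposing $\Gamma_a^\eps$ as $\Gamma_{a,\gamma}\cup\Gamma_{a,\up}^\eps\cup(-\Gamma_{a,\mi}^\eps)\cup\Gamma_{a,\down}^\eps$, I would let $a\to\infty$: the segment $\Gamma_{a,\gamma}$ converges absolutely to the integral along $\re\lambda=\gamma$ by Lemma~\ref{l2.10}(i); the segments $\Gamma_{a,\up}^\eps$ and $\Gamma_{a,\down}^\eps$ vanish in the limit by Lemma~\ref{l3.9}, where $n\geq n_0$ supplies the required inequality $2n-1\geq 2n_0-1$ and the standing hypothesis $\inf g_0>-\infty$ provides the bound $b$ used in that lemma. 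Since all three of these limits exist and the sum of the residues does not depend on $a$, the remaining integral along $\Gamma_{a,\mi}^\eps$ must also have a limit, which is by definition $\PV\int_{\Lambda_0^\eps}\lambda^{-(2n-1)}e^{\lambda t}R_A(\lambda)A^nx\,\rmd\lambda$; this proves (i).

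Finally, I would identify the residues to obtain (ii). For $\mu_k\ne 0$, Lemma~\ref{l3.5}(i) reduces the residue to $\Res(\lambda e^{\lambda t}R_A(\lambda)x,\mu_k)$, which by Lemma~\ref{l3.3}(iv) with $\kappa=1$ and Remark~\ref{r3.4} equals $\sum_{j=1}^{N_k}p_{1,k,j}(t)e^{\mu_k t}(A-\mu_k^2I_\bX)^{j-1}P_kx$. If $g_0(0)<0$ there is an additional residue at $\lambda=0$: when $0\in\cM$, Lemma~\ref{l3.5}(ii) produces the same polynomial expression (with $\mu_k^2=0$) plus the remainder $-\sum_{j=0}^{n-1}\frac{t^{2j}}{(2j)!}A^jx$; when $0\in\Omega_0\setminus\cM$, the pole at $0$ is artificial (introduced by the factor $\lambda^{-(2n-1)}$) and Lemma~\ref{l3.5}(iii) yields precisely $-\sum_{j=0}^{n-1}\frac{t^{2j}}{(2j)!}A^jx$. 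In either subcase the extra polynomial cancels the explicit remainder from Lemma~\ref{l2.10}(ii), and indeed $\chi_{[0,\infty)}(g_0(0))=0$. When $g_0(0)\geq 0$, the origin lies outside $\Gamma_a^\eps$, no extra residue arises, the polynomial remainder from Lemma~\ref{l2.10}(ii) survives, and $\chi_{[0,\infty)}(g_0(0))=1$. The main technical obstacle is exactly this bookkeeping around $\lambda=0$, where the artificial pole contributed by $\lambda^{-(2n-1)}$ must be tracked in the three cases distinguished by the sign of $g_0(0)$ and the membership of $0$ in $\cM$; Lemma~\ref{l3.5} is engineered for this purpose, so the main argument reduces to its careful case-by-case invocation.
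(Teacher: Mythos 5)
Your proposal follows the paper's own proof essentially line by line: start from the vertical-line representation of Lemma~\ref{l2.10}(ii), apply the residue theorem on the closed path $\Gamma_a^\eps$, discard the horizontal legs via Lemma~\ref{l3.9} as $a\to\infty$, and identify the residues using Lemma~\ref{l3.5}, Lemma~\ref{l3.3}(iv), and Remark~\ref{r3.4}. Your case-by-case bookkeeping around $\lambda=0$ is also correct and is in fact spelled out more carefully than in the paper's terse version of \eqref{t3.11.1}, so no gaps.
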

\begin{proof} Fix $t\geq 0$, $x\in\dom(A^n)$, $\eps\in(0,\eps_0)$. By Remark~\ref{r3.10}, \eqref{def-eps-o} and \eqref{def-Gamma-a-eps}, the singularities of the meromorphic function $\lambda\to\frac{e^{\lambda t}}{\lambda^{2n-1}}R_A(\lambda)A^nx:\Omega_0\setminus\bigl(\cM\cup\{0\}\bigr)\to\bX$ located in the interior of the set enclosed by the closed path $\Gamma_a^\eps$ are the finitely meromorphic points $\{\mu_k:k=1,\dots,m\}$  and, possibly $0$, provided that $g_0(0)<0$. From Lemma~\ref{l3.3}, Remark~\ref{r3.4},  Lemma~\ref{l3.5} and the Residues Theorem we obtain
\begin{align}\label{t3.11.1}
\frac{1}{2\pi\rmi}\int_{\Gamma_a^\eps}\frac{e^{\lambda t}}{\lambda^{2n-1}}&R_A(\lambda)A^nx\rmd\lambda=\sum_{k=1}^{m}\Res(\lambda e^{\lambda t}R_A(\lambda)x,\lambda=\mu_k)+\chi_{(-\infty,0)}(g_0(0))\sum_{j=0}^{n-1}\frac{t^{2j}}{(2j)!}A^jx\nonumber\\
&=\sum_{k=1}^{m} \sum_{j=1}^{N_k}p_{1,k,j}(t)e^{\mu_k t}(A-\mu_k^2 I_\bX)^{j-1}P_kx+\chi_{(-\infty,0)}(g_0(0))\sum_{j=0}^{n-1}\frac{t^{2j}}{(2j)!}A^jx.	
\end{align}
for any $a>\max\limits_{1\leq k\leq m}|\im\mu_k|+1$. The theorem now follows from Lemma ~\ref{l2.10} and Lemma~\ref{l3.9} by passing to the limit as $a\to\infty$ in \eqref{t3.11.1}. 		
\end{proof}
\begin{theorem}\label{t3.12}
Assume Hypothesis (H) and, in addition, that  $b:=\inf\limits_{s\in\RR}g_0(s)>-\infty$. Then,
\begin{enumerate}
\item[(i)] The integral $\displaystyle\int_{\Lambda_0^\eps}\frac{e^{\lambda t}}{\lambda^{2n}}R_A(\lambda)A^{n-1}w\rmd\lambda$ belongs to $\dom(A)$ and it is convergent in the principal value sense in $\bX$ for any $t\geq 0$, $w\in\bW_{n-1}$, $\eps\in(0,\eps_0)$, $n\in\NN$, with $n\geq n_0$;
\item[(ii)] $S(t)w=\displaystyle\sum\limits_{k=1}^{m} \sum_{j=1}^{N_k}p_{0,k,j}(t)e^{\mu_k t}(A-\mu_k^2 I_\bX)^{j-1}P_kw  +\frac{1}{2\pi\rmi}A\PV\displaystyle\int_{\re\lambda=g_0(\im\lambda)+\eps}\frac{e^{\lambda t}}{\lambda^{2n}}R_A(\lambda)A^{n-1}w\rmd\lambda$
		
$+\chi_{[0,\infty)}(g_0(0))\sum\limits_{j=0}^{n-1}\frac{t^{2j+1}}{(2j+1)!}A^jw$ for any $t\geq 0$, $w\in\bW_{n-1}$, $\eps\in(0,\eps_0)$, $n\in\NN$, with $n\geq n_0$.
\end{enumerate}
We recall the definition of the projection $P_k$ and $N_k\in\NN$, $k=1,\dots,m$, in Lemma~\ref{l3.3}, and of the polynomial $p_{1,k,j}$  in \eqref{polynomial-representation}.
\end{theorem}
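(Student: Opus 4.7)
The plan is to mimic the proof of Theorem~\ref{t3.11}, starting from the sine representation of Lemma~\ref{l2.11}(iv) and using Lemma~\ref{l3.6} in place of Lemma~\ref{l3.5} for the residues. Fix $t\geq 0$, $w\in\bW_{n-1}$, $\eps\in(0,\eps_0)$, $n\geq n_0$ and $\gamma>\omega$, and set $f(\lambda):=\lambda^{-2n}e^{\lambda t}R_A(\lambda)A^{n-1}w$; by Lemma~\ref{l3.6}(i), $f$ is meromorphic on $\Omega_0\setminus(\cM\cup\{0\})$ with values in $\dom(A)$ at every regular point. Applying the residues theorem to $f$ on the closed contour $\Gamma_a^\eps$ (with $a>\max_k|\im\mu_k|$) identifies $\frac{1}{2\pi\rmi}\int_{\Gamma_a^\eps}f\,\rmd\lambda$ with the sum of the residues at the $\mu_k$'s together with the residue at $0$ whenever $g_0(0)<0$.

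The main new technical ingredient is an analog of Lemma~\ref{l3.9} for the integrand $\lambda^{-2n}e^{\lambda t}AR_A(\lambda)A^{n-1}w$. On the part of the horizontal segments where $\re\lambda\in[g_0(\pm a)+\eps,\omega]$, Lemma~\ref{l3.7} yields
$$
\Big\|\frac{e^{\lambda t}}{\lambda^{2n}}AR_A(\lambda)A^{n-1}w\Big\|\leq C\,e^{\omega t}h_2(\re\lambda)\Big(\frac{1}{|\lambda|^{2n}}+\frac{h_1(|\lambda|)}{|\lambda|^{2n-1}}\Big)\|A^{n-1}w\|_\bW,
$$
and since $h_1(s)/s^{2n-1}\to 0$ for $n\geq n_0$, Remark~\ref{r3.8} forces this contribution to vanish as $a\to\infty$. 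For the remaining short segment $\re\lambda\in[\omega,\gamma]$, I would invoke the resolvent identity
$$
AR_A(\xi\pm\rmi a)A^{n-1}w=AR_A(\omega\pm\rmi a)A^{n-1}w-(\xi-\omega)(\xi+\omega\pm 2\rmi a)R_A(\xi\pm\rmi a)AR_A(\omega\pm\rmi a)A^{n-1}w
$$
combined with $\|(\xi-\omega)R_A(\xi\pm\rmi a)\|\leq M/|\xi\pm\rmi a|$ from \eqref{RA-est} and the Lemma~\ref{l3.7} bound at the reference point $\omega\pm\rmi a$, producing an estimate of order $(a+\omega)h_1(\sqrt{\omega^2+a^2})/(\omega^2+a^2)^n$, which likewise tends to zero.

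Having this extension of Lemma~\ref{l3.9} in hand, the absolute convergence of $\int_{\Gamma_{a,\mi}^\eps}\lambda^{-2n}e^{\lambda t}AR_A(\lambda)A^{n-1}w\,\rmd\lambda$ together with the closedness of $A$ and \cite[Proposition~1.1.7]{ABHN} permits the interchange of $A$ with the contour integral, so passing $a\to\infty$ yields assertion (i). For (ii), I would combine Lemma~\ref{l2.11}(iv) with the deformation identity $\int_{\re\lambda=\gamma}f\,\rmd\lambda=\PV\int_{\Lambda_0^\eps}f\,\rmd\lambda+2\pi\rmi\cdot(\text{residues})$ and compute $A\,\Res(f,\cdot)$ via Lemma~\ref{l3.6}(ii)--(iv): for $\mu_k\neq 0$, $A\,\Res(f,\mu_k)=\Res(e^{\lambda t}R_A(\lambda)w,\mu_k)=\sum_{l=1}^{N_k}p_{0,k,l}(t)e^{\mu_kt}(A-\mu_k^2I_\bX)^{l-1}P_kw$ by Lemma~\ref{l3.3}(iv) with $\kappa=0$, while $A\,\Res(f,0)$ contributes an additional $-\sum_{j=0}^{n-1}\frac{t^{2j+1}}{(2j+1)!}A^jw$ whenever $0$ lies inside $\Gamma_a^\eps$; a case analysis on the sign of $g_0(0)$ identical to the one in Theorem~\ref{t3.11} then produces the stated $\chi_{[0,\infty)}(g_0(0))$ prefactor in (ii). The principal obstacle throughout is the extension of Lemma~\ref{l3.9} just described: a naive bound based on $\|R_A\|$ alone costs a full power of $|\lambda|$, making the phase-space estimate of Lemma~\ref{l3.7} essential, as is the resolvent-identity trick on the short segment $\re\lambda\in[\omega,\gamma]$ that lies outside the regime of Hypothesis~(H)(iii).
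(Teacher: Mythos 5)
Your proof is correct, but it takes a genuinely different and more laborious route than the paper's. Both approaches apply the residues theorem to $f(\lambda)=\lambda^{-2n}e^{\lambda t}R_A(\lambda)A^{n-1}w$ along $\Gamma_a^\eps$ and send $a\to\infty$; the divergence is in how $\dom(A)$-membership of the PV limit is secured. You try to verify it directly by closedness of $A$, which forces you to show that $\int_{\Gamma_{a,\mi}^\eps}\lambda^{-2n}e^{\lambda t}AR_A(\lambda)A^{n-1}w\,\rmd\lambda$ also converges, and for this you develop an ``$AR_A$-version'' of Lemma~\ref{l3.9} built on the phase-space estimate of Lemma~\ref{l3.7} plus a resolvent-identity reduction on $\re\lambda\in[\omega,\gamma]$. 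The paper sidesteps all of this: the residue identity and Lemma~\ref{l3.9} (applied to $R_A$, with $k=2n$, in tandem with Lemma~\ref{l2.11}(ii)) already deliver $\PV\int_{\Lambda_0^\eps}f=\int_{\re\lambda=\gamma}f-2\pi\rmi\,\cS_{n,t}w$, and both right-hand pieces are known to lie in $\dom(A)$ with explicitly computable $A$-images -- the integral along $\re\lambda=\gamma$ by Lemma~\ref{l2.11}(iii)--(iv) and the residue sum by Lemma~\ref{l3.6}(i)--(iv). So the PV integral lands in $\dom(A)$ for free, and applying $A$ gives (ii) at once. Your proclaimed ``principal obstacle'' -- extending Lemma~\ref{l3.9} to $AR_A$ -- is thus not an obstacle the paper ever confronts; in fact Lemma~\ref{l3.7} is not used at all in the paper's proof of this theorem (it enters only later, in the error bounds of Lemma~\ref{l3.15}). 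One cosmetic slip in your estimate: the $\omega^{-1}$ term coming from Lemma~\ref{l3.7} should not carry a factor $h_2(\re\lambda)$; since that factor may vanish, your displayed bound is, as written, not a valid upper bound, though this is easily repaired and does not affect your conclusion.
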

\begin{proof}
Fix $t\geq 0$, $w\in\bW_{n-1}$, $\eps\in(0,\eps_0)$. We argue as in the proof of Theorem~\ref{t3.11} with \eqref{t3.11.1} replaced by
\begin{align}\label{t3.12.1}
\frac{1}{2\pi\rmi}\int_{\Gamma_a^\eps}\frac{e^{\lambda t}}{\lambda^{2n}}R_A(\lambda)A^{n-1}w\rmd\lambda&=\sum_{\mu\in\cM\setminus\{0\}}\Res\Bigl(\frac{e^{\lambda t}}{\lambda^{2n}}R_A(\lambda)A^{n-1}w,\lambda=\mu\Bigr)\nonumber\\&+\chi_{(-\infty,0)}(g_0(0))\,\Res\Bigl(\frac{e^{\lambda t}}{\lambda^{2n}}R_A(\lambda)A^{n-1}w,\lambda=0\Bigr).
\end{align}
for any $a>\max\limits_{1\leq k\leq m}|\im\mu_k|+1$. Passing to the limit as $a\to\infty$ in \eqref{t3.12.1}, from Lemma~\ref{l2.11}(ii) and
Lemma~\ref{l3.9} we obtain
\begin{equation}\label{t3.12.2}
\lim_{a\to\infty}\int_{\Gamma_{a,\mi}^\eps}\frac{e^{\lambda t}}{\lambda^{2n}}R_A(\lambda)A^{n-1}w\rmd\lambda=\int_{\re\lambda=\gamma}\frac{e^{\lambda t}}{\lambda^{2n}}R_A(\lambda)A^{n-1}w\rmd\lambda-2\pi\rmi\cS_{n,t}w, 	
\end{equation}	
where the linear operator $\cS_{n,t}:\bW_{n-1}\to\bX$ is defined by
\begin{equation}\label{t3.12.3}
\cS_{n,t}w=\sum_{\mu\in\cM\setminus\{0\}}\Res(\frac{e^{\lambda t}}{\lambda^{2n}}R_A(\lambda)A^{n-1}w,\lambda=\mu)+\chi_{(-\infty,0)}(g_0(0))\,\Res(\frac{e^{\lambda t}}{\lambda^{2n}}R_A(\lambda)A^{n-1}w,\lambda=0).
\end{equation}
From \eqref{t3.12.2} we  conclude that $\displaystyle\int_{\Lambda_0^\eps}\frac{e^{\lambda t}}{\lambda^{2n}}R_A(\lambda)A^{n-1}w\rmd\lambda$ is convergent in the principal value sense in $\bX$.	 Moreover, since $\mathrm{Range}(\cS_{n,t})\subseteq\dom(A)$ by Lemma~\ref{l3.6}(i), from Lemma~\ref{l2.11}(iii) we infer that $\PV\displaystyle\int_{\Lambda_0^\eps}\frac{e^{\lambda t}}{\lambda^{2n}}R_A(\lambda)A^{n-1}w\rmd\lambda\in\dom(A)$, proving (i).

Next, from Lemma~\ref{l3.3}, Remark~\ref{r3.4} and Lemma~\ref{l3.6}(ii)-(iv) we obtain that $\Range(\cS_{n,t})\subseteq\dom(A)$ and
\begin{align}\label{t3.12.4}
A\cS_{n,t}w&=\sum_{k=1}^{m}\Res(\lambda e^{\lambda t}R_A(\lambda),\lambda=\mu_k)w+\chi_{(-\infty,0)}(g_0(0))\sum_{j=0}^{n-1}\frac{t^{2j}}{(2j)!}A^jx\nonumber\\
&=\sum_{k=1}^{m} \sum_{j=1}^{N_k}p_{0,k,j}(t)e^{\mu_k t}(A-\mu_k^2 I_\bX)^{j-1}P_kw+\chi_{(-\infty,0)}(g_0(0))\sum_{j=0}^{n-1}\frac{t^{2j+1}}{(2j+1)!}A^jw.		
\end{align}	
From Lemma~\ref{l2.11}(iv), \eqref{t3.12.2} and \eqref{t3.12.4} it follows that
\begin{align*}
&\frac{1}{2\pi\rmi}A\PV\displaystyle\int_{\Lambda_0^\eps}\frac{e^{\lambda t}}{\lambda^{2n}}R_A(\lambda)A^{n-1}w\rmd\lambda=\frac{1}{2\pi\rmi}A\int_{\re\lambda=\gamma}\frac{e^{\lambda t}}{\lambda^{2n}}R_A(\lambda)A^{n-1}w\rmd\lambda-A\cS_{n,t}w\nonumber\\
&\qquad\qquad=S(t)w-\sum_{j=0}^{n-1}\frac{t^{2j+1}}{(2j+1)!}A^jw-\sum_{k=1}^{m} \sum_{j=1}^{N_k}p_{0,k,j}(t)e^{\mu_k t}(A-\mu_k^2 I_\bX)^{j-1}P_kw\nonumber\\&\qquad\qquad\qquad\qquad+\chi_{(-\infty,0)}(g_0(0))\sum_{j=0}^{n-1}\frac{t^{2j+1}}{(2j+1)!}A^jw\nonumber\\
&\qquad\qquad=S(t)w-\chi_{[0,\infty)}(g_0(0))\sum\limits_{j=0}^{n-1}\frac{t^{2j+1}}{(2j+1)!}A^jw-\sum_{k=1}^{m} \sum_{j=1}^{N_k}p_{0,k,j}(t)e^{\mu_k t}(A-\mu_k^2 I_\bX)^{j-1}P_kw,
\end{align*}
proving the theorem.	
\end{proof}
We note that the conclusion of Theorem~\ref{t3.12} remain true if \eqref{limit-n-zero} is replaced by $\lim\limits_{s\to\infty}\frac{h_1(s)}{s^{2n_0}}=0$. Out next task is to estimate the integral terms in the representations of the cosine and sine families given in Theorem~\ref{t3.11} and Theorem~\ref{t3.12}, respectively. To obtain appropriate estimates we have to find a balance between the smoothness of the initial conditions of \eqref{abstract-Cauchy} and the decay properties at $+\infty$ of the function $h_1$ introduced in Hypothesis (H).
\begin{lemma}\label{l3.13}
Assume Hypothesis (H). Then, the following assertions hold true:
\begin{itemize}
	\item[(i)] There exists $M_\mi^\rmc:\NN\times(0,\infty)\times(0,\eps_0)\to(0,\infty)$  such that
\begin{equation}\label{Error-cos-mid}
\Bigg\|\int_{\Gamma_{a,\mi}^\eps}\frac{e^{\lambda t}}{\lambda^{2n-1}}R_A(\lambda)A^nx\rmd\lambda\Bigg\|\leq M_\mi^\rmc(n,a,\eps)e^{(\sup_{s\in\RR}g_0(s)+\eps)t}\|A^{n-1}x\|	
\end{equation}		
for any $t\geq 0$, $x\in\dom(A^n)$, $\eps\in(0,\eps_0)$, $a>0$;
	\item[(ii)] There exists $M_\mi^\rms:\NN\times(0,\infty)\times(0,\eps_0)\to(0,\infty)$  such that
\begin{equation}\label{Error-sine-mid}
	\Bigg\|A\int_{\Gamma_{a,\mi}^\eps}\frac{e^{\lambda t}}{\lambda^{2n}}R_A(\lambda)A^{n-1}w\rmd\lambda\Bigg\|\leq M_\mi^\rms(n,a,\eps)e^{(\sup_{s\in\RR}g_0(s)+\eps)t}\|A^{n-1}w\|
\end{equation}		
for any $t\geq 0$, $x\in\bW_{n-1}$, $\eps\in(0,\eps_0)$, $a>0$.	
\end{itemize}
\end{lemma}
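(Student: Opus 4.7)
The plan is to exploit the identity from Lemma~\ref{l3.2}(iii) to rewrite both integrands in a form that separates a simple rational factor in $\lambda$ from a term of the shape $R_A(\lambda)y$ with $y=A^{n-1}x$ or $y=A^{n-1}w$. Then the pointwise norm bound from Hypothesis~(H)(iii) together with the trivial bound $\re\lambda\le\sup_{s\in\RR} g_0(s)+\eps$ holding on $\Gamma_{a,\mi}^\eps$ produce the exponential factor $e^{(\sup g_0+\eps)t}$, while the remaining $\lambda$-dependent pieces are integrated against the bounded arc-length element of a compact path to give a finite constant depending only on $n,a,\eps$.

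For part (i), apply the $n=1$ case of Lemma~\ref{l3.2}(iii), namely $R_A(\lambda)Az=\lambda^2R_A(\lambda)z-z$, to $z=A^{n-1}x\in\dom(A)$ (well-defined since $x\in\dom(A^n)$), which yields
\[
\frac{e^{\lambda t}}{\lambda^{2n-1}}R_A(\lambda)A^nx=\frac{e^{\lambda t}}{\lambda^{2n-3}}R_A(\lambda)A^{n-1}x-\frac{e^{\lambda t}}{\lambda^{2n-1}}A^{n-1}x.
\]
Taking norms, using $\|R_A(\lambda)A^{n-1}x\|\le\|R_A(\lambda)\|\,\|A^{n-1}x\|$, and the fact that $e^{(\re\lambda)t}\le e^{(\sup g_0+\eps)t}$ for $t\ge 0$, integration along $\Gamma_{a,\mi}^\eps$ produces an estimate of the required form, with $M_\mi^\rmc(n,a,\eps)$ being (up to the factor $1/(2\pi)$ absorbed into constants later) the supremum of $|\lambda|^{-(2n-3)}\|R_A(\lambda)\|+|\lambda|^{-(2n-1)}$ on the path, times the arc-length.

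For part (ii) the same strategy works, with the additional step of moving $A$ inside the integral. Since $R_A(\lambda)A^{n-1}w\in\dom(A)$ for $\lambda\in\Omega_0\setminus\cM$ by Lemma~\ref{l3.2}, and the integrand $\lambda\mapsto\frac{e^{\lambda t}}{\lambda^{2n}}R_A(\lambda)A^{n-1}w$ and its $A$-image $\lambda\mapsto\frac{e^{\lambda t}}{\lambda^{2n}}(\lambda^2R_A(\lambda)A^{n-1}w-A^{n-1}w)$ are continuous on $\mathrm{Range}(\Gamma_{a,\mi}^\eps)$, \cite[Proposition 1.1.7]{ABHN} allows one to pull $A$ under the integral. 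Applying again $AR_A(\lambda)y=\lambda^2R_A(\lambda)y-y$ with $y=A^{n-1}w$ yields
\[
\frac{e^{\lambda t}}{\lambda^{2n}}AR_A(\lambda)A^{n-1}w=\frac{e^{\lambda t}}{\lambda^{2n-2}}R_A(\lambda)A^{n-1}w-\frac{e^{\lambda t}}{\lambda^{2n}}A^{n-1}w,
\]
and the argument of part (i) applies \emph{mutatis mutandis} to deliver $M_\mi^\rms(n,a,\eps)$.

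The only non-cosmetic point to verify is boundedness of $R_A(\lambda)$ along the whole path $\Gamma_{a,\mi}^\eps$. Hypothesis~(H)(iii) provides $\|R_A(\lambda)\|\le h_1(|\lambda|)h_2(\re\lambda)$ only for $|\im\lambda|>\max_k|\im\mu_k|$, so for the remaining compact sub-arc one appeals instead to the analyticity of $R_A$ on $\Omega_0\setminus\cM$ combined with the fact that, by \eqref{def-eps-o} and Remark~\ref{r3.10}, this sub-arc stays at positive distance both from $\cM$ and from~$0$; continuity of $R_A$ on a compact set then gives the bound. Likewise $h_2$ is bounded on compact intervals by Hypothesis~(H)(iv), and $h_1$ is piecewise continuous, so the combined constant is finite. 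Since the splitting at $|\im\lambda|=\max_k|\im\mu_k|$ is independent of $t$, the resulting $M_\mi^{\rmc/\rms}(n,a,\eps)$ depends only on $n,a,\eps$ as required.
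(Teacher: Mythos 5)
Your argument is correct and follows the same route as the paper's proof: the identity $R_A(\lambda)Az=\lambda^2R_A(\lambda)z-z$ (equivalently $AR_A(\lambda)y=\lambda^2R_A(\lambda)y-y$) is used to split each integrand into a resolvent term of lower order in $|\lambda|$ plus a scalar multiple of $A^{n-1}x$ (resp.\ $A^{n-1}w$), the exponential factor is extracted from $e^{(\re\lambda)t}\le e^{(\sup g_0+\eps)t}$, and $A$ is pulled inside the compact-path integral via the closedness of $A$ and \cite[Proposition 1.1.7]{ABHN}. Your extra paragraph justifying finiteness of the suprema on the compact part of the path (by continuity of $R_A$ on $\Omega_0\setminus\cM$ and the fact that $\Gamma_{a,\mi}^\eps$ avoids $\cM$ and $0$) is a worthwhile clarification that the paper leaves implicit.
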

\begin{proof} Fix $t\geq 0$, $x\in\dom(A^n)$, $w\in\bW_{n-1}$, $\eps\in(0,\eps_0)$. From \eqref{l3.2.2}	and \eqref{l3.2.4} we obtain that
\begin{align}\label{t3.13.1}
\frac{e^{\lambda t}}{\lambda^{2n-1}}R_A(\lambda)A^nx&=\frac{e^{\lambda t}}{\lambda^{2n-3}}R_A(\lambda)A^{n-1}x-\frac{e^{\lambda t}}{\lambda^{2n-1}}A^{n-1}x,\nonumber\\\frac{e^{\lambda t}}{\lambda^{2n}}AR_A(\lambda)A^{n-1}w&=\frac{e^{\lambda t}}{\lambda^{2n-2}}R_A(\lambda)A^{n-1}w-\frac{e^{\lambda t}}{\lambda^{2n}}A^{n-1}w.
\end{align}
for any $\lambda\in\Omega_0\setminus\bigl(\cM\cup\{0\}\bigr)$. Assertion (i) follows from \eqref{t3.13.1} for
\begin{align}\label{l3.13.2}
M_\mi^\rmc(n,a,\eps)&=\Bigl(\max\{M_{1,\mi}^\rmc(n,a,\eps),M_{2,\mi}^\rmc(n,a,\eps)\}\Bigr)\mathrm{Lenght}(\Gamma_{a,\mi}^\eps),\;\mbox{where}\nonumber\\
M_{1,\mi}^\rmc(n,a,\eps)&:=\sup_{\lambda\in\mathrm{Range}(\Gamma_{a,\mi}^\eps)}\frac{\|R_A(\lambda)\|}{|\lambda|^{2n-3}},\;M_{2,\mi}^\rmc(n,a,\eps):=\sup_{\lambda\in\mathrm{Range}(\Gamma_{a,\mi}^\eps)}\frac{1}{|\lambda|^{2n-1}}
\end{align}	
Since the linear operator $A$ is closed, \cite[Proposition 1.1.7]{ABHN} gives
\begin{equation}\label{t3.13.3}
A\int_{\Gamma_{a,\mi}^\eps}\frac{e^{\lambda t}}{\lambda^{2n}}R_A(\lambda)A^{n-1}w\rmd\lambda=\int_{\Gamma_{a,\mi}^\eps}\frac{e^{\lambda t}}{\lambda^{2n}}AR_A(\lambda)A^{n-1}w\rmd\lambda.	
\end{equation}	
Assertion (ii) follows from \eqref{t3.13.1} and \eqref{t3.13.3} for
\begin{align}\label{t3.13.4}
M_\mi^\rms(n,a,\eps)&=\Bigl(\max\{M_{1,\mi}^\rms(n,a,\eps),M_{2,\mi}^\rms(n,a,\eps)\}\Bigr)\mathrm{Lenght}(\Gamma_{a,\mi}^\eps),\;\mbox{where}\nonumber\\
M_{1,\mi}^\rms(n,a,\eps)&:=\sup_{\lambda\in\mathrm{Range}(\Gamma_{a,\mi}^\eps)}\frac{\|R_A(\lambda)\|}{|\lambda|^{2n-2}},\;M_{2,\mi}^\rms(n,a,\eps):=\sup_{\lambda\in\mathrm{Range}(\Gamma_{a,\mi}^\eps)}\frac{1}{|\lambda|^{2n}}.
\end{align}	
\end{proof}
In addition to the path $\Gamma_{a,\mi}^\eps$ introduced in \eqref{def-Gamma-a-eps}, we introduce the path
\begin{equation}\label{def-Gamma-out}
\Gamma_{a,\out}^\eps: \lambda=g_0(s)+\eps+\rmi s,\; s\in(-\infty,-a]\cup [a,\infty).
\end{equation}
To estimate integrals along the path $\Gamma_{a,\out}^\eps$ we need the elementary integral estimate \eqref{r3.14.6} below.
\begin{remark}\label{r3.14}
Assume  that the function $g_0$ introduced in Hypothesis (H) is such that $g_0,g_0'\in L^\infty(\RR)$. We define
\begin{equation}\label{r3.14.1}
\os_0=\max\big\{2(\|g_0\|_\infty+\eps_0)\|g_0'\|_\infty,\|g_0\|_\infty+\eps_0\big\}+1.	
\end{equation}
One can readily check that the function
\begin{equation}\label{r3.14.2}
s\to s^2+(g_0(s)+\eps)^2\;\mbox{is increasing on}\;[\os_0,\infty)\;\mbox{and decreasing on}\;(-\infty,-\os_0]
\end{equation}
for any $\eps\in(0,\eps_0)$. Moreover,
\begin{align}\label{r3.14.3}
\frac{\sqrt{s^2+(g_0(s)+\eps)^2}}{|s+(g_0(s)+\eps)g_0'(s)|}&\leq\frac{|s|+\|g_0\|_\infty+\eps_0}{|s|-(\|g_0\|_\infty+\eps_0)\|g_0'\|_\infty}\leq\frac{|s|+\|g_0\|_\infty+\eps_0}{|s|-\os_0/2+1/2}\leq\frac{\os_0+\|g_0\|_\infty+\eps_0}{\os_0/2+1/2}\nonumber\\
&\leq2+\frac{2(\|g_0\|_\infty+\eps_0)}{\os_0}\leq 4,\;\mbox{whenever}\;|s|\geq\os_0,\;\mbox{for any}\;\eps\in(0,\eps_0).	
\end{align}
Fix a piecewise continuous function $h:[0,\infty)\to[0,\infty)$ and $\eta>0$. Making the change of variables $\xi=\sqrt{s^2+(g_0(s)+\eps)^2}$, from \eqref{r3.14.2} and \eqref{r3.14.3} we obtain that
\begin{align}\label{r3.14.4}
\int_{\os_0}^{\infty}&\frac{h\big(\sqrt{s^2+(g_0(s)+\eps)^2}\big)\rmd s}{\big(s^2+(g_0(s)+\eps)^2\big)^\eta}\leq 4\int_{\os_0}^{\infty}\frac{h\big(\sqrt{s^2+(g_0(s)+\eps)^2}\big)}{\big(s^2+(g_0(s)+\eps)^2\big)^\eta}\,\cdot\frac{s+(g_0(s)+\eps)g_0'(s)}{\sqrt{s^2+(g_0(s)+\eps)^2}}\rmd s\nonumber\\&=4\int_{\overline{\xi}_{\eps,+}}^{\infty}\frac{h(\xi)\rmd\xi}{\xi^{2\eta}}\leq4\int_{1}^{\infty}\frac{h(\xi)\rmd\xi}{\xi^{2\eta}},\;\mbox{where}\;\overline{\xi}_{\eps,+}=\sqrt{\os_0^2+(g_0(\os_0)+\eps)^2}\geq 1.
\end{align}	
for any piecewise continuous function $h:[0,\infty)\to[0,\infty)$. Similarly,
\begin{align}\label{r3.14.5}
\int_{-\infty}^{-\os_0}&\frac{h\big(\sqrt{s^2+(g_0(s)+\eps)^2}\big)\rmd s}{\big(s^2+(g_0(s)+\eps)^2\big)^\eta}\leq 4\int_{-\infty}^{-\os_0}\frac{h\big(\sqrt{s^2+(g_0(s)+\eps)^2}\big)}{\big(s^2+(g_0(s)+\eps)^2\big)^\eta}\,\cdot\Bigg(-\frac{s+(g_0(s)+\eps)g_0'(s)}{\sqrt{s^2+(g_0(s)+\eps)^2}}\Bigg)\rmd s\nonumber\\&=4\int_{\overline{\xi}_{\eps,-}}^{\infty}\frac{h(\xi)\rmd\xi}{\xi^{2\eta}}\leq4\int_{1}^{\infty}\frac{h(\xi)\rmd\xi}{\xi^{2\eta}},\;\mbox{where}\;\overline{\xi}_{\eps,-}=\sqrt{\os_0^2+(g_0(-\os_0)+\eps)^2}\geq 1.
\end{align}	
Summarizing from \eqref{r3.14.4} and \eqref{r3.14.5} we infer that
\begin{equation}\label{r3.14.6}
\int_{|s|\geq\os_0}\frac{h\big(\sqrt{s^2+(g_0(s)+\eps)^2}\big)\rmd s}{\big(s^2+(g_0(s)+\eps)^2\big)^\eta}\leq 8\int_{1}^{\infty}\frac{h(\xi)\rmd\xi}{\xi^{2\eta}}
\end{equation}
for any piecewise continuous function $h:[0,\infty)\to[0,\infty)$ and any $\eta>0$.				
\end{remark}
\begin{lemma}\label{l3.15}
Assume Hypothesis (H) and, in addition, that $b:=\inf\limits_{s\in\RR}g_0(s)>-\infty$, $g_0'\in L^\infty(\RR)$,  $\displaystyle\int_1^\infty\frac{h_1(s)\rmd s}{s^{2n_0-1}}<\infty$. Then, for any $a>\oa_0:=\os_0+\max\limits_{1\leq k\leq m}|\im\mu_k|$, with $\os_0$ from \eqref{r3.14.1}, $t\geq 0$, $\eps\in(0,\eps_0)$, $n\in\NN$ with $n\geq n_0$, $x\in\dom(A^n)$, $w\in\bW_{n-1}$,
\begin{enumerate}
\item[(i)] The integral $\displaystyle\int_{\Gamma_{a,\out}^\eps}\frac{e^{\lambda t}}{\lambda^{2n-1}}R_A(\lambda)A^nx\rmd\lambda$ is absolutely convergent and
\begin{equation}\label{Error-cos-out}
\Bigg\|\int_{\Gamma_{a,\out}^\eps}\frac{e^{\lambda t}}{\lambda^{2n-1}}R_A(\lambda)A^nx\rmd\lambda\Bigg\|\leq8\Bigg(\int_{1}^{\infty}\frac{h_1(\xi)\rmd\xi}{\xi^{2n-1}}\Bigg)\Big(\|g_0'\|_\infty+1\Big)\Big(\sup_{\xi\leq\gamma}h_2(\xi)\Big)	 e^{(\sup_{s\in\RR}g_0(s)+\eps)t}\|A^nx\|;	
\end{equation}
\item[(ii)] The integral $\displaystyle\int_{\Gamma_{a,\out}^\eps}\frac{e^{\lambda t}}{\lambda^{2n}}AR_A(\lambda)A^{n-1}w\rmd\lambda$ is absolutely convergent and
\begin{equation}\label{Error-sine-out}
\Bigg\|\int_{\Gamma_{a,\mi}^\eps}\frac{e^{\lambda t}}{\lambda^{2n}}AR_A(\lambda)A^{n-1}w\rmd\lambda\Bigg\|\leq \oM(n)e^{(\sup_{s\in\RR}g_0(s)+\eps)t}\|A^{n-1}w\|_\bW,	
\end{equation}
where
\begin{equation}\label{def-oM}
\oM(n)=\big(\|g_0'\|_\infty+1\big)\Bigg(32M_1(1+\omega)
\Big(\sup_{\xi\leq\gamma}h_2(\xi)\Big)\int_{1}^{\infty}\frac{h_1(\xi)\rmd\xi}{\xi^{2n-1}}
+\frac{2M_1}{(2n-1)\omega}\Bigg).
\end{equation}	
\end{enumerate}		
\end{lemma}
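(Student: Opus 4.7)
The plan is to parametrize $\Gamma_{a,\out}^\eps$ by $\lambda=g_0(s)+\eps+\rmi s$ with $s\in(-\infty,-a]\cup[a,\infty)$, so $\rmd\lambda=(g_0'(s)+\rmi)\rmd s$ and $|\rmd\lambda|\leq(\|g_0'\|_\infty+1)\rmd s$. Since $a>\oa_0=\os_0+\max_{1\leq k\leq m}|\im\mu_k|$, one has $|\im\lambda|=|s|>\max|\im\mu_k|$, so Hypothesis (H)(iii) applies and yields $\|R_A(\lambda)\|\leq h_1(|\lambda|)h_2(\re\lambda)$. Moreover $\re\lambda=g_0(s)+\eps\leq\sup g_0+\eps<\omega<\gamma$, whence $|e^{\lambda t}|\leq e^{(\sup g_0+\eps)t}$ and $h_2(\re\lambda)\leq\sup_{\xi\leq\gamma}h_2(\xi)=:H_2<\infty$ by Hypothesis (H)(iv). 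Finally, $|s|\geq a>\os_0$ implies $|\lambda|^2=s^2+(g_0(s)+\eps)^2\geq\os_0^2\geq1$, so $|\lambda|\geq 1$ throughout the range of integration. Note also that $n\geq n_0$ together with Hypothesis (H)(iv) ensures $\int_1^\infty h_1(\xi)\xi^{-(2n-1)}\rmd\xi\leq\int_1^\infty h_1(\xi)\xi^{-(2n_0-1)}\rmd\xi<\infty$.

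\textbf{Proof of (i).} Combining the pointwise bounds above gives
\begin{equation*}
\Big\|\frac{e^{\lambda t}}{\lambda^{2n-1}}R_A(\lambda)A^nx\Big\|\leq e^{(\sup g_0+\eps)t}H_2\,\frac{h_1(|\lambda|)}{|\lambda|^{2n-1}}\,\|A^nx\|.
\end{equation*}
Integrating along the path and absorbing the $|\rmd\lambda|$ factor, I would then invoke the change-of-variable estimate \eqref{r3.14.6} from Remark~\ref{r3.14} with $h=h_1$ and $\eta=(2n-1)/2$, which is applicable precisely because $|s|\geq\os_0$ on $\Gamma_{a,\out}^\eps$. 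This yields
\begin{equation*}
\int_{|s|\geq a}\frac{h_1\!\left(\sqrt{s^2+(g_0(s)+\eps)^2}\right)}{\bigl(s^2+(g_0(s)+\eps)^2\bigr)^{(2n-1)/2}}\,\rmd s\leq 8\int_1^\infty\frac{h_1(\xi)}{\xi^{2n-1}}\rmd\xi,
\end{equation*}
and absolute convergence together with \eqref{Error-cos-out} follow after collecting the constants $(\|g_0'\|_\infty+1)$, $H_2$ and $8$.

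\textbf{Proof of (ii).} I would first apply Lemma~\ref{l3.7} to $w=A^{n-1}w\in\bW$, which is admissible since $|\im\lambda|=|s|>\max|\im\mu_k|$, giving
\begin{equation*}
\|AR_A(\lambda)A^{n-1}w\|\leq M_1\bigl(\omega^{-1}+4(|\lambda|+\omega)h_1(|\lambda|)h_2(\re\lambda)\bigr)\|A^{n-1}w\|_\bW.
\end{equation*}
Then I split the resulting integrand into two pieces. For the piece $M_1\omega^{-1}|\lambda|^{-2n}$, I apply \eqref{r3.14.6} with $h\equiv 1$ and $\eta=n$, using $\int_1^\infty\xi^{-2n}\rmd\xi=\tfrac{1}{2n-1}$. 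For the piece $4M_1(|\lambda|+\omega)h_1(|\lambda|)h_2(\re\lambda)|\lambda|^{-2n}$, I use that $|\lambda|\geq 1$ on the path to estimate $|\lambda|+\omega\leq(1+\omega)|\lambda|$, reducing it to $4M_1(1+\omega)H_2\cdot h_1(|\lambda|)/|\lambda|^{2n-1}$, which is handled by the same Remark~\ref{r3.14} estimate as in part (i). Before integrating I also need to exchange $A$ with the integral, which is justified by closedness of $A$ via \cite[Proposition 1.1.7]{ABHN}, exactly as in Lemma~\ref{l2.11}(iii) and Lemma~\ref{l3.13}(ii). Collecting all constants and pulling out the common factor $(\|g_0'\|_\infty+1)$ from the $|\rmd\lambda|$ bound recovers the constant $\oM(n)$ in \eqref{def-oM}.

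\textbf{Where the work lies.} Both parts reduce, via the same mechanism, to an ordinary one-variable integral in $\xi=|\lambda|$; the nontrivial analytic input is Remark~\ref{r3.14}, whose proof rests on the monotonicity of $s\mapsto s^2+(g_0(s)+\eps)^2$ for $|s|\geq\os_0$ and on controlling $\sqrt{s^2+(g_0(s)+\eps)^2}\bigl/|s+(g_0(s)+\eps)g_0'(s)|$ by $4$, which is precisely why the hypothesis $g_0'\in L^\infty$ is needed. The only subtle point in (ii) is that the extra factor $|\lambda|$ produced by Lemma~\ref{l3.7} is exactly compensated by the additional power of $\lambda$ in the denominator coming from $\lambda^{-2n}$ rather than $\lambda^{-(2n-1)}$, so the integrability threshold $n\geq n_0$ from Hypothesis (H)(iv) suffices for both estimates simultaneously. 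Tracking the numerical constants is the only bookkeeping issue and contributes the explicit form of $\oM(n)$.
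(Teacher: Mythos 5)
Your approach is essentially the paper's: parametrize $\Gamma_{a,\out}^\eps$, bound $|\rmd\lambda|\leq(\|g_0'\|_\infty+1)\rmd s$, invoke Hypothesis (H)(iii), and reduce both parts to the one-variable estimate of Remark~\ref{r3.14}, with Lemma~\ref{l3.7} supplying the key bound on $\|AR_A(\lambda)A^{n-1}w\|$ in part (ii). Two small points where you diverge from the paper, one of which is consequential. First, you needn't invoke \cite[Proposition 1.1.7]{ABHN} to move $A$ across the integral: in \eqref{Error-sine-out} the integrand is already $\frac{e^{\lambda t}}{\lambda^{2n}}AR_A(\lambda)A^{n-1}w$, so there is no interchange to justify here (that step occurs in Lemma~\ref{l3.13}(ii), not Lemma~\ref{l3.15}). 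Second, and more substantively: for the first piece $M_1\omega^{-1}|\lambda|^{-2n}$ you propose to apply \eqref{r3.14.6} with $h\equiv1$, which produces $8\int_1^\infty\xi^{-2n}\rmd\xi=\frac{8}{2n-1}$; the paper instead uses the cruder pointwise bound $|\lambda|\geq|s|$ followed by $\int_{|s|\geq\os_0}|s|^{-2n}\rmd s\leq\int_{|s|\geq1}|s|^{-2n}\rmd s=\frac{2}{2n-1}$ (valid since $\os_0\geq1$). Your route is correct as an estimate but gives a contribution $\frac{8M_1}{(2n-1)\omega}$ rather than $\frac{2M_1}{(2n-1)\omega}$, so it does \emph{not} recover the stated constant $\oM(n)$ in \eqref{def-oM} --- it proves the inequality only with a strictly larger constant. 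To match the paper's $\oM(n)$ exactly, use the direct $|\lambda|\geq|s|$ bound for the $h\equiv1$ piece and reserve Remark~\ref{r3.14} for the piece involving $h_1$.
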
	
\begin{proof} Fix $t\geq 0$, $x\in\dom(A^n)$, $\eps\in(0,\eps_0)$. By Hypothesis (H), \eqref{def-Gamma-out} and \eqref{r3.14.6} we obtain
\begin{align}\label{l3.15.1}	
\Bigg\|\int_{\Gamma_{a,\mi}^\eps}&\frac{e^{\lambda t}}{\lambda^{2n-1}}R_A(\lambda)A^nx\rmd\lambda\Bigg\|\leq\int_{|s|\geq\os_0}\frac{|e^{(g_0(s)+\eps+\rmi s)t}|\|A^nx\|}{|g_0(s)+\eps+\rmi s|^{2n-1}}\big\|R_A\big(g_0(s)+\eps+\rmi s\big)\big\||g_0'(s)+\rmi|\rmd s\nonumber\\
&\leq\|A^nx\|\big(\|g_0'\|_\infty+1\big)\int_{|s|\geq\os_0}\frac{e^{(g_0(s)+\eps)t}}{|g_0(s)+\eps+\rmi s|^{2n-1}}h_1\big(|g_0(s)+\eps+\rmi s|\big)h_2\big(g_0(s)+\eps\big)\rmd s\nonumber\\
&\leq\|A^nx\|\big(\|g_0'\|_\infty+1\big)\Big(\sup_{\xi\leq\gamma}h_2(\xi)\Big)	e^{(\sup_{s\in\RR}g_0(s)+\eps)t}\int_{|s|\geq\os_0}\frac{h_1\big(\sqrt{s^2+(g_0(s)+\eps)^2}\big)}{\big(g_0(s)+\eps+\rmi s\big)^{\frac{2n-1}{2}}}\rmd s\nonumber\\
&\leq8\Bigg(\int_{1}^{\infty}\frac{h_1(\xi)\rmd\xi}{\xi^{2n-1}}\Bigg)\Big(\|g_0'\|_\infty+1\Big)\Big(\sup_{\xi\leq\gamma}h_2(\xi)\Big)	e^{(\sup_{s\in\RR}g_0(s)+\eps)t}\|A^nx\|,	
\end{align}	
proving (i). Next, we fix $w\in\bW_{n-1}$. Since $A^{n-1}w\in\bW$ from Lemma~\ref{l3.7} we infer that
\begin{equation}\label{l3.15.2}
\|AR_A(\lambda)A^{n-1}w\|\leq \Big(c_1+c_2|\lambda|h_1(|\lambda|)h_2(\re\lambda)\Bigr)\|A^{n-1}w\|_\bW
\end{equation}	
for any $\lambda\in\Omega_0\setminus\CC_\omega^+$ such that $|\im\lambda|>\max\limits_{1\leq k\leq m}|\im\mu_k|+1$, where $c_1=\frac{M_1}{\omega}$ and $c_2=4M_1(1+\omega)$. From  Hypothesis (H), \eqref{def-Gamma-out}, \eqref{r3.14.6} and \eqref{l3.15.2} it follows that
\begin{align}\label{l3.15.3}	
\Bigg\|\int_{\Gamma_{a,\mi}^\eps}&\frac{e^{\lambda t}}{\lambda^{2n}}AR_A(\lambda)A^{n-1}w\rmd\lambda\Bigg\|\leq\int_{|s|\geq\os_0}\frac{|e^{(g_0(s)+\eps+\rmi s)t}|}{|g_0(s)+\eps+\rmi s|^{2n}}\big\|AR_A\big(g_0(s)+\eps+\rmi s\big)A^{n-1}w\big\||g_0'(s)+\rmi|\rmd s\nonumber\\
&\leq\|A^{n-1}w\|\big(\|g_0'\|_\infty+1\big)\Bigg(c_2\int_{|s|\geq\os_0}\frac{e^{(g_0(s)+\eps)t}}{|g_0(s)+\eps+\rmi s|^{2n-1}}h_1\big(|g_0(s)+\eps+\rmi s|\big)h_2\big(g_0(s)+\eps\big)\rmd s\nonumber\\
&\qquad\qquad\qquad\qquad\qquad+c_1\int_{|s|\geq\os_0}\frac{e^{(g_0(s)+\eps)t}}{|g_0(s)+\eps+\rmi s|^{2n}}\Bigg)\nonumber\\
&\leq\|A^{n-1}w\|\big(\|g_0'\|_\infty+1\big)e^{(\sup_{s\in\RR}g_0(s)+\eps)t}\Bigg(c_2
\Big(\sup_{\xi\leq\gamma}h_2(\xi)\Big)\int_{|s|\geq\os_0}\frac{h_1\big(\sqrt{s^2+(g_0(s)+\eps)^2}\big)}{\big(g_0(s)+\eps+\rmi s\big)^{\frac{2n-1}{2}}}\rmd s\nonumber\\
&\qquad\qquad\qquad\qquad\qquad+2c_1\int_{|s|\geq1}\frac{\rmd s}{|s|^{2n}}\Bigg)\nonumber\\
&\leq\|A^{n-1}w\|\big(\|g_0'\|_\infty+1\big)e^{(\sup_{s\in\RR}g_0(s)+\eps)t}\Bigg(8c_2
\Big(\sup_{\xi\leq\gamma}h_2(\xi)\Big)\int_{1}^{\infty}\frac{h_1(\xi)\rmd\xi}{\xi^{2n-1}}
+\frac{2c_1}{2n-1}\Bigg),
\end{align}	
proving the lemma.
\end{proof}
To formulate the next theorem we introduce the notation
\begin{equation}\label{def-Natural}
\NN_{k}=\{n\in\NN:n\geq k\}\;\mbox{for}\;k\in\NN.	
\end{equation}	
\begin{theorem}\label{t3.16}
Assume Hypothesis (H) and, in addition, that $b:=\inf\limits_{s\in\RR}g_0(s)>-\infty$, $g_0'\in L^\infty(\RR)$, $\displaystyle\int_1^\infty\frac{h_1(s)\rmd s}{s^{2n_0-1}}<\infty$. Then,
\begin{enumerate}
\item[(i)] There exists $M^\rmc:\NN_{n_0}\times(0,\eps_0)\to(0,\infty)$  and $\cE^\rmc_n:\RR\to\mathcal{B}(\dom(A^n),\bX)$ such that
\begin{equation}\label{Error-cos-out-bis}
C(t)x=\sum\limits_{k=1}^{m} \sum_{j=1}^{N_k}p_{1,k,j}(t)e^{\mu_k t}(A-\mu_k^2 I_\bX)^{j-1}P_kx+\chi_{[0,\infty)}(g_0(0))\displaystyle\sum\limits_{j=0}^{n-1}\frac{t^{2j}}{(2j)!}A^jx+\cE^\rmc_n(t)x,
\end{equation}
\begin{equation}\label{error-cos2}
\|\cE^\rmc_n(t)x\|\leq 	M^\rmc(n,\eps)e^{(\sup_{s\in\RR}g_0(s)+\eps)t}\big(\|A^{n-1}x\|+\|A^nx\|\big)
\end{equation}	
for any $t\geq 0$, $x\in\dom(A^n)$, $\eps\in(0,\eps_0)$, $n\in\NN$ with $n\geq n_0$;
\item[(ii)] There exists $M^\rms:\NN\times(0,\eps_0)\to(0,\infty)$  and $\cE^\rms_n:\RR\to\mathcal{B}(W_{n-1},\bX)$ such that
\begin{equation}\label{Error-sine-out-bis}
S(t)w=\sum\limits_{k=1}^{m} \sum_{j=1}^{N_k}p_{0,k,j}(t)e^{\mu_k t}(A-\mu_k^2 I_\bX)^{j-1}P_kw+\chi_{[0,\infty)}(g_0(0))\sum\limits_{j=0}^{n-1}\frac{t^{2j+1}}{(2j+1)!}A^jw+\cE^\rms_n(t)w
\end{equation}
\begin{equation}\label{error-sine2}
	\|\cE^\rms_n(t)w\|\leq 	M^\rms(n,\eps)e^{(\sup_{s\in\RR}g_0(s)+\eps)t}\|A^{n-1}w\|_\bW
\end{equation}	
for any $t\geq 0$, $w\in\bW_{n-1}$, $\eps\in(0,\eps_0)$, $n\in\NN$ with $n\geq n_0$.
\end{enumerate}	
We recall the definition of $n_0$ in Hypothesis (H)(iv), the projection $P_k$ and numbers $N_k\in\NN$, $k=1,\dots,m$, in Lemma~\ref{l3.3}, and of the polynomial $p_{\kappa,k,j}$, $\kappa=0,1$,  in \eqref{polynomial-representation}.	
\end{theorem}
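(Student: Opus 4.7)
The plan is to combine the representations from Theorem~\ref{t3.11}(ii) and Theorem~\ref{t3.12}(ii) with the contour-wise bounds of Lemma~\ref{l3.13} and Lemma~\ref{l3.15}. The extra assumptions of the present theorem (the lower bound on $g_0$, the boundedness of $g_0'$, and the integrability of $h_1(\xi)\xi^{-(2n_0-1)}$) are precisely those under which both lemmas apply. I will define $\cE^\rmc_n(t)x$ and $\cE^\rms_n(t)w$ as the principal-value integrals appearing in the two representations, then split the contour $\Lambda_0^\eps$ at $|\im\lambda|=a$ for a fixed $a>\oa_0$, applying Lemma~\ref{l3.13} on the bounded middle arc $\Gamma_{a,\mi}^\eps$ and Lemma~\ref{l3.15} on the tails $\Gamma_{a,\out}^\eps$.

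For part~(i), fix $\eps\in(0,\eps_0)$ and $n\geq n_0$, and set
\[
\cE^\rmc_n(t)x:=\frac{1}{2\pi\rmi}\PV\int_{\Lambda_0^\eps}\frac{e^{\lambda t}}{\lambda^{2n-1}}R_A(\lambda)A^nx\,\rmd\lambda,\quad t\geq 0,\;x\in\dom(A^n),
\]
so that \eqref{Error-cos-out-bis} is merely a restatement of Theorem~\ref{t3.11}(ii). With $a:=\oa_0+1$, Lemma~\ref{l3.15}(i) provides absolute convergence of the tail integral, so the principal-value symbol can be dropped after the natural splitting of $\Lambda_0^\eps$ into $\Gamma_{a,\mi}^\eps$ and $\Gamma_{a,\out}^\eps$. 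Lemma~\ref{l3.13}(i) then bounds the bounded-arc contribution by a constant times $e^{(\sup g_0+\eps)t}\|A^{n-1}x\|$, while Lemma~\ref{l3.15}(i) bounds the tail contribution by a constant times $e^{(\sup g_0+\eps)t}\|A^nx\|$; summing the two estimates produces \eqref{error-cos2} with a suitable $M^\rmc(n,\eps)$.

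For part~(ii), set
\[
\cE^\rms_n(t)w:=\frac{1}{2\pi\rmi}A\PV\int_{\Lambda_0^\eps}\frac{e^{\lambda t}}{\lambda^{2n}}R_A(\lambda)A^{n-1}w\,\rmd\lambda,\quad t\geq 0,\;w\in\bW_{n-1},
\]
which is well-defined in $\bX$ by Theorem~\ref{t3.12}(i); representation \eqref{Error-sine-out-bis} is then Theorem~\ref{t3.12}(ii). The same contour splitting is used. On the bounded arc $\Gamma_{a,\mi}^\eps$, closedness of $A$ combined with \cite[Proposition~1.1.7]{ABHN} allows interchanging $A$ with the integral, and Lemma~\ref{l3.13}(ii) controls the resulting expression by $\|A^{n-1}w\|$. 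On the tail, Lemma~\ref{l3.15}(ii) simultaneously yields absolute convergence of $\int_{\Gamma_{a,\out}^\eps}\frac{e^{\lambda t}}{\lambda^{2n}}AR_A(\lambda)A^{n-1}w\,\rmd\lambda$ and the exponential bound in $\|A^{n-1}w\|_\bW$; closedness of $A$ once more identifies this with $A\int_{\Gamma_{a,\out}^\eps}\frac{e^{\lambda t}}{\lambda^{2n}}R_A(\lambda)A^{n-1}w\,\rmd\lambda$. Adding the two contributions and majorizing $\|A^{n-1}w\|\leq\|A^{n-1}w\|_\bW$ yields \eqref{error-sine2}.

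The only subtle point is the legitimacy of pulling the unbounded operator $A$ through the principal-value integral in part~(ii). The contour splitting at $|\im\lambda|=a$ is what makes this rigorous: on the bounded arc the interchange is standard, and on the tail the required absolute convergence of both $\int_{\Gamma_{a,\out}^\eps} R_A(\lambda)A^{n-1}w\,\rmd\lambda$ and $\int_{\Gamma_{a,\out}^\eps} AR_A(\lambda)A^{n-1}w\,\rmd\lambda$ (weighted with $e^{\lambda t}/\lambda^{2n}$) is precisely what Lemma~\ref{l3.15}(ii) delivers under the integrability assumption on $h_1$, so closedness of $A$ closes the argument.
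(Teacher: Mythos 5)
Your proof is correct and takes the same route as the paper, which simply states that the theorem follows from Theorem~\ref{t3.11}, Theorem~\ref{t3.12}, Lemma~\ref{l3.13} and Lemma~\ref{l3.15}. You have spelled out the combination step that the paper leaves implicit: defining the error term as the principal-value integral along $\Lambda_0^\eps$, splitting the contour at a fixed $a>\oa_0$, invoking Lemma~\ref{l3.13} on the bounded arc and Lemma~\ref{l3.15} on the tails, and noting that the absolute convergence on the tails together with closedness of $A$ justifies moving $A$ through the integral in part~(ii).
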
	
\begin{proof}
The theorem follows from Theorem~\ref{t3.11}, Theorem~\ref{t3.12}, Lemma~\ref{l3.13} and Lemma~\ref{l3.15}.
\end{proof}

\section{Extensions across the essential spectrum}\label{sec4}
In this section we aim to refine the representations given in Section~\ref{sec3}, see, in particular, Theorem~\ref{t3.16}, for the cosine and sine operator families. To achieve this goal we need to study the \textit{resonances} of the generator of the cosine family. In the case of Schr\"odinger operators considered on $L^2(\RR^m)$, with $m$ being an odd positive integer, it is possible, see, e.g., \cite{DZ} to extend the operator-valued function $R_A(\cdot)_{|L^2_{\mathrm{comp}}(\RR^m)}$ to a meromorphic function. The extension is defined on a set that contains the essential spectrum of $A$ and takes values in the set of continuous, linear operators $\cC\cL\big(L^2_{\mathrm{comp}}(\RR^m),L^2_{\mathrm{loc}}(\RR^m)\big)$. We refer to \cite{DZ} for a very detailed discussion in this important special case.

\subsection{ General Frechet Space Framework}\label{sec4.1-new}
To proceed in our general case when $A$ is the generator of a cosine family on a Banach space $\bX$,
we need to find a subspace of $\bX$ that plays the role of the space $L^2_{\mathrm{comp}}(\RR^n)$ and a Frechet space $\bY$ that contains the original Banach space to which we can extend the generator $A$ in a natural way, and $\bY$ will be playing the role of  $L^2_{\mathrm{loc}}(\RR^m)$.

\noindent\textbf{Hypothesis (Q).} We assume that $\bX$ is a Banach space, $A$ is the generator of a cosine family on $\bX$ and, $\bY$ is a Frechet space such that $\bX\hookrightarrow\bY$, $Q_n:\bY\to\bX$, $n\in\NN$, are linear operators such that
\begin{enumerate}
\item[($Q_1$)] ${Q_n}_{|\bX}\in\mathcal{B}(\bX)$ for any $n\in\NN$;
\item[($Q_2$)] $Q_nx\xrightarrow{\enskip\bX\enskip} x$ as $n\to\infty$ for any $x\in\bX$;	
\item[($Q_3$)] $Q_nQ_k=Q_kQ_n=Q_n$ for any $k,n\in\NN$ with $k\geq n+1$;
\item[($Q_4$)]	$Q_n\big(\dom(A)\big)\subseteq\dom(A)$ and $({Q_n}_{|\bX})^*\big(\dom(A^*)\big)\subseteq\dom(A^*)$;
\item[($Q_5$)] $A\big(\dom(A)\cap\Ker(Q_n-I_\bY)\big)\subseteq\Ker(Q_n-I_\bY)$ for any $n\in\NN$;
\item[($Q_6$)] $Q_nAQ_kx=Q_nAx$ for any $x\in\dom(A)$, $k,n\in\NN$ with $k\geq n+1$;
\item[($Q_7$)] For any $n\in\NN$ the linear operator $\cG_n:\Graph(A)\to\Graph(A)$ defined by $\cG_n(x,Ax)=(Q_nx,AQ_nx)$ is continuous when $\Graph(A)$ is endowed with the $\bY\times\bY$ topology;
\item[($Q_8$)] The topology on $\bY$ is equivalent to the topology induced by the family of seminorms $\{\mfkq_n\}_{n\in\NN}$ defined by $\mfkq_n:\bY\to[0,\infty)$, $\mfkq_n(y):=\|Q_ny\|_\bX$, $n\in\NN$.
\end{enumerate}
\begin{example}\label{e4.1}
If $\bX=L^p(\RR,\CC^m)$ and $A=\displaystyle\sum_{j=0}^{k}B_j(x)\partial_x^j$, where $B_j\in\cC_{\mathrm{b}}(\RR,\CC^{m\times m})$, $j=0,1,\dots,m$. In this case $\dom(A)=W^{p,k}(\RR,\CC^m)$ and $\bY=L_{\mathrm{loc}}^p(\RR,\CC^m)$, endowed with their canonical topologies. Let $\varphi_n\in\cC^\infty(\RR)$ be such that $0\leq\varphi_n\leq1$, $\sup\limits_{n\in\NN}\|\varphi_n'\|_\infty<\infty$, $\varphi_n(t)=1$ for any $t\in[-n,n]$, $\varphi_n(t)=0$, whenever $|t|\geq n+1$. We define the linear operator $Q_n: L^p_{\mathrm{loc}}(\RR,\CC^m)\to L^p(\RR,\CC^m)$ by $Q_nf=\varphi_nf$. One can readily check that
\begin{equation}\label{e4.1.1}
(\varphi_n f)^{(j)}=\sum_{\ell=0}^{j} \left(\begin{matrix}j\\
	\ell
\end{matrix}\right)	\varphi^{(j-\ell)}f^{(\ell)}\;\mbox{for any}\; f\in W^{p,k}(\RR,\CC^m), n\in\NN, j=1,\dots,k.
\end{equation}	
\begin{equation}\label{e4.1.2}
\Ker(Q_n-I)=\{f\in L^p(\RR,\CC^m):f_{|\RR\setminus[-n,n]}=0\, \mathrm{a.e}\}\;\mbox{for any}\; n\in\NN.	
\end{equation}
Using  Lebesgue Dominated Convergence Theorem, along with \eqref{e4.1.1} and \eqref{e4.1.2}, we can see that Hypothesis (Q) holds in this case. 		
\end{example}	
Our first task is to describe immediate consequences of Hypothesis (Q). We recall that a subset $\bL\subset\bY$ is \textit{bounded} provided that
\begin{equation}\label{bounded-Frechet}
\sup_{y\in\bL}\mfkq_k(y)<\infty\;\mbox{for any}\;k\in\NN.	
\end{equation}	
The space $\bY^*$ is a locally convex space when endowed by the topology induced by the family of seminorms $\{\mfkp_{\bL,*}: \bL\;\mbox{is a bounded subset of}\;\bY\}$, where
\begin{equation}\label{def-mfkp}
	\mfkp_{\bL,*}:\bY^*\to[0,\infty)\;\mbox{is defined by}\;\mfkp_{\bL,*}(y^*)=\sup_{y\in \bL}|\langle y,y^*\rangle|.
\end{equation}
In what follows an essential role is played by the set
\begin{equation}\label{def-bX-infty}
\bX_\infty=\bigcup\limits_{n=1}^{\infty}\Ker(Q_n-I_\bY).	
\end{equation}	
\begin{lemma}\label{l4.2}
Assume Hypothesis (Q). Then, the following assertions hold true:
\begin{enumerate}
\item[(i)] $\Ker(Q_n-I_\bY)=\Ker\big((Q_n)_{|\bX}-I_\bX\big)\subset\bX$ is a closed subspace in the $\|\cdot\|_\bX$ norm for any $n\in\NN$;
\item[(ii)] $\Ker(Q_n-I_\bY)\subseteq\Range(Q_n)\subseteq\Ker(Q_{n+1}-I_\bY)\subset\bX$ for any $n\in\NN$;
\item[(iii)] $Q_kAQ_nx=AQ_nx$ for any $x\in\dom(A)$, $k,n\in\NN$ with $k\geq n+1$;
\item[(iv)] $\bX_\infty$ is a dense subspace of $\bX$ in the $\|\cdot\|_\bX$ norm, invariant under $A$;
\item[(v)]  $Q_ny\xrightarrow{\enskip\bY\enskip} y$ as $n\to\infty$ for any $y\in\bY$;
\item[(vi)] $Q_k\in\cC\cL(\bY,\bX)$ for any $k\in\NN$;
\item[(vii)] $Q_k^*\in\cC\cL(\bX^*,\bY^*)$ for any $k\in\NN$;
\item[(viii)] $\dom(A)$ is a dense subspace of $\bY$ in the $\bY$-topology;
\item[(ix)] $\dom(A)\cap\bX_\infty$ is dense in $\bX$ in the $\|\cdot\|_\bX$ norm;
\item[(x)] If, in addition, we assume that $n\in\NN$ and $Q_j(\dom(A^n))\subseteq\dom(A^n)$ for any $j\in\NN$, then $\dom(A^{n-1})\cap\Ker(Q_j-I_\bY)$ is contained in the closure of $\dom(A^n)\cap\Ker(Q_{j+1}-I_\bY)$ in the $\|\cdot\|_{\dom(A^{n-1})}$ norm for any $j\in\NN$.
\end{enumerate}	
\end{lemma}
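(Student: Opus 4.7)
The plan is to split the ten parts into two tiers: the algebraic claims (i)--(vii) follow directly from ($Q_1$)--($Q_5$) and ($Q_8$), and the density claims (viii)--(x) use these together with ($Q_4$), ($Q_6$), ($Q_7$) and the standard density of $\dom(A^n)$ in $\dom(A^{n-1})$ for cosine-family generators.

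For (i), any $y\in\Ker(Q_n-I_\bY)$ satisfies $y=Q_ny\in\bX$ by the codomain of $Q_n$, so the kernel equals $\Ker({Q_n}_{|\bX}-I_\bX)$, which is closed by ($Q_1$). For (ii), ($Q_3$) gives $y=Q_nz\Rightarrow Q_{n+1}y=Q_{n+1}Q_nz=Q_nz=y$. For (iii), combine (ii) with ($Q_4$): $Q_nx\in\Range(Q_n)\cap\dom(A)\subseteq\Ker(Q_k-I_\bY)\cap\dom(A)$ for $k\ge n+1$, and then ($Q_5$) forces $AQ_nx\in\Ker(Q_k-I_\bY)$. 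For (iv), density is immediate from ($Q_2$) together with $Q_nx\in\bX_\infty$, and invariance under $A$ follows from ($Q_5$). For (v), by ($Q_8$) it suffices to control each $\mfkq_k(Q_ny-y)$, and ($Q_3$) gives $Q_kQ_n=Q_k$ whenever $n\ge k+1$, so $\mfkq_k(Q_ny-y)=0$ for all large $n$. For (vi), write $Q_k=Q_kQ_{k+1}$ on $\bY$ via ($Q_3$) and apply ($Q_1$): $\|Q_ky\|_\bX\le\|{Q_k}_{|\bX}\|_{\cB(\bX)}\,\mfkq_{k+1}(y)$. Part (vii) is the standard fact that the transpose of a continuous linear map between locally convex spaces is strong-dual continuous; if $\bL\subset\bY$ is bounded then so is $Q_k(\bL)\subset\bX$, and $\mfkp_{\bL,*}(Q_k^*f)\le\sup_{z\in Q_k(\bL)}|\langle z,f\rangle|\le C_\bL\|f\|_{\bX^*}$, with the seminorms $\mfkp_{\bL,*}$ from \eqref{def-mfkp} generating exactly the strong topology on $\bY^*$.

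For (viii), given $y\in\bY$, combine (v) with the density of $\dom(A)$ in $\bX$: pick $\zeta_{n,m}\in\dom(A)$ approximating $Q_ny\in\bX$ in $\|\cdot\|_\bX$-norm, which is stronger than the $\bY$-topology since $\bX\hookrightarrow\bY$, and diagonalize. For (ix), start with $x\in\bX$ and $Q_nx\to x$ in $\bX$ by ($Q_2$) with $Q_nx\in\bX_\infty$; approximate each $Q_nx$ by $\zeta_{n,m}\in\dom(A)$ in $\|\cdot\|_\bX$-norm, then replace each $\zeta_{n,m}$ by $Q_{n+1}\zeta_{n,m}\in\dom(A)\cap\bX_\infty$ using ($Q_4$) and (ii); the latter still converges to $Q_{n+1}Q_nx=Q_nx$ by ($Q_3$) and the continuity of ${Q_{n+1}}_{|\bX}$, and a diagonal argument finishes.

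The main obstacle is (x). Fix $x\in\dom(A^{n-1})\cap\Ker(Q_j-I_\bY)$ and use the density of $\dom(A^n)$ in $\dom(A^{n-1})$ in graph norm---a standard consequence of $\lambda^2R_A(\lambda)z\to z$ in $\dom(A^{n-1})$ for cosine-family generators---to choose $y_m\in\dom(A^n)$ with $y_m\to x$ in the $\dom(A^{n-1})$-norm; then set $x_m:=Q_jy_m$. By the extra hypothesis $Q_j(\dom(A^n))\subseteq\dom(A^n)$ and (ii), $x_m\in\dom(A^n)\cap\Ker(Q_{j+1}-I_\bY)$, so the remaining task is the convergence $A^kx_m\to A^kx$ in $\bX$ for $k=0,\ldots,n-1$. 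The crucial reduction is that iterated application of ($Q_5$) gives $A^kx\in\Ker(Q_j-I_\bY)\subseteq\Ker(Q_{j+1}-I_\bY)$, while (iii) applied inductively together with the extra hypothesis gives $A^kx_m\in\Ker(Q_{j+1}-I_\bY)$; on this subspace $\|\cdot\|_\bX$ coincides with the seminorm $\mfkq_{j+1}$, so $\bX$-convergence of the differences reduces to $\bY$-convergence. That $\bY$-convergence $A^kQ_jy_m\to A^kx$ is extracted by iterating ($Q_7$): the continuity of $\cG_j$ on $\Graph(A)$ in the $\bY\times\bY$ topology transports the graph convergence $(A^{k-1}y_m,A^ky_m)\to(A^{k-1}x,A^kx)$ in $\bX\times\bX\hookrightarrow\bY\times\bY$ through successive $\cG_j$-steps; matching these limits with $A^kQ_jy_m$ requires the commutation $AQ_j=Q_jA$ on the stratified layers $\dom(A)\cap\Ker(Q_\ell-I_\bY)$ with $\ell\le j$, which follows from ($Q_3$) and ($Q_5$), and this commutation, coupled with the induction in $k$, is the technical heart of the argument.
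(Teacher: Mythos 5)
Your parts (i)--(ix) are correct and follow essentially the paper's route, with only a minor variation in (ix): the paper uses the Uniform Boundedness Principle to get $q_*:=\sup_n\|{Q_n}_{|\bX}\|<\infty$ and then applies $Q_n$ to a single approximating sequence $x_n\to x$ from $\dom(A)$, whereas you approximate each $Q_nx$ and diagonalize; both are valid. Your observation in (x) that the needed $\bX$-convergence reduces to $\bY$-convergence because the relevant vectors lie in $\Ker(Q_{j+1}-I_\bY)$, on which $\|\cdot\|_\bX$ coincides with $\mfkq_{j+1}$, is correct and is implicit in the paper as well.

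The mechanism you propose for the remaining convergence in (x), however, has a genuine gap. Applying $\cG_j$ to $(A^{k-1}y_m, A^k y_m)\in\Graph(A)$ yields the $\bY$-convergence $AQ_jA^{k-1}y_m\to AQ_jA^{k-1}x=A^kx$, but what is needed is $A^kQ_jy_m\to A^kx$. These two expressions coincide only if $A^{k-1}$ and $Q_j$ commute on $y_m$, and they do not: the commutation $AQ_j=Q_jA$ you invoke holds only on the stratified layers $\dom(A)\cap\Ker(Q_\ell-I_\bY)$, i.e.\ on vectors already fixed by $Q_j$, whereas the approximants $y_m$ are generic elements of $\dom(A^n)$ with no such invariance. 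Iterating $\cG_j$ also does not help, since $\cG_j^2(y_m,Ay_m)=(Q_j^2y_m,AQ_j^2y_m)$ still only controls a single power of $A$, and $Q_j^2\ne Q_j$ in general. The paper resolves exactly this difficulty by introducing the commutator $\nK_{k,j}=A^kQ_j-Q_jA^k$ on $\dom(A^k)$, showing it is closed (using closedness of $A^k$ and boundedness of ${Q_j}_{|\bX}$) and hence bounded by the Closed Graph Theorem, and then writing $A^kQ_jy_m=\nK_{k,j}y_m+Q_jA^ky_m$: the first term converges to $\nK_{k,j}x=A^kQ_jx-Q_jA^kx=0$ (since $Q_jx=x$ and $Q_jA^kx=A^kx$ by iterated ($Q_5$)), and the second to $Q_jA^kx=A^kx$. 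That boundedness of the commutator, not ($Q_7$), is what makes the induction in $k$ close. The paper also uses a concrete smoothing $\tx_k=k\int_0^{1/k}T(s)x\,\rmd s$ for the approximating sequence, which is a cosmetic difference; your use of abstract density of $\dom(A^n)$ in $\dom(A^{n-1})$ would serve equally well once the commutator estimate is available.
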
	
\begin{proof}
Assertion (i) follows shortly from ($Q_1$) and since $\Range(Q_n)\subset\bX$ for any $n\in\NN$. Obviously, $\Ker(Q_n-I_\bY)\subseteq\Range(Q_n)$. Moreover, from ($Q_3$) one can readily check that $\Range(Q_n)\subseteq\Ker(Q_{n+1}-I_\bY)$ for any $n\in\NN$, proving assertion (ii).

Next, we fix $x\in\dom(A)$. From (ii) and ($Q_4$) it follows that $Q_nx\in\dom(A)\cap\Ker(Q_{n+1}-I_\bY)$ for any $n\in\NN$. From ($Q_5$) we infer that $AQ_nx\in\Ker(Q_{n+1}-I_\bY)\subseteq\Ker(Q_k-I_\bY)$ for $k,n\in\NN$ with $k\geq n+1$, which proves assertion (iii).
From (ii) one can readily check that $\bX_\infty$ is a subspace of $\bX$. Assertion (iv) follows from ($Q_2$), ($Q_5$) and since $Q_nx\in\Range(Q_n)\subseteq\Ker(Q_{n+1}-I_\bY)\subset\bX_\infty$ for any $n\in\NN$, $x\in\bX$ by (ii).

From ($Q_3$) we know that $Q_\ell Q_ny=Q_\ell y$ for any $y\in\bY$ and any $n,\ell\in\NN$ with $n\geq\ell$. It follows that $\mfkq_\ell(Q_ny-y)\to0$ as $n\to\infty$ for any $\ell\in\NN$, $y\in\bY$, proving assertion (v). Assertion (vi) follows immediately from ($Q_8$). Next, fix $\{x_n^*\}_{n\in\NN}$ a sequence of elements of $\bX^*$ such that $x_n^*\xrightarrow{\enskip\bX^*\enskip}0$ as $n\to\infty$. Then,
\begin{equation}\label{l4.2.1}
\mfkp_{\bL,*}(Q_k^*x_n^*)=\sup_{y\in\bL}|\langle y,Q_k^*x_n^*\rangle|=\sup_{y\in\bL}|\langle Q_ky,x_n^*\rangle|\leq\big(\sup_{y\in\bL}\|Q_ky\|\big)\|x_n^*\|=\big(\sup_{y\in\bL}\mfkq_k(y)\big)\|x_n^*\|		 
\end{equation}
for any $n,k\in\NN$ and any bounded subset $\bL$ of $\bY$. Assertion (vii) follows from \eqref{bounded-Frechet} and \eqref{l4.2.1}.

Next, we fix $y\in\bY$. Since $Q_ny\in\bX$ for any $n\in\NN$ and $\dom(A)$ is a dense subspace of $\bX$ in the $\|\cdot\|_\bX$ norm, we infer that for any $n\in\NN$ there exists $x_n\in\dom(A)$ such that $\|x_n-Q_ny\|\leq\frac{1}{n}$. It follows that $x_n-Q_ny\xrightarrow{\enskip\bX\enskip}0$ as $n\to\infty$. Since $\bX\hookrightarrow\bY$,	from (v) we conclude that $x_n\xrightarrow{\enskip\bY\enskip}y$ as $n\to\infty$, proving assertion (viii).

Finally, from ($Q_1$) and the Uniform Boundedness Principle we immediately infer that
\begin{equation}\label{l4.2.2}
q_*:=\sup_{n\in\NN}\|{Q_n}_{|\bX}\|<\infty.	
\end{equation}	
Fix $x\in\bX$. Since $\dom(A)$ is a dense subspace of $\bX$ in the $\|\cdot\|_\bX$ norm, there exists $\{x_n\}_{n\geq 1}$ a sequence of elements of $\dom(A)$ such that  $x_n\xrightarrow{\enskip\bX\enskip}x$ as $n\to\infty$. From ($Q_4$) we have $Q_nx_n\in\dom(A)\cap\Range(Q_n)\subseteq\dom(A)\cap\bX_\infty$ for any $n\in\NN$. Using \eqref{l4.2.2} we have
\begin{equation}\label{l4.2.3}
\|Q_nx_n-x\|\leq\|Q_nx_n-Q_nx\|+\|Q_nx-x\|\leq q_*\|x_n-x\|+\|Q_nx-x\|\;\mbox{for any}\;n\in\NN.		
\end{equation}	
Using ($Q_1$) and \eqref{l4.2.3} we conclude that $Q_nx_n\xrightarrow{\enskip\bX\enskip}x$ as $n\to\infty$, proving assertion (ix).

Finally, we fix $n\in\NN$ and assume that $Q_j(\dom(A^n))\subseteq\dom(A^n)$ for any $j\in\NN$. Let $\nK_{i,j}:\dom(A^i)\to\bX$ be the linear operator defined by
\begin{equation}\label{l4.2.4}
\nK_{i,j}x=A^iQ_jx-Q_jA^ix\;\mbox{for}\;j\in\NN, i=0,\dots,n.	
\end{equation}
Let $\{x_k\}_{k\geq 1}$ be a sequence of elements of $\dom(A^i)$, $x\in\dom(A^i)$ and $\tx\in\bX$ such that
\begin{equation}\label{l4.2.5}
x_k\xrightarrow{\enskip\dom(A^i)\enskip}x\;\mbox{and}\; \nK_{i,j}x_k\xrightarrow{\enskip\bX\enskip}\tx\;\mbox{as}\;k\to\infty.	
\end{equation}
From ($Q_1$) and \eqref{l4.2.5} we obtain
\begin{equation}\label{l4.2.6}
Q_jx_k\xrightarrow{\enskip\bX\enskip}Q_jx\;\mbox{and}\; Q_jA^ix_k\xrightarrow{\enskip\bX\enskip}Q_jA^ix\;\mbox{as}\;k\to\infty.	
\end{equation}
From \eqref{l4.2.5} and \eqref{l4.2.6} it follows that $A^iQ_jx_k\xrightarrow{\enskip\bX\enskip}Q_jA^ix+\tx$ as $k\to\infty$. We recall that $A^i$ is a closed linear operator since $A$ is closed with non-empty resolvent set. Hence, $\nK_{i,j}$ is a closed linear operator.  Thus, by the Closed Graph Theorem, we conclude that
\begin{equation}\label{l4.2.7}
\nK_{i,j}\in\mathcal{B}(\dom(A^i),\bX)\;\mbox{for any}\; j\in\NN, i=0,\dots,n.
\end{equation}
Next, we fix $j\in\NN$ and $x\in\dom(A^{n-1})\cap\Ker(Q_j-I_\bY)$. We define
\begin{equation}\label{l4.2.8}
\tx_k=k\int_0^{1/k}T(s)x\rmd s,\;\;k\in\NN,
\end{equation}
where $\{T(z)\}_{z\in\CC_0^+}$ is the analytic semigroup generated by $A$, given by \eqref{Weierstrss}. From \cite[Proposition 1.1.7]{ABHN} we have
\begin{equation}\label{l4.2.9}
\tx_k\in\dom(A^i)\;\mbox{and}\;A^i\tx_k=k\int_0^{1/k}T(s)A^ix\rmd s\;\mbox{for any}\;k\in\NN, i=0,\dots,n-1.	
\end{equation}	
It follows that $A^i\tx_k\xrightarrow{\enskip\bX\enskip}A^ix$ as $k\to\infty$, for any $i=0,\dots,n-1$, hence
\begin{equation}\label{l4.2.10}
\tx_k\xrightarrow{\enskip\dom(A^{n-1})\enskip}A^ix\;\mbox{as}\; k\to\infty.
\end{equation}
Moreover, from \eqref{l4.2.9} we infer that $\tx_k\in\dom(A^n)$ for any $k\in\NN$. Since $Q_j(\dom(A^n))\subseteq\dom(A^n)$, from (ii) we infer that $Q_j\tx_k\in\dom(A^n)\cap\Ker(Q_{j+1}-I_\bY)$ for any $k\in\NN$. From ($Q_1$), \eqref{l4.2.7} and \eqref{l4.2.10} we obtain that
\begin{equation}\label{l4.2.11}
A^iQ_j\tx_k=\nK_{i,j}\tx_k+Q_jA^i\tx_k\xrightarrow{\enskip\bX\enskip}\nK_{i,j}x+Q_jA^ix=A^iQ_jx=A^ix\;\mbox{as}\;k\to\infty.
\end{equation}
for any $i=0,\dots,n$. We conclude that $Q_j\tx_k\xrightarrow{\enskip\dom(A^{n-1})\enskip}x$ as $k\to\infty$, proving the lemma.
\end{proof}
\subsection{The closed extension of the generator $A$ to the Frechet Space $\bY$}\label{sec4.1}

Next, we aim to show that the linear operator $A:\dom(A)\subseteq\bX\to\bX$ can be extended to a closed linear operator $\tA$ acting in the Frechet space $\bY$ and that the extension preserves at least some of the properties of $A$ associated
with the family of operators $\{Q_n\}_{n\in\NN}$, in particular the localization property ($Q_6$). We denote by $A_\bY:\dom(A)\subset\bY\to\bY$ the linear operator defined by $A_\bY y=Ay$ for $y\in\dom(A)\subseteq\bX\hookrightarrow\bY$. We will show that the linear operator $A_\bY$ is \textit{closable} in the $\bY$ topology. First, we recall the following classical result of F. E. Browder.
\begin{lemma}\label{l4.3}(\cite[Lemma 2.1(iii)]{B})
If $\bY$ is a Frechet space, $B:\dom(B)\subseteq\bY\to\bY$ is a linear operator, $\dom(B)$ is dense in $\bY$, then $B$ is \textit{closable} if and only if $\dom(B^*)$ is dense in the weak $*$ topology (denoted $\sigma(\bY^*,\bY)$) on $\bY^*$.
\end{lemma}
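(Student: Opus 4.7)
The plan is to reduce the statement to a standard polarity computation in the product space $\bY \times \bY$, following the classical Banach-space strategy but verifying that each step extends to the locally convex setting. First I would define the adjoint concretely: since $\dom(B)$ is dense in $\bY$, a continuous linear functional $y^* \in \bY^*$ belongs to $\dom(B^*)$ exactly when $y \mapsto \langle By, y^*\rangle$ is $\bY$-continuous on $\dom(B)$, and $B^* y^*$ is then the unique extension to $\bY$. Using the canonical identification $(\bY\times\bY)^* \cong \bY^*\times\bY^*$ with the pairing $\langle(y_1,y_2),(y_1^*,y_2^*)\rangle = \langle y_1,y_1^*\rangle + \langle y_2,y_2^*\rangle$, a direct computation shows
\begin{equation*}
\Graph(B)^{\perp} \;=\; \bigl\{(-B^*y^*,\,y^*):y^*\in\dom(B^*)\bigr\}\;=\;J\bigl(\Graph(B^*)\bigr),
\end{equation*}
where $J(y^*,z^*)=(-z^*,y^*)$ is the usual flip.

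Next I would recall the characterization of closability: the operator $B$ is closable in $\bY$ if and only if $\overline{\Graph(B)}$ is still the graph of a function, equivalently
\begin{equation*}
\bigl\{y\in\bY : (0,y)\in\overline{\Graph(B)}\bigr\}=\{0\}.
\end{equation*}
Here the closure is taken in the product Frechet topology, which coincides with the weak closure by the bipolar theorem in locally convex spaces, so $\overline{\Graph(B)} = {}^{\perp}(\Graph(B)^{\perp})$. Combining this with the identification above, the pair $(0,y)$ lies in $\overline{\Graph(B)}$ if and only if $\langle y, y^*\rangle = 0$ for every $y^* \in \dom(B^*)$. Hence $B$ is closable if and only if the annihilator of $\dom(B^*)$ in $\bY$ is trivial.

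Finally I would invoke Hahn--Banach in the form appropriate for weak-$*$ density: the continuous dual of $(\bY^*, \sigma(\bY^*,\bY))$ is precisely $\bY$ (acting by evaluation), so a linear subspace $M\subseteq\bY^*$ is weak-$*$ dense exactly when its annihilator in $\bY$ reduces to $\{0\}$. Applied to $M=\dom(B^*)$, this immediately matches the condition obtained in the previous step, finishing both implications simultaneously.

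The one place that requires genuine care, rather than a mechanical translation from the Banach case, is justifying the bipolar identity $\overline{\Graph(B)} = {}^{\perp}(\Graph(B)^{\perp})$ and the duality $(\bY^*_{w*})^* = \bY$: both rely on $\bY\times\bY$ being a Hausdorff locally convex space with enough continuous linear functionals to separate points, which holds since $\bY$ is a Frechet space. Once these topological facts are in place the argument is entirely algebraic, and there is no obstacle coming from the fact that $\bY$ is metrizable but not normable, since we never invoke completeness of $\bY^*$ or a norm on it.
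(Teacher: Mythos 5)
Your argument is correct, and it is also the standard one. Note that the paper does not actually prove this lemma; it simply cites Browder \cite[Lemma 2.1(iii)]{B}, so there is no ``paper proof'' to compare against. What you have reconstructed is precisely the classical graph/annihilator argument, carried into the locally convex setting without difficulty, which is almost certainly how Browder argues as well.

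Your reduction is sound at every step: the identification $\Graph(B)^{\perp}=\{(-B^{*}y^{*},y^{*}):y^{*}\in\dom(B^{*})\}$ is a direct computation using only density of $\dom(B)$; the criterion ``$B$ closable $\iff$ $\{y:(0,y)\in\overline{\Graph(B)}\}=\{0\}$'' is the usual characterization of when the closure of a subspace of $\bY\times\bY$ is again a graph; and the passage from the Fr\'echet closure of $\Graph(B)$ to ${}^{\perp}(\Graph(B)^{\perp})$ does hold in any Hausdorff locally convex space (Mazur for convex sets plus bipolar). One small point of precision: you attribute the equality of the weak and Fr\'echet closures of $\Graph(B)$ to the bipolar theorem, but that coincidence is really Mazur's theorem (Hahn--Banach separation applied to the convex set $\Graph(B)$); the bipolar theorem is what then rewrites the weak closure as ${}^{\perp}(\Graph(B)^{\perp})$. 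The net conclusion is unaffected. Finally, your appeal to $(\bY^{*},\sigma(\bY^{*},\bY))^{*}=\bY$ to convert ``$\dom(B^{*})$ weak-$*$ dense'' into ``${}^{\perp}\dom(B^{*})=\{0\}$'' is exactly right and requires no completeness or metrizability of $\bY^{*}$, only that $\bY$ is Hausdorff locally convex so that it separates points of $\bY^{*}$. No gaps.
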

\begin{lemma}\label{l4.4}
Assume Hypothesis (Q). Then the following assertions hold true:
\begin{enumerate}
\item[(i)] $Q_n^*\big(\dom(A^*)\big)\subseteq\dom(A_\bY^*)$ for any $n\in\NN$;
\item[(ii)] $A_\bY$ is closable.
\end{enumerate}
\end{lemma}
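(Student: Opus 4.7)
The plan is to prove (i) by direct verification that the defining condition of $\dom(A_\bY^*)$ holds for $Q_n^*x^*$ whenever $x^*\in\dom(A^*)$, using the compatibility conditions $(Q_4)$ and $(Q_6)$ together with the continuity $Q_k\in\cC\cL(\bY,\bX)$ from Lemma~\ref{l4.2}(vi), and then to deduce (ii) by combining (i) with Browder's criterion (Lemma~\ref{l4.3}).

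For part (i), I would fix $n\in\NN$ and $x^*\in\dom(A^*)$, and show that the linear functional $y\mapsto\langle A_\bY y,Q_n^*x^*\rangle$ on $\dom(A)$ is $\bY$-continuous. The key identity, valid for any $k\geq n+1$ and $y\in\dom(A)$, is
\begin{equation*}
\langle A_\bY y,Q_n^*x^*\rangle_{\bY,\bY^*}=\langle Q_nAy,x^*\rangle=\langle Q_nAQ_ky,x^*\rangle=\langle AQ_ky,(Q_n|_\bX)^*x^*\rangle=\langle Q_ky,A^*(Q_n|_\bX)^*x^*\rangle,
\end{equation*}
obtained by invoking $(Q_6)$ to insert $Q_k$ and then $(Q_4)$ (which places $(Q_n|_\bX)^*x^*$ in $\dom(A^*)$) to transfer the adjoint. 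The rightmost expression depends on $y$ only through $Q_ky$, and $Q_k:\bY\to\bX$ is continuous by Lemma~\ref{l4.2}(vi); hence the functional is $\bY$-continuous on $\dom(A)$, so $Q_n^*x^*\in\dom(A_\bY^*)$.

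For part (ii), Lemma~\ref{l4.2}(viii) gives that $\dom(A_\bY)=\dom(A)$ is $\bY$-dense, so Browder's criterion reduces the closability of $A_\bY$ to the weak-$*$ density of $\dom(A_\bY^*)$ in $\bY^*$. By (i), it suffices to check that $\bigcup_{n\in\NN}Q_n^*(\dom(A^*))$ separates points of $\bY$. Given $y\in\bY$ with $\langle y,Q_n^*x^*\rangle=\langle Q_ny,x^*\rangle=0$ for every $n\in\NN$ and every $x^*\in\dom(A^*)$, the weak-$*$ density of $\dom(A^*)$ in $\bX^*$ (standard for closed densely defined operators on a Banach space) forces $Q_ny=0$ for all $n$, and Lemma~\ref{l4.2}(v) then yields $y=\lim_n Q_ny=0$.

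The only substantive step is the identity chain in (i): the combination of $(Q_6)$ (allowing us to insert $Q_k$), $(Q_4)$ (permitting the adjoint to be pushed onto $x^*$), and Lemma~\ref{l4.2}(vi) (furnishing the $\bY\to\bX$ continuity of $Q_k$) is exactly what upgrades the a priori only $\bX$-continuous functional to a $\bY$-continuous one, which is the extra regularity required by membership in $\dom(A_\bY^*)$. I do not expect any further obstacle, since the reduction in (ii) is routine once (i) is in hand.
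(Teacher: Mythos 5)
Your proof of part (i) follows exactly the paper's argument: you use $(Q_6)$ to insert $Q_k$, $(Q_4)$ to transfer the adjoint onto $x^*$, and Lemma~\ref{l4.2}(vi) to conclude that the functional factors continuously through $Q_k:\bY\to\bX$; this is the same identity chain and the same conclusion.

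For part (ii) you take a genuinely different route. The paper proves weak-$*$ density of $\dom(A_\bY^*)$ constructively: given $y^*\in\bY^*$ it builds explicit approximants $\tf_k^*=Q_{k+1}^*f_k^*$ with $f_k^*=({Q_k}_{|\bX})^*\,y^*_{|\bX}$, shows $\tf_k^*\to y^*$ in $\sigma(\bY^*,\bY)$ using Lemma~\ref{l4.2}(v), and then approximates each $f_k^*$ by elements of $\dom(A^*)$ pushed forward through $Q_{k+1}^*$. You instead invoke the bipolar theorem: a subspace of $\bY^*$ is weak-$*$ dense iff its preannihilator in $\bY$ is trivial, so it suffices that $\bigcup_n Q_n^*(\dom(A^*))\subseteq\dom(A_\bY^*)$ separates points of $\bY$. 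Your verification of this uses the same two ingredients the paper uses (weak-$*$ density of $\dom(A^*)$ in $\bX^*$, which is Browder's criterion applied to $A$ on $\bX$, and $Q_ny\to y$ from Lemma~\ref{l4.2}(v)) but assembles them through a duality argument rather than an explicit approximation. Both are correct; yours is shorter and avoids juggling the auxiliary functionals $f_k^*,\tf_k^*$, at the cost of appealing to the bipolar theorem for the dual pair $(\bY,\bY^*)$, which the paper does not explicitly state but which is available since Hahn--Banach holds in Frechet spaces (cf.\ Remark~\ref{rB1}).
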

\begin{proof} (i) Fix $n\in\NN$, $y\in\dom(A)$ and $x^*\in\dom(A^*)$. From ($Q_4$) and ($Q_6$) we have
\begin{equation}\label{l4.4.1}
Q_{n+1}y\in\dom(A), (Q_n)^*_{|\bX}x^*\in\dom(A^*), 	Q_nAy=(Q_n)_{|\bX}AQ_{n+1}y.
\end{equation}		
Moreover, one can readily check that
\begin{align}\label{l4.4.2}
\langle A_\bY y,Q_n^*x^*\rangle&=\langle Ay,Q_n^*x^*\rangle=\langle Q_nAy,x^*\rangle=\langle
{Q_n}_{|\bX}AQ_{n+1}y,x^*\rangle\nonumber\\&=\langle AQ_{n+1}y,{Q_n}_{|\bX}^*x^*\rangle=\langle Q_{n+1}y,A^*{Q_n}_{|\bX}^*x^*\rangle.
\end{align}
Since $A^*{Q_n}_{|\bX}^*x^*\in\bX^*$ and $Q_{n+1}\in\cC\cL(\bY,\bX)$ by Lemma~\ref{l4.2}(vi) it follows that the functional
\begin{equation}\label{l4.4.3}
y\to\big\langle Q_{n+1}y,A^*{Q_n}_{|\bX}^*x^*\big\rangle:\bY\to\CC\;\mbox{is continuous}.
\end{equation}
Hence, $Q_nx^*\in\dom(A_\bY^*)$, proving (i).

We use Lemma ~\ref{l4.3} to show that $\dom(A_\bY^*)$ is a dense subspace of $\bY^*$ in the $\sigma(\bY^*,\bY)$ topology. Fix $y^*\in\bY^*$. Since $\bX\hookrightarrow\bY$ one can readily check that $y^*_{|\bX}\in\bX^*$. Next, we define the linear functionals $f_k^*:\bX\to\CC$, $\tf_k^*:\bY\to\CC$ by
\begin{equation}\label{l4.4.4}
f_k^*(x)=\langle (Q_k)_{|\bX}x,y^*\rangle,\;\tf_k^*(y)=\langle Q_ky,y^*\rangle.
\end{equation}
Since $(Q_k)_{|\bX}\in\mathcal{B}(\bX)$ by ($Q_1$) and $Q_k\in\cC\cL(\bY,\bX)$ by Lemma~\ref{l4.2}(ii) we infer that $f_k^*\in\bX^*$ and $\tf_k^*\in\bY^*$ for any $k\in\NN$.  From ($Q_3$) and \eqref{l4.4.4} we have
\begin{equation}\label{l4.4.5}
\langle y,Q_{k+1}^*f_k^*\rangle=\langle Q_{k+1}y,f_k^*\rangle=\langle Q_kQ_{k+1}y,y^*\rangle=\langle Q_ky,y^*\rangle=\langle y,\tf_k^*\rangle\;\mbox{for any}\;y\in\bY,k\in\NN,
\end{equation}
which implies that
\begin{equation}\label{l4.4.6}
Q_{k+1}^*f_k^*=\tf_k^*\;\mbox{for any}\;k\in\NN.
\end{equation}
Moreover, from Lemma~\ref{l4.2}(v) it follows that
\begin{equation}\label{l4.4.7}
\langle y,\tf_k^*\rangle=\langle Q_ky,y^*\rangle\longrightarrow\langle y,y^*\rangle\;\mbox{as}\;k\to\infty\;\mbox{for any}\;y\in\bY.
\end{equation}
Hence,
\begin{equation}\label{l4.4.8}
\tf_k^*\xrightarrow{\enskip\sigma(\bY^*,\bY)\enskip} y^*\;\mbox{as}\;k\to\infty.
\end{equation}
Next, we prove that the sequence $\{\tf_k^*\}_{k\in\NN}$ belongs to the closure of $\dom(A_\bY^*)$ in $\sigma(\bY^*,\bY)$-topology. Fix $k\in\NN$. Since $A$ is a closed, densely defined linear operator on $\bX$, from Lemma~\ref{l4.3} we infer that $\dom(A^*)$ is dense in $\bX^*$ in the weak $*$ topology ($\sigma(\bX^*,\bX)$). Thus, there exists a sequence $\{x_n^*\}_{n\in\NN}$ of vectors of $\dom(A^*)$ such that  $x_n^*\xrightarrow{\enskip\sigma(\bX^*,\bX)\enskip}f_k^*$ as $n\to\infty$. Using \eqref{l4.4.6} we obtain that
\begin{equation}\label{l4.4.9}
\langle y, Q_{k+1}^*x_n^*\rangle=\langle Q_{k+1}y,x_n^*\rangle\longrightarrow\langle Q_{k+1}y,f_k^*\rangle=\langle y,Q_{k+1}^*f_k^*\rangle=\langle y,\tf_k^*\rangle\;\mbox{as}\;k\to\infty\;\mbox{for any}\;y\in\bY.
\end{equation}
We conclude that $Q_{k+1}^*x_n^*\xrightarrow{\enskip\sigma(\bY^*,\bY)\enskip}\tf_k^*$ as $n\to\infty$. Since $x_n^*\in\dom(A^*)$ for any $n\in\NN$, from assertion (i) we infer that $Q_{k+1}^*x_n^*\in\dom(A_\bY^*)$ for any $n\in\NN$, which shows that
\begin{equation}\label{l4.4.10}
\tf_k^*\in\overline{\dom(A_\bY^*)}\;\mbox{in the}\;\sigma(\bY^*,\bY)\;\mbox{topology for any}\;k\in\NN.
\end{equation}
From \eqref{l4.4.8} and \eqref{l4.4.10} it follows that $y^*$ belongs to the closure of $\dom(A_\bY^*)$ in the $\sigma(\bY^*,\bY)$ topology, and thus $\dom(A_\bY^*)$ is a dense subspace of $\bY^*$ in the $\sigma(\bY^*,\bY)$ topology. From Lemma~\ref{l4.3} we infer that $A_\bY$ is closable in $\bY$, proving the lemma.
\end{proof}
We denote by $\tA:\dom(\tA)\subseteq\bY\to\bY$ the closure of the closable operator $A_\bY$. Then, $\Graph(\tA)$ is the closure of $\Graph(A)$ in the $\bY\times\bY$ topology. Next, we study the properties of $\tA$.
\begin{remark}\label{r4.5}
Under Hypothesis (Q), the space $\dom(\tA)$ is a Frechet when endowed with the family of seminorms $\{\tmfkp_n\}_{n\in\NN}$ defined by
\begin{equation}\label{tilde-pn-seminorm}
\tmfkp_n:\dom(\tA)\to[0,\infty), \;\;\tmfkp(y)=\|Q_ny\|_\bX+\|Q_n\tA y\|_\bX, n\in\NN.
\end{equation}
\end{remark}
\begin{lemma}\label{l4.6}
Assume Hypothesis (Q). Then, the following assertions hold true:
\begin{enumerate}
\item[(i)] For any $n\in\NN$ the linear operator $(Q_n)_{|\dom(A)}:\dom(A)\to\dom(A)$ is continuous when we use the topology induced by the family of seminorms $\{\tmfkp_n\}_{n\in\NN}$ on the domain and the $\|\cdot\|_{\dom(A)}$ norm on the image space of the operator;
\item[(ii)] $\dom(A)$ is a dense subspace of $\dom(\tA)$ in the topology induced by the family of seminorms $\{\tmfkp_n\}_{n\in\NN}$.
\end{enumerate}
\end{lemma}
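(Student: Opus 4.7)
\medskip

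\noindent\textbf{Proof plan for Lemma~\ref{l4.6}.}

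For assertion (i), the plan is to exploit the fact that $Q_n$ plus the localization properties ($Q_3$), ($Q_5$) force $Q_nx$ and $AQ_nx$ to lie in the subspace $\Ker(Q_{n+1}-I_\bY)$, on which the Frechet seminorm $\mfkq_{n+1}$ already equals the Banach norm $\|\cdot\|_\bX$. First I would fix $x\in\dom(A)$ and observe that $Q_nx\in\Range(Q_n)\subseteq\Ker(Q_{n+1}-I_\bY)$ by Lemma~\ref{l4.2}(ii), while $AQ_nx\in\Ker(Q_{n+1}-I_\bY)$ by Lemma~\ref{l4.2}(iii). This yields the identities
\begin{equation*}
\|Q_nx\|_\bX = \mfkq_{n+1}(Q_nx),\qquad \|AQ_nx\|_\bX = \mfkq_{n+1}(AQ_nx),
\end{equation*}
so that $\|Q_nx\|_{\dom(A)}$ is controlled by the $\bY\times\bY$-seminorm of $\cG_n(x,Ax)=(Q_nx,AQ_nx)$. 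By Hypothesis ($Q_7$), $\cG_n$ is continuous on $\Graph(A)$ in the $\bY\times\bY$ topology, hence there exist $C_n>0$ and $K_n\in\NN$ with
\begin{equation*}
\mfkq_{n+1}(Q_nx)+\mfkq_{n+1}(AQ_nx)\leq C_n\sum_{j=1}^{K_n}\bigl(\mfkq_j(x)+\mfkq_j(Ax)\bigr)=C_n\sum_{j=1}^{K_n}\tmfkp_j(x),
\end{equation*}
where in the last identity I use that $x\in\dom(A)\subseteq\dom(\tA)$ with $\tA x=Ax$, so that $\tmfkp_j(x)=\mfkq_j(x)+\mfkq_j(Ax)$. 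Combining gives continuity of $(Q_n)_{|\dom(A)}$ with the stated topologies.

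For assertion (ii), I intend to appeal directly to the definition of $\tA$ as the closure of $A_\bY$. By construction $\Graph(\tA)$ is the $\bY\times\bY$-closure of $\Graph(A_\bY)=\Graph(A)$; since $\bY$ is a Frechet space, this closure coincides with the sequential closure. Thus for every $y\in\dom(\tA)$ one can choose a sequence $\{x_k\}_{k\in\NN}\subset\dom(A)$ such that $x_k\to y$ in $\bY$ and $Ax_k\to \tA y$ in $\bY$. By Hypothesis ($Q_8$) the $\bY$-convergence is equivalent to convergence in each seminorm $\mfkq_n$, so
\begin{equation*}
\tmfkp_n(x_k-y)=\mfkq_n(x_k-y)+\mfkq_n(Ax_k-\tA y)\xrightarrow[k\to\infty]{}0\qquad\text{for every }n\in\NN,
\end{equation*}
which is precisely convergence of $x_k$ to $y$ in the Frechet topology of $\dom(\tA)$ induced by $\{\tmfkp_n\}_{n\in\NN}$.

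Neither part looks genuinely difficult; the only subtle point is matching the two topologies in (i). The main obstacle is organizing the bookkeeping: one must remember that $Q_nx$ and $AQ_nx$ sit inside $\Ker(Q_{n+1}-I_\bY)$, which is the only reason the $\bX$-norms can be replaced by a single seminorm $\mfkq_{n+1}$, and then the continuity statement ($Q_7$) delivers the required estimate in terms of finitely many $\tmfkp_j$. If one forgets this localization and tries to bound $\|Q_nx\|_\bX$ using Lemma~\ref{l4.2}(vi) directly, one obtains control by seminorms $\mfkq_j$ with $j$ larger than $n$, but no control of $\|AQ_nx\|_\bX$; it is precisely the pair $(Q_n,AQ_n)$ acting on the graph that must be invoked, via ($Q_7$).
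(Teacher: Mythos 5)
Your proof is correct and follows essentially the same route as the paper's: both part (i) arguments hinge on the pair ($Q_3$)/Lemma~\ref{l4.2}(ii),(iii) to place $Q_n x$ and $AQ_n x$ in $\Ker(Q_{n+1}-I_\bY)$ (so that $\bY$-seminorms become $\bX$-norms) and then invoke ($Q_7$) for the $\bY\times\bY$-continuity of $\cG_n$, and both part (ii) arguments appeal directly to $\Graph(\tA)$ being the $\bY\times\bY$-closure of $\Graph(A)$. The only cosmetic difference is that you state the quantitative seminorm estimate furnished by ($Q_7$) while the paper checks sequential continuity of $(Q_n)_{|\dom(A)}$, which for Frechet spaces is equivalent.
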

\begin{proof} Let $\{x_k\}_{k\in\NN}$ be a sequence of vectors of $\dom(A)$ such that $\tmfkp_m(x_k)\longrightarrow0$ as $k\to\infty$ for any $m\in\NN$. From ($Q_8$) we have $x_k\xrightarrow{\enskip\bY\enskip}0$ and $Ax_k\xrightarrow{\enskip\bY\enskip}0$ as $k\to\infty$. From ($Q_7$) we obtain that
\begin{equation}\label{l4.5.1}
(Q_nx_k,AQ_nx_k)=\cG_n(x_k,Ax_k)\xrightarrow{\enskip\bY\times\bY\enskip}(0,0)\;\mbox{as}\;k\to\infty\;\mbox{for any}\;n\in\NN.	
\end{equation}
From Lemma~\ref{l4.2}(iii) and \eqref{l4.5.1} it follows that
\begin{equation}\label{l4.5.2}
AQ_nx_k=Q_{n+1}AQ_nx_k\xrightarrow{\enskip\bX\enskip}0\;\mbox{as}\;k\to\infty\;\mbox{for any}\;n\in\NN.	
\end{equation}
Moreover, since $x_k\xrightarrow{\enskip\bY\enskip}0$ as $k\to\infty$ from ($Q_8$) we have $Q_nx_k\xrightarrow{\enskip\bX\enskip}0$ as $k\to\infty$. Using \eqref{l4.5.2} we conclude that
\begin{equation}\label{l4.5.3}
\|Q_nx_k\|_{\dom(A)}=\|Q_nx_k\|+\|AQ_nx_k\|\longrightarrow0\;\mbox{as}\;k\to\infty\;\mbox{for any}\;n\in\NN,
\end{equation}
proving assertion (i). Assertion (ii) follows since $\Graph(\tA)$ is the closure in the $\bY\times\bY$ topology of $\Graph(A)$.  	
\end{proof}
To discuss further properties of the linear operator $\tA$ we need the following real analysis result.
\begin{remark}\label{r4.7}
Let $(\cM_1,d_{\cM_1})$ be a metric space, $(\cM_2,d_{\cM_2})$ be a complete metric space, $\cU_1\subseteq\cM_1$, $f:\cU_1\to\cM_2$ an uniformly continuous function. Then, there exists a unique uniformly continuous function $g:\overline{\cU_1}\to\cM_2$ such that $g_{|\cU_1}=f$. Here $\overline{\cU_1}$ denotes the closure of the set $\cU_1$ in the topology induced by the metric $d_{\cM_1}$ on $\cM_1$.
\end{remark}
\begin{lemma}\label{l4.8}
Assume Hypothesis (Q). Then, the following assertions hold true:
\begin{enumerate}
\item[(i)] $Q_n\big(\dom(\tA)\big)\subseteq\dom(A)$ for any $n\in\NN$;
\item[(ii)] $Q_nAQ_ky=Q_n\tA y$ for any $y\in\dom(\tA)$, $k,n\in\NN$ with $k\geq n+1$.
\end{enumerate} 		
\end{lemma}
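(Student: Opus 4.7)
The plan is to prove (i) by approximating $y\in\dom(\tA)$ by elements of $\dom(A)$ in the Frechet topology induced by the seminorms $\{\tmfkp_m\}_{m\in\NN}$, then promoting this to convergence of $\{Q_ny\}$ in the graph norm of $\dom(A)$ via the smoothing continuity of $Q_n$; part (ii) will then follow by combining (i) with axiom $(Q_6)$ and a further limit passage.

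First I would fix $y\in\dom(\tA)$ and $n\in\NN$, and invoke Lemma~\ref{l4.6}(ii) to produce a sequence $\{x_j\}_{j\in\NN}\subset\dom(A)$ with $\tmfkp_m(x_j-y)\to 0$ for every $m\in\NN$; by definition this means $Q_m x_j\to Q_m y$ and $Q_m A x_j\to Q_m\tA y$ in $\bX$ for all $m$, equivalently $x_j\to y$ and $A x_j\to\tA y$ in $\bY$ thanks to $(Q_8)$. Hypothesis $(Q_4)$ gives $Q_n x_j\in\dom(A)$ for each $j$. Since $\{x_j\}$ is Cauchy in the $\{\tmfkp_m\}$-topology, Lemma~\ref{l4.6}(i) forces $\{Q_n x_j\}$ to be Cauchy in the graph norm $\|\cdot\|_{\dom(A)}$. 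As $A$ is closed with nonempty resolvent set, $\dom(A)$ is complete in this norm, so $Q_n x_j\to z$ in $\dom(A)$ for some $z\in\dom(A)$. On the other hand, Lemma~\ref{l4.2}(vi) yields $Q_n\in\cC\cL(\bY,\bX)$, whence $Q_n x_j\to Q_n y$ in $\bX$. Uniqueness of limits forces $z=Q_n y$, and therefore $Q_n y\in\dom(A)$, proving (i).

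For (ii), fix $k,n\in\NN$ with $k\geq n+1$ and $y\in\dom(\tA)$; part (i) already gives $Q_k y\in\dom(A)$, so $Q_n A Q_k y$ is well defined. Repeating the argument above with $k$ in place of $n$ shows in addition that $Q_k x_j\to Q_k y$ in $\dom(A)$, hence $A Q_k x_j\to A Q_k y$ in $\bX$; using $(Q_1)$ we then obtain $Q_n A Q_k x_j\to Q_n A Q_k y$ in $\bX$. On the other hand, axiom $(Q_6)$ gives $Q_n A Q_k x_j=Q_n A x_j$ for every $j$; since $A x_j\to\tA y$ in $\bY$ and $Q_n\in\cC\cL(\bY,\bX)$, we infer $Q_n A x_j\to Q_n\tA y$ in $\bX$. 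Identifying the two limits yields $Q_n A Q_k y=Q_n\tA y$.

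The principal difficulty is the mismatch between the Frechet topology on $\dom(\tA)$, in which $x_j\to y$, and the graph norm on $\dom(A)$, in which we need $Q_n x_j$ to converge in order to invoke closedness of $A$. Bridging this asymmetry is exactly the content of Lemma~\ref{l4.6}(i), which packages the smoothing effect of the cutoff-type operators $Q_n$ encoded in axioms $(Q_6)$ and $(Q_7)$; without that smoothing the approximating sequence would converge only in the weak Frechet sense and we could not conclude $Q_n y\in\dom(A)$.
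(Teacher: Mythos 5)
Your proof is correct. Part (i) is essentially the same as the paper's argument: both extend $(Q_n)_{|\dom(A)}$ from $\dom(A)$ to $\dom(\tA)$ using density (Lemma~\ref{l4.6}(ii)), continuity in the mixed Frechet--Banach topologies (Lemma~\ref{l4.6}(i)), and completeness of $\dom(A)$ in graph norm; the paper packages this through Remark~\ref{r4.7} on extensions of uniformly continuous maps, while you unpack the Cauchy-sequence reasoning by hand, which is equivalent. Part (ii) follows a genuinely different route. The paper's proof works directly with the sequence $Q_m y$: via $(Q_3)$, $(Q_6)$, and part (i), the sequence $\{Q_n A Q_m y\}_m$ stabilizes for $m\geq n+1$, which by $(Q_8)$ forces $\{A Q_m y\}_m$ to converge in $\bY$; closedness of $\tA$ then identifies the limit as $\tA y$, and one passes to the limit in the stabilized identity. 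You instead keep the approximating sequence $x_j\in\dom(A)$ from Lemma~\ref{l4.6}(ii), apply $(Q_6)$ to each $x_j$ (where it is literally valid), and pass to the limit in $\bX$ using the continuity established in part (i) together with $Q_n\in\cC\cL(\bY,\bX)$. Your version avoids the separate invocation of closedness of $\tA$, at the mild cost of carrying the approximating sequence into the second part; the paper's version is shorter once part (i) is in hand. Both are sound.
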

\begin{proof} (i) Fix $n\in\NN$. First, we apply Remark~\ref{r4.7} to $\cM_1=\dom(\tA)$ endowed with the usual Frechet space metric,
\begin{equation}\label{l4.8.1}	
 d_\tA:\dom(\tA)\times\dom(\tA)\to[0,\infty),\;\;d_\tA(y_1,y_2)=\sum_{j=1}^{\infty}\frac{1}{2^j}\frac{\tmfkp_j(y_1-y_2)}{1+\tmfkp_j(y_1-y_2)},
\end{equation}
$\cM_2=\dom(A)$ endowed with the usual Banach space metric induced by the norm $\|\cdot\|_{\dom(A)}$. We set $\cU_1=\dom(A)$ and $f=(Q_n)_{|\dom(A)}$. From Lemma~\ref{l4.6}(i) we have $f\in\cC(\cU_1,\cM_2)$ and linear, hence, $f$ is uniformly continuous. Since the closure of $\cU_1$ in the topology induced by the metric $d_\tA$ is equal to $\dom(\tA)$, from Remark~\ref{r4.7} we infer that there exists a unique uniformly continuous function $\tQ_n:\dom(\tA)\to\dom(A)$ such that $(\tQ_n)_{|\dom(A)}=(Q_n)_{|\dom(A)}$. Since the function $(Q_n)_{|\dom(A)}$ is linear, from the uniqueness of the function $\tQ_n$, we obtain that $\tQ_n\in\cC\cL(\dom(\tA),\dom(A))$ with respect to the topology induced by the family of seminorms $\{\tmfkp_n\}_{n\in\NN}$ on $\dom(\tA)$ and the graph norm $\|\cdot\|_{\dom(A)}$ on $\dom(A)$.

Next, we note that since $\dom(\tA)\hookrightarrow\bY$ and $Q_n\in\cC\cL(\bY,\bX)$ by Lemma~\ref{l4.2}(vi), we infer that $(Q_n)_{|\dom(\tA)}\in\cC\cL(\dom(\tA),\bX)$. Since $\tQ_n\in\cC\cL(\dom(\tA),\dom(A))$ and $\dom(A)\hookrightarrow\bX$ we obtain
$\tQ_n\in\cC\cL(\dom(\tA),\bX)$. Moreover, we have
\begin{equation}\label{l4.8.2}
\tQ_nx=(Q_n)_{|\dom(\tA)}x=Q_nx\;\mbox{for any}\;x\in\dom(A).	
\end{equation}	
Since $\dom(A)$ is a dense subspace of $\dom(\tA)$ in the topology induced by the family of seminorms $\{\tmfkp_n\}_{n\in\NN}$ by Lemma~\ref{l4.6}(ii), from \eqref{l4.8.2} we conclude that $\tQ_n=(Q_n)_{|\dom(\tA)}$. It follows that
\begin{equation}\label{l4.8.3}
Q_n\big(\dom(\tA)\big)=\tQ_n\big(\dom(\tA)\big)\subseteq\dom(A).
\end{equation}

Next, we fix $y\in\dom(\tA)$. From (i), ($Q_3$) and ($Q_6$) we obtain
\begin{equation}\label{l4.8.4}
Q_nAQ_my=Q_nAQ_kQ_my=Q_nAQ_ky\;\mbox{for any}\;k,m,n\in\NN\;\mbox{such that}\;m>k>n.	
\end{equation}	
The sequence $\{Q_nAQ_my\}_{m\in\NN}$ is constant for $m\geq n+1$ for any $n\in\NN$, which implies that the sequence $\{Q_nAQ_my\}_{m\in\NN}$ is convergent in $\bX$ for any $n\in\NN$. From ($Q_8$) we infer that the sequence $\{AQ_my\}_{m\in\NN}$ is convergent in $\bY$. Let $z=\lim_{m\to\infty}AQ_my$ in $\bY$. Since $\tA_{|\dom(A)}=A$ from Lemma~\ref{l4.2}(v) we have $Q_my\in\dom(A)\subset\dom(\tA)$ for any $m\in\NN$ and
\begin{equation}\label{l4.8.5}
Q_my\xrightarrow{\enskip\bY\enskip}y\;\mbox{and}\; \tA Q_my=AQ_my\xrightarrow{\enskip\bY\enskip}z\;\mbox{as}\;m\to\infty.	
\end{equation}	
Since the linear operator $\tA$ is closed we conclude that $z=\tA y$, which shows that $AQ_my\xrightarrow{\enskip\bY\enskip}\tA y$ as $m\to\infty$. Assertion (ii) follows by passing to the limit as $m\to\infty$ in \eqref{l4.8.4}.
\end{proof}
\section{Meromorphic extensions of $R_A(\cdot)_{|\bX_\infty}$ and integral representations of the Cosine and Sine functions}\label{sec5} Obviously we cannot hope to have analytic (or meromorphic) extensions of the operator valued function $R_A(\cdot)$ to a subset of the complex plane that contains the set $\Omega_0$ defined in \eqref{def-Omega-0} and some part of the essential spectrum of the generator $A$. As showed in \cite{DZ} for the case of Schr\" odinger operators, the best that we can hope for is to find meromorphic extensions of the operator-valued function $R_A(\cdot)_{|\bX_\infty}$ with values in the space $\cC\cL(\bX_\infty,\bY)$, where $\bX_\infty$ is the subspace of $\bX$ defined in \eqref{def-bX-infty}. Therefore, we need to discuss  some of the elementary properties of the subspace.
\begin{remark}\label{r4.9}
Assume Hypotheses (H) and (Q). Then, the following assertions hold true:
\begin{enumerate}
\item[(i)] $B\in\cC\cL(\bX_\infty,\bY)$ if and only if $Q_nB\in\mathcal{B}(\bX_\infty,\bX)$ for any $n\in\NN$;	
\item[(ii)] The space $\cC\cL(\bX_\infty,\bY)$ is a Frechet space when endowed with the family of seminorms $\{\tmfkq_n\}_{n\in\NN}$ defined by
\begin{equation}\label{seminroms-operator}
\tmfkq_n:\cC\cL(\bX_\infty,\bY)\to[0,\infty),\;\tmfkq_n(B)=\|Q_nB\|_{\mathcal{B}(\bX)}, n\in\NN;	
\end{equation}
\item[(iii)] If $\Omega\subseteq\CC$ is an open connected set, $F:\Omega\to\cC\cL(\bX_\infty,\bY)$ is an analytic function, then $Q_nF(\cdot)$ is a $\mathcal{B}(\bX_\infty,\bX)$-valued analytic function and $Q_nF(\cdot){Q_j}_{|\bX}$ is a $\mathcal{B}(\bX)$-valued analytic function for any $n,j\in\NN$;
\item[(iv)] The operator-valued function
\begin{equation}\label{analytic-restriction}
\lambda\to R_A(\lambda)_{|\bX_\infty}:\Omega_0\setminus\cM\to\cC\cL(\bX_\infty,\bY)\;\mbox{is analytic}.
\end{equation}  	
\end{enumerate}	
\end{remark}	
We also recall the definition of a \textit{finitely meromorphic} point given in Definition~\ref{d3.1} for the case of Banach spaces. This definition can be extended naturally to the case of $\cC\cL(\bX_\infty,\bY)$-valued functions. Let $g_0$ be the function from Hypothesis (H) above. Throughout this section we assume the following.

\noindent\textbf{Hypothesis (H-ext).}  We assume that there exist a number $\delta_0>0$, a function $g_*:\RR\to\RR$, \textit{piecewise} of class $\mathcal{C}^1$, such that  $g_*(s)+\delta_0\leq g_0(s)$ for any $s\in\RR$ and such that
\begin{enumerate}
\item[(i)] The function $R_A(\cdot)_{|\bX_\infty}$ with values in $\cC\cL(\bX_\infty,\bY)$ has a meromorphic extension from $\Omega_0\setminus\cM$ to the set
\begin{equation}\label{def-Omega-star}
\Omega_*=\{\lambda\in\CC:\re\lambda>g_*(\im\lambda)\}.
\end{equation}
The extension will be denoted $R_A^\infty(\cdot)$;
\item[(ii)] The operator-valued function $R_A^\infty(\cdot)$ has finitely many singularities in $\Omega_*\setminus\Omega_0$, denoted $\nu_\ell$, $\ell=1,\dots,p$. All singularities are finitely meromorphic points of $R_A^\infty(\cdot)$;
\item[(iii)] For any $i,j\in\NN$ there exist two piecewise continuous functions $\tih_{i,j,1},\tih_{i,j,2}:[0,\infty)\to[0,\infty)$ such that
\begin{equation}\label{RA-ext-h1-h2-est}
\|Q_iR_A^\infty(\lambda)Q_j\|\leq \tih_{i,j,1}(|\lambda|)\tih_{i,j,2}(\re\lambda)\;\mbox{for any}\;\lambda\in\Omega_*\setminus\Omega_0\;\mbox{such that}\;|\im\lambda|>\max_{1\leq \ell\leq p}|\im\nu_\ell|.	
\end{equation}	
\item[(iv)]  There exists $n_*\in\NN$ such that
\begin{equation}\label{limit-n-star}
\lim\limits_{s\to\infty}\frac{\tih_{i,j,1}(s)}{s^{2n_*-1}}=0.
\end{equation}
Moreover, $\sup\limits_{s\leq\omega}e^{\beta_{i,j}s}\tih_{i,j,2}(s)<\infty$ for some $\beta_{i,j}>0$, for any $i,j\in\NN$.
\end{enumerate}
In the sequel we denote the set of singularities by
\begin{equation}\label{def-calN}
\cN=\{\nu_\ell:\ell=1,\dots,p\}.	
\end{equation}	

\begin{figure}[h]
	\begin{center}
		\includegraphics[width=0.6\textwidth]{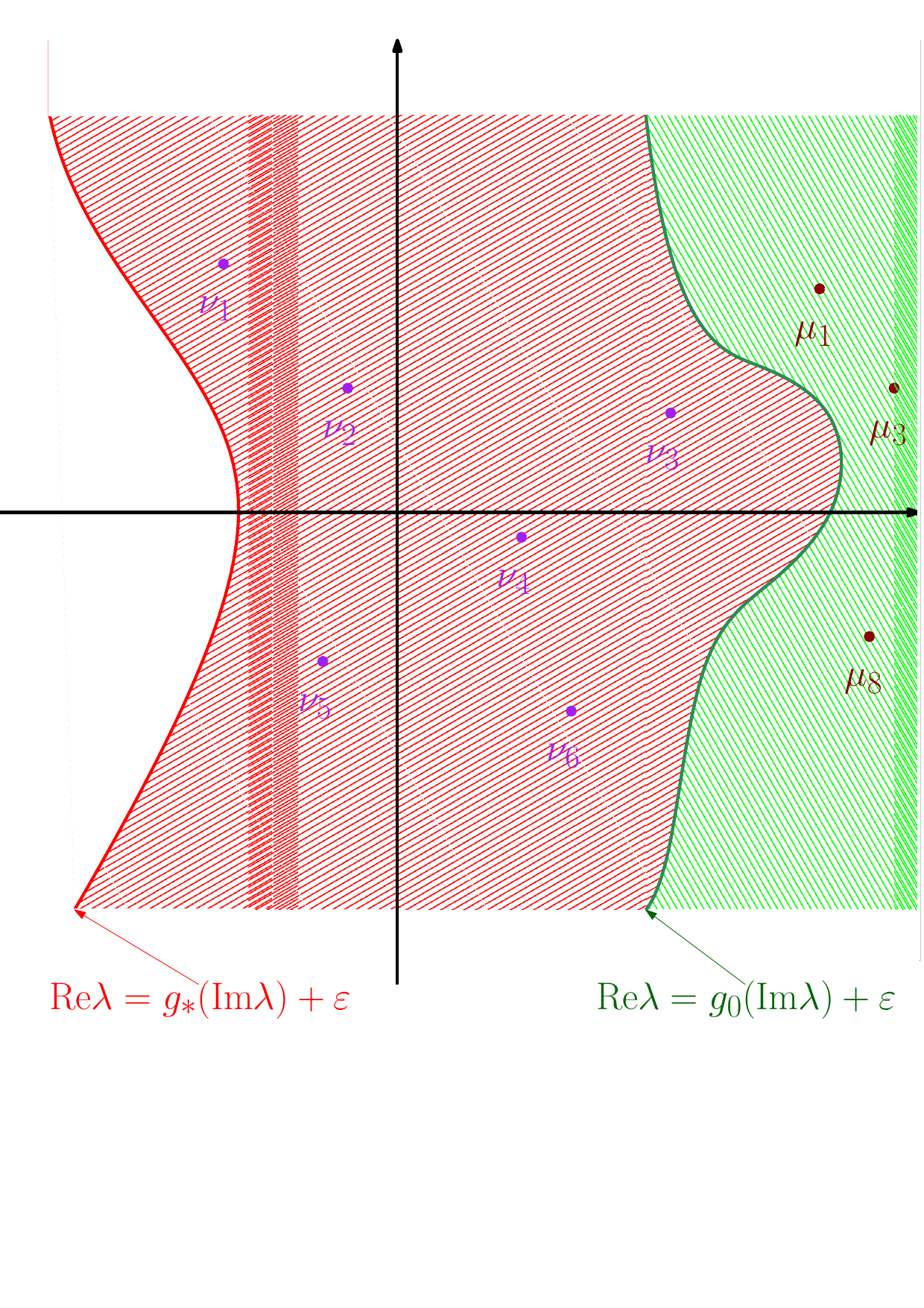}
		
		Figure 3. A generic depiction of the sets introduced in Hypothesis (H-ext). The set $\Omega_*$ consists of the union of the regions to the right of the graph of the curve $\re\lambda=g_*(\im\lambda)$.
	\end{center}
\end{figure}
Our next task is to describe the immediate consequences of Hypothesis (H-ext), similar to Lemma~\ref{l3.2} of Section~\ref{sec3}.
\begin{lemma}\label{l4.10}
Assume Hypotheses (H), (Q), (H-ext). Then, the following assertions hold true:
\begin{enumerate}
\item[(i)] $R_A^\infty(\lambda)Ax=\lambda^2R_A^\infty(\lambda)x-x$ for any $\lambda\in\Omega_*\setminus\big(\cM\cup\cN\big)$, $x\in\dom(A)\cap\bX_\infty$;
\item[(ii)] $\Range\big(R_A^\infty(\lambda)\big)\subseteq\dom(\tA)$ and $\tA R_A^\infty(\lambda)x=\lambda^2R_A^\infty(\lambda)x-x$  for any  $\lambda\in\Omega_*\setminus\big(\cM\cup\cN\big)$, $x\in\bX_\infty$;
\item[(iii)] $\Ker(Q_i-I_\bY)$ and $\bX_\infty$ are invariant subspaces of $A^n$ for any $n,i\in\NN$;
\item[(iv)] For any $n\in\NN$, $x\in\dom(A^n)\cap\bX_\infty$ and $\lambda\in\Omega_*\setminus\big(\cM\cup\cN\big)$ we have
\begin{equation}\label{RA-infty-reduction}
R_A^\infty(\lambda)A^nx=\lambda^{2n}R_A^\infty(\lambda)x-\displaystyle\sum_{j=0}^{n-1}\lambda^{2n-2-2j}A^jx.
\end{equation}		
\end{enumerate}
\end{lemma}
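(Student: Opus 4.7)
\textbf{Proof proposal for Lemma~\ref{l4.10}.} My plan is to prove the four assertions in the order (iii), (i), (ii), (iv), since (iii) is a simple induction that will feed into (iv), while (i) and (ii) rely on analytic continuation from $\Omega_0\setminus\cM$ to $\Omega_*\setminus(\cM\cup\cN)$ and are the technical core.

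For (iii), I would argue by induction on $n$, using Hypothesis~(Q)($Q_5$) as the base step. Suppose $x\in\dom(A^n)\cap\Ker(Q_i-I_\bY)$. Since $\dom(A^n)\hookrightarrow\dom(A^{n-k})$ for $k\le n$, we have $A^kx\in\dom(A)$ for each $k<n$, so the induction proceeds by showing $A^kx\in\Ker(Q_i-I_\bY)$ for $k=0,\dots,n$: the base case $k=0$ is by hypothesis, and assuming $A^kx\in\dom(A)\cap\Ker(Q_i-I_\bY)$ with $k<n$, Hypothesis~(Q)($Q_5$) yields $A^{k+1}x\in\Ker(Q_i-I_\bY)$. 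Invariance of $\bX_\infty$ then follows by taking the union over $i$.

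For (i), fix $x\in\dom(A)\cap\bX_\infty$; by (iii) (or directly by Lemma~\ref{l4.2}(iv)) we have $Ax\in\bX_\infty$, so both sides make sense as $\bY$-valued functions of $\lambda$ on $\Omega_*\setminus(\cM\cup\cN)$, and both are meromorphic there by Hypothesis~(H-ext)(i)-(ii). Lemma~\ref{l3.2}(iii) (with $n=1$) gives the identity on the open connected subset $\Omega_0\setminus\cM$, so the identity principle for meromorphic $\bY$-valued functions forces it on all of $\Omega_*\setminus(\cM\cup\cN)$. Assertion (iv) then follows by induction on $n$ using (i) applied to $A^{n-1}x$ (valid since $A^{n-1}x\in\dom(A)\cap\bX_\infty$ by (iii)) combined with a telescoping algebraic identity: one verifies
\begin{equation*}
\lambda^2\Bigl(\lambda^{2n-2}R_A^\infty(\lambda)x-\sum_{j=0}^{n-2}\lambda^{2n-4-2j}A^jx\Bigr)-A^{n-1}x=\lambda^{2n}R_A^\infty(\lambda)x-\sum_{j=0}^{n-1}\lambda^{2n-2-2j}A^jx,
\end{equation*}
which closes the induction.

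The main obstacle is (ii), where the issue is to interpret $\tA R_A^\infty(\lambda)x$ for arbitrary $x\in\bX_\infty$ (not just $x\in\dom(A)\cap\bX_\infty$), since $R_A^\infty(\lambda)x$ need not lie in $\dom(A)$ nor even in $\bX$. The strategy is to mimic the proof of Lemma~\ref{l3.2}(ii): for fixed $x\in\bX_\infty$, define the $\bY\times\bY$-valued function $F_x(\lambda)=\bigl(R_A^\infty(\lambda)x,\lambda^2R_A^\infty(\lambda)x-x\bigr)$, which is analytic on $\Omega_*\setminus(\cM\cup\cN)$. On the smaller set $\Omega_0\setminus\cM$ we have $R_A^\infty(\lambda)x=R_A(\lambda)x\in\dom(A)$ and $AR_A(\lambda)x=\lambda^2R_A(\lambda)x-x$ by Lemma~\ref{l3.2}, so $F_x(\lambda)\in\Graph(A)\subseteq\Graph(\tA)$ there. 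Since $\tA$ is closed (by construction as the closure of $A_\bY$ in Section~\ref{sec4}), $\Graph(\tA)$ is a closed subspace of $\bY\times\bY$, and Lemma~\ref{lB3} (analytic continuation of functions with values in a closed subspace of a Frechet space) forces $F_x(\lambda)\in\Graph(\tA)$ for every $\lambda\in\Omega_*\setminus(\cM\cup\cN)$. Unpacking the definition of the graph yields both $R_A^\infty(\lambda)x\in\dom(\tA)$ and $\tA R_A^\infty(\lambda)x=\lambda^2R_A^\infty(\lambda)x-x$. The delicate point here is verifying that $F_x$ really is analytic as a $\bY\times\bY$-valued function (not merely as a $\cC\cL(\bX_\infty,\bY)$-valued function composed with evaluation at $x$), which follows from Remark~\ref{r4.9}(iii) together with the continuous embedding $\bX\hookrightarrow\bY$.
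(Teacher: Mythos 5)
Your proof is correct and follows essentially the same route as the paper: analytic continuation of the resolvent identity from $\Omega_0\setminus\cM$ for part~(i), the $\Graph(\tA)$ argument via Lemma~\ref{lB3} for part~(ii), induction on ($Q_5$) for part~(iii), and induction from (i) for part~(iv). You do flag one point the paper passes over silently, namely that $\tF_x$ is analytic as a $\bY\times\bY$-valued (rather than operator-valued) function; your observation that this follows from Remark~\ref{r4.9}(iii) together with $\bX\hookrightarrow\bY$ is a welcome explicit justification of a step the paper leaves to the reader.
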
	
\begin{proof}  Fix $x\in\dom(A)\cap\bX_\infty$. From Lemma~\ref{l3.2}(i)-(ii) we infer that
\begin{equation}\label{l4.10.1}
R_A(\lambda)Ax=\lambda^2R_A(\lambda)x-x\;\mbox{for any}\;\lambda\in\Omega_0\setminus\cM.
\end{equation}	
Assertion (i) follows from \eqref{l4.10.1} because $R_A^\infty(\cdot)$ is the analytic extension of $R_A(\cdot)_{|\bX_\infty}$ to $\Omega_*\setminus\big(\cM\cup\cN\big)$ by Hypothesis (H-ext).

Next, we fix $x\in\bX_\infty$ and define the function $\tF_x:\Omega_*\setminus\big(\cM\cup\cN\big)\to\bY\times\bY$
by $\tF_x(\lambda)=(R_A^\infty(\lambda)x,\lambda^2R_A^\infty(\lambda)x-x)$. By Hypothesis (H-ext) $\tF_x$ is analytic.  From Lemma~\ref{l3.2}(i)-(ii) we obtain
\begin{equation}\label{l4.10.2}
\tF_x(\lambda)=\bigl(R_A(\lambda)x,\lambda^2R_A(\lambda)x-x\bigr)\in\mathrm{Graph}(A)\subseteq\Graph(\tA)
\end{equation}	
for any $\lambda\in\Omega_0\setminus\cM$. Since the linear operator $\tA$ is closed, $\Graph(\tA)$ is a closed subspace of the Frechet space $\bY\times\bY$. From Lemma~\ref{lB3} and \eqref{l4.10.2} we derive that $\tF_x(\lambda)\in\Graph(\tA)$ for any $\lambda\in\Omega_*\setminus\big(\cM\cup\cN\big)$, proving assertion
(ii).

Using induction and Hypothesis ($Q_5$) we obtain
\begin{equation}\label{invariance-A-n-Qk}
A^n\big(\dom(A^n)\cap\Ker(Q_i-I_\bY)\big)\subseteq\Ker(Q_i-I_\bY)\;\mbox{for any}\; n,i\in\NN,
\end{equation}
which implies the subspace $\bX_\infty$ is invariant under $A^n$, that is
\begin{equation}\label{invariance-A-n}
	A^n\big(\dom(A^n)\cap\bX_\infty)\big)\subseteq\bX_\infty\;\mbox{for any}\; n\in\NN.
\end{equation}
Assertion (iv) follows from (i) by induction, similar to the proof of Lemma~\ref{l3.2}(iii).
\end{proof}
\begin{lemma}\label{l4.11}
Assume Hypotheses (H), (Q), (H-ext). Then, the following assertions hold true:
\begin{enumerate}
\item[(i)] $\nu_\ell^2\in\sigma_{\mathrm{point}}(\tA)$ for any $\ell=1,\dots,p$;
\item[(ii)] There exists $\ttr_*>0$, small enough, such that
\begin{equation}\label{tilde-Pk}
\tP_\ell=\frac{1}{\pi\rmi}\int_{\partial D(\nu_\ell,\ttr_*)}\lambda R_A^\infty(\lambda)\rmd\lambda\in\cC\cL(\bX_\infty,\bY),\;\oP_\ell=\frac{1}{\pi\rmi}\int_{\partial D(\nu_\ell,\ttr_*)} R_A^\infty(\lambda)\rmd\lambda\in\cC\cL(\bX_\infty,\bY)
\end{equation}
are finite rank operators for any $\ell=1,\dots,p$;	
\item[(iii)] $\tP_\ell\big(\dom(A)\cap\bX_\infty\big)=\Range(\tP_\ell)\subset\dom(\tA)$, $\oP_\ell\big(\dom(A)\cap\bX_\infty\big)=\Range(\oP_\ell)\subset\dom(\tA)$ and
$\tA\tP_\ell x=\tP_\ell Ax$, $\tA\oP_\ell x=\oP_\ell Ax$ for any $x\in\dom(A)\cap\bX_\infty$, $\ell=1,\dots,p$.	
Moreover, $\Range(\tP_\ell)$  and $\Range(\oP_\ell)$ are invariant under $\tA$ for any $\ell=1,\dots,p$;		
\item[(iv)] For any $\ell\in\{1,\dots,p\}$ there exists $\tN_\ell\in\NN$ such that $\Range(\tP_\ell)\subset\dom(\tA^{\tN_\ell})$ and $(\tA-\nu_\ell^2I_\bY)_{|\mathrm{Range}(\tP_\ell)}$  is nilpotent;
\item[(v)] If $\nu_\ell=0$ then there exists $\tN_\ell\in\NN$ such that $\Range(\oP_\ell)\subset\dom(\tA^{\tN_\ell})$ and $\tA_{|\mathrm{Range}(\oP_\ell)}$  is nilpotent;
\item[(vi)] If $\nu_\ell\ne0$ then there exists an analytic function $\tE_\ell:D(\nu_\ell,2\ttr_*)\to\cC\cL(\bX_\infty,\bY)$ such that
\begin{equation}\label{Laurent-nuk-nonzero}
R_A^\infty(\lambda)=\sum_{n=1}^{\tN_\ell}\frac{1}{(\lambda^2-\nu_\ell^2)^n}(\tA-\nu_\ell^2 I_\bY)^{n-1}\tP_\ell+\tE_\ell(\lambda)\;\mbox{for any}\;\lambda\in D(\nu_\ell,2\ttr_*)\setminus\{\nu_\ell\};
\end{equation}
\item[(vii)] If $\nu_\ell=0$ then there exists $\oN_\ell\in\NN$ with $\oN_\ell\leq\tN_\ell$ and an analytic function $\tE_\ell:D(0,2\ttr_*)\to\cC\cL(\bX_\infty,\bY)$ such that
\begin{equation}\label{Laurent-nuk-zero}
R_A^\infty(\lambda)=\sum_{j=1}^{\oN_\ell}\frac{1}{\lambda^{2j}}\tA^{j-1}\tP_\ell+\sum_{j=1}^{\oN_\ell}\frac{1}{\lambda^{2j+1}}\tA^{j-1}\oP_\ell+\tE_\ell(\lambda)
\;\mbox{for any}\;\lambda\in D(0,2\ttr_*)\setminus\{0\};
\end{equation}
\item[(viii)] If $\nu_\ell\ne0$ then
\begin{equation}\label{res-cos-sin-ext-nonzero}
\Res\Bigl(\lambda^\kappa e^{\lambda t}R_A^\infty(\lambda),\lambda=\nu_\ell\Bigr)=\sum_{j=1}^{\tN_\ell}\Bigg(\frac{1}{2\pi\rmi}\int_{\partial D(\nu_\ell,\ttr_*)}\frac{\lambda^\kappa e^{\lambda t}}{(\lambda^2-\nu_k^2)^j}\rmd\lambda\Bigg)(\tA-\nu_\ell^2 I_\bY)^{j-1}\tP_\ell,\;\kappa=0,1;
\end{equation}
\item[(ix)] If $\nu_\ell=0$ then
\begin{equation}\label{res-cos-sin-ext-zero}
\Res\Bigl(\lambda^\kappa e^{\lambda t}R_A^\infty(\lambda),\lambda=0\Bigr)=\sum_{j=1}^{\oN_\ell}\frac{t^{2j-1-\kappa}}{(2j-1-\kappa)!}\tA^{j-1}\tP_\ell+\sum_{j=1}^{\oN_\ell}\frac{t^{2j-\kappa}}{(2j-\kappa)!}\tA^{j-1}\oP_\ell,\;\kappa=0,1.
\end{equation}		
\end{enumerate}
\end{lemma}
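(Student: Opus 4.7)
The plan is to mirror the strategy of Lemma~\ref{l3.3} but now in the Frechet-space setting $\cC\cL(\bX_\infty,\bY)$, while treating the branch point $\nu_\ell=0$ separately from the regular case $\nu_\ell\neq 0$. I would first choose $\ttr_*>0$ small enough so that the disks $D(\nu_\ell,2\ttr_*)$ are pairwise disjoint, lie in $\Omega_*$, miss $\cM\cup(\cN\setminus\{\nu_\ell\})$, and --- whenever $\nu_\ell\neq 0$ --- exclude the origin and make $\lambda\mapsto\lambda^2$ injective on the disk. Because $\nu_\ell$ is a finitely meromorphic point, its Laurent expansion
\[
R_A^\infty(\lambda)=\sum_{k=1}^{K_\ell}\frac{G_{\ell,k}}{(\lambda-\nu_\ell)^k}+H_\ell(\lambda)
\]
has finite-rank coefficients $G_{\ell,k}\in\cC\cL(\bX_\infty,\bY)$, so $\tP_\ell$ and $\oP_\ell$ are finite linear combinations of these and hence finite rank, giving (ii). Lemma~\ref{l4.10}(ii) puts $\Range(R_A^\infty(\lambda))\subset\dom(\tA)$, and since $\Graph(\tA)$ is closed in $\bY\times\bY$ the Appendix~\ref{Appendix B} machinery for $\cC\cL$-valued integrals lets us pass $\tA$ through the contour integrals; combined with $\tA R_A^\infty(\lambda)x=\lambda^2R_A^\infty(\lambda)x-x$ and the analogue on $\dom(A)\cap\bX_\infty$ from Lemma~\ref{l4.10}(i), this yields all parts of (iii).

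For $\nu_\ell\neq 0$ the map $\lambda\mapsto\mu=\lambda^2$ is a biholomorphism near $\nu_\ell$, so $\tR(\mu):=R_A^\infty(\sqrt\mu)$ is a well-defined finitely meromorphic $\cC\cL(\bX_\infty,\bY)$-valued function near $\mu_\ell=\nu_\ell^2$ obeying the honest resolvent identity $(\mu I_\bY-\tA)\tR(\mu)x=x$ for $x\in\bX_\infty$. Adapting the standard Riesz-projection computation from Lemma~\ref{l3.3} to this Frechet setting produces the Laurent series
\[
\tR(\mu)=\sum_{n=1}^{\tN_\ell}\frac{(\tA-\mu_\ell I_\bY)^{n-1}\tP_\ell}{(\mu-\mu_\ell)^n}+\text{analytic part},
\]
together with $(\tA-\mu_\ell I_\bY)^{\tN_\ell}\tP_\ell=0$. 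Substituting $\mu=\lambda^2$ gives (vi) and (iv). Since $\Range(\tP_\ell)$ is finite-dimensional and nonzero, the nilpotent restriction of $\tA-\nu_\ell^2 I_\bY$ has nontrivial kernel, which gives (i) in this case; (viii) is then immediate by term-wise residue computation.

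The main obstacle is the case $\nu_\ell=0$, where $\lambda\mapsto\lambda^2$ is branched and the above reduction fails. Here I would work directly with the Laurent series at $0$ and insert it into $(\lambda^2-\tA)R_A^\infty(\lambda)x=x$; matching coefficients of $\lambda^{-k}$ for $k\geq 1$ gives the two-step recurrence $G_{0,k+2}=\tA G_{0,k}$, with $G_{0,k}=0$ for $k>K_0$. The principal part therefore splits into an even chain generated by $G_{0,2}$ and an odd chain generated by $G_{0,1}$, each terminating after at most $\oN_\ell$ steps, where $\oN_\ell:=\lceil K_0/2\rceil$. Identifying $\tP_\ell$ (respectively $\oP_\ell$) with an appropriate multiple of the leading term of the even (respectively odd) chain --- normalization determined by comparing the Laurent series with the explicit contour-integral definitions in \eqref{tilde-Pk} --- yields (vii); termination of both chains gives the nilpotency in (v), from which (i) for $\nu_\ell=0$ follows via finite-dimensional nilpotency of $\tA$ on $\Range(\tP_\ell)+\Range(\oP_\ell)$; finally (ix) is read off from (vii) using the elementary residues $\Res(\lambda^\kappa e^{\lambda t}/\lambda^m,\lambda=0)=t^{m-1-\kappa}/(m-1-\kappa)!$. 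The hardest bookkeeping lies in pinning down the normalization constants in this branched case and in verifying that each $G_{0,k}x$ lands in $\dom(\tA)$ so that the recurrence is legitimate --- the latter uses that $\dom(\tA)$-valuedness of an analytic Frechet-valued map passes to each Laurent coefficient, which is the content of Lemma~\ref{lB3}.
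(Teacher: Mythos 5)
Your proposal is correct and matches the paper's strategy: contour-integral operators $\tG_{\ell,n}$ built from the finitely meromorphic Laurent coefficients, the recurrence $(\tA-\nu_\ell^2 I_\bY)\tG_{\ell,n}=\tG_{\ell,n+1}$ driven by Lemma~\ref{l4.10}(ii) together with the closedness of $\tA$, and a case split at $\nu_\ell=0$. The only difference is presentational. For $\nu_\ell\neq 0$ the paper stays entirely in the $\lambda$-plane and regroups the Laurent principal part in $(\lambda-\nu_\ell)^{-j}$ into a principal part in $(\lambda^2-\nu_\ell^2)^{-n}$ plus a leftover series in $(\lambda+\nu_\ell)^{-j}$ (analytic on $D(\nu_\ell,2\ttr_*)$ since that disk excludes $-\nu_\ell$) via the explicit contour identities \eqref{l4.11.10}--\eqref{l4.11.15}; your change of variable $\mu=\lambda^2$ collapses this algebra into the standard single-pole Riesz computation, which is cleaner so long as you check, as you do, that $\lambda\mapsto\lambda^2$ is injective on the disk and that the one-sided relation $(\mu I_\bY-\tA)\tR(\mu)x=x$ on $\bX_\infty$ is all the ``resolvent identity'' one actually uses. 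For $\nu_\ell=0$ the paper derives $\tA\tR_{\ell,j}=\tR_{\ell,j+2}$ directly from the contour representation of $\tR_{\ell,j}$ and Lemma~\ref{l4.10}(ii), whereas you insert the Laurent expansion into $(\lambda^2-\tA)R_A^\infty(\lambda)x=x$ and match coefficients; the two are equivalent, but note that the justification you flag --- that each Laurent coefficient lands in $\dom(\tA)$ --- comes from the contour-integral formula for the coefficients plus closedness of $\tA$ (cf.\ \cite[Proposition~1.1.7]{ABHN}), not literally from Lemma~\ref{lB3}, which concerns the values $f(\lambda)$ themselves rather than their Laurent coefficients. One small improvement over the paper: your derivation of assertion (i) at $\nu_\ell=0$ via nilpotency of $\tA$ on $\Range(\tP_\ell)+\Range(\oP_\ell)$ is safer, since when $\nu_\ell=0$ it can happen that $\tP_\ell=0$ while $\oP_\ell\neq0$ (a pure $\lambda^{-1}$ pole), whereas the paper invokes $\tP_\ell\ne0$.
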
	
\begin{proof}
Let $\ttr_*>0$ be a positive number independent of $\ell\in\{1,\dots,p\}$ such that the elements of the set $\cN_\pm:=\{\nu_\ell: \ell=1,\dots,p\}\cup\{-\nu_\ell: \ell=1,\dots,p\}$ are separated by disks of radius $2\ttr_*$, that is
\begin{equation}\label{l4.11.1}	
D(\xi,2\ttr_*)\cap\cN_\pm=\{\xi\}\;\mbox{for any}\;\xi\in\cN_\pm.	
\end{equation}	
Fix $\ell\in\{1,\dots,p\}$. Since $\nu_l$ is a finitely meromorphic point by Hypothesis (H-ext) (ii), by Definition~\ref{d3.1} there exists $\tN_\ell\in\NN$, an analytic operator-valued function $\oE_\ell:D(\nu_\ell,2\ttr_*)\to\cC\cL(\bX_\infty,\bY)$ and  finite rank operators $\tR_{\ell,j}\in\cC\cL(\bX_\infty,\bY)$, $j=1,\dots,\tN_\ell$, such that
\begin{equation}\label{l4.11.2}
R_A^\infty(\lambda)=\sum_{j=1}^{\tN_\ell}\frac{1}{(\lambda-\nu_\ell)^j}\tR_{\ell,j}+\oE_\ell(\lambda)\;\mbox{for any}\;\lambda\in D(\nu_\ell,2\ttr_*)\setminus\{\nu_\ell\}.
\end{equation}
Integrating, one can readily check that
\begin{equation}\label{l4.11.3}
\tR_{\ell,j}=\frac{1}{2\pi\rmi}\int_{\partial D(\nu_\ell,\ttr_*)}(\lambda-\nu_l)^{j-1}R_A^\infty(\lambda)\rmd\lambda\;\mbox{for any}\;j=1,\dots,\tN_\ell.
\end{equation}
Next, we define
\begin{equation}\label{l4.11.4}
\tG_{\ell,n}=\frac{1}{\pi\rmi}\int_{\partial D(\nu_\ell,\ttr_*)}\lambda(\lambda^2-\nu_\ell^2)^{n-1}R_A^\infty(\lambda)\rmd\lambda\in\cC\cL(\bX_\infty,\bY)\;\mbox{for}\;n\in\NN.
\end{equation}
From \eqref{l4.11.2} and \eqref{l4.11.4} we  obtain that
\begin{equation}\label{l4.11.5}
\tG_{\ell,n}=\frac{1}{\pi\rmi}\sum_{j=1}^{\tN_\ell}\Bigg(\int_{\partial D(\nu_\ell,\ttr_*)}\frac{\lambda\bigl(\lambda^2-\nu_\ell^2\bigr)^{n-1}}{(\lambda-\nu_\ell)^j}\rmd\lambda\Bigg)\tR_{\ell,j}\;\mbox{for any}\;n\in\NN.
\end{equation}
Since $\tR_{\ell,j}$ is a finite rank operator for any $j=1,\dots,\tN_\ell$, from \eqref{l4.11.5} we conclude that
\begin{equation}\label{l4.11.6}
\tG_{\ell,n}=0\;\mbox{for any}\; n\geq 1+\tN_\ell,\;\;\tG_{\ell,n}\;\mbox{is of finite rank for any}\; n=1,\dots,\tN_\ell.	
\end{equation}
From Lemma~\ref{l4.10}(ii) and \cite[Proposition 1.1.7]{ABHN} it follows that $\Range(\tG_{\ell,n})\subseteq\dom(\tA)$ and
\begin{align}\label{l4.11.7}
(\tA-\nu_\ell^2I_\bY)\tG_{\ell,n}x&=\frac{1}{\pi\rmi}\int_{\partial D(\nu_\ell,\ttr_*)}\lambda\bigl(\lambda^2-\nu_\ell^2\bigr)^{n-1}(\tA-\nu_\ell^2I_\bY)R_A^\infty(\lambda)x\rmd\lambda\nonumber\\
&=\frac{1}{\pi\rmi}\int_{\partial D(\nu_\ell,\ttr_*)}\lambda\bigl(\lambda^2-\nu_\ell^2\bigr)^{n-1}\Bigl((\lambda^2-\nu_\ell^2)R_A^\infty(\lambda)x-x\Bigr)\rmd\lambda\nonumber\\
&=\frac{1}{\pi\rmi}\int_{\partial D(\nu_\ell,\ttr_*)}\lambda\bigl(\lambda^2-\nu_\ell^2\bigr)^nR_A^\infty(\lambda)x\rmd\lambda=\tG_{\ell,n+1}x
\end{align}	
for any $x\in\bX_\infty$, $n\in\NN$. From \eqref{l4.11.6} we conclude that $\tP_\ell=\tG_{\ell,1}$ and $\oP_\ell=\tR_{\ell,1}$ are finite rank operators for any $\ell=1,\dots,p$, proving assertion (ii). Moreover, from \eqref{l4.11.7} we have
\begin{equation}\label{l4.11.8}
\Range(\tP_\ell)\subset\dom(\tA^{\tN_\ell})\;\mbox{and}\;\tG_{\ell,n}=(\tA-\nu_\ell^2I_\bY)^{n-1}\tP_\ell\;\mbox{for any}\;\ell=1,\dots,p,\, n=1,\dots,\tN_\ell+1.
\end{equation}
From Lemma~\ref{l4.10} and \cite[Proposition 1.1.7]{ABHN} we obtain $\Range(\oP_\ell)\subseteq\dom(\tA)$ and
\begin{align}\label{l4.11.9}
\tA\tP_\ell x=\frac{1}{\pi\rmi}\int_{\partial D(\nu_\ell,\ttr_*)}\lambda\tA R_A^\infty(\lambda)x\rmd\lambda=\frac{1}{\pi\rmi}\int_{\partial D(\nu_\ell,\ttr_*)}\lambda R_A^\infty(\lambda)Ax\rmd\lambda=\tP_\ell Ax,\nonumber\\	
\tA\oP_\ell x=\frac{1}{2\pi\rmi}\int_{\partial D(\nu_\ell,\ttr_*)}\tA R_A^\infty(\lambda)x\rmd\lambda=\frac{1}{2\pi\rmi}\int_{\partial D(\nu_\ell,\ttr_*)}R_A^\infty(\lambda)Ax\rmd\lambda=\oP_\ell Ax	 
\end{align}	
for any $x\in\dom(A)\cap\bX_\infty$. From Lemma~\ref{l4.2}(ix) and since $\tP_\ell,\oP_\ell\in\cC\cL(\bX_\infty,\bY)$ we infer that $\tP_\ell(\dom(A)\cap\bX_\infty)$ and $\oP_\ell(\dom(A)\cap\bX_\infty)$ are dense subspaces of $\Range(\tP_\ell)$ and $\Range(\oP_\ell)$, respectively. Since $\Range(\tP_\ell)$ and $\Range(\oP_\ell)$ are finite dimensional vector spaces, we infer that $\tP_\ell\big(\dom(A)\cap\bX_\infty\big)=\Range(\tP_\ell)$, $\oP_\ell\big(\dom(A)\cap\bX_\infty\big)=\Range(\oP_\ell)$. This fact together with \eqref{l4.11.9} proves assertion (iii).
From \eqref{l4.11.6} and \eqref{l4.11.8} we obtain  $(\tA-\nu_\ell^2I_\bY)^{\tN_\ell}\tP_\ell=0$. In addition, since $\nu_\ell$ is a singularity of $R_A^\infty(\cdot)$ by Hypothesis (H-ext) it follows that $\tP_\ell\ne0$ for any $\ell=1,\dots,p$.
From (iii) we conclude that $(\tA-\nu_\ell^2I_\bY)_{|\Range(\tP_\ell)}$ is a nilpotent linear operator on the non-trivial, finite dimensional space $\Range(\tP_\ell)$, proving assertion (iv). Assertion (i) follows immediately from assertion (iv) and since $\tP_\ell\ne0$.

To prove the remaining assertions of the lemma we distinguish between the case when the resonance $\nu_\ell$ is at a non-zero value or at zero.

\noindent\textbf{Case 1.} $\nu_\ell\ne0$.

\noindent We fix $\lambda\in D(\nu_\ell,2\ttr_*)\setminus\{\nu_\ell\}$ and $r\in\Big(0,\min\big\{\ttr_*,\frac{|\lambda^2-\nu_\ell^2|}{4(\onu+\ttr_*)}\big\}\Big)$, where $\onu=\max\limits_{1\leq\ell\leq p}|\nu_\ell|$. From \eqref{l4.11.5} and \eqref{l4.11.6} we have
\begin{align}\label{l4.11.10}
\sum_{n=1}^{\tN_\ell}\frac{1}{(\lambda^2-\nu_\ell^2)^n}\tG_{\ell,n}&=\sum_{n=1}^{\infty}\frac{1}{(\lambda^2-\nu_\ell^2)^n}\tG_{\ell,n}=\frac{1}{\pi\rmi}\sum_{n=1}^{\infty}\sum_{j=1}^{\tN_\ell}\Bigg(\int_{\partial D(\nu_\ell,r)}\frac{\xi\bigl(\xi^2-\nu_\ell^2\bigr)^{n-1}}{(\lambda^2-\nu_\ell^2)^n(\xi-\nu_\ell)^j}\rmd\xi\Bigg)\tR_{\ell,j}\nonumber\\
&=\frac{1}{\pi\rmi}\sum_{j=1}^{\tN_\ell}\sum_{n=1}^{\infty}\Bigg(\int_{\partial D(\nu_\ell,r)}\frac{\xi\bigl(\xi^2-\nu_\ell^2\bigr)^{n-1}}{(\lambda^2-\nu_\ell^2)^n(\xi-\nu_\ell)^j}\rmd\xi\Bigg)\tR_{\ell,j}.
\end{align}
An elementary computation shows that
\begin{equation}\label{l4.11.11}
\int_{\partial D(\nu,r)}\frac{d\xi}{(\xi-\nu)^k(\xi-\mu)}=2\pi\rmi\frac{(-1)^{k-1}}{(\nu-\mu)}\;\mbox{whenever}\;k\in\NN, |\nu-\mu|>r.	
\end{equation}
Moreover, we note that
\begin{equation}\label{l4.11.12}
\Bigg|\frac{\xi^2-\nu_\ell^2}{\lambda^2-\nu_\ell^2}\Bigg|=\frac{(|\xi-\nu_\ell|)(|\xi|+|\nu_\ell|)}{|\lambda^2-\nu_\ell^2|}\leq\frac{r(2\onu+\ttr_*)}{|\lambda^2-\nu_\ell^2|}\leq\frac{1}{2}.	
\end{equation}	
From \eqref{l4.11.11} and \eqref{l4.11.12} we obtain 	 	
\begin{align}\label{l4.11.13}
\sum_{n=1}^{\infty}&\Bigg(\int_{\partial D(\nu_\ell,r)}\frac{\xi\bigl(\xi^2-\nu_\ell^2\bigr)^{n-1}\rmd\xi}{(\lambda^2-\nu_\ell^2)^n(\xi-\nu_\ell)^j}\Bigg)=\int_{\partial D(\nu_\ell,r)}\frac{\xi}{(\xi-\nu_\ell)^j(\lambda^2-\xi^2)}\rmd\xi\nonumber\\&=-\frac{1}{2}\int_{\partial D(\nu_\ell,r)}\frac{\rmd\xi}{(\xi-\nu_\ell)^j(\xi-\lambda)}-\frac{1}{2}\int_{\partial D(\nu_\ell,r)}\frac{\rmd\xi}{(\xi-\nu_\ell)^j(\xi+\lambda)}\nonumber\\
&=\frac{\pi\rmi}{(\lambda-\nu_\ell)^j}+\frac{(-1)^j\pi\rmi}{(\lambda+\nu_\ell)^j}\;\mbox{for any}\;j=1,\dots,\tN_\ell.
\end{align}
From \eqref{l4.11.10} and \eqref{l4.11.13} we conclude that
\begin{equation}\label{l4.11.14}
\sum_{j=1}^{\tN_\ell}\frac{1}{(\lambda-\nu_\ell)^j}\tR_{\ell,j}=\sum_{n=1}^{\tN_\ell}\frac{1}{(\lambda^2-\nu_\ell^2)^n}\tG_{\ell,n}+\sum_{j=1}^{\tN_\ell}\frac{(-1)^{j-1}}{(\lambda+\nu_\ell)^j}\tR_{\ell,j}\;\mbox{for any}\;\lambda\in D(\nu_\ell,2\ttr_*)\setminus\{\nu_\ell\}.
\end{equation}
From \eqref{l4.11.2}, \eqref{l4.11.8} and \eqref{l4.11.14} we infer that the Laurent expansion \eqref{Laurent-nuk-nonzero} holds, with $\tE_\ell:D(\nu_\ell,2\ttr_*)\to\cC\cL(\bX_\infty,\bY)$ defined by
\begin{equation}\label{l4.11.15}
\tE_\ell(\lambda)=\oE_\ell(\lambda)+\sum_{j=1}^{\tN_\ell}\frac{(-1)^{j-1}}{(\lambda+\nu_\ell)^j}\tR_{\ell,j}.
\end{equation}
Since $\nu_\ell\ne0$ by \eqref{l4.11.1} we have $-\nu_\ell\notin D(\nu_\ell,2\ttr_*)$, which implies that $\tE_\ell$ is analytic, proving assertion (vi). Assertion (viii) follows immediately from assertion (vi).

\noindent\textbf{Case 2.} $\nu_\ell=0$.

\noindent First, we set $\tR_{\ell,j}=0$ for each $j\geq \tN_\ell+1$. Using again Lemma~\ref{l4.10}(ii) and \cite[Proposition 1.1.7]{ABHN} we obtain that $\Range(\tR_{\ell,j})\subseteq\dom(\tA)$ and
\begin{align}\label{l4.11.16}
\tA\tR_{\ell,j}x&=\frac{1}{2\pi\rmi}\int_{\partial D(0,\ttr_*)}\lambda^{j-1}\tA R_A^\infty(\lambda)x\rmd\lambda=\frac{1}{2\pi\rmi}\int_{\partial D(0,\ttr_*)}\lambda^{j-1}\Bigl(\lambda^2R_A^\infty(\lambda)x-x\Bigr)\rmd\lambda\nonumber\\
&=\frac{1}{2\pi\rmi}\int_{\partial D(0,\ttr_*)}\lambda^{j+1}R_A^\infty(\lambda)x\rmd\lambda=\tR_{\ell,j+2}x\;\mbox{for any}\; x\in\bX_\infty,\,j\in\NN.
\end{align}	
Since $\oP_\ell=\tR_{\ell,1}$ it follows that $\Range(\oP_\ell)\subset\dom(\tA^{\tN_\ell})$ and
\begin{equation}\label{l4.11.17}
\tR_{\ell,2j+1}=\tA^{j-1}\oP_\ell\;\mbox{and}\;\tR_{\ell,2j}=\tA^{j-1}\tP_\ell\;\mbox{for any}\;j\in\NN.
\end{equation}
It follows that $\tA^{\tN_\ell}\oP_\ell=0$, which implies that $\tA_{|\Range(\oP_\ell)}$ is nilpotent proving assertion (v). Next, we define $\oN_\ell=[\tN_\ell/2]+1$, where $[\cdot]$ denotes the integer part. Assertion (vii) follows from \eqref{l4.11.2} and \eqref{l4.11.17}. Assertion (ix) follows immediately from assertion (vii).
\end{proof}
\begin{remark}\label{r4.12}
We note that the residue formula \eqref{res-cos-sin} with $\mu_k=0$ has either even or odd terms in $t$ for the case when $\kappa=0$ or $\kappa=1$, respectively. Meanwhile, \eqref{res-cos-sin-ext-zero} always has even and odd terms in both cases. This difference is due to the fact that if $0$ is a discrete eigenvalue of finite algebraic multiplicity, then in a neighborhood of $0$ we have $R_A(\lambda)=R(\lambda^2,A)$. In the case when $0$ is a resonance, it is possible that the Laurent series representation of $R_A^\infty(\lambda)$ has odd terms in $\lambda$, see, e.g., \cite[Theorem 2.2.7]{DZ}.
\end{remark}

\begin{remark}\label{r4.13}  As in Remark~\ref{r3.4}, an elementary computation shows that for any $\kappa=0,1$, $\ell=1,\dots,p$, $k\in\NN$ there exists a polynomial $\tp_{\kappa,\ell,k}$ of degree at most $k-1$ such that
\begin{equation}\label{polynomial-representation-resonances}
\frac{1}{2\pi\rmi}\int_{\partial D(\nu_k,\ttr_*)}\frac{\lambda^\kappa e^{\lambda t}}{(\lambda^2-\nu_\ell^2)^k}\rmd\lambda=\tp_{\kappa,\ell,k}(t)e^{\nu_\ell t}\;\mbox{for any}\; t\in\RR.	
\end{equation}
\end{remark}	
Next, we discuss the properties of the residues of the operator-valued functions that involve $R_A^\infty$.
\begin{lemma}\label{l4.14}
Assume Hypotheses (H), (Q), (H-ext) $n\in\NN$ and $x\in\dom(A^n)\cap\bX_\infty$. Then,
\begin{enumerate}
\item[(i)]$\Res\Bigl(\frac{e^{\lambda t}}{\lambda^{2n-1}}R_A^\infty(\lambda)A^nx,\lambda=\nu_\ell\Bigr)=\Res\Bigl(\lambda e^{\lambda t}R_A^\infty(\lambda)x,\lambda=\nu_\ell\Bigr)$, if $\nu_\ell\ne0$;
\item[(ii)] $\Res\Bigl(\frac{e^{\lambda t}}{\lambda^{2n-1}}R_A^\infty(\lambda)A^nx,\lambda=0\Bigr)=\Res\Bigl(\lambda e^{\lambda t}R_A^\infty(\lambda)x,\lambda=0\Bigr)-\displaystyle\sum_{j=0}^{n-1}\frac{t^{2j}}{(2j)!}A^jx$, if $\nu_\ell=0$;
\item[(iii)] $\Res\Bigl(\frac{e^{\lambda t}}{\lambda^{2n-1}}R_A^\infty(\lambda)A^nx,\lambda=0\Bigr)=-\displaystyle\sum_{j=0}^{n-1}\frac{t^{2j}}{(2j)!}A^jx$, if $0\in\Omega_*\setminus\big(\cM\cup\cN\big)$.
\end{enumerate}
\end{lemma}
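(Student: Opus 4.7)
The strategy mirrors the proof of Lemma~\ref{l3.5} in Section~\ref{sec3}, now with the extended resolvent $R_A^\infty(\cdot)$ playing the role of $R_A(\cdot)$. The essential ingredient is the analogue of Lemma~\ref{l3.2}(iii) for $R_A^\infty$, which is precisely Lemma~\ref{l4.10}(iv): for $x\in\dom(A^n)\cap\bX_\infty$ and $\lambda\in\Omega_*\setminus(\cM\cup\cN)$ we have
\begin{equation*}
R_A^\infty(\lambda)A^nx=\lambda^{2n}R_A^\infty(\lambda)x-\sum_{j=0}^{n-1}\lambda^{2n-2-2j}A^jx.
\end{equation*}
Note that by Lemma~\ref{l4.10}(iii), each $A^jx$ stays in $\bX_\infty$, so the identity makes sense termwise in $\bY$.

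First, I would divide this identity by $\lambda^{2n-1}$ and multiply by $e^{\lambda t}$ to obtain
\begin{equation*}
\frac{e^{\lambda t}}{\lambda^{2n-1}}R_A^\infty(\lambda)A^nx=\lambda e^{\lambda t}R_A^\infty(\lambda)x-\sum_{j=0}^{n-1}\frac{e^{\lambda t}}{\lambda^{2j+1}}A^jx,
\end{equation*}
valid for all $\lambda\in\Omega_*\setminus(\cM\cup\cN\cup\{0\})$. Both sides are meromorphic as functions into $\bY$ on $\Omega_*\setminus(\cM\cup\cN)$, with isolated singularities only at points of $\cM\cup\cN\cup\{0\}$.

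Next, I would take residues at $\nu_\ell$ of both sides, using the elementary residue calculation \eqref{l3.5.2} that $\Res(e^{\lambda t}/\lambda^\ell,\lambda=\mu)$ equals $0$ when $\mu\ne0$ and $t^{\ell-1}/(\ell-1)!$ when $\mu=0$. Splitting into cases yields the lemma: if $\nu_\ell\ne 0$, every term $e^{\lambda t}/\lambda^{2j+1}$ in the sum is holomorphic at $\nu_\ell$, so the sum contributes zero residue and assertion (i) follows; if $\nu_\ell=0$, the residue of $e^{\lambda t}/\lambda^{2j+1}$ at $0$ equals $t^{2j}/(2j)!$, producing the extra finite sum in assertion (ii); finally, if $0\in\Omega_*\setminus(\cM\cup\cN)$ then $R_A^\infty(\cdot)$ is holomorphic at $0$, so $\Res(\lambda e^{\lambda t}R_A^\infty(\lambda)x,\lambda=0)=0$ and only the explicit sum survives, giving (iii).

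There is no real obstacle here; the statement is a direct bookkeeping consequence of Lemma~\ref{l4.10}(iv) together with the residue computation \eqref{l3.5.2}. The only mildly delicate point is ensuring that all manipulations take place inside $\bY$ and that each $A^jx$ is well-defined in $\bX_\infty\subset\bY$; both are guaranteed by the invariance statement in Lemma~\ref{l4.10}(iii). In particular, this proof is essentially identical in structure to that of Lemma~\ref{l3.5}, with the Frechet-valued framework requiring only the caveat that we read the identity in $\bY$ rather than $\bX$.
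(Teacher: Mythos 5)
Your proof is correct and follows exactly the paper's argument: derive the decomposition of $\frac{e^{\lambda t}}{\lambda^{2n-1}}R_A^\infty(\lambda)A^nx$ from the reduction formula \eqref{RA-infty-reduction} in Lemma~\ref{l4.10}(iv), then take residues term by term using \eqref{l3.5.2}. The one minor point of difference is that you correctly cite Lemma~\ref{l4.10}(iv) where the paper's proof (apparently a typo) refers to part (iii); your added remark that $A^jx\in\bX_\infty$ by the invariance in Lemma~\ref{l4.10}(iii) is a sound, if implicit-in-the-paper, sanity check.
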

\begin{proof} From Lemma~\ref{l4.10}(iii) we have
\begin{equation}\label{l4.14.1}	
\frac{e^{\lambda t}}{\lambda^{2n-1}}R_A^\infty(\lambda)A^nx=\lambda R_A^\infty(\lambda)x-\displaystyle\sum_{j=0}^{n-1}\frac{e^{\lambda t}}{\lambda^{2j+1}}A^jx
\end{equation}
for any $\lambda\in\Omega\setminus\bigl(\cM\cup\cN\cup\{0\}\bigr)$.	
Assertions (i)-(iii) follow from \eqref{l3.5.2} and \eqref{l4.14.1}.
\end{proof}
\begin{lemma}\label{l4.15}
Assume Hypotheses (H), (Q), (H-ext), $n\in\NN$, $x\in\dom(A^{n-1})\cap\bX_\infty$. Then,
\begin{enumerate}
\item[(i)] $\Res\Bigl(\frac{e^{\lambda t}}{\lambda^{2n}}R_A^\infty(\lambda)A^{n-1}x,\lambda=\nu\Bigr)\in\dom(\tA)$ for any $\nu\in\cN\cup\{0\}$;
\item[(ii)]$\tA\Res\Bigl(\frac{e^{\lambda t}}{\lambda^{2n}}R_A^\infty(\lambda)A^{n-1}x,\lambda=\nu\Bigr)=\Res\Bigl(e^{\lambda t}R_A^\infty(\lambda)x,\lambda=\nu\Bigr)$, if $\nu\in\cN\setminus\{0\}$;
\item[(iii)] $\tA\Res\Bigl(\frac{e^{\lambda t}}{\lambda^{2n}}R_A^\infty(\lambda)A^{n-1}x,\lambda=0\Bigr)=\Res\Bigl(e^{\lambda t}R_A^\infty(\lambda)x,\lambda=0\Bigr)-\displaystyle\sum_{j=0}^{n-1}\frac{t^{2j+1}}{(2j+1)!}A^jx$, if $0\in\cN$;
\item[(iv)] $\tA\Res\Bigl(\frac{e^{\lambda t}}{\lambda^{2n}}R_A^\infty(\lambda)A^{n-1}x,\lambda=0\Bigr)=-\displaystyle\sum_{j=0}^{n-1}\frac{t^{2j+1}}{(2j+1)!}A^jx$, if $0\in\Omega_*\setminus\big(\cM\cup\cN\big)$.
\end{enumerate}
\end{lemma}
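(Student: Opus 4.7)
The plan is to closely follow the argument of Lemma~\ref{l3.6}, replacing the resolvent $R_A(\cdot)$ by its meromorphic extension $R_A^\infty(\cdot)$ and the generator $A$ by its Frechet-space closure $\tA$. Fix $n\in\NN$ and $x\in\dom(A^{n-1})\cap\bX_\infty$; by Lemma~\ref{l4.10}(iii), $A^{n-1}x\in\bX_\infty$, so $R_A^\infty(\lambda)A^{n-1}x$ is well-defined for $\lambda\in\Omega_*\setminus(\cM\cup\cN)$. Applying Lemma~\ref{l4.10}(ii) to the vector $A^{n-1}x$ yields $R_A^\infty(\lambda)A^{n-1}x\in\dom(\tA)$ together with $\tA R_A^\infty(\lambda)A^{n-1}x = \lambda^2 R_A^\infty(\lambda)A^{n-1}x - A^{n-1}x$, from which I deduce the pointwise identity
\begin{equation*}
\tA\Bigl(\frac{e^{\lambda t}}{\lambda^{2n}}R_A^\infty(\lambda)A^{n-1}x\Bigr) = \frac{e^{\lambda t}}{\lambda^{2n-2}}R_A^\infty(\lambda)A^{n-1}x - \frac{e^{\lambda t}}{\lambda^{2n}}A^{n-1}x,
\end{equation*}
valid on $\Omega_*\setminus(\cM\cup\cN\cup\{0\})$.

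Next, I would apply Lemma~\ref{l4.10}(iv) with $n$ replaced by $n-1$ (the case $n=1$ being trivial since then $A^{n-1}x=x$) to expand $R_A^\infty(\lambda)A^{n-1}x = \lambda^{2n-2}R_A^\infty(\lambda)x - \sum_{j=0}^{n-2}\lambda^{2n-4-2j}A^jx$. Dividing by $\lambda^{2n-2}$ and multiplying by $e^{\lambda t}$ gives $\frac{e^{\lambda t}}{\lambda^{2n-2}}R_A^\infty(\lambda)A^{n-1}x = e^{\lambda t}R_A^\infty(\lambda)x - \sigma_n(\lambda,t,x)$, where $\sigma_n$ is precisely the function defined in \eqref{l3.6.4}. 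Combining with the previous display and using the algebraic relation \eqref{l3.6.5} yields
\begin{equation*}
\tA\Bigl(\frac{e^{\lambda t}}{\lambda^{2n}}R_A^\infty(\lambda)A^{n-1}x\Bigr) = e^{\lambda t}R_A^\infty(\lambda)x - \sum_{j=0}^{n-1}\frac{e^{\lambda t}}{\lambda^{2j+2}}A^jx
\end{equation*}
on $\Omega_*\setminus(\cM\cup\cN\cup\{0\})$.

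The central step is then to take residues at $\nu\in\cN\cup\{0\}$ and interchange $\tA$ with the contour integral $\frac{1}{2\pi\rmi}\int_{\partial D(\nu,\ttr_*)}$. This is the main technical obstacle, since $\tA$ is merely closed in the Frechet space $\bY$ rather than bounded. However, the integrand on $\partial D(\nu,\ttr_*)$ takes values in $\dom(\tA)$ by Lemma~\ref{l4.10}(ii) and is continuous, and its image under $\tA$ is also continuous on the compact contour; hence the Frechet-space version of \cite[Proposition 1.1.7]{ABHN} developed in Appendix~\ref{Appendix B} permits pulling $\tA$ inside the integral. This simultaneously establishes assertion (i), that $\Res(\cdot,\lambda=\nu)\in\dom(\tA)$, and yields the master formula
\begin{equation*}
\tA\Res\Bigl(\frac{e^{\lambda t}}{\lambda^{2n}}R_A^\infty(\lambda)A^{n-1}x,\lambda=\nu\Bigr) = \Res\bigl(e^{\lambda t}R_A^\infty(\lambda)x,\lambda=\nu\bigr) - \sum_{j=0}^{n-1}\Res\Bigl(\frac{e^{\lambda t}}{\lambda^{2j+2}},\lambda=\nu\Bigr)A^jx.
\end{equation*}

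Assertions (ii), (iii), and (iv) follow directly from the elementary residue computation \eqref{l3.5.2}. For $\nu\in\cN\setminus\{0\}$, each term $\Res\bigl(\frac{e^{\lambda t}}{\lambda^{2j+2}},\lambda=\nu\bigr)$ vanishes, giving (ii). For $\nu=0\in\cN$, the same computation yields $\Res\bigl(\frac{e^{\lambda t}}{\lambda^{2j+2}},\lambda=0\bigr)=\frac{t^{2j+1}}{(2j+1)!}$, establishing (iii). Finally, for $0\in\Omega_*\setminus(\cM\cup\cN)$ the function $R_A^\infty(\cdot)$ is analytic at $0$, so $\Res\bigl(e^{\lambda t}R_A^\infty(\lambda)x,\lambda=0\bigr)=0$, which yields (iv). Once the Frechet-space exchange of $\tA$ with the contour integral is in place, the rest of the argument is a direct transcription of the Banach-space computation carried out in Lemma~\ref{l3.6}.
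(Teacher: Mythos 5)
Your proof follows essentially the same route as the paper's: establish the pointwise identity for $\tA\bigl(\frac{e^{\lambda t}}{\lambda^{2n}}R_A^\infty(\lambda)A^{n-1}x\bigr)$ using Lemma~\ref{l4.10}, reduce $\frac{e^{\lambda t}}{\lambda^{2n-2}}R_A^\infty(\lambda)A^{n-1}x$ to $e^{\lambda t}R_A^\infty(\lambda)x-\sigma_n(\lambda,t,x)$ via the reduction formula, take residues, interchange $\tA$ with the contour integral, and read off (ii)--(iv) from \eqref{l3.5.2}. The logic and all intermediate identities match \eqref{l4.15.1}--\eqref{l4.15.4}.

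One small inaccuracy worth flagging: you write that the interchange of $\tA$ with the contour integral is justified by ``the Frechet-space version of \cite[Proposition 1.1.7]{ABHN} developed in Appendix~\ref{Appendix B}.'' Appendix~\ref{Appendix B} does not in fact contain such a statement; it only records that a $\bY$-valued analytic function taking values in a closed subspace on an open subset does so everywhere (Lemmas~\ref{lB2}--\ref{lB3}). The paper simply cites \cite[Proposition 1.1.7]{ABHN} directly for the closed operator $\tA$ on $\bY$, implicitly relying on the fact that the Hille-type argument for pulling a closed operator through a vector-valued integral carries over verbatim from Banach to Frechet spaces; this is a standard but unproven point in the paper, not one established in Appendix~\ref{Appendix B}.
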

\begin{proof} From Lemma~\ref{l4.10}(i),
\begin{equation}\label{l4.15.1}
\frac{e^{\lambda t}}{\lambda^{2n}}R_A^\infty(\lambda)A^{n-1}x\in\dom(\tA)\;\mbox{and}\;\tA\Bigl(\frac{e^{\lambda t}}{\lambda^{2n}}R_A^\infty(\lambda)A^{n-1}x\Bigr)=\frac{e^{\lambda t}}{\lambda^{2n-2}}R_A^\infty(\lambda)A^{n-1}x-\frac{e^{\lambda t}}{\lambda^{2n}}A^{n-1}x	
\end{equation}
for any $\lambda\in\Omega_*\setminus\bigl(\cM\cup\cN\cup\{0\}\bigr)$. Assertion (i) follows from \eqref{l4.15.1} and \cite[Proposition 1.1.7]{ABHN}. Moreover, we have
\begin{equation}\label{l4.15.2}
\tA\Res\Bigl(\frac{e^{\lambda t}}{\lambda^{2n}}R_A^\infty(\lambda)A^{n-1}x,\lambda=\nu\Bigr)=\Res\Bigl(\frac{e^{\lambda t}}{\lambda^{2n-2}}R_A^\infty(\lambda)A^{n-1}x,\lambda=\nu\Bigr)-\Res\Bigl(\frac{e^{\lambda t}}{\lambda^{2n}},\lambda=\nu\Bigr)A^{n-1}x
\end{equation}
for any $\nu\in\cN\cup\{0\}$. From Lemma~\ref{l4.10}(iii) it follows that
\begin{equation}\label{l4.15.3}	
\frac{e^{\lambda t}}{\lambda^{2n-2}}R_A^\infty(\lambda)A^{n-1}x=e^{\lambda t} R_A^\infty(\lambda)x-\sigma_n(\lambda,t,x)
\end{equation}
for any  $\lambda\in\Omega_*\setminus\bigl(\cM\cup\cN\cup\{0\}\bigr)$,	
where the function $\sigma_n:\CC\setminus\{0\}\times\RR\times\dom(A^{n-1})\to\bX$ was introduced in \eqref{l3.6.4}.
From \eqref{l3.6.4}, \eqref{l3.6.5}, \eqref{l4.15.2} and \eqref{l4.15.3} we obtain
\begin{equation}\label{l4.15.4}
\tA\Res\Bigl(\frac{e^{\lambda t}}{\lambda^{2n}}R_A^\infty(\lambda)A^{n-1}x,\lambda=\nu\Bigr)=\Res\Bigl(e^{\lambda t}R_A^\infty(\lambda)x,\lambda=\nu\Bigr)-\sum_{j=0}^{n-1}\Res\Bigl(\frac{e^{\lambda t}}{\lambda^{2j+2}},\lambda=\nu\Bigr)A^jx	
\end{equation}
for any $\nu\in\cN\cup\{0\}$. Assertions (ii)-(iv) follow from \eqref{l3.5.2} and \eqref{l4.15.4}.		
\end{proof}
Next, we aim to switch the integral in the representation formulas of the cosine family and its associated sine family given in Theorem~\ref{t3.11} and Theorem~\ref{t3.12}, to a path along the complex curve $\re\lambda=g_*(\im\lambda)+\eps$ for some $\eps>0$ small enough. Similarly to Section~\ref{sec3}, see \eqref{def-eps-o}, we define
\begin{equation}\label{tilde-eps}
\teps_0:=\frac{1}{4}\min\bigl\{\eps_0,\delta_0,\re\nu_1-g_*(\im\nu_1),\dots,\re\nu_p-g_*(\im\nu_p)\bigr\}>0.	
\end{equation}	
The positive constant $\delta_0$ was introduced in Hypothesis (H-ext).
Similarly to \eqref{def-Gamma-a-eps}, for any $a>\max_{1\leq \ell\leq p}|\im\nu_\ell|$ and $\eps\in(0,\teps_0)$ we define the closed simple path $\tGamma_a^\eps$ oriented counterclockwise, defined as the union $\Gamma_{a,\mi}^\eps\cup\tGamma_{a,\up}^\eps\cup\tGamma_{a,\down}^\eps\cup-\tGamma_{a,\mi}^\eps$, where
\begin{align}\label{def-tilde-Gamma-a-eps}
&\tGamma_{a,\up}^\eps:=\big[g_0(a)+\eps+\rmi a,g_*(a)+\eps+\rmi a\big],\;\tGamma_{a,\down}^\eps=\Big[g_*(-a)+\eps-\rmi a,g_0(-a)+\eps-\rmi a\Big],\nonumber\\
&\tGamma_{a,\mi}^\eps: \lambda=g_*(s)+\eps+\rmi s,\; s\in[-a,a].
\end{align}
The path $\Gamma_{a,\mi}^\eps$ was defined in \eqref{def-Gamma-a-eps}.
\begin{figure}[h]
	\begin{center}
		\includegraphics[width=0.6\textwidth]{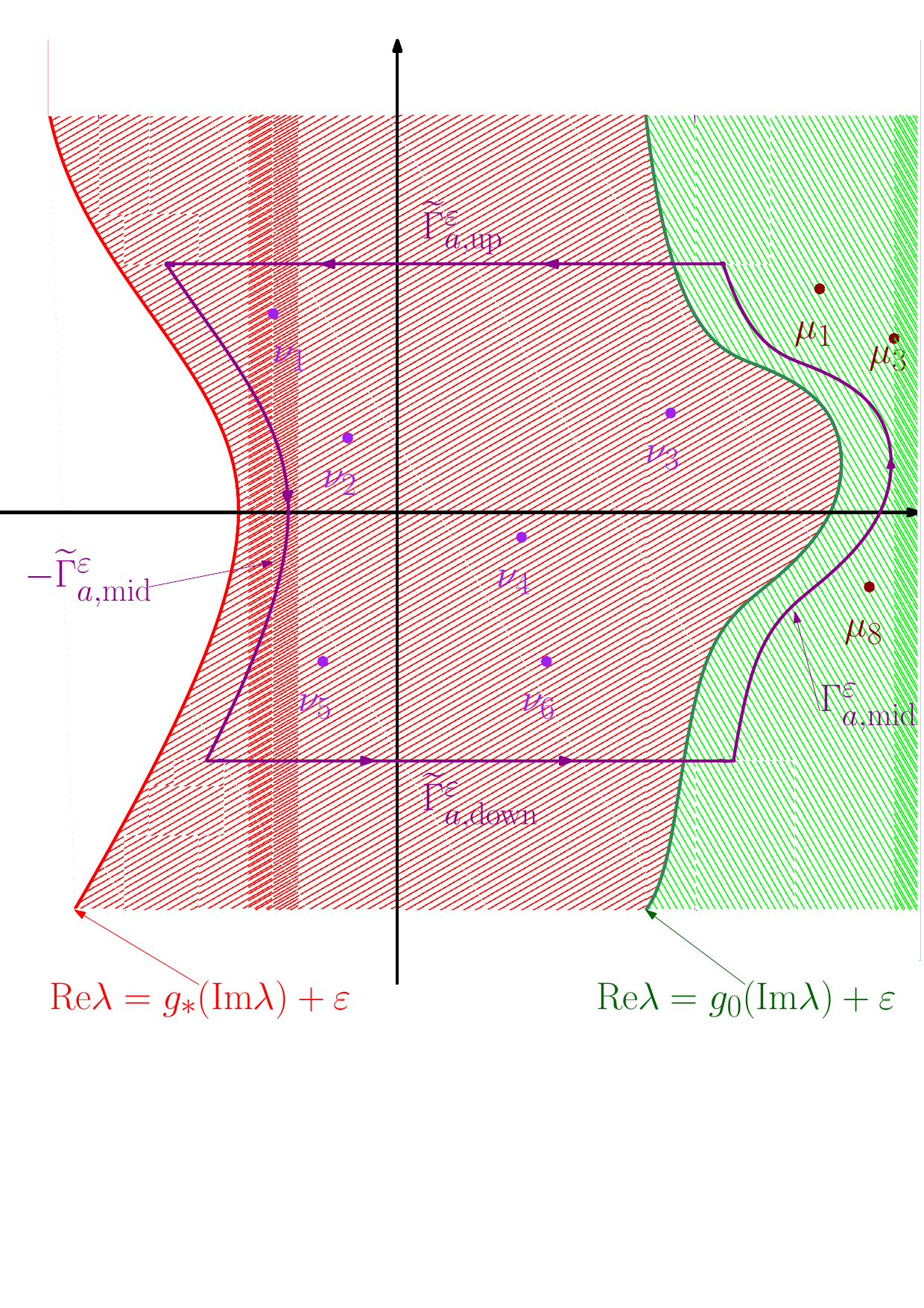}
		
		Figure 4. The path $\tGamma_a^\eps$ superimposed over the set $\Omega_*$ from \eqref{def-Omega-star}.
	\end{center}
\end{figure}
\begin{lemma}\label{l4.16}
Assume Hypotheses (H), (Q), (H-ext), $b:=\inf\limits_{s\in\RR}g_0(s)>-\infty$. Then,
 \begin{equation}\label{tilde-Gamma-2-3-lim}
\lim_{a\to\infty}\int_{\tGamma_{a,\up}^\eps}\frac{e^{\lambda t}}{\lambda^k}Q_iR_A^\infty(\lambda)Q_j\rmd\lambda=\lim_{a\to\infty}\int_{\tGamma_{a,\down}^\eps}\frac{e^{\lambda t}}{\lambda^k}Q_iR_A^\infty(\lambda)Q_j\rmd\lambda=0
\end{equation}
in the $\mathcal{B}(\bX)$-norm, for any $t>\beta_{i,j}$, $\eps\in(0,\teps_0)$, $k\in\NN$ with $k\geq 2\max\{n_0,n_*\}-1$, where $n_0$ and $n_*$ are introduced in \eqref{limit-n-zero} and \eqref{limit-n-star}, respectively.	
If, in addition, $b_*:=\inf_{s\in\RR}g_*(s)>-\infty$, the limits are zero for any $t\geq 0$.
\end{lemma}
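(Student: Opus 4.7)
The plan is to mimic the proof of Lemma~\ref{l3.9}, but now with the extended resolvent $R_A^\infty$ and with the added subtlety that the horizontal segments $\tGamma_{a,\up}^\eps$, $\tGamma_{a,\down}^\eps$ straddle the boundary $\re\lambda=g_0(\im\lambda)$ separating the regions where Hypotheses (H)(iii) and (H-ext)(iii) are available. I will focus on $\tGamma_{a,\up}^\eps$; the argument for $\tGamma_{a,\down}^\eps$ is obtained by replacing $a$ with $-a$. Parametrizing $\lambda=\xi+\rmi a$, I would split the integral at $\xi=g_0(a)$:
\[
\int_{\tGamma_{a,\up}^\eps}\frac{e^{\lambda t}}{\lambda^k}Q_iR_A^\infty(\lambda)Q_j\rmd\lambda
= -\Big(\int_{g_0(a)}^{g_0(a)+\eps}+\int_{g_*(a)+\eps}^{g_0(a)}\Big)\frac{e^{(\xi+\rmi a)t}}{(\xi+\rmi a)^k}Q_iR_A^\infty(\xi+\rmi a)Q_j\rmd\xi.
\]
On the first (``$\Omega_0$-part'') the extension coincides with $R_A$ on $\bX_\infty$; since $Q_jx\in\Range(Q_j)\subseteq\bX_\infty$ by Lemma~\ref{l4.2}(ii), Hypothesis (H)(iii) combined with $(Q_1)$ yields $\|Q_iR_A^\infty(\xi+\rmi a)Q_j\|\leq C_{i,j}h_1\big(\sqrt{\xi^2+a^2}\big)h_2(\xi)$ for $a$ larger than $\max_k|\im\mu_k|$. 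On the second (``$\Omega_*\setminus\Omega_0$-part'') Hypothesis (H-ext)(iii) gives $\|Q_iR_A^\infty(\xi+\rmi a)Q_j\|\leq\tih_{i,j,1}\big(\sqrt{\xi^2+a^2}\big)\tih_{i,j,2}(\xi)$ for $a$ larger than $\max_\ell|\im\nu_\ell|$.

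For the $\Omega_0$-part the integration range has length $\eps$ and $\xi\in[b,\sup g_0+\eps]$ is a fixed compact interval, so $h_2(\xi)$ stays bounded by Hypothesis (H)(iv); the factor $e^{\xi t}$ is bounded by $e^{(\sup g_0+\eps)t}$; and
\[
\frac{h_1\big(\sqrt{\xi^2+a^2}\big)}{(\xi^2+a^2)^{k/2}}\longrightarrow 0\quad\text{as}\quad a\to\infty,
\]
since $k\geq 2n_0-1$ and by Hypothesis (H)(iv). For the $\Omega_*\setminus\Omega_0$-part the integration range $[g_*(a)+\eps,g_0(a)]$ can be unbounded in $a$, which is the main obstacle. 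Here I would use the crucial bound from Hypothesis (H-ext)(iv): $\tih_{i,j,2}(\xi)\leq \tc_{i,j}e^{-\beta_{i,j}\xi}$ for $\xi\leq\omega$, so the integrand carries the factor $e^{\xi(t-\beta_{i,j})}$; for $t>\beta_{i,j}$ this is increasing in $\xi$ and thus bounded uniformly in $a$ by $e^{g_0(a)(t-\beta_{i,j})}\leq e^{\sup g_0\,(t-\beta_{i,j})}$, while its integral over $[g_*(a)+\eps,g_0(a)]$ remains bounded by $(t-\beta_{i,j})^{-1}e^{\sup g_0\,(t-\beta_{i,j})}$. The remaining factor
\[
\frac{\tih_{i,j,1}\big(\sqrt{\xi^2+a^2}\big)}{(\xi^2+a^2)^{k/2}}\leq\sup_{s\geq a}\frac{\tih_{i,j,1}(s)}{s^{k}}
\]
tends to $0$ uniformly in $\xi$ as $a\to\infty$, since $k\geq 2n_*-1$ and by Hypothesis (H-ext)(iv). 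Combining yields the first claim.

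For the additional statement, if $b_*:=\inf_s g_*(s)>-\infty$ then the integration range $[g_*(a)+\eps,g_0(a)+\eps]$ is contained in the fixed bounded interval $[b_*+\eps,\sup g_0+\eps]$, so $e^{\xi t}$ is bounded for \emph{every} $t\in\RR$ (no condition $t>\beta_{i,j}$ is needed), and $\tih_{i,j,2}$ is bounded on this interval as well; the same two pointwise bounds on $\tih_{i,j,1}(\sqrt{\xi^2+a^2})/(\xi^2+a^2)^{k/2}$ and $h_1(\sqrt{\xi^2+a^2})/(\xi^2+a^2)^{k/2}$ drive the integrals to zero. The $\tGamma_{a,\down}^\eps$ case follows by the symmetric parametrization $\lambda=\xi-\rmi a$, $\xi\in[g_0(-a)+\eps,g_*(-a)+\eps]$, with $g_0(-a),g_*(-a)$ playing the roles of $g_0(a),g_*(a)$.
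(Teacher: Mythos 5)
Your proof is correct and follows the paper's strategy: parametrize the horizontal segment, split it at $\xi=g_0(a)$ into an $\Omega_0$-part (where $R_A^\infty Q_j=R_AQ_j$ by Lemma~\ref{l4.2}(ii) and Hypothesis (H)(iii) applies) and an $\Omega_*\setminus\Omega_0$-part (where Hypothesis (H-ext)(iii)--(iv) apply), and drive each piece to zero. The only deviation is in the final limiting step: the paper bounds the $\Omega_*\setminus\Omega_0$-integrand by $e^{(t-\beta_{i,j})\xi}\chi_{(b_*,\omega]}(\xi)\cdot\tih_{i,j,1}(\sqrt{\xi^2+a^2})/(\xi^2+a^2)^{k/2}$ and invokes Remark~\ref{r3.8} (a dominated-convergence argument), whereas you extract $\sup_{s\geq a}\tih_{i,j,1}(s)/s^k\to 0$ from the integrand and bound the remaining integral $\int e^{(t-\beta_{i,j})\xi}\rmd\xi$ uniformly in $a$ -- a slightly more elementary but equally valid alternative that requires the same dichotomy ($t>\beta_{i,j}$ or $b_*>-\infty$).
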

\begin{proof} The proof of the lemma is similar to the proof of Lemma~\ref{l3.9}. First, we fix $t\geq 0$, $\eps\in(0,\teps_0)$ and $i,j\in\NN$. From Hypothesis (H-ext)(i) and since $\Range(Q_j)\subseteq\Ker(Q_{j+1}-I_\bY)\subset\bX_\infty$ by Lemma~\ref{l4.2}(ii), we have
\begin{equation}\label{l4.16.1}
R_A^\infty(\lambda)Q_j=R_A(\lambda)_{|\bX_\infty}Q_j=R_A(\lambda)Q_j\;\mbox{for any}\;\lambda\in\Omega_0\setminus\cM.	
\end{equation}
From \eqref{def-tilde-Gamma-a-eps} and \eqref{l4.16.1} we obtain
\begin{align}\label{l4.16.2}
\int_{\tGamma_{a,\up}^\eps}&\frac{e^{\lambda t}}{\lambda^k}Q_iR_A^\infty(\lambda)Q_j\rmd\lambda=-\int_{g_*(a)+\eps}^{g_0(a)+\eps}\frac{e^{(\xi+\rmi a)t}}{(\xi+\rmi a)^k}Q_iR_A^\infty(\xi+\rmi a)Q_j\rmd\xi\nonumber\\&=-\int_{g_*(a)+\eps}^{g_0(a)}\frac{e^{(\xi+\rmi a)t}}{(\xi+\rmi a)^k}Q_iR_A^\infty(\xi+\rmi a)Q_j\rmd\xi-\int_{g_0(a)}^{g_0(a)+\eps}\frac{e^{(\xi+\rmi a)t}}{(\xi+\rmi a)^k}Q_iR_A(\xi+\rmi a)Q_j\rmd\xi,\nonumber\\
\int_{\tGamma_{a,\down}^\eps}&\frac{e^{\lambda t}}{\lambda^k}Q_iR_A^\infty(\lambda)Q_j\rmd\lambda=\int_{g_*(-a)+\eps}^{g_0(-a)+\eps}\frac{e^{(\xi-\rmi a)t}}{(\xi-\rmi a)^k}Q_iR_A^\infty(\xi-\rmi a)Q_j\rmd\xi\nonumber\\&=\int_{g_*(-a)+\eps}^{g_0(-a)}\frac{e^{(\xi-\rmi a)t}}{(\xi-\rmi a)^k}Q_iR_A^\infty(\xi-\rmi a)Q_j\rmd\xi+\int_{g_0(-a)}^{g_0(-a)+\eps}\frac{e^{(\xi-\rmi a)t}}{(\xi+\rmi a)^k}Q_iR_A(\xi-\rmi a)Q_j\rmd\xi.
\end{align}
From Hypothesis (H-ext)(iii) it follows that
\begin{align}\label{l4.16.3}
\Bigl\|\frac{e^{\lambda t}}{\lambda^k}Q_iR_A^\infty(\lambda)Q_j\Bigr\|&=\frac{e^{(\re\lambda)t}}{|\lambda|^k}\|Q_iR_A^\infty(\lambda)Q_j\|\leq\frac{e^{(\re\lambda)t}}{|\lambda|^k}\tih_{i,j,1}(|\lambda|)\tih_{i,j,2}(\re\lambda)\nonumber\\
&\leq \tM_{i,j}e^{(t-\beta_{i,j})\re\lambda}\tih_{i,j,2}(s)\bigr)\frac{\tih_{i,j,1}(|\lambda|)}{|\lambda|^k},\;\mbox{whenever}\; \re\lambda\leq g_0(\im\lambda),
\end{align}
where $\tM_{i,j}:=\sup\limits_{s\leq\omega}e^{\beta_{i,j}s}\tih_{i,j,2}(s)$. We infer that
\begin{equation}\label{l4.16.4}
\Bigg\|\int_{g_*(\pm a)+\eps}^{g_0(\pm a)}\frac{e^{(\xi\pm\rmi a)t}}{(\xi\pm\rmi a)^k}Q_iR_A^\infty(\xi\pm\rmi a)Q_j\rmd\xi\Bigg\|	\leq\tM_{i,j}
\int_{b_*}^\omega e^{(t-\beta_{i,j})\xi}\frac{\tih_{i,j,1}(\sqrt{\xi^2+a^2})}{(\xi^2+a^2)^{\frac{k}{2}}}\rmd\xi,	
\end{equation}
where $b_*=\inf_{s\in\RR}g_*(s)$. We note that $\sup\limits_{s\in\RR}g_0(s)+\teps_0\leq\sup\limits_{s\in\RR}g_0(s)+\eps_0<\omega$ by \eqref{def-eps-o}. From\eqref{l3.9.2} we obtain that
\begin{equation}\label{l4.16.5}
\Bigg\|\int_{g_0(\pm a)}^{g_0(\pm a)+\eps}\frac{e^{(\xi\pm\rmi a)t}}{(\xi\pm\rmi a)^k}Q_iR_A(\xi\pm\rmi a)Q_j\rmd\xi\Bigg\|\leq e^{\omega t}\bigl(\sup_{b\leq s\leq\gamma}h_2(s)\bigr)\int_{b}^\omega\frac{h_1(\sqrt{\xi^2+a^2})}{(\xi^2+a^2)^{\frac{k}{2}}}\rmd\xi.	
\end{equation}
for any $t\geq 0$. One can readily check that
\begin{equation}\label{l4.16.6}
e^{(t-\beta_{i,j}\cdot)}\chi_{(b_*,\omega]}\in L^1(\RR)\;\mbox{whenever}\; t>\beta_{i,j}\;\mbox{or}\;t\geq 0,b_*>-\infty.
\end{equation}
Since $\lim\limits_{s\to\infty}\frac{\tih_{i,j,1}(s)}{s^k}=0$, from Remark~\ref{r3.8} and \eqref{l4.16.6} we conclude that
\begin{equation}\label{l4.16.7}
\lim_{a\to\infty}\int_{b_*}^\omega e^{(t-\beta_{i,j})\xi}\frac{\tih_{i,j,1}(\sqrt{\xi^2+a^2})}{(\xi^2+a^2)^{\frac{k}{2}}}\rmd\xi=0\;\mbox{whenever}\; t>\beta_{i,j}\;\mbox{or}\;t\geq 0,b_*>-\infty.	 
\end{equation}
The lemma follows from \eqref{l3.9.4}, \eqref{l4.16.2}, \eqref{l4.16.4}, \eqref{l4.16.5}, \eqref{l4.16.7}.
\end{proof}
\begin{remark}\label{r4.17}
From \eqref{tilde-eps} and Hypotheses (H) and (H-ext) one can readily check that the interior of the set enclosed by the path $\tGamma_a^\eps$ contains all the resonances $\nu_\ell$, $\ell=1,\dots,p$, and does not contain any of the singularities $\mu_k$, $k=1,\dots,m$, for any $a>\max_{1\leq \ell\leq p}|\im\nu_\ell|$, $\eps\in(0,\teps_0)$. Moreover, $0\notin\mathrm{Range}(\tGamma_{a,\mi}^\eps)$. In addition, $0$ belongs to the interior of the set enclosed by $\tGamma_a^\eps$ if and only if $g_*(0)<0\leq g_0(0)$.
\end{remark}
Let $P_k$ and $N_k\in\NN$, $k=1,\dots,m$ be defined in Lemma~\ref{l3.3}, $\tP_\ell$, $\oP_\ell$, $\tN_\ell$, $\oN_\ell$, $\ell=1,\dots,p$, be defined in Lemma~\ref{l4.11} and $p_{1,k,\na}$ and $\tp_{1,\ell,\na}$ be defined in \eqref{polynomial-representation} and \eqref{polynomial-representation-resonances}, respectively. Also, we introduce the path
\begin{equation}\label{def-Lambda-star}	
\Lambda_*^\eps: \lambda=g_*(s)+\eps+\rmi s,\; s\in\RR.	
\end{equation}	
\begin{theorem}\label{t4.18}
Assume Hypotheses (H), (Q) and (H-ext),  $b:=\inf\limits_{s\in\RR}g_0(s)>-\infty$. Then,
\begin{enumerate}
\item[(i)] If $x\in\dom(A^n)\cap\Ker(Q_j-I_\bY)$, then $\displaystyle\int_{\Lambda_*^\eps}\frac{e^{\lambda t}}{\lambda^{2n-1}}Q_iR_A^\infty(\lambda)A^nx\rmd\lambda$ is convergent in the principal value sense in $\bX$ for any $t\geq \beta_{i,j}$, $\eps\in(0,\teps_0)$, $n\in\NN$, with $n\geq\max\{n_0,n_*\}$;
\item[(ii)] If $x\in\dom(A^n)\cap\bX_\infty$ and $b_*:=\inf_{s\in\RR}g_*(s)>-\infty$, then $\displaystyle\int_{\Lambda_*^\eps}\frac{e^{\lambda t}}{\lambda^{2n-1}}R_A^\infty(\lambda)A^nx\rmd\lambda$ is convergent in the principal value sense in $\bY$ for any $t\geq 0$, $\eps\in(0,\teps_0)$, $n\in\NN$, with $n\geq\max\{n_0,n_*\}$;
\item[(iii)] If $x\in\dom(A^n)\cap\Ker(Q_j-I_\bY)$ then, for any $t\geq \beta_{i,j}$, $\eps\in(0,\teps_0)$, $n\in\NN$, with $n\geq\max\{n_0,n_*\}$,
\begin{align}\label{partial-resonance-cos}Q_iC(t)x&=\displaystyle\sum\limits_{k=1}^{m} \sum_{\na=1}^{N_k}p_{1,k,\na}(t)e^{\mu_k t}Q_i(A-\mu_k^2 I_\bX)^{\na-1}P_kx+\displaystyle\sum\limits_{\nu_\ell\ne0} \sum_{\na=1}^{\tN_\ell}\tp_{1,\ell,\na}(t)e^{\nu_\ell t}Q_i(\tA-\nu_\ell^2 I_\bY)^{\na-1}\tP_\ell x \nonumber\\
&+\frac{1}{2\pi\rmi}\PV\displaystyle\int_{\Lambda_*^\eps}\frac{e^{\lambda t}}{\lambda^{2n-1}}Q_iR_A^\infty(\lambda)A^nx\rmd\lambda+\chi_{[0,\infty)}(g_*(0))\displaystyle\sum\limits_{\na=0}^{n-1}\frac{t^{2\na}}{(2\na)!}Q_iA^\na x\nonumber\\
&+\chi_{\cN}(0)\Bigg(\sum_{\na=1}^{\oN_\ell}\frac{t^{2\na-2}}{(2\na-2)!}Q_i\tA^{\na-1}\tP_\ell x+\sum_{\na=1}^{\oN_\ell}\frac{t^{2\na-1}}{(2\na-1)!}Q_i\tA^{\na-1}\oP_\ell x\Bigg);
\end{align}
\item[(iv)] If $x\in\dom(A^n)\cap\bX_\infty$ and $b_*:=\inf_{s\in\RR}g_*(s)>-\infty$ then, for any $t\geq 0$, $\eps\in(0,\teps_0)$, $n\in\NN$, with $n\geq\max\{n_0,n_*\}$
\begin{align}\label{resonance-cos}C(t)x&=\displaystyle\sum\limits_{k=1}^{m} \sum_{\na=1}^{N_k}p_{1,k,\na}(t)e^{\mu_k t}(A-\mu_k^2 I_\bX)^{\na-1}P_kx+\displaystyle\sum\limits_{\nu_\ell\ne0} \sum_{\na=1}^{\tN_\ell}\tp_{1,\ell,\na}(t)e^{\nu_\ell t}(\tA-\nu_\ell^2 I_\bY)^{\na-1}\tP_\ell x \nonumber\\
&+\frac{1}{2\pi\rmi}\PV\displaystyle\int_{\Lambda_*^\eps}\frac{e^{\lambda t}}{\lambda^{2n-1}}R_A^\infty(\lambda)A^nx\rmd\lambda+\chi_{[0,\infty)}(g_*(0))\displaystyle\sum\limits_{\na=0}^{n-1}\frac{t^{2\na}}{(2\na)!}A^\na x\nonumber\\
&+\chi_{\cN}(0)\Bigg(\sum_{\na=1}^{\oN_\ell}\frac{t^{2\na-2}}{(2\na-2)!}\tA^{\na-1}\tP_\ell x+\sum_{\na=1}^{\oN_\ell}\frac{t^{2\na-1}}{(2\na-1)!}\tA^{\na-1}\oP_\ell x\Bigg).
\end{align}
\end{enumerate}
\end{theorem}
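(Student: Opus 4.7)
The strategy is to take the representation of $C(t)x$ from Theorem~\ref{t3.11}(ii) as the starting point and deform the contour from $\Lambda_0^\eps$ to $\Lambda_*^\eps$ by means of the closed contour $\tGamma_a^\eps$, picking up residue contributions at each resonance $\nu_\ell\in\cN$ (and possibly at $0$) along the way. The key observation enabling this deformation is Hypothesis~(H-ext)(i): on $\Omega_0\setminus\cM$ the extension $R_A^\infty(\lambda)$ agrees with $R_A(\lambda)_{|\bX_\infty}$, so after applying $Q_i$ the integrand on $\Lambda_0^\eps$ in Theorem~\ref{t3.11}(ii) matches the one appearing along $\Lambda_*^\eps$ in \eqref{partial-resonance-cos}, via the extension.

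First I would establish assertions (i)--(ii). Fix $x\in\dom(A^n)\cap\Ker(Q_j-I_\bY)$; by the invariance statement in Lemma~\ref{l4.10}(iii), $A^n x\in\Ker(Q_j-I_\bY)$ and $Q_jA^n x=A^n x$, so Hypothesis~(H-ext)(iii)--(iv) supplies the bound $\|Q_iR_A^\infty(\lambda)A^n x\|\leq\tih_{i,j,1}(|\lambda|)\tih_{i,j,2}(\re\lambda)\|A^n x\|$ on $\Omega_*\setminus\Omega_0$ for $|\im\lambda|$ large. I apply the residue theorem to $\frac{e^{\lambda t}}{\lambda^{2n-1}}Q_iR_A^\infty(\lambda)A^n x$ around $\tGamma_a^\eps$ for each $a>\max_\ell|\im\nu_\ell|$, whose interior (Remark~\ref{r4.17}) encloses exactly the $\nu_\ell$ and, iff $g_*(0)<0\leq g_0(0)$, the origin, and then let $a\to\infty$: Lemma~\ref{l4.16} kills the horizontal legs $\tGamma_{a,\up}^\eps$ and $\tGamma_{a,\down}^\eps$ (for $t\geq\beta_{i,j}$ in (i) and (iii), and for all $t\geq 0$ when $b_*>-\infty$ in (ii) and (iv)), while Theorem~\ref{t3.11}(i) ensures the principal-value convergence along $\Gamma_{a,\mi}^\eps$. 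This yields both the existence of $\PV\int_{\Lambda_*^\eps}$ claimed in (i)--(ii) and the contour identity
\begin{equation*}
\frac{1}{2\pi\rmi}\PV\int_{\Lambda_0^\eps}\frac{e^{\lambda t}}{\lambda^{2n-1}}Q_iR_A(\lambda)A^n x\,\rmd\lambda=\frac{1}{2\pi\rmi}\PV\int_{\Lambda_*^\eps}\frac{e^{\lambda t}}{\lambda^{2n-1}}Q_iR_A^\infty(\lambda)A^n x\,\rmd\lambda+\mathfrak{R}(t,x),
\end{equation*}
where $\mathfrak{R}(t,x)$ collects all the residues enclosed by $\tGamma_a^\eps$.

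The residues at the $\nu_\ell\neq 0$ are computed via Lemma~\ref{l4.14}(i) combined with Lemma~\ref{l4.11}(viii), producing the stated finite rank terms $\tp_{1,\ell,\na}(t)e^{\nu_\ell t}Q_i(\tA-\nu_\ell^2 I_\bY)^{\na-1}\tP_\ell x$. If $0\in\cN$ (which, by Hypothesis~(H-ext)(ii) forcing $0\in\Omega_*\setminus\Omega_0$, implies that $0$ is automatically enclosed), the residue at $0$ is evaluated by Lemma~\ref{l4.14}(ii) together with Lemma~\ref{l4.11}(ix) with $\kappa=1$: it contributes the $\chi_\cN(0)$ pieces with $\tA^{\na-1}\tP_\ell$ and $\tA^{\na-1}\oP_\ell$ coefficients, plus a correction $-\sum_{j=0}^{n-1}\frac{t^{2j}}{(2j)!}Q_iA^j x$; if $0$ is enclosed but $0\notin\cN$, exactly the same correction term appears by Lemma~\ref{l4.14}(iii). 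Substituting into Theorem~\ref{t3.11}(ii) applied with $Q_i$ gives \eqref{partial-resonance-cos}, provided one simplifies the indicator by the identity $\chi_{[0,\infty)}(g_0(0))-[0\text{ is enclosed}]=\chi_{[0,\infty)}(g_*(0))$, which follows from a three-case analysis on the signs of $g_0(0)$ and $g_*(0)$ (remembering $g_0\geq g_*+\delta_0$). Part (iv) is obtained by the identical argument without applying $Q_i$, using the Frechet-space principal-value convergence guaranteed by the additional hypothesis $b_*>-\infty$ in Lemma~\ref{l4.16}. The principal obstacle is not any single hard estimate but the careful bookkeeping required to track the residue at $0$ (resonance versus pure pole of $1/\lambda^{2n-1}$, enclosed versus not) and to merge the resulting Taylor correction with the indicator from Theorem~\ref{t3.11} into the single indicator $\chi_{[0,\infty)}(g_*(0))$.
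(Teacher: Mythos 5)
Your proposal is correct and follows essentially the same route as the paper: integrate around $\tGamma_a^\eps$, use Lemma~\ref{l4.16} for the horizontal legs, Theorem~\ref{t3.11} for the inner leg, and the residue calculus from Lemmas~\ref{l4.11} and~\ref{l4.14} to identify the new finite-rank terms. In fact your indicator bookkeeping is correct and more careful than the paper's: equation~\eqref{t4.18.3} as printed, $\chi_{(g_*(0),g_0(0)]}(0)+\chi_{[0,\infty)}(g_0(0))=\chi_{[0,\infty)}(g_*(0))$, is a sign typo (it fails when $g_*(0)<0\le g_0(0)$), while your identity $\chi_{[0,\infty)}(g_0(0))-\chi_{(g_*(0),g_0(0)]}(0)=\chi_{[0,\infty)}(g_*(0))$ is the one actually produced by subtracting the residue at the origin. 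One small precision worth flagging: for part~(iv), ``without applying $Q_i$'' should be read as ``because the identity holds after applying each $Q_i$, it holds in $\bY$ by~($Q_8$)'' --- equivalently, pass to the limit $i\to\infty$ using $Q_iy\to y$ in $\bY$, which is how the paper phrases it; and the Frechet-space principal-value convergence relies on Theorem~\ref{t3.11}(i) (the inner leg) in addition to Lemma~\ref{l4.16} (the horizontal legs).
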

\begin{proof}
Fix $t\geq 0$, $x\in\dom(A^n)\cap\Ker(Q_j-I_\bY)$, $\eps\in(0,\teps_0)$. From Remark~\ref{r4.17}, \eqref{tilde-eps} and \eqref{def-tilde-Gamma-a-eps} it follows that the singularities of the meromorphic function $\lambda\to\frac{e^{\lambda t}}{\lambda^{2n-1}}R_A^\infty(\lambda)A^nx:\Omega_*\setminus\big(\cM\cup\cN\cup\{0\}\big)\to\bY$ that belong to the interior of the set enclosed by the closed path $\tGamma_a^\eps$ are the finitely meromorphic points $\{\nu_\ell:\ell=1,\dots,p\}$  and, possibly $0$, in the case when that $g_*(0)<0\leq g_0(0)$. From Lemma~\ref{l4.11}, Remark~\ref{r4.13},  Lemma~\ref{l4.14} and the Residues Theorem we obtain
\begin{align}\label{t4.18.1}
\frac{1}{2\pi\rmi}\int_{\tGamma_a^\eps}\frac{e^{\lambda t}}{\lambda^{2n-1}}&R_A^\infty(\lambda)A^nx\rmd\lambda=\sum_{\ell=1}^{p}\Res(\lambda e^{\lambda t}R_A^\infty(\lambda)x,\lambda=\nu_\ell)-\chi_{(g_*(0),g_0(0)]}(0)\sum_{\na=0}^{n-1}\frac{t^{2\na}}{(2\na)!}A^jx\nonumber\\
&=\sum\limits_{\nu_\ell\ne0} \sum_{\na=1}^{\tN_\ell}\tp_{1,\ell,\na}(t)e^{\nu_\ell t}(\tA-\nu_\ell^2 I_\bY)^{\na-1}\tP_\ell x -\chi_{(g_*(0),g_0(0)]}(0)\sum_{\na=0}^{n-1}\frac{t^{2\na}}{(2\na)!}A^jx\nonumber\\
&\qquad+\chi_{\cN}(0)\Bigg(\sum_{\na=1}^{\tN_\ell}\frac{t^{2\na-2}}{(2\na-2)!}\tA^{\na-1}\tP_\ell x+\sum_{\na=1}^{\tN_\ell}\frac{t^{2\na-1}}{(2\na-1)!}\tA^{\na-1}\oP_\ell x\Bigg).
\end{align}
for any $a>\max_{1\leq \ell\leq p}|\im\nu_\ell|+1$. From Lemma~\ref{l4.10}(iii) we know that $A^nx\in\Ker(Q_j-I_\bY)$, hence $A^nx=Q_jA^nx$. Since $Q_i\in\cC\cL(\bY,\bX)$, from \eqref{def-tilde-Gamma-a-eps} and \eqref{t4.18.1} we derive
\begin{align}\label{t4.18.2}
\frac{1}{2\pi\rmi}Q_i&\int_{\tGamma_{a,\mi}^\eps}\frac{e^{\lambda t}}{\lambda^{2n-1}}R_A^\infty(\lambda)A^nx\rmd\lambda=\frac{1}{2\pi\rmi}\int_{\tGamma_{a,\mi}^\eps}\frac{e^{\lambda t}}{\lambda^{2n-1}}Q_iR_A^\infty(\lambda)A^nx\rmd\lambda\nonumber\\
&=\frac{1}{2\pi\rmi}Q_i\int_{\Gamma_{a,\mi}^\eps}\frac{e^{\lambda t}}{\lambda^{2n-1}}R_A(\lambda)A^nx\rmd\lambda+\frac{1}{2\pi\rmi}\int_{\tGamma_{a,\up}^\eps}\frac{e^{\lambda t}}{\lambda^{2n-1}}Q_iR_A^\infty(\lambda)Q_jA^nx\rmd\lambda\nonumber\\
&+\frac{1}{2\pi\rmi}\int_{\tGamma_{a,\down}^\eps}\frac{e^{\lambda t}}{\lambda^{2n-1}}Q_iR_A^\infty(\lambda)Q_jA^nx\rmd\lambda-\frac{1}{2\pi\rmi}Q_i\int_{\tGamma_a^\eps}\frac{e^{\lambda t}}{\lambda^{2n-1}}R_A^\infty(\lambda)A^nx\rmd\lambda.				
\end{align}	
Assertions (i), (ii) and (iii) follow from Hypothesis ($Q_8$), Theorem~\ref{t3.11}, Lemma~\ref{l4.16}, \eqref{t4.18.1} and \eqref{t4.18.2} due to
\begin{equation}\label{t4.18.3}
\chi_{(g_*(0),g_0(0)]}(0)+\chi_{[0,\infty)}(g_0(0))=\chi_{[0,\infty)}(g_*(0)).
\end{equation}
If, in addition, $b_*=\inf_{s\in\RR}g_*(s)>-\infty$, arguing in the same way as above we conclude that assertion (iii) holds true for any $t\geq 0$. One infers assertion (iv) by passing to the limit as $i\to\infty$ in \eqref{t4.18.2}.
\end{proof}	
We recall the definition of the space $W_{n-1}$ in \eqref{def-bW-n}. Also, $P_k$ and $N_k\in\NN$, $k=1,\dots,m$ are defined in Lemma~\ref{l3.3}, $\tP_\ell$, $\oP_\ell$, $\tN_\ell$, $\oN_\ell$, $\ell=1,\dots,p$ are defined in Lemma~\ref{l4.11} and $p_{0,k,\na}$ and $\tp_{0,\ell,\na}$ are defined in \eqref{polynomial-representation} and \eqref{polynomial-representation-resonances}, respectively.
\begin{theorem}\label{t4.19}
Assume Hypotheses (H), (Q) and (H-ext),  $b:=\inf\limits_{s\in\RR}g_0(s)>-\infty$. Then,
\begin{enumerate}
\item[(i)] If $w\in\bW_{n-1}\cap\Ker(Q_j-I_\bY)$, then $\displaystyle\int_{\Lambda_*^\eps}\frac{e^{\lambda t}}{\lambda^{2n}}Q_iR_A^\infty(\lambda)A^{n-1}w\rmd\lambda\in\dom(A)$ is convergent in the principal value sense in $\bX$ for any $t\geq \beta_{i,j}$, $\eps\in(0,\eps_0)$, $n\in\NN$, with $n\geq\max\{n_0,n_*\}$;
\item[(ii)] If $w\in\bW_{n-1}\cap\bX_\infty$ and $b_*:=\inf_{s\in\RR}g_*(s)>-\infty$, then $\displaystyle\int_{\Lambda_*^\eps}\frac{e^{\lambda t}}{\lambda^{2n}}R_A^\infty(\lambda)A^{n-1}w\rmd\lambda\in\dom(\tA)$ is convergent in the principal value sense in $\bY$ for any $t\geq 0$, $\teps\in(0,\eps_0)$, $n\in\NN$, with $n\geq\max\{n_0,n_*\}$;
\item[(iii)] If $w\in\bW_{n-1}\cap\Ker(Q_j-I_\bY)$, then for any $t\geq \beta_{i,j}$, $\eps\in(0,\teps_0)$, $n\in\NN$, with $n\geq\max\{n_0,n_*\}$
\begin{align}\label{partial-resonance-sin}
Q_iS(t)w&=\displaystyle\sum\limits_{k=1}^{m} \sum_{\na=1}^{N_k}p_{0,k,\na}(t)e^{\mu_k t}Q_i(A-\mu_k^2 I_\bX)^{\na-1}P_kw+\displaystyle\sum\limits_{\nu_\ell\ne0} \sum_{\na=1}^{\tN_\ell}\tp_{0,\ell,\na}(t)e^{\nu_\ell t}Q_i(\tA-\nu_\ell^2 I_\bY)^{\na-1}\tP_\ell w \nonumber\\
&+\frac{1}{2\pi\rmi}Q_iA\Bigg(\PV\displaystyle\int_{\Lambda_*^\eps}\frac{e^{\lambda t}}{\lambda^{2n}}Q_{i+1}R_A^\infty(\lambda)A^{n-1}w\rmd\lambda\Bigg)+\chi_{[0,\infty)}(g_*(0))\displaystyle\sum\limits_{\na=0}^{n-1}\frac{t^{2\na+1}}{(2\na+1)!}Q_iA^\na w\nonumber\\
&+\chi_{\cN}(0)\Bigg(\sum_{\na=1}^{\oN_\ell}\frac{t^{2\na-1}}{(2\na-1)!}Q_i\tA^{\na-1}\tP_\ell w+\sum_{\na=1}^{\oN_\ell}\frac{t^{2\na}}{(2\na)!}Q_i\tA^{\na-1}\oP_\ell w\Bigg);
\end{align}
\item[(iv)] If $w\in\bW_{n-1}\cap\bX_\infty$ and $b_*:=\inf_{s\in\RR}g_*(s)>-\infty$ then, for any $t\geq 0$, $\eps\in(0,\teps_0)$, $n\in\NN$, with $n\geq\max\{n_0,n_*\}$
\begin{align}\label{resonance-sin}
S(t)w&=\displaystyle\sum\limits_{k=1}^{m} \sum_{\na=1}^{N_k}p_{0,k,\na}(t)e^{\mu_k t}(A-\mu_k^2 I_\bX)^{\na-1}P_kw+\displaystyle\sum\limits_{\nu_\ell\ne0} \sum_{\na=1}^{\tN_\ell}\tp_{0,\ell,\na}(t)e^{\nu_\ell t}(\tA-\nu_\ell^2 I_\bY)^{\na-1}\tP_\ell w \nonumber\\
&+\frac{1}{2\pi\rmi}\tA\Bigg(\PV\displaystyle\int_{\Lambda_*^\eps}\frac{e^{\lambda t}}{\lambda^{2n}}R_A^\infty(\lambda)A^{n-1}w\rmd\lambda\Bigg)+\chi_{[0,\infty)}(g_*(0))\displaystyle\sum\limits_{\na=0}^{n-1}\frac{t^{2\na+1}}{(2\na+1)!}A^\na w\nonumber\\
&+\chi_{\cN}(0)\Bigg(\sum_{\na=1}^{\oN_\ell}\frac{t^{2\na-1}}{(2\na-1)!}\tA^{\na-1}\tP_\ell w+\sum_{\na=1}^{\oN_\ell}\frac{t^{2\na}}{(2\na)!}\tA^{\na-1}\oP_\ell w\Bigg).
\end{align}
\end{enumerate}
\end{theorem}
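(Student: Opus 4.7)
The plan is to follow the blueprint of Theorem~\ref{t4.18}, starting from the representation of $S(t)w$ obtained in Theorem~\ref{t3.12}(ii) along the contour $\Lambda_0^\eps$ and shifting to $\Lambda_*^\eps$ by means of the residue theorem on the closed path $\tGamma_a^\eps$ defined in \eqref{def-tilde-Gamma-a-eps}. Fix $t,\eps,n,w,i,j$ as in the statement. Consider the meromorphic function $\lambda\mapsto \frac{e^{\lambda t}}{\lambda^{2n}}R_A^\infty(\lambda)A^{n-1}w:\Omega_*\setminus(\cM\cup\cN\cup\{0\})\to\bY$. By Remark~\ref{r4.17}, the singularities of this function enclosed by $\tGamma_a^\eps$ are exactly the resonances $\nu_\ell$, $\ell=1,\dots,p$, together with $0$ when $g_*(0)<0\le g_0(0)$. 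Using the Laurent expansions of Lemma~\ref{l4.11}(vi)--(vii) together with Remark~\ref{r4.13} and the residue calculations in Lemma~\ref{l4.15}, applying the residue theorem gives an identity analogous to \eqref{t4.18.1} but with the $\lambda^{-2n}$ weight: the residues at $\nu_\ell\ne0$ produce the terms $\tp_{0,\ell,\na}(t)e^{\nu_\ell t}(\tA-\nu_\ell^2 I_\bY)^{\na-1}\tP_\ell w$, and the residue at $0$ (when present) supplies the $\oN_\ell$ even/odd polynomial contributions in $\tP_\ell w$ and $\oP_\ell w$, plus the polynomial in $A^\na w$ coming from Lemma~\ref{l4.15}(iii).

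Next, by Lemma~\ref{l4.10}(iii), $A^{n-1}w\in\Ker(Q_j-I_\bY)$, hence $A^{n-1}w=Q_jA^{n-1}w$, so on the pieces $\tGamma_{a,\up}^\eps$ and $\tGamma_{a,\down}^\eps$ the integrand has the form $\lambda^{-2n}e^{\lambda t}R_A^\infty(\lambda)Q_jA^{n-1}w$. After applying $Q_i$, Lemma~\ref{l4.16} shows that these horizontal contributions vanish as $a\to\infty$ whenever $t\ge\beta_{i,j}$. The integral over $\Gamma_{a,\mi}^\eps$ coincides, via Hypothesis (H-ext)(i), with the Banach-space integral appearing in Theorem~\ref{t3.12}(ii), since $R_A^\infty$ extends $R_A(\cdot)_{|\bX_\infty}$. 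Combining the residue identity with Theorem~\ref{t3.12}(ii) and passing to the principal-value limit establishes the decomposition; assertion (i) and the convergence in $\bX$ of the Cauchy principal value in (i) follow. When $b_*>-\infty$, the vanishing in Lemma~\ref{l4.16} holds for every $t\ge0$ and, by Hypothesis~($Q_8$), we may let $i\to\infty$ to obtain (ii) and (iv) in the Frechet space $\bY$.

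The main technical obstacle is to legally move the unbounded operator $A$ (respectively $\tA$) through the contour integral. For (iii), rewrite the integrand on $\Gamma_{a,\mi}^\eps$ using Hypothesis ($Q_6$) as
\[
\frac{e^{\lambda t}}{\lambda^{2n}}Q_iAR_A(\lambda)A^{n-1}w=\frac{e^{\lambda t}}{\lambda^{2n}}Q_iAQ_{i+1}R_A(\lambda)A^{n-1}w=\frac{e^{\lambda t}}{\lambda^{2n}}Q_iAQ_{i+1}R_A^\infty(\lambda)A^{n-1}w,
\]
where the last equality uses $A^{n-1}w\in\bX_\infty$ and Hypothesis (H-ext)(i). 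Since $Q_{i+1}R_A^\infty(\lambda)$ takes values in $\dom(A)$ by Lemma~\ref{l4.8}(i) and $Q_iA\in\mathcal{B}(\dom(A),\bX)$ is continuous, \cite[Proposition 1.1.7]{ABHN} lets us pull $Q_iA$ outside the integral, yielding the form appearing in \eqref{partial-resonance-sin}. For (iv), Lemma~\ref{l4.10}(ii) gives $\tA R_A^\infty(\lambda)x=\lambda^2R_A^\infty(\lambda)x-x$ for $x\in\bX_\infty$ with $\lambda\in\Omega_*\setminus(\cM\cup\cN)$, so the integrand lies in $\dom(\tA)$ pointwise; the closedness of $\tA$ combined with \cite[Proposition 1.1.7]{ABHN} then shows that the principal-value integral belongs to $\dom(\tA)$ and that $\tA$ can be exchanged with integration.

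Finally, the explicit even/odd $t^{2\na-1}/(2\na-1)!$ and $t^{2\na}/(2\na)!$ structure of the contribution at the resonance $\nu_\ell=0$ in \eqref{partial-resonance-sin}--\eqref{resonance-sin} follows from Lemma~\ref{l4.11}(ix) with $\kappa=0$, while the polynomial $\sum_{\na=0}^{n-1}\frac{t^{2\na+1}}{(2\na+1)!}A^\na w$ in the characteristic-function term arises from the elementary residue computation of Lemma~\ref{l4.15}(iii) applied to $A$, exactly as in the cosine case the identity $\chi_{(g_*(0),g_0(0)]}(0)+\chi_{[0,\infty)}(g_0(0))=\chi_{[0,\infty)}(g_*(0))$ is used to combine the surviving polynomial pieces coming from Theorem~\ref{t3.12}(ii) with those extracted by the residue calculus.
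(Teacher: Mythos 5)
Your proposal is correct and follows essentially the same route as the paper: apply the residue theorem on the closed contour $\tGamma_a^\eps$ to the $\cC\cL(\bX_\infty,\bY)$-valued meromorphic function $\lambda\mapsto\lambda^{-2n}e^{\lambda t}R_A^\infty(\lambda)A^{n-1}w$, use Lemma~\ref{l4.10}(iii) to insert $Q_j$ on the horizontal pieces, invoke Lemma~\ref{l4.16} to kill those pieces as $a\to\infty$, combine the resulting identity with Theorem~\ref{t3.12}(ii), and transport $A$ (resp.\ $\tA$) through the principal-value integral using Lemma~\ref{l4.8} and \cite[Proposition~1.1.7]{ABHN}. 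The only cosmetic divergence is how the unbounded operator is moved: you pass $Q_iA$ through the integrand directly via Hypothesis~($Q_6$), while the paper first establishes the decomposition \eqref{t4.19.5} of the principal-value integral into a $\Lambda_0^\eps$-integral of $R_A$ minus $\tcS_{n,t}w$ (both manifestly in $\dom(\tA)$) and then applies Lemma~\ref{l4.8}(ii); both arguments rest on the same lemmas and yield the same conclusion, so this is a presentational difference, not a gap.
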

\begin{proof}
Fix $t\geq 0$, $w\in\bW_{n-1}\cap\Ker(Q_j-I_\bY)$, $\eps\in(0,\eps_0)$. From Remark~\ref{r4.17}, \eqref{tilde-eps} and \eqref{def-tilde-Gamma-a-eps} it follows that the singularities of the meromorphic function $\lambda\to\frac{e^{\lambda t}}{\lambda^{2n}}R_A^\infty(\lambda)A^nx:\Omega_*\setminus\big(\cM\cup\cN\cup\{0\}\big)\to\bY$ that belong to the interior of the set enclosed by the closed path $\tGamma_a^\eps$ are the finitely meromorphic points $\{\nu_\ell:\ell=1,\dots,p\}$  and, possibly $0$, in the case when $g_*(0)<0\leq g_0(0)$.  By the Residues Theorem
\begin{equation}\label{t4.19.1}
\frac{1}{2\pi\rmi}\int_{\tGamma_a^\eps}\frac{e^{\lambda t}}{\lambda^{2n}}R_A^\infty(\lambda)A^nx\rmd\lambda=\tcS_{n,t}w
\end{equation}
for any $a>\max_{1\leq \ell\leq p}|\im\nu_\ell|+1$, where
$\tcS_{n,t}:\bW_{n-1}\to\bY$ is defined by
\begin{equation}\label{t4.19.2}
\tcS_{n,t}w=\sum_{\nu_\ell\ne0}\Res(\frac{e^{\lambda t}}{\lambda^{2n}}R_A^\infty(\lambda)A^{n-1}w,\lambda=\mu)+\chi_{(g_*(0),g_0(0)]}(0)\Res(\frac{e^{\lambda t}}{\lambda^{2n}}R_A^\infty(\lambda)A^{n-1}w,\lambda=0).
\end{equation}
From Lemma~\ref{l4.11}, Remark~\ref{r4.13} and Lemma~\ref{l4.15} we see that $\Range(\tcS_{n,t})\subseteq\dom(\tA)$ and
\begin{align}\label{l4.19.3}
\tA\tcS_{n,t}w&=\sum_{\ell=1}^{p}\Res(e^{\lambda t}R_A^\infty(\lambda)w,\lambda=\nu_\ell)-\chi_{(g_*(0),g_0(0)]}(0)\sum_{\na=0}^{n-1}\frac{t^{2\na+1}}{(2\na+1)!}A^\na w\nonumber\\
&=\sum\limits_{\nu_\ell\ne0} \sum_{\na=1}^{\tN_\ell}\tp_{0,\ell,\na}(t)e^{\nu_\ell t}(\tA-\nu_\ell^2 I_\bY)^{\na-1}\tP_\ell w -\chi_{(g_*(0),g_0(0)]}(0)\sum_{\na=0}^{n-1}\frac{t^{2\na+1}}{(2\na+1)!}A^\na w\nonumber\\
&\qquad+\chi_{\cN}(0)\Bigg(\sum_{\na=1}^{\oN_\ell}\frac{t^{2\na-1}}{(2\na-1)!}\tA^{\na-1}\tP_\ell w+\sum_{\na=1}^{\oN_\ell}\frac{t^{2\na}}{(2\na)!}\tA^{\na-1}\oP_\ell w
\Bigg).	
\end{align}	
From \eqref{def-bW-n} and Lemma~\ref{l4.10}(iii) we have $A^{n-1}w\in\bW\cap\Ker(Q_j-I_\bY)$, which implies that $A^{n-1}w=Q_jA^{n-1}$. Using again the fact that $Q_i\in\cC\cL(\bY,\bX)$, from \eqref{def-tilde-Gamma-a-eps} and \eqref{t4.19.1} we conclude that
\begin{align}\label{t4.19.4}
\frac{1}{2\pi\rmi}Q_i&\int_{\tGamma_{a,\mi}^\eps}\frac{e^{\lambda t}}{\lambda^{2n}}R_A^\infty(\lambda)A^{n-1}w\rmd\lambda=\frac{1}{2\pi\rmi}\int_{\tGamma_{a,\mi}^\eps}\frac{e^{\lambda t}}{\lambda^{2n}}Q_iR_A^\infty(\lambda)A^{n-1}w\rmd\lambda\nonumber\\
&=\frac{1}{2\pi\rmi}Q_i\int_{\Gamma_{a,\mi}^\eps}\frac{e^{\lambda t}}{\lambda^{2n}}R_A(\lambda)A^{n-1}w\rmd\lambda+\frac{1}{2\pi\rmi}\int_{\tGamma_{a,\up}^\eps}\frac{e^{\lambda t}}{\lambda^{2n}}Q_iR_A^\infty(\lambda)Q_jA^{n-1}w\rmd\lambda\nonumber\\
&+\frac{1}{2\pi\rmi}\int_{\tGamma_{a,\down}^\eps}\frac{e^{\lambda t}}{\lambda^{2n}}Q_iR_A^\infty(\lambda)Q_jA^{n-1}w\rmd\lambda-Q_i\tcS_{n,t}w.
\end{align}	
Assertions (i) and (ii) follow from  Hypothesis ($Q_8$), Theorem~\ref{t3.12}, Lemma~\ref{l4.16}, \eqref{t4.19.4} since $\dom(A)\subset\dom(\tA)$, $\Range(\tcS_{n,t})\subseteq\dom(\tA)$ and $Q_i\dom(\tA)\subseteq\dom(A)$ by Lemma~\ref{l4.8}(i).

\noindent\textit{Proof of (iii).} Using Theorem~\ref{t3.12} and passing to the limit as $a\to\infty$ in \eqref{t4.19.4}  we infer that
\begin{equation}\label{t4.19.5}
\PV\displaystyle\int_{\Lambda_*^\eps}\frac{e^{\lambda t}}{\lambda^{2n}}Q_{i+1}R_A^\infty(\lambda)A^{n-1}w\rmd\lambda=\frac{1}{2\pi\rmi}Q_{i+1}\PV\displaystyle\int_{\Lambda_*^\eps}\frac{e^{\lambda t}}{\lambda^{2n}}R_A(\lambda)A^{n-1}w\rmd\lambda-Q_{i+1}\tcS_{n,t}w,	
\end{equation}	
for any $t\geq\beta_{i,j}$. From Theorem~\ref{t3.12}(ii), Hypothesis ($Q_7$), Lemma~\ref{l4.8}(ii), \eqref{l4.19.3},  and \eqref{t4.19.5} it follows that
\begin{align}\label{t4.19.6}
\frac{1}{2\pi\rmi}&Q_iA\Bigg(\PV\displaystyle\int_{\Lambda_*^\eps}\frac{e^{\lambda t}}{\lambda^{2n}}Q_{i+1}R_A^\infty(\lambda)A^{n-1}w\rmd\lambda\Bigg)=\frac{1}{2\pi\rmi}Q_iA\PV\displaystyle\int_{\Lambda_*^\eps}\frac{e^{\lambda t}}{\lambda^{2n}}R_A^\infty(\lambda)A^{n-1}w\rmd\lambda-Q_i\tA\tcS_{n,t}w\nonumber\\
&=Q_iS(t)w-\displaystyle\sum\limits_{k=1}^{m} \sum_{\na=1}^{N_k}p_{0,k,\na}(t)e^{\mu_k t}Q_i(A-\mu_k^2 I_\bX)^{\na-1}P_kw-\chi_{[0,\infty)}(g_0(0))\displaystyle\sum\limits_{\na=0}^{n-1}\frac{t^{2\na+1}}{(2\na+1)!}Q_iA^\na w\nonumber\\
&\qquad-\displaystyle\sum\limits_{\nu_\ell\ne0} \sum_{\na=1}^{\tN_\ell}\tp_{0,\ell,\na}(t)e^{\nu_\ell t}Q_i(\tA-\nu_\ell^2 I_\bY)^{\na-1}\tP_\ell w-\chi_{(g_*(0),g_0(0)]}(0)\displaystyle\sum\limits_{\na=0}^{n-1}\frac{t^{2\na+1}}{(2\na+1)!}Q_iA^\na w\nonumber\\
&\qquad-\chi_{\cN}(0)\Bigg(\sum_{\na=1}^{\oN_\ell}\frac{t^{2\na-1}}{(2\na-1)!}\tA^{\na-1}\tP_\ell w+\sum_{\na=1}^{\oN_\ell}\frac{t^{2\na}}{(2\na)!}\tA^{\na-1}\oP_\ell w\Bigg)
\end{align}	
Assertion (iii) follows from \eqref{t4.18.3} and \eqref{t4.19.6}.

\noindent\textit{Proof of (iv).} Arguing similarly to the proof of (iii) we have
\begin{equation}\label{t4.19.7}
\PV\displaystyle\int_{\Lambda_*^\eps}\frac{e^{\lambda t}}{\lambda^{2n}}R_A^\infty(\lambda)A^{n-1}w\rmd\lambda=\frac{1}{2\pi\rmi}\PV\displaystyle\int_{\Lambda_*^\eps}\frac{e^{\lambda t}}{\lambda^{2n}}R_A(\lambda)A^{n-1}w\rmd\lambda-\tcS_{n,t}w,	
\end{equation}	
for any $t\geq 0$. From Theorem~\ref{t3.12}(ii), \eqref{l4.19.3}, \eqref{t4.19.7} and since $\tA_{|\dom(A)}=A$ we conclude that
\begin{align}\label{t4.19.8}
\frac{1}{2\pi\rmi}&\tA\Bigg(\PV\displaystyle\int_{\Lambda_*^\eps}\frac{e^{\lambda t}}{\lambda^{2n}}R_A^\infty(\lambda)A^{n-1}w\rmd\lambda\Bigg)=\frac{1}{2\pi\rmi}A\PV\displaystyle\int_{\Lambda_*^\eps}\frac{e^{\lambda t}}{\lambda^{2n}}R_A^\infty(\lambda)A^{n-1}w\rmd\lambda-\tA\tcS_{n,t}w\nonumber\\
&=S(t)w-\displaystyle\sum\limits_{k=1}^{m} \sum_{\na=1}^{N_k}p_{0,k,\na}(t)e^{\mu_k t}(A-\mu_k^2 I_\bX)^{\na-1}P_kw-\chi_{[0,\infty)}(g_0(0))\sum\limits_{\na=0}^{n-1}\frac{t^{2\na+1}}{(2\na+1)!}A^\na w\nonumber\\
&-\sum\limits_{\nu_\ell\ne0} \sum_{\na=1}^{\tN_\ell}\tp_{0,\ell,\na}(t)e^{\nu_\ell t}(\tA-\nu_\ell^2 I_\bY)^{\na-1}\tP_\ell w -\chi_{(g_*(0),g_0(0)]}(0)\sum_{\na=0}^{n-1}\frac{t^{2\na+1}}{(2\na+1)!}A^\na w\nonumber\\
&-\chi_{\cN}(0)\Bigg(\sum_{\na=1}^{\oN_\ell}\frac{t^{2\na-1}}{(2\na-1)!}\tA^{\na-1}\tP_\ell w+\sum_{\na=1}^{\oN_\ell}\frac{t^{2\na}}{(2\na)!}\tA^{\na-1}\oP_\ell w
\Bigg)
\end{align}
for any $t\geq 0$. Assertion (iv) follows from \eqref{t4.18.3} and \eqref{t4.19.8}.
\end{proof}
Similar to Theorem~\ref{t3.12}, we note that the conclusion of Theorem~\ref{t4.19} remain true if \eqref{limit-n-star} is replaced by $\lim\limits_{s\to\infty}\frac{\tih_{i,j,1}(s)}{s^{2n_*}}=0$.

\section{Frechet Space estimates of integral terms}\label{sec6}
In this subsection we aim to estimate the integral terms in the representations of cosine and sine functions from Theorem~\ref{t4.18} and Theorem~\ref{t4.19}. There are some similarities to the estimates of similar integral terms in Section~\ref{sec3}. However there are important differences. We have seen that the curve $\re\lambda=g_0(\im\lambda)$ needs to be to the right of the set $\{\lambda\in\CC:\lambda^2\in\sigma_{\mathrm{ess}}(A)\}$, therefore it is natural to assume that $\inf_{s\in\RR}g_0(s)>-\infty$. The curve $\re\lambda=g_*(\im\lambda)$ does not need to satisfy such conditions and in concrete examples (cosine families generated by a Schr\"odinger operator) it might be possible to choose a function $g_*$ such that $g_*(s)=\mathcal{O}(-\ln{|s|})$ as $s\to\pm\infty$.

Another important difference is that the extension  $R_A^\infty$ does not, in general, satisfy the Hilbert resolvent identity. More precisely, one can readily check that, in general, the range of $R(\mu^2,A)$ is not necessarily contained in $\bX_\infty$. Indeed, if $A=\partial_x^2$ one can readily check that $\bX_\infty=L^2_{\mathrm{comp}}(\RR)$, however $R(1,\partial_x^2)\chi_{[0,1]}\notin L^2_{\mathrm{comp}}(\RR)$. This shows that, in general, $R_A^\infty(\lambda)$ cannot be applied to elements of the range of $R(\mu^2,A)$, and thus $R_A^\infty(\lambda)R(\mu^2,A)$ needed for the Hilbert resolvent identity does not make sense.

First, we estimate the integral terms of the representations given in Theorem~\ref{t4.18} and Theorem~\ref{t4.19}, when the path $\re\lambda=g_*(\im\lambda)+\eps$ is replaced by the path $\tGamma_{a,\mi}^\eps$, see Figure 4.

\begin{lemma}\label{l4.20}
Assume Hypotheses (H), (Q) and (H-ext). Then, the following assertions hold true:
\begin{itemize}
\item[(i)] There exists $\tM_\mi^\rmc:\NN^3\times(0,\infty)\times(0,\eps_0)\to(0,\infty)$  such that
\begin{equation}\label{Error-cos-tilde-mid}
\Big\|\int_{\tGamma_{a,\mi}^\eps}\frac{e^{\lambda t}}{\lambda^{2n-1}}Q_iR_A^\infty(\lambda)A^nx\rmd\lambda\Big\|\leq M_\mi^\rmc(n,i,j,a,\eps)e^{(\sup_{s\in\RR}g_*(s)+\eps)t}\|A^{n-1}x\|	
\end{equation}		
for any $t\geq 0$, $x\in\dom(A^n)\cap\Ker(Q_j-I_\bY)$, $i,j\in\NN$, $\eps\in(0,\teps_0)$, $a>0$;
\item[(ii)] There exists $\tM_\mi^\rms:\NN^3\times(0,\infty)\times(0,\teps_0)\to(0,\infty)$  such that
\begin{equation}\label{Error-sine-tilde-mid}
\Big\|Q_iA\int_{\tGamma_{a,\mi}^\eps}\frac{e^{\lambda t}}{\lambda^{2n}}Q_{i+1}R_A^\infty(\lambda)A^{n-1}w\rmd\lambda\Big\|\leq \tM_\mi^\rms(n,i,j,a,\eps)e^{(\sup_{s\in\RR}g_*(s)+\eps)t}\|A^{n-1}w\|
\end{equation}		
for any $t\geq 0$, $x\in\bW_{n-1}\cap\Ker(Q_j-I_\bY)$, $i,j\in\NN$, $\eps\in(0,\teps_0)$, $a>0$.	
\end{itemize}
\end{lemma}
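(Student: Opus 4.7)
The plan is to reduce both estimates to direct applications of Hypothesis (H-ext)(iii) on the compact arc $\tGamma_{a,\mi}^\eps$, after using the identity in Lemma~\ref{l4.10} to peel off one power of $\lambda$ and the localization property $Q_j A^{n-1}(\cdot) = A^{n-1}(\cdot)$ on $\Ker(Q_j-I_\bY)$ to insert the needed cut-off $Q_j$ on the right.

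For part (i), I first note that $A^{n-1}x \in \dom(A) \cap \Ker(Q_j-I_\bY) \subset \dom(A) \cap \bX_\infty$ by Lemma~\ref{l4.10}(iii), so Lemma~\ref{l4.10}(i) (applied with $A^{n-1}x$ in place of $x$) yields, exactly as in \eqref{t3.13.1},
\[
\frac{e^{\lambda t}}{\lambda^{2n-1}} R_A^\infty(\lambda) A^n x \;=\; \frac{e^{\lambda t}}{\lambda^{2n-3}} R_A^\infty(\lambda) A^{n-1} x \;-\; \frac{e^{\lambda t}}{\lambda^{2n-1}} A^{n-1} x
\]
for $\lambda \in \Omega_* \setminus (\cM \cup \cN \cup \{0\})$. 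Applying $Q_i$ and using $A^{n-1}x = Q_j A^{n-1}x$, I rewrite the first term as $\frac{e^{\lambda t}}{\lambda^{2n-3}} Q_i R_A^\infty(\lambda) Q_j A^{n-1}x$, to which Hypothesis (H-ext)(iii) applies for $|\im\lambda|$ sufficiently large; on the compact path $\tGamma_{a,\mi}^\eps$ the quantities $|\lambda|^{-(2n-3)}\|Q_iR_A^\infty(\lambda)Q_j\|$ and $|\lambda|^{-(2n-1)}$ are uniformly bounded by constants $\tM_{1,\mi}^\rmc, \tM_{2,\mi}^\rmc$ depending on $n,i,j,a,\eps$ (piecewise continuity of $\tih_{i,j,1}, \tih_{i,j,2}$ takes care of the bounded-range part of the curve, and Hypothesis (H-ext)(iii) bounds the rest). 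Since $|e^{\lambda t}| \leq e^{(\sup g_* + \eps)t}$ on the arc for $t\geq 0$, multiplying by the length of $\tGamma_{a,\mi}^\eps$ gives \eqref{Error-cos-tilde-mid} with
\[
\tM_\mi^\rmc(n,i,j,a,\eps) \;=\; \bigl(\max\{\tM_{1,\mi}^\rmc,\tM_{2,\mi}^\rmc\}\bigr)\,\mathrm{Length}(\tGamma_{a,\mi}^\eps).
\]

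For part (ii), I need to push $Q_i A$ through the integral and convert it into a computation on $\tA$. By Lemma~\ref{l4.10}(ii) the integrand $Q_{i+1}R_A^\infty(\lambda)A^{n-1}w$ lies in $\Range(Q_{i+1}) \cap \dom(A)$ thanks to Lemma~\ref{l4.8}(i), so closedness of $A$ and \cite[Prop.~1.1.7]{ABHN} let me write
\[
Q_i A \int_{\tGamma_{a,\mi}^\eps} \frac{e^{\lambda t}}{\lambda^{2n}} Q_{i+1} R_A^\infty(\lambda) A^{n-1}w\, \rmd\lambda \;=\; \int_{\tGamma_{a,\mi}^\eps} \frac{e^{\lambda t}}{\lambda^{2n}} Q_i A Q_{i+1} R_A^\infty(\lambda) A^{n-1}w\, \rmd\lambda.
\]
Lemma~\ref{l4.8}(ii) with $k=i+1 \geq i+1$ then replaces $Q_i A Q_{i+1}$ by $Q_i \tA$ on $\dom(\tA)$. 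Since $A^{n-1}w \in \bW \cap \Ker(Q_j - I_\bY)\subset\bX_\infty$ (using $w\in\bW_{n-1}$ and Lemma~\ref{l4.10}(iii)), Lemma~\ref{l4.10}(ii) gives
\[
Q_i \tA R_A^\infty(\lambda) A^{n-1}w \;=\; \lambda^2\, Q_i R_A^\infty(\lambda) Q_j A^{n-1}w \;-\; Q_i A^{n-1}w.
\]
Dividing by $\lambda^{2n}$ splits the integrand into $\frac{e^{\lambda t}}{\lambda^{2n-2}}Q_iR_A^\infty(\lambda)Q_j A^{n-1}w$ and $-\frac{e^{\lambda t}}{\lambda^{2n}}Q_i A^{n-1}w$, both of which are handled exactly as in part (i): Hypothesis (H-ext)(iii) controls $Q_iR_A^\infty(\lambda)Q_j$, the remaining scalar factor is uniformly bounded on the compact range of $\tGamma_{a,\mi}^\eps$, and $|e^{\lambda t}| \leq e^{(\sup g_* + \eps)t}$. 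This yields \eqref{Error-sine-tilde-mid} with $\tM_\mi^\rms(n,i,j,a,\eps)$ defined analogously.

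No serious obstacle is expected; the only care needed is to check the chain of mapping properties so that all algebraic identities are valid on the correct domain (namely, $A^{n-1}x, A^{n-1}w \in \bX_\infty$ for the insertion of $Q_j$, and $Q_{i+1}R_A^\infty(\lambda)A^{n-1}w \in \dom(A)$ for the interchange of $A$ with the integral and the conversion $Q_iAQ_{i+1} = Q_i\tA$). All of these have been set up in Sections~\ref{sec4} and \ref{sec5}.
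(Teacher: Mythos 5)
Your proof is correct and follows essentially the same route as the paper: peel off a power of $\lambda$ via Lemma~\ref{l4.10}, insert $Q_j$ using $A^{n-1}x=Q_jA^{n-1}x$ (from Lemma~\ref{l4.10}(iii)), and bound the integrand uniformly on the compact arc, with part~(ii) reduced to the same shape after moving $Q_iA$ inside the integral. The only small points worth tightening are that the bound on $\|Q_iR_A^\infty(\lambda)Q_j\|$ over the arc comes simply from continuity on the compact range (the arc avoids all singularities since $\eps<\teps_0$), not from Hypothesis~(H-ext)(iii), and that the interchange of $Q_iA$ with the integral uses $(Q_7)$ to supply the continuity needed for \cite[Prop.~1.1.7]{ABHN}.
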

\begin{proof} Fix $t\geq 0$, $i,j\in\NN$, $x\in\dom(A^n)\cap\Ker(Q_j-I_\bY)$, $w\in\bW_{n-1}\cap\Ker(Q_j-I_\bY)$, $\eps\in(0,\teps_0)$. From Lemma~\ref{l4.10}(iii) we infer that $A^nx,A^{n-1}w\in\Ker(Q_j-I_\bY)$, hence $A^nx=Q_jA^nx$ and $A^{n-1}w=Q_jA^{n-1}w$. From Lemma~\ref{l4.10}(i) it follows that
\begin{equation}\label{l4.20.1}	
\frac{e^{\lambda t}}{\lambda^{2n-1}}Q_iR_A^\infty(\lambda)A^nx=\frac{e^{\lambda t}}{\lambda^{2n-3}}Q_iR_A^\infty(\lambda)Q_jA^{n-1}x-\frac{e^{\lambda t}}{\lambda^{2n-1}}Q_iA^{n-1}x	
\end{equation}	
\begin{equation}\label{l4.20.2}
\frac{e^{\lambda t}}{\lambda^{2n}}Q_i\tA R_A^\infty(\lambda)Q_jA^{n-1}w
=\frac{e^{\lambda t}}{\lambda^{2n-2}}Q_i R_A^\infty(\lambda)Q_jA^{n-1}w-\frac{e^{\lambda t}}{\lambda^{2n}}Q_iA^{n-1}w
\end{equation}
for any $\lambda\in\Omega_*\setminus\bigl(\cM\cup\cN\cup\{0\}\bigr)$. Assertion (i) follows from \eqref{l4.20.1} for
\begin{align}\label{l4.20.3}
\tM_\mi^\rmc(n,i,j,a,\eps)&=\Bigl(\max\{\tM_{1,\mi}^\rmc(n,a,\eps),M_{2,\mi}^\rmc(n,i,a,\eps)\}\Bigr)\mathrm{Lenght}(\tGamma_{a,\mi}^\eps),\;\mbox{where}\nonumber\\
\tM_{1,\mi}^\rmc(n,i,j,a,\eps)&:=\sup_{\lambda\in\mathrm{Range}(\tGamma_{a,\mi}^\eps)}\frac{\|Q_iR_A^\infty(\lambda)Q_j\|}{|\lambda|^{2n-3}},\;\tM_{2,\mi}^\rmc(n,i,a,\eps):=\sup_{\lambda\in\mathrm{Range}(\tGamma_{a,\mi}^\eps)}\frac{q_*}{|\lambda|^{2n-1}},
\end{align}	
where $q_*$ is defined in \eqref{l4.2.2}. Since the linear operator $A$ is closed and $Q_i\in\cC\cL(\bY,\bX)$, from \cite[Proposition 1.1.7]{ABHN} and ($Q_7$) we infer that
\begin{equation}\label{l4.20.4}
Q_iA\int_{\tGamma_{a,\mi}^\eps}\frac{e^{\lambda t}}{\lambda^{2n}}Q_{i+1}R_A^\infty(\lambda)A^{n-1}w\rmd\lambda=\int_{\tGamma_{a,\mi}^\eps}\frac{e^{\lambda t}}{\lambda^{2n}}Q_i\tA R_A^\infty(\lambda)A^{n-1}w\rmd\lambda.	
\end{equation}	
Assertion (ii) follows from \eqref{l4.20.2} and \eqref{l4.20.4} for
\begin{align}\label{t3.13.4-bis}
\tM_\mi^\rms(n,i,j,a,\eps)&=\Bigl(\max\{\tM_{1,\mi}^\rms(n,i,j,a,\eps),M_{2,\mi}^\rms(n,i,a,\eps)\}\Bigr)\mathrm{Lenght}(\tGamma_{a,\mi}^\eps),\;\mbox{where}\nonumber\\
\tM_{1,\mi}^\rms(n,i,j,a,\eps)&:=\sup_{\lambda\in\mathrm{Range}(\tGamma_{a,\mi}^\eps)}\frac{\|Q_iR_A^\infty(\lambda)Q_j\|}{|\lambda|^{2n-2}},\;M_{2,\mi}^\rms(n,i,a,\eps):=\sup_{\lambda\in\mathrm{Range}(\tGamma_{a,\mi}^\eps)}\frac{q_*}{|\lambda|^{2n}}.
\end{align}	
\end{proof}
Similar to Section~\ref{sec3} we define the path
\begin{equation}\label{def-tGamma-out}
\tGamma_{a,\out}^\eps: \lambda=g_*(s)+\eps+\rmi s,\; s\in(-\infty,-a]\cup [a,\infty).
\end{equation}
Our next task is to prove estimates of the integral terms in the representations given in Theorem~\ref{t4.18} and Theorem~\ref{t4.19}, when the path $\re\lambda=g_*(\im\lambda)+\eps$ is replaced by the path $\tGamma_{a,\out}^\eps$. To prove these results we introduce
\begin{equation}\label{def-overline-a-star}
\oa_*=\max\{\max_{1\leq k\leq m}|\mu_k|,\max_{1\leq\ell\leq p}|\nu_\ell|\}+1.
\end{equation}
One important assumption on the function $g_*$, introduced in Hypothesis (H-ext) is

\noindent\textbf{Hypothesis (G).}  We assume that there exist $c_*>0$, $\os_*>\oa_*$ and $\vk_*\geq 0$, such that
\begin{enumerate}
\item[(i)] $g_*(s)\leq-c_*\ln{|s|}$ whenever $|s|\geq \os_*$;
\item[(ii)] $e^{\vk_*g_*}g_*'\in L^\infty(\RR)$.
\end{enumerate}
First, we will prove two preliminary estimates needed in the sequel.
\begin{lemma}\label{l4.21}
Assume Hypotheses (H), (Q), (H-ext) and (G) and, in addition, $b:=\inf\limits_{s\in\RR}g_0(s)>-\infty$. Then,
\begin{enumerate}
\item[(i)] 	There exists $\M_{\out,1}:\NN_{2n_*-1}\times\NN^2\to(0,\infty)$ such that
\begin{equation}\label{Est-bold-M-1-out}
\Big\|\int_{\tGamma_{a,\out}^\eps}\frac{e^{\lambda t}}{\lambda^{k-2}}Q_iR_A^\infty(\lambda)Q_j\rmd\lambda\Big\|\leq	 \frac{\M_{\out,1}(k,i,j)e^{\eps(t-\beta_{i,j})}}{\big(c_*(t-\beta_{i,j}-\vk_*)-3\big)a^{c_*(t-\beta_{i,j}-\vk_*)-3}}	
\end{equation}
for any $t>\beta_{i,j}+\vk_*+3/c_*$, $i,j\in\NN$, $a\geq\os_*$,  $\eps\in(0,\teps_0)$, $k\in\NN$ with $k\geq 2n_*-1$;
\item[(ii)]	There exists $\M_{\out,2}>0$ such that
\begin{equation}\label{Est-bold-M-2-out}
\Big|\int_{\tGamma_{a,\out}^\eps}\frac{e^{\lambda t}}{\lambda^{k}}\rmd\lambda\Big|\leq	\frac{\M_{\out,2}e^{\eps t}}{\big(c_*(t-\vk_*)+k-1\big)a^{c_*(t-\vk_*)+k-1}}	
\end{equation}
for any $t>\vk_*-(k-1)/c_*$, $i,j\in\NN$, $a\geq\os_*$,  $\eps\in(0,\teps_0)$.
\end{enumerate}	
\end{lemma}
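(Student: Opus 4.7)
The plan is to parametrize both integrals along $\tGamma_{a,\out}^\eps$ via $\lambda(s)=g_*(s)+\eps+\rmi s$ for $|s|\geq a$, bound the integrands pointwise using Hypotheses (H-ext) and (G), and then reduce to one-dimensional integrals in $s$. For assertion (i) I would first apply (H-ext)(iii)--(iv): setting $\tM_{i,j}:=\sup_{r\leq\omega}e^{\beta_{i,j}r}\tih_{i,j,2}(r)<\infty$ and $\tih^*_{i,j,1}:=\sup_{r\geq 1}\tih_{i,j,1}(r)/r^{2n_*-1}<\infty$ (finite by piecewise continuity together with the vanishing limit \eqref{limit-n-star}), one has $\tih_{i,j,2}(g_*(s)+\eps)\leq \tM_{i,j}e^{-\beta_{i,j}(g_*(s)+\eps)}$ and $\tih_{i,j,1}(|\lambda|)\leq \tih^*_{i,j,1}|\lambda|^{2n_*-1}$ whenever $|\lambda|\geq 1$, which holds since $|\lambda|\geq|s|\geq\os_*\geq 1$. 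The restriction $k\geq 2n_*-1$ then produces the pointwise bound
\begin{equation*}
\frac{\|Q_iR_A^\infty(\lambda)Q_j\|}{|\lambda|^{k-2}}\leq\tih^*_{i,j,1}|\lambda|^{2n_*+1-k}\tih_{i,j,2}(g_*(s)+\eps)\leq\tih^*_{i,j,1}|\lambda|^2\,\tM_{i,j}e^{-\beta_{i,j}(g_*(s)+\eps)},
\end{equation*}
the worst case $k=2n_*-1$ governing the uniform estimate. Combining with the factor $e^{\lambda t}$ gives a pointwise bound of the form $\tM_{i,j}\tih^*_{i,j,1}e^{\eps(t-\beta_{i,j})}e^{g_*(s)(t-\beta_{i,j})}|\lambda|^2$.

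Next I would handle the Jacobian via (G)(ii): since $g_*(s)\leq-c_*\ln|s|\leq 0$ for $|s|\geq\os_*\geq 1$, one has $|g_*'(s)|\leq \|e^{\vk_*g_*}g_*'\|_\infty e^{-\vk_*g_*(s)}$, so $1+|g_*'(s)|\leq K e^{-\vk_*g_*(s)}$ with $K=1+\|e^{\vk_*g_*}g_*'\|_\infty$. Absorbing this into the temporal exponential dominates the integrand by
\begin{equation*}
\tM_{i,j}\tih^*_{i,j,1}K\,e^{\eps(t-\beta_{i,j})}\,e^{g_*(s)\alpha}|\lambda|^2,\qquad \alpha:=t-\beta_{i,j}-\vk_*.
\end{equation*}
The central trick is to tame $e^{g_*(s)\alpha}|\lambda|^2$ by splitting $\alpha=\tfrac{2}{c_*}+\bigl(\alpha-\tfrac{2}{c_*}\bigr)$, dedicating the first portion of the exponential decay to absorbing $|\lambda|^2$ and using the remainder to produce polynomial decay in $|s|$. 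Writing $|\lambda|^2=(g_*(s)+\eps)^2+s^2$, the elementary identity $\sup_{u\geq 0}u^2e^{-u\beta}=4/(e\beta)^2$ applied with $u=-g_*(s)$ and $\beta=2/c_*$, together with $s^2e^{g_*(s)\cdot 2/c_*}\leq s^2|s|^{-2}=1$ for $|s|\geq\os_*$ from (G)(i), yields $e^{g_*(s)\cdot 2/c_*}|\lambda|^2\leq C'$ uniformly in $s$, where $C'$ depends only on $c_*$ and $\teps_0$. Applying (G)(i) to the remainder, $e^{g_*(s)(\alpha-2/c_*)}\leq|s|^{2-c_*\alpha}$ for $|s|\geq\os_*$, so the integrand is bounded by $\wti C_{i,j,k}\,e^{\eps(t-\beta_{i,j})}\,|s|^{2-c_*\alpha}$ with $\wti C_{i,j,k}$ independent of $t,a,\eps$. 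The hypothesis $t>\beta_{i,j}+\vk_*+3/c_*$ gives $c_*\alpha>3$, so $\int_a^\infty|s|^{2-c_*\alpha}\rmd s=a^{3-c_*\alpha}/(c_*\alpha-3)$, and doubling for $s\leq-a$ completes assertion (i).

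Assertion (ii) follows by the same parametrization but without the operator-norm factor. Using $1/|\lambda|^k\leq 1/|s|^k$, the integrand is bounded by $e^{(g_*(s)+\eps)t}|s|^{-k}(1+|g_*'(s)|)$; applying (G)(ii) as above gives $(1+|g_*'(s)|)e^{g_*(s)t}\leq Ke^{g_*(s)(t-\vk_*)}$, and then (G)(i) yields the pointwise estimate $Ke^{\eps t}|s|^{-k-c_*(t-\vk_*)}$ for $|s|\geq\os_*$. Integrating $\int_a^\infty$ and summing both symmetric halves produces the stated bound, with convergence requiring $k-1+c_*(t-\vk_*)>0$, i.e., the condition $t>\vk_*-(k-1)/c_*$.

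The main obstacle is the bookkeeping in assertion (i): one must absorb the potentially unbounded factor $|\lambda|^2$—unavoidable in the worst case $k=2n_*-1$, reflecting that $\tih_{i,j,1}$ may grow like $|\lambda|^{2n_*-1}$—into the exponential decay uniformly in $t,a,\eps$. The splitting of $\alpha$ into a fixed piece $2/c_*$ used to kill $|\lambda|^2$ and a variable piece producing $|s|^{2-c_*\alpha}$ decay is the essential device; the shift $-3$ in the final exponent $c_*(t-\beta_{i,j}-\vk_*)-3$ reflects its cost, namely two powers of $|s|$ coming from the worst-case $|\lambda|^2$ plus one from the integration.
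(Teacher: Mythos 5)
Your proof is correct and follows essentially the same strategy as the paper's: parametrize $\tGamma_{a,\out}^\eps$, bound $\tih_{i,j,1}$ and $\tih_{i,j,2}$ using the suprema guaranteed by (H-ext)(iv), extract $e^{-\vk_*g_*(s)}$ from the Jacobian via (G)(ii), fold it into the time exponential to get the exponent $\alpha(g_*(s)+\eps)$ with $\alpha=t-\beta_{i,j}-\vk_*$, and convert the residual exponential into polynomial decay in $|s|$ via (G)(i).

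The one place you genuinely diverge is in how the stray factor $|\lambda|^2$ is absorbed in part (i). You dedicate a fixed slice $2/c_*$ of $\alpha$ to kill \emph{all} of $|\lambda|^2$ at once: $s^2e^{2g_*(s)/c_*}\leq 1$ by (G)(i), and $(g_*(s)+\eps)^2e^{2g_*(s)/c_*}\leq C'$ by the elementary supremum and $g_*\leq 0$; this leaves a single integral $\int_a^\infty|s|^{2-c_*\alpha}\rmd s$. The paper instead splits $|\lambda|^2=s^2+(g_*(s)+\eps)^2$ first: the $s^2$ piece goes directly through (G)(i) (yielding exponent $c_*\alpha-3$), while the $(g_*(s)+\eps)^2$ piece spends only $1/c_*$ of $\alpha$ via $\sup_{\xi\leq\omega}\xi^2e^{\xi/c_*}$ (yielding exponent $c_*\alpha-2$), and the two contributions are then merged using $a\geq\os_*$ to get the common exponent $c_*\alpha-3$. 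The net result and the governing hypothesis $c_*\alpha>3$ are identical; your version produces one integral instead of two at the cost of a marginally wastier estimate on the $(g_*+\eps)^2$ term, and your accounting of the prefactors (separate constants $\tih^*_{i,j,1}$, $\tM_{i,j}$ rather than the paper's single $\oM_*(k,i,j)$) is, if anything, cleaner. Part (ii) matches the paper's argument directly. Both your proof and the paper's tacitly use $t\geq\vk_*$ in the step $e^{(t-\vk_*)g_*(s)}\leq|s|^{-c_*(t-\vk_*)}$, which is a slight overreach of the stated range $t>\vk_*-(k-1)/c_*$ when $k\geq 2$; since you mirror the paper on this, and the lemma is only invoked downstream for $t$ well above $\vk_*$, it is not a defect in your argument.
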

\begin{proof}
Fix $t\geq 0$, $k,i,j\in\NN$, $a\geq\os_*$, $\eps\in(0,\teps)$. First, we note that	by our assumptions on $\tih_{i,j,1}$ and $\tih_{i,j,2}$,
\begin{align}\label{l4.21.1}
\oM_*(k,i,j)&:=\Bigg(\max\Big\{\sup_{s\geq1}\frac{\tih_{i,j,1}(s)}{s^{k}},\, \sup_{s\leq\omega}e^{\beta_{i,j}s}\tih_{i,j,2}(s)\Big\}\Bigg)^{\frac{1}{2}}<\infty\nonumber\\
M_{*,1}&:=\mathrm{esssup}_{s\in\RR}|e^{\vk_*g_*(s)}g_*'(s)|<\infty.	
\end{align}
From Hypotheses (H) and (H-ext) we know that
\begin{equation}\label{l4.21.2}
g_*(s)+\eps<g_*(s)+\teps\leq g_*(s)+\delta_0\leq g_0(s)\leq\omega\;\mbox{for any}\; s\in\RR.
\end{equation}
Hence, from \eqref{l4.21.1} and \eqref{l4.21.2} we obtain the following estimate
\begin{align}\label{l4.21.3}
&\Big\|\int_{\tGamma_{a,\out}^\eps}\frac{e^{\lambda t}}{\lambda^{k-2}}Q_iR_A^\infty(\lambda)Q_j\rmd\lambda\Big\|\nonumber\\
&\leq\int_{|s|\geq a}\frac{e^{(g_*(s)+\eps)t}}{|g_*(s)+\eps+\rmi s|^{k-2}}\tih_{i,j,1}(|g_*(s)+\eps+\rmi s|)\tih_{i,j,2}(g_*(s)+\eps)|g_*'(s)+\rmi|\rmd s\nonumber\\
&\leq\oM_*(k,i,j)e^{\vk_*\eps}\int_{|s|\geq a}|g_*(s)+\eps+\rmi s|^2e^{(t-\beta_{i,j}-\vk_*)(g_*(s)+\eps)}\big|e^{\vk_*g_*(s)}g_*'(s)+\rmi e^{\vk_*g_*(s)}\big|\rmd s\nonumber\\
&\leq\oM_*(k,i,j)(M_{*,1}+e^{\vk_*\omega})e^{\vk_*\eps}\int_{|s|\geq a}|g_*(s)+\eps+\rmi s|^2e^{(t-\beta_{i,j}-\vk_*)(g_*(s)+\eps)}\rmd s.
\end{align}
Next, we estimate
\begin{align}\label{l4.21.4}
\int_{|s|\geq a}&s^2e^{(t-\beta_{i,j}-\vk_*)(g_*(s)+\eps)}\rmd s\leq e^{\eps(t-\beta_{i,j}-\vk_*)}\int_{|s|\geq a}s^2e^{-c_*(t-\beta_{i,j}-\vk_*)\ln{|s|}}\rmd s\nonumber\\
&=2e^{\eps(t-\beta_{i,j}-\vk_*)}\int_{a}^{\infty}\frac{\rmd s}{s^{c_*(t-\beta_{i,j}-\vk_*)-2}}=\frac{2e^{\eps(t-\beta_{i,j}-\vk_*)}}{\big(c_*(t-\beta_{i,j}-\vk_*)-3\big)a^{c_*(t-\beta_{i,j}-\vk_*)-3}}.	
\end{align}
for any $t>\beta_{i,j}+\vk_*+3/c_*$. Next, we define $M_{*,2}=\sup_{\xi\leq\omega}\xi^2e^{\frac{\xi}{c_*}}<\infty$. From \eqref{l4.21.2} it follows that
\begin{align}\label{l4.21.5}
&\int_{|s|\geq a}(g_*(s)+\eps)^2e^{(t-\beta_{i,j}-\vk_*)(g_*(s)+\eps)}\rmd s\leq \int_{|s|\geq a}(g_*(s)+\eps)^2e^{\frac{g_*(s)+\eps}{c_*}}e^{(t-\beta_{i,j}-\vk_*-1/c_*)(g_*(s)+\eps)}\rmd s\nonumber\\
&\leq M_{*,2}\int_{|s|\geq a}e^{(t-\beta_{i,j}-\vk_*-1/c_*)(g_*(s)+\eps)}\rmd s\leq M_{*,2}e^{\eps(t-\beta_{i,j}-\vk_*-1/c_*)}\int_{|s|\geq a}e^{-c_*(t-\beta_{i,j}-\vk_*-1/c_*)\ln{|s|}}\rmd s\nonumber\\
&\leq2M_{*,2}e^{\eps(t-\beta_{i,j}-\vk_*)}\int_{a}^{\infty}\frac{\rmd s}{s^{c_*(t-\beta_{i,j}-\vk_*)-1}}=\frac{2M_{*,2}e^{\eps(t-\beta_{i,j}-\vk_*)}}{\big(c_*(t-\beta_{i,j}-\vk_*)-2\big)a^{c_*(t-\beta_{i,j}-\vk_*)-2}}.	
\end{align}
for any $t>\beta_{i,j}+\vk_*+2/c_*$. From \eqref{l4.21.3}, \eqref{l4.21.4} and \eqref{l4.21.5} we conclude that
\begin{equation}\label{l4.21.6}
\Big\| \int_{\tGamma_{a,\out}^\eps}\frac{e^{\lambda t}}{\lambda^{k-2}}Q_iR_A^\infty(\lambda)Q_j\rmd\lambda\Big\|\leq\M_{\out,1}(k,i,j)\frac{e^{\eps(t-\beta_{i,j})}}{\big(c_*(t-\beta_{i,j}-\vk_*)-3\big)a^{c_*(t-\beta_{i,j}-\vk_*)-3}}.	
\end{equation}
for any $t>\beta_{i,j}+\vk_*+3/c_*$, where $\M_{\out,1}(k,i,j):=2\oM_*(k,i,j)(M_{*,1}+e^{\vk_*\omega})\max\{1,M_{*,2}/\os_*\}$, proving assertion (i). Similarly,
\begin{align}\label{l4.21.7}
\Big|\int_{\tGamma_{a,\out}^\eps}&\frac{e^{\lambda t}}{\lambda^k}\rmd\lambda\Big|\leq\int_{|s|\geq a}\frac{e^{(g_*(s)+\eps)t}|g_*'(s)+\rmi|}{|g_*(s)+\eps+\rmi s|^k}\rmd s\leq e^{\eps t}\int_{|s|\geq a}\frac{e^{(t-\vk_*)g_*(s)}}{|s|^k}\big|e^{\vk_*g_*(s)}g_*'(s)+\rmi e^{\vk_*g_*(s)}\big|\rmd s\nonumber\\
&\leq(M_{*,1}+e^{\vk_*\omega})e^{\eps t}\int_{|s|\geq a}\frac{e^{-c_*(t-\vk_*)\ln{|s|}}}{|s|^k}\rmd s=(M_{*,1}+e^{\vk_*\omega})e^{\eps t}\int_{|s|\geq a}\frac{\rmd s}{|s|^{c_*(t-\vk_*)+k}}.
\end{align}	
for any $t> \vk_*-(k-1)/c_*$. Assertion (ii) follows shortly from \eqref{l4.21.7}.		
\end{proof}
\begin{lemma}\label{l4.22}
Assume Hypotheses (H), (Q), (H-ext), (G) and, in addition,   $b:=\inf\limits_{s\in\RR}g_0(s)>-\infty$. Then, there exists $\tM^\rmc_\out:\NN_{\max\{n_0,n_*\}}\times\NN^2\to(0,\infty)$ such that
\begin{equation}\label{Error-cos-out-bis2}
\Big\|\int_{\tGamma_{a,\out}^\eps}\frac{e^{\lambda t}}{\lambda^{2n-1}}Q_iR_A^\infty(\lambda)A^nx\rmd\lambda\Big\|\leq	 \frac{\tM^\rmc_\out(n,i,j)e^{\eps(t-\beta_{i,j})}\|A^{n-1}x\|}{\big(c_*(t-\beta_{i,j}-\vk_*)-3\big)a^{c_*(t-\beta_{i,j}-\vk_*)-3}}	
\end{equation}
for any $t>\beta_{i,j}+\vk_*+3/c_*$, $i,j\in\NN$, $a\geq\os_*$, $x\in\dom(A^n)\cap\Ker(Q_j-I_\bY)$, $\eps\in(0,\teps_0)$, $n\in\NN$, with $n\geq\max\{n_0,n_*\}$.
\end{lemma}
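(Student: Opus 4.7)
The plan is to reduce the integrand to expressions already handled by Lemma~\ref{l4.21} using the ``killing powers of $A$'' identity from Lemma~\ref{l4.10}(iii). Since $x\in\dom(A^n)\cap\Ker(Q_j-I_\bY)$, Lemma~\ref{l4.10}(iii) yields $A^{n-1}x\in\dom(A)\cap\Ker(Q_j-I_\bY)$, so $A^{n-1}x = Q_jA^{n-1}x$. Applying Lemma~\ref{l4.10}(i) to the vector $A^{n-1}x\in\dom(A)\cap\bX_\infty$ and multiplying by $e^{\lambda t}/\lambda^{2n-1}$ (followed by $Q_i$) gives the splitting
\begin{equation}\label{l4.22.split}
\frac{e^{\lambda t}}{\lambda^{2n-1}}Q_iR_A^\infty(\lambda)A^nx=\frac{e^{\lambda t}}{\lambda^{2n-3}}Q_iR_A^\infty(\lambda)Q_jA^{n-1}x-\frac{e^{\lambda t}}{\lambda^{2n-1}}Q_iA^{n-1}x
\end{equation}
for $\lambda\in\Omega_*\setminus(\cM\cup\cN\cup\{0\})$; integrating \eqref{l4.22.split} over $\tGamma_{a,\out}^\eps$ reduces the proof to two separate estimates.

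For the first term on the right of \eqref{l4.22.split}, I pull the vector $A^{n-1}x$ outside and apply Lemma~\ref{l4.21}(i) with $k=2n-1$ (which satisfies $k\geq 2n_*-1$ since $n\geq\max\{n_0,n_*\}$), obtaining
\[
\Big\|\int_{\tGamma_{a,\out}^\eps}\frac{e^{\lambda t}}{\lambda^{2n-3}}Q_iR_A^\infty(\lambda)Q_jA^{n-1}x\rmd\lambda\Big\|\leq\frac{\M_{\out,1}(2n-1,i,j)\,e^{\eps(t-\beta_{i,j})}\|A^{n-1}x\|}{\big(c_*(t-\beta_{i,j}-\vk_*)-3\big)a^{c_*(t-\beta_{i,j}-\vk_*)-3}}
\]
for $t>\beta_{i,j}+\vk_*+3/c_*$. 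For the second term in \eqref{l4.22.split}, I factor out $Q_iA^{n-1}x$, estimate $\|Q_iA^{n-1}x\|\leq q_*\|A^{n-1}x\|$ using \eqref{l4.2.2}, and apply Lemma~\ref{l4.21}(ii) with $k=2n-1$ to obtain
\[
\Big\|\int_{\tGamma_{a,\out}^\eps}\frac{e^{\lambda t}}{\lambda^{2n-1}}\rmd\lambda\Big\|\,\|Q_iA^{n-1}x\|\leq\frac{q_*\M_{\out,2}\,e^{\eps t}\|A^{n-1}x\|}{\big(c_*(t-\vk_*)+2n-2\big)a^{c_*(t-\vk_*)+2n-2}}.
\]

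To combine these two estimates into a single bound of the required form, I observe that because $a\geq\os_*>1$, $\beta_{i,j}\geq 0$ and $n\geq 1$, the exponent $c_*(t-\vk_*)+2n-2$ of $a$ exceeds $c_*(t-\beta_{i,j}-\vk_*)-3$, and the corresponding denominator factor is larger; thus the second bound is majorized by $\M_{\out,2}q_*e^{\eps\beta_{i,j}}\big[\big(c_*(t-\beta_{i,j}-\vk_*)-3\big)a^{c_*(t-\beta_{i,j}-\vk_*)-3}\big]^{-1}e^{\eps(t-\beta_{i,j})}\|A^{n-1}x\|$. Since $\eps\in(0,\teps_0)$, the factor $e^{\eps\beta_{i,j}}\leq e^{\teps_0\beta_{i,j}}$ depends only on $i,j$ and can be absorbed into the final constant. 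Taking
\[
\tM^\rmc_\out(n,i,j):=\M_{\out,1}(2n-1,i,j)+q_*\M_{\out,2}e^{\teps_0\beta_{i,j}}
\]
yields \eqref{Error-cos-out-bis2}. The main technical bookkeeping is verifying the exponent comparison in this final step so that the slower decay of the first term dominates; once this is in place, the rest is a direct substitution into Lemma~\ref{l4.21}.
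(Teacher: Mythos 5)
Your proof is correct and follows essentially the same route as the paper's: split via \eqref{l4.20.1}/Lemma~\ref{l4.10}(iii), apply Lemma~\ref{l4.21}(i)--(ii) with $k=2n-1$, and combine with the constant $\tM_\out^\rmc(n,i,j)=\M_{\out,1}(2n-1,i,j)+q_*\M_{\out,2}e^{\teps_0\beta_{i,j}}$. Your spelling out of the exponent comparison to majorize the second term (which the paper leaves to the reader with ``the lemma follows shortly'') is the right bookkeeping and is sound, since $a\geq\os_*>1$, $\beta_{i,j}>0$ and $c_*\beta_{i,j}+2n+1>0$ give both the larger exponent of $a$ and the larger linear factor in the denominator.
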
	
\begin{proof}
Fix $t\geq 0$, $i,j\in\NN$, $a\geq\os_*$, $x\in\dom(A^n)\cap\Ker(Q_j-I_\bY)$, $\eps\in(0,\teps_0)$.
From \eqref{l4.20.1} it follows that
\begin{equation}\label{l4.22.1}	
\int_{\tGamma_{a,\out}^\eps}\frac{e^{\lambda t}}{\lambda^{2n-1}}Q_iR_A^\infty(\lambda)A^nx\rmd\lambda=\int_{\tGamma_{a,\out}^\eps}\frac{e^{\lambda t}}{\lambda^{2n-3}}Q_iR_A^\infty(\lambda)Q_jA^{n-1}x\rmd\lambda-\int_{\tGamma_{a,\out}^\eps}\frac{e^{\lambda t}}{\lambda^{2n-1}}\rmd\lambda Q_iA^{n-1}x.
\end{equation}
We define $\tM_\out^\rmc:\NN_{\max\{n_0,n_*\}}\times\NN^2\to(0,\infty)$ by
\begin{equation}\label{l4.22.2}
\tM_\out^\rmc(n,i,j)=\M_{\out,1}(2n-1,i,j)+q_*\M_{\out,2}e^{\teps_0\beta_{i,j}},
\end{equation}
where $q_*$ is defined in \eqref{l4.2.2}. The lemma follows shortly from Lemma~\ref{l4.21} (with $k=2n-1$), \eqref{l4.22.1} and \eqref{l4.22.2}.
\end{proof}
Next, we prove an estimate of the integral term of the sine function representation given in Theorem~\ref{t4.19} by relaxing slightly the hypotheses of Lemma~\ref{l4.22}.
\begin{lemma}\label{l4.23}
Assume Hypotheses (H), (Q), (H-ext), (G) and, in addition,  $b:=\inf\limits_{s\in\RR}g_0(s)>-\infty$. Then, there exists $\tM^\rms_\out:\NN_{\max\{n_0,n_*\}}\times\NN^2\to(0,\infty)$ such that
\begin{equation}\label{Error-sin-out-bis}
\Big\|Q_iA\int_{\tGamma_{a,\out}^\eps}\frac{e^{\lambda t}}{\lambda^{2n-1}}Q_{i+1}R_A^\infty(\lambda)A^{n-1}w\rmd\lambda\Big\|\leq	 \frac{\tM^\rms_\out(n,i,j)e^{\eps(t-\beta_{i,j})}\|A^{n-1}w\|}{\big(c_*(t-\beta_{i,j}-\vk_*)-3\big)a^{c_*(t-\beta_{i,j}-\vk_*)-3}}	
\end{equation}
for any $t>\beta_{i,j}+\vk_*+3/c_*$, $i,j\in\NN$, $a\geq\os_*$, $x\in\bW_{n-1}\cap\Ker(Q_j-I_\bY)$, $\eps\in(0,\teps_0)$, $n\in\NN$, with $n\geq\max\{n_0,n_*\}$.
\end{lemma}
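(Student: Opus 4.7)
The plan is to follow the template of the proof of Lemma~\ref{l4.22} for the cosine function, with the added complication that the closed operator $A$ is applied outside the contour integral. The key idea is to move $Q_iA$ past the integral and across the localizer $Q_{i+1}$, rewrite the resulting integrand via the resolvent-type identity of Lemma~\ref{l4.10}(ii), and reduce matters to the two building-block estimates of Lemma~\ref{l4.21}.

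First I would verify that the integrand takes values in $\dom(A)$ for each $\lambda$ on the contour: Lemma~\ref{l4.10}(ii) yields $R_A^\infty(\lambda)A^{n-1}w\in\dom(\tA)$, and Lemma~\ref{l4.8}(i) then gives $Q_{i+1}R_A^\infty(\lambda)A^{n-1}w\in\dom(A)$. Next I would combine Hypothesis ($Q_7$), closedness of $A$, the commutation formula $Q_iAQ_{i+1}y=Q_i\tA y$ on $\dom(\tA)$ from Lemma~\ref{l4.8}(ii), and the standard commutation of a closed operator with a Bochner-type integral (cf.\ \cite[Proposition 1.1.7]{ABHN}), to obtain
\begin{equation*}
Q_iA\int_{\tGamma_{a,\out}^\eps}\frac{e^{\lambda t}}{\lambda^{2n-1}}Q_{i+1}R_A^\infty(\lambda)A^{n-1}w\,\rmd\lambda=\int_{\tGamma_{a,\out}^\eps}\frac{e^{\lambda t}}{\lambda^{2n-1}}Q_i\tA R_A^\infty(\lambda)A^{n-1}w\,\rmd\lambda.
\end{equation*}
By Lemma~\ref{l4.10}(ii), $\tA R_A^\infty(\lambda)A^{n-1}w=\lambda^2 R_A^\infty(\lambda)A^{n-1}w-A^{n-1}w$; since $A^{n-1}$ leaves $\Ker(Q_j-I_\bY)$ invariant by Lemma~\ref{l4.10}(iii), one has $A^{n-1}w=Q_jA^{n-1}w$. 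This decomposes the integral into
\begin{equation*}
\int_{\tGamma_{a,\out}^\eps}\frac{e^{\lambda t}}{\lambda^{2n-3}}Q_iR_A^\infty(\lambda)Q_jA^{n-1}w\,\rmd\lambda\;-\;Q_iA^{n-1}w\int_{\tGamma_{a,\out}^\eps}\frac{e^{\lambda t}}{\lambda^{2n-1}}\,\rmd\lambda.
\end{equation*}

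Finally, I would apply Lemma~\ref{l4.21}(i) with $k=2n-1$ to the first summand and Lemma~\ref{l4.21}(ii) with $k=2n-1$ to the second; the range constraint $t>\beta_{i,j}+\vk_*+3/c_*$ comes from Lemma~\ref{l4.21}(i), while the constraint $t>\vk_*-(2n-2)/c_*$ from Lemma~\ref{l4.21}(ii) is automatic since $\beta_{i,j}>0$ and $n\geq 1$. The constant $\tM_\out^\rms(n,i,j)$ is then defined in analogy with \eqref{l4.22.2}, combining $\M_{\out,1}(2n-1,i,j)$, $q_*\M_{\out,2}e^{\teps_0\beta_{i,j}}$ (with $q_*$ from \eqref{l4.2.2}), and an $\os_*$-dependent factor. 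The main obstacle is the bookkeeping needed to unify the two bounds into the form of \eqref{Error-sin-out-bis}: the second term carries $e^{\eps t}$ in place of $e^{\eps(t-\beta_{i,j})}$ and a larger power of $a$ in the denominator. These discrepancies are absorbed via the elementary estimates $e^{\eps t}\leq e^{\teps_0\beta_{i,j}}e^{\eps(t-\beta_{i,j})}$ and $a^{c_*(t-\vk_*)+2n-2}\geq \os_*^{c_*\beta_{i,j}+2n+1}\,a^{c_*(t-\beta_{i,j}-\vk_*)-3}$, valid since $a\geq\os_*\geq 1$, yielding an enlarged but ($t,a,\eps,w$)-independent constant $\tM_\out^\rms(n,i,j)$.
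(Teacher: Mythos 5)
Your proposal is correct and follows the same route as the paper's own proof: move $Q_iA$ inside the integral by closedness and \cite[Proposition 1.1.7]{ABHN}, pass to $Q_i\tA$ via Lemma~\ref{l4.8}(ii), expand via Lemma~\ref{l4.10} to split off the scalar integral, and then invoke the two bounds of Lemma~\ref{l4.21}. One remark worth making: the exponent $\lambda^{2n-1}$ in the stated estimate \eqref{Error-sin-out-bis} is a typographical slip; the sine-family integrand carries $\lambda^{2n}$ throughout (compare Lemma~\ref{l4.20}(ii), Theorem~\ref{t4.19}(iii), and the paper's own argument, which applies Lemma~\ref{l4.21} with $k=2n$ and defines the constant with $\M_{\out,1}(2n,i,j)$). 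You inherited this typo and correspondingly took $k=2n-1$; the structure of the argument is unaffected, only the parameter $k$ and the decomposition $\lambda^{2n}/\lambda^{2n-2}$ versus $\lambda^{2n-1}/\lambda^{2n-3}$ shift by one. Your explicit bookkeeping of the mismatched powers of $a$ and the shift $e^{\eps t}\to e^{\teps_0\beta_{i,j}}e^{\eps(t-\beta_{i,j})}$ is sound (the paper suppresses this step, instead tacitly using that $u\mapsto u\,a^u$ is increasing for $a\geq\os_*>1$ and that $c_*(t-\vk_*)+2n-2>c_*(t-\beta_{i,j}-\vk_*)-3$), and your observation that the constraint $t>\vk_*-(k-1)/c_*$ from Lemma~\ref{l4.21}(ii) is subsumed by $t>\beta_{i,j}+\vk_*+3/c_*$ is correct.
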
	
\begin{proof}
Fix $t\geq 0$, $i,j\in\NN$, $a\geq\os_*$, $w\in\bW_{n-1}\cap\Ker(Q_j-I_\bY)$, $\eps\in(0,\teps_0)$.
Since the linear operator $A$ is closed and $Q_i\in\cC\cL(\bY,\bX)$, from \cite[Proposition 1.1.7]{ABHN}, Theorem~\ref{t4.19}(i), ($Q_7$) and \eqref{l4.20.2} it follows that
\begin{align}\label{l4.23.1}
Q_iA&\int_{\tGamma_{a,\out}^\eps}\frac{e^{\lambda t}}{\lambda^{2n}}Q_{i+1}R_A^\infty(\lambda)A^{n-1}w\rmd\lambda=\int_{\tGamma_{a,\out}^\eps}\frac{e^{\lambda t}}{\lambda^{2n}}Q_i\tA R_A^\infty(\lambda)A^{n-1}w\rmd\lambda\nonumber\\
&=\int_{\tGamma_{a,\out}^\eps}\frac{e^{\lambda t}}{\lambda^{2n-2}}Q_iR_A^\infty(\lambda)Q_jA^{n-1}w\rmd\lambda-\int_{\tGamma_{a,\out}^\eps}\frac{e^{\lambda t}}{\lambda^{2n}}\rmd\lambda Q_iA^{n-1}w.	 
\end{align}	
Let $\tM_\out^\rmc:\NN_{\max\{n_0,n_*\}}\times\NN^2\to(0,\infty)$ be the function defined by
\begin{equation}\label{l4.23.2}
\tM_\out^\rmc(n,i,j)=\M_{\out,1}(2n,i,j)+q_*\M_{\out,2}e^{\teps\beta_{i,j}},
\end{equation}
The lemma follows shortly from Lemma~\ref{l4.21} (with $k=2n$), \eqref{l4.23.1} and \eqref{l4.23.2}.
\end{proof}
\begin{theorem}\label{t4.24}
Assume Hypotheses (H), (Q), (H-ext), (G) and, in addition,   $b:=\inf\limits_{s\in\RR}g_0(s)>-\infty$.
Then, there exists $\tM^\rmc:\NN_{\max\{n_0,n_*\}}\times\NN^2\times(0,\eps_0)\to(0,\infty)$  and $\tcE^\rmc_{n,i}:\RR\to\cC\cL(\dom(A^{n-1})\cap\bX_\infty,\bY)$ such that
\begin{align}\label{Frechet-Error-cos}
Q_iC(t)x&=\displaystyle\sum\limits_{k=1}^{m} \sum_{\na=1}^{N_k}p_{1,k,\na}(t)e^{\mu_k t}Q_i(A-\mu_k^2 I_\bX)^{\na-1}P_kx+\displaystyle\sum\limits_{\nu_\ell\ne0} \sum_{\na=1}^{\tN_\ell}\tp_{1,\ell,\na}(t)e^{\nu_\ell t}Q_i(\tA-\nu_\ell^2 I_\bY)^{\na-1}\tP_\ell x \nonumber\\
&+\chi_{\{\nu_\ell:\ell=1,\dots,p\}}(0)\Bigg(\sum_{\na=1}^{\oN_\ell}\frac{t^{2\na-2}}{(2\na-2)!}Q_i\tA^{\na-1}\tP_\ell x+\sum_{\na=1}^{\oN_\ell}\frac{t^{2\na-1}}{(2\na-1)!}Q_i\tA^{\na-1}\oP_\ell x\Bigg)\nonumber\\
&+\chi_{[0,\infty)}(g_*(0))\displaystyle\sum\limits_{\na=0}^{n-1}\frac{t^{2\na}}{(2\na)!}Q_iA^\na x+\tcE^\rmc_{n,i}(t)x
\end{align}
\begin{equation}\label{Frechet-error-cos2}
\|Q_i\tcE^\rmc_{n,i}(t)x\|\leq 	\tM^\rmc(n,i,j,\eps)e^{(\sup_{s\in\RR}g_*(s)+\eps)t}\|A^{n-1}x\|
\end{equation}	
for any $t\geq\beta_{i,j}+\vk_*+4/c_*$, $i,j\in\NN$, $x\in\dom(A^n)\cap\Ker(Q_j-I_\bY)$, $\eps\in(0,\teps_0)$, $n\in\NN$, with $n\geq\max\{n_0,n_*\}$. If, in addition we assume that $Q_i\big(\dom(A^n)\big)\subseteq\dom(A^n)$ for any $i\in\NN$, then \eqref{Frechet-error-cos2} holds for any $x\in\dom(A^{n-1})$.
\end{theorem}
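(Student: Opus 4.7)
The plan is to begin from the representation \eqref{partial-resonance-cos} furnished by Theorem~\ref{t4.18}(iii), which for $x\in\dom(A^n)\cap\Ker(Q_j-I_\bY)$ and $t\geq\beta_{i,j}$ already displays all four finite-rank/polynomial sums appearing in \eqref{Frechet-Error-cos}. Comparing the two formulas forces the definition
$$\tcE^\rmc_{n,i}(t)x:=\frac{1}{2\pi\rmi}\PV\int_{\Lambda_*^\eps}\frac{e^{\lambda t}}{\lambda^{2n-1}}R_A^\infty(\lambda)A^n x\,\rmd\lambda\in\bY,$$
so the whole theorem reduces to estimating the $i$-th Fr\'echet seminorm $\|Q_i\tcE^\rmc_{n,i}(t)x\|$ and then passing to the claimed larger domain. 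I will split the contour $\Lambda_*^\eps$ into the bounded middle part $\tGamma_{a,\mi}^\eps$ and the two infinite tails $\tGamma_{a,\out}^\eps$ of \eqref{def-tilde-Gamma-a-eps}-\eqref{def-tGamma-out}, with $a$ to be fixed in terms of the structural data.

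Applying Lemma~\ref{l4.20}(i) to the middle piece yields a bound $\tM_\mi^\rmc(n,i,j,a,\eps)\,e^{(\sup g_*+\eps)t}\|A^{n-1}x\|$, which is already of the required form for any fixed $a$. Applying Lemma~\ref{l4.22} to the tails gives
$$\Big\|\int_{\tGamma_{a,\out}^\eps}\frac{e^{\lambda t}}{\lambda^{2n-1}}Q_iR_A^\infty(\lambda)A^n x\,\rmd\lambda\Big\|\leq\frac{\tM_\out^\rmc(n,i,j)\,e^{\eps(t-\beta_{i,j})}\|A^{n-1}x\|}{(c_*(t-\beta_{i,j}-\vk_*)-3)\,a^{c_*(t-\beta_{i,j}-\vk_*)-3}}.$$
The decisive step, and the main technical content of the proof, is the choice $a=a_0:=\max\{\os_*,\,e^{-\sup_{s\in\RR}g_*(s)/c_*}\}$, a constant depending only on the data: by Hypothesis (G)(i) this satisfies $-c_*\ln a_0\leq\sup g_*$, so the tail factor $a_0^{-c_*(t-\beta_{i,j}-\vk_*)}$ absorbs precisely the missing exponential $e^{(\sup g_*)t}$ needed to upgrade $e^{\eps(t-\beta_{i,j})}$ to $e^{(\sup g_*+\eps)t}$. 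The lower bound $t\geq\beta_{i,j}+\vk_*+4/c_*$ ensures that $c_*(t-\beta_{i,j}-\vk_*)-3\geq1$, preventing the denominator from causing trouble. Summing the two estimates produces \eqref{Frechet-error-cos2} with
$$\tM^\rmc(n,i,j,\eps):=\tfrac{1}{2\pi}\Bigl(\tM_\mi^\rmc(n,i,j,a_0,\eps)+a_0^{c_*(\beta_{i,j}+\vk_*)+3}e^{-\eps\beta_{i,j}}\tM_\out^\rmc(n,i,j)\Bigr).$$

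For the final statement under the added hypothesis $Q_i(\dom(A^n))\subseteq\dom(A^n)$, the plan is a density argument based on Lemma~\ref{l4.2}(x). For $x\in\dom(A^{n-1})\cap\Ker(Q_j-I_\bY)$ the lemma supplies an approximating sequence $\tx_k\in\dom(A^n)\cap\Ker(Q_{j+1}-I_\bY)$ with $\tx_k\to x$ in $\|\cdot\|_{\dom(A^{n-1})}$; applying the already-established estimate with index $j+1$ to each $\tx_k$, and noting that the bound applied to differences $\tx_k-\tx_\ell$ makes $\tcE^\rmc_{n,i}(t)$ Lipschitz in the $\dom(A^{n-1})$ norm, one passes to the limit. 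A general $x\in\dom(A^{n-1})$ is then reduced to this case by replacing $x$ with $Q_jx\in\dom(A^{n-1})\cap\Ker(Q_{j+1}-I_\bY)$ and sending $j\to\infty$, invoking Hypothesis $(Q_8)$ together with the $\dom(A^{n-1})$-closedness of $A^{n-1}$ to recover the bound for $x$ itself. The only delicate point of the whole argument is the tail estimate: without Hypothesis (G)(i) the factor $a^{-c_*t}$ could not be traded for $e^{(\sup g_*)t}$, and the tail would only decay as $e^{\eps t}$.
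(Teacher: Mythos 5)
Your main argument is correct and coincides with the paper's: you define $\tcE^\rmc_{n,i}(t)$ as the principal-value integral from Theorem~\ref{t4.18}(iii), split $\Lambda_*^\eps$ at $|\im\lambda|=a$, and estimate the middle arc by Lemma~\ref{l4.20}(i) and the tails by Lemma~\ref{l4.22}. The paper takes $a=a_{i,j}$ depending on $(i,j)$, chosen from the boundedness of $t\mapsto(\sup_{s\in\RR}g_*(s))t/\big(c_*(t-\beta_{i,j}-\vk_*)\big)$ on $[\beta_{i,j}+\vk_*+4/c_*,\infty)$, while you fix a single $a_0=\max\{\os_*,e^{-\sup g_*/c_*}\}$ and push the resulting $(i,j)$-dependent factor $a_0^{c_*(\beta_{i,j}+\vk_*)+3}e^{-\eps\beta_{i,j}}$ into $\tM^\rmc$; the two devices are interchangeable. (One small slip: the inequality $-c_*\ln a_0\leq\sup_{s\in\RR}g_*(s)$ is not \textquotedblleft by Hypothesis~(G)(i)\textquotedblright{} but is simply encoded in the definition of $a_0$; Hypothesis~(G) enters earlier, inside Lemma~\ref{l4.21}/\ref{l4.22}, to produce the tail bound.)

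The only place where you go beyond the paper — and where the argument fails — is the final sentence, where you try to extend \eqref{Frechet-error-cos2} from $\dom(A^{n-1})\cap\bX_\infty$ to all of $\dom(A^{n-1})$ by replacing $x$ with $Q_jx$ and sending $j\to\infty$. First, $\tcE^\rmc_{n,i}(t)$ is constructed as an element of $\cC\cL(\dom(A^{n-1})\cap\bX_\infty,\bY)$: $R_A^\infty(\lambda)$ only acts on $\bX_\infty$, so $\tcE^\rmc_{n,i}(t)x$ is not even defined for $x\notin\bX_\infty$. Second, both the admissible time range $t\geq\beta_{i,j}+\vk_*+4/c_*$ and the constant $\tM^\rmc(n,i,j,\eps)$ depend on $j$ through $\beta_{i,j}$, and Hypothesis~(H-ext)(iv) gives no uniformity as $j\to\infty$; moreover the commutators $\nK_{i,j}=A^iQ_j-Q_jA^i$ from Lemma~\ref{l4.2}(x) are bounded but need not converge to zero, so $Q_jx\to x$ in $\dom(A^{n-1})$ is not guaranteed. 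The paper's proof stops where the valid part of yours does: after replacing $j$ by $j+1$ and using the $\|\cdot\|_{\dom(A^{n-1})}$-density from Lemma~\ref{l4.2}(x), one obtains \eqref{Frechet-error-cos2} (with constants indexed by $j+1$) for $x\in\dom(A^{n-1})\cap\Ker(Q_j-I_\bY)$, hence for $x\in\dom(A^{n-1})\cap\bX_\infty$ with $j$-dependent constants — that is the intended reading of the final claim, not the removal of $\bX_\infty$.
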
	
\begin{proof}
Fix  $t\geq\beta_{i,j}+\vk_*+4/c_*$, $i,j\in\NN$ and $\eps\in(0,\teps_0)$. We define the operator-valued function $\tcE_{n,i,j}$ by solving equation \eqref{Frechet-Error-cos}. From Lemma~\ref{l4.11} we infer that $\tcE_{n,i,j}$ is well-defined. Next, we note that the function
\begin{equation}\label{t4.24.1}
t\to\frac{\big(\sup_{s\in\RR}g_*(s)\big)t}{c_*(t-\beta_{i,j}-\vk_*)}:[\beta_{i,j}+\vk_*+4/c_*,\infty)\to\RR\;\mbox{is bounded}.	
\end{equation}	
Hence, the exists $a_{i,j}>\os_*$ large enough such that
\begin{equation}\label{t4.24.2}
-c_*(t-\beta_{i,j}-\vk_*)\ln{a_{i,j}}<\big(\sup_{s\in\RR}g_*(s)\big)t\;\mbox{for any}\; t\geq\beta_{i,j}+\vk_*+4/c_*.
\end{equation}
From Lemma~\ref{l4.20}(i), Lemma~\ref{l4.22} and \eqref{t4.24.2} we obtain that
\begin{equation}\label{t4.24.3}
\|Q_i\tcE^\rmc_{n,i}(t)x\|\leq 	\tM^\rmc(n,i,j,\eps)e^{(\sup_{s\in\RR}g_*(s)+\eps)t}\|A^{n-1}x\|
\end{equation}	
for any $t\geq\beta_{i,j}+\vk_*+4/c_*$, $x\in\dom(A^n)\cap\Ker(Q_j-I_\bY)$, where the function $\tM^\rmc:\NN_{\max\{n_0,n_*\}}\times\NN^2\times(0,\eps_0)\to(0,\infty)$ is defined by
\begin{equation}\label{t4.24.4}
\tM^\rmc(n,i,j,\eps)=\tM_\mi^\rmc(n,i,j,a_{i,j},\eps)+\tM_\out^\rmc(n,i,j,\eps)	
\end{equation}
From \eqref{t4.24.3} we immediately infer \eqref{Frechet-error-cos2}. If, in addition, $Q_i\big(\dom(A^n)\big)\subseteq\dom(A^n)$ for any $i\in\NN$ from Lemma~\ref{l4.2}(x) we have $\dom(A^{n-1})\cap\Ker(Q_j-I_\bY)$ is contained in the closure of $\dom(A^n)\cap\Ker(Q_{j+1}-I_\bY)$ in the $\|\cdot\|_{\dom(A^{n-1})}$ norm. Replacing $j$ by $j+1$ in \eqref{Frechet-error-cos2}, the lemma follows since both sides of the inequality are continuous functions in the $\|\cdot\|_{\dom(A^{n-1})}$ norm.
\end{proof}
\begin{theorem}\label{t4.25}
Assume Hypotheses (H), (Q), (H-ext), (G) and, in addition,   $b:=\inf\limits_{s\in\RR}g_0(s)>-\infty$.
Then, there exists $\tM^\rmc:\NN_{\max\{n_0,n_*\}}\times\NN^2\times(0,\eps_0)\to(0,\infty)$  and $\tcE^\rmc_{n,i}:\RR\to\cC\cL(\dom(A^{n-1})\cap\bX_\infty,\bY)$ such that
\begin{align}\label{Frechet-Error-sin}
Q_iS(t)w&=\displaystyle\sum\limits_{k=1}^{m} \sum_{\na=1}^{N_k}p_{0,k,\na}(t)e^{\mu_k t}Q_i(A-\mu_k^2 I_\bX)^{\na-1}P_kw+\displaystyle\sum\limits_{\nu_\ell\ne0} \sum_{\na=1}^{\tN_\ell}\tp_{0,\ell,\na}(t)e^{\nu_\ell t}Q_i(\tA-\nu_\ell^2 I_\bY)^{\na-1}\tP_\ell w \nonumber\\
&+\chi_{\{\nu_\ell:\ell=1,\dots,p\}}(0)\Bigg(\sum_{j\na=1}^{\oN_\ell}\frac{t^{2\na-1}}{(2\na-1)!}Q_i\tA^{\na-1}\tP_\ell w+\sum_{\na=1}^{\oN_\ell}\frac{t^{2\na}}{(2\na)!}Q_i\tA^{\na-1}\oP_\ell w\Bigg)\nonumber\\
&+\chi_{[0,\infty)}(g_*(0))\displaystyle\sum\limits_{\na=0}^{n-1}\frac{t^{2\na+1}}{(2\na+1)!}Q_iA^\na w+
\tcE^\rms_{n,i}(t)w
\end{align}
\begin{equation}\label{Frechet-error-sin2}
\|Q_i\tcE^\rms_{n,i}(t)w\|\leq \tM^\rmc(n,i,j,\eps)e^{(\sup_{s\in\RR}g_*(s)+\eps)t}\|A^{n-1}w\|
\end{equation}	
for any $t\geq\beta_{i,j}+\vk_*+4/c_*$, $i,j\in\NN$, $x\in\bW_{n-1}\cap\Ker(Q_j-I_\bY)$, $\eps\in(0,\teps_0)$, $n\in\NN$, with $n\geq\max\{n_0,n_*\}$.
\end{theorem}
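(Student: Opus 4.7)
The plan is to follow exactly the strategy of the proof of Theorem~\ref{t4.24}, but applied to the sine representation in Theorem~\ref{t4.19}(iii) and relying on the sine-type Frechet estimates we have already assembled. Fix $t\geq\beta_{i,j}+\vk_*+4/c_*$, $i,j\in\NN$, $\eps\in(0,\teps_0)$, $n\geq\max\{n_0,n_*\}$ and $w\in\bW_{n-1}\cap\Ker(Q_j-I_\bY)$. I would define $\tcE^\rms_{n,i}(t)w$ implicitly by moving every explicit term in \eqref{Frechet-Error-sin} to the left-hand side and letting the residual be $\tcE^\rms_{n,i}(t)w$; by Lemma~\ref{l4.11} each of the finite rank terms is well defined, and by Theorem~\ref{t4.19}(iii) the resulting object coincides with
\[
\tcE^\rms_{n,i}(t)w=\frac{1}{2\pi\rmi}Q_iA\Bigg(\PV\int_{\Lambda_*^\eps}\frac{e^{\lambda t}}{\lambda^{2n}}Q_{i+1}R_A^\infty(\lambda)A^{n-1}w\,\rmd\lambda\Bigg).
\]

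Next I would split $\Lambda_*^\eps=\tGamma_{a,\mi}^\eps\cup\tGamma_{a,\out}^\eps$ for a parameter $a\geq\os_*$ that will be chosen depending on $(i,j)$. The middle piece is controlled directly by Lemma~\ref{l4.20}(ii), which yields a bound of the form
\[
\Big\|Q_iA\int_{\tGamma_{a,\mi}^\eps}\frac{e^{\lambda t}}{\lambda^{2n}}Q_{i+1}R_A^\infty(\lambda)A^{n-1}w\,\rmd\lambda\Big\|\leq\tM_\mi^\rms(n,i,j,a,\eps)\,e^{(\sup g_*+\eps)t}\|A^{n-1}w\|.
\]
The outer piece is controlled by Lemma~\ref{l4.23}, which gives an extra decay factor $a^{-(c_*(t-\beta_{i,j}-\vk_*)-3)}$ times $e^{\eps(t-\beta_{i,j})}$. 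The key observation, identical to \eqref{t4.24.1}--\eqref{t4.24.2}, is that the function $t\mapsto(\sup g_*)t/(c_*(t-\beta_{i,j}-\vk_*))$ is bounded on $[\beta_{i,j}+\vk_*+4/c_*,\infty)$, so there exists $a_{i,j}>\os_*$ such that
\[
-c_*(t-\beta_{i,j}-\vk_*)\ln a_{i,j}<\big(\sup_{s\in\RR}g_*(s)\big)t\qquad\text{for every }t\geq\beta_{i,j}+\vk_*+4/c_*.
\]
Inserting $a=a_{i,j}$ absorbs the algebraic factor into an exponential factor of the correct rate, so that the outer contribution is bounded by $\tM^\rms_\out(n,i,j)\,e^{(\sup g_*+\eps)t}\|A^{n-1}w\|$.

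Adding the two estimates and setting
\[
\tM^\rms(n,i,j,\eps):=\frac{1}{2\pi}\Big(\tM_\mi^\rms(n,i,j,a_{i,j},\eps)+\tM^\rms_\out(n,i,j)\Big)
\]
yields \eqref{Frechet-error-sin2}. The genuinely subtle point, and the only place where the sine case differs in substance from Theorem~\ref{t4.24}, is that the operator $A$ must be pulled inside each finite-$a$ integral before one applies Lemma~\ref{l4.20}(ii) and Lemma~\ref{l4.23}; this is exactly where Hypothesis $(Q_7)$ together with the closedness of $A$ and the identity $\Range(Q_{i+1})\subseteq\dom(A)\cap\Ker(Q_{i+2}-I_\bY)$ enter, mirroring the derivation of \eqref{l4.23.1}. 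Once the interchange of $Q_iA$ with the integral is justified on $\tGamma_{a,\mi}^\eps$ and $\tGamma_{a,\out}^\eps$ separately, passage to the limit $a\to\infty$ on $\tGamma_{a,\mi}^\eps$ uses Lemma~\ref{l4.16} and the principal-value convergence from Theorem~\ref{t4.19}(i), closing the argument.
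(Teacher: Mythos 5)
Your proof is correct and follows exactly the paper's one-line strategy: Theorem~\ref{t4.19}(iii) identifies the integral error term, Lemma~\ref{l4.20}(ii) and Lemma~\ref{l4.23} bound the middle and outer pieces respectively, and the choice of $a_{i,j}$ from \eqref{t4.24.2} absorbs the algebraic decay factor into the desired exponential rate. One small imprecision: the final mention of ``passage to the limit $a\to\infty$'' is unnecessary at the end of the argument --- once the principal-value integral exists by Theorem~\ref{t4.19}(i), you just fix $a=a_{i,j}$ and split the path, and the interchange of $Q_iA$ with the integral is already built into the statements of Lemma~\ref{l4.20}(ii) and Lemma~\ref{l4.23} rather than being an extra step to verify.
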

\begin{proof}
The theorem follows from Theorem~\ref{t4.19}, Lemma~\ref{l4.20}, Lemma~\ref{l4.23} and \eqref{t4.24.2}.
\end{proof}	
Next, we look for estimates similar to those obtained in Lemma~\ref{l4.22} and Lemma~\ref{l4.23} in the case when Hypothesis (G) is replaced by $g_*,g_*'\in L^\infty(\RR)$. First, we look for the estimate of integral term of the cosine function representation given in Theorem~\ref{t4.18}.
\begin{lemma}\label{l4.26}
Assume Hypotheses (H), (Q) and (H-ext) and, in addition,  $b:=\inf\limits_{s\in\RR}g_0(s)>-\infty$, $b_*=\inf_{s\in\RR}g_*(s)>-\infty$, $g_*'\in L^\infty(\RR)$ and $\displaystyle\int_1^\infty\frac{\tih_{i,j,1}(s)\rmd s}{s^{2n_*-1}}<\infty$. Then, there exists $L^\rmc_\out:\NN_{\max\{n_0,n_*\}}\times\NN^2\to(0,\infty)$ and $\ts_*>\oa_*$ such that	
\begin{equation}\label{Frechet-Est-cos-3}
\Bigg\|\int_{\tGamma_{a,\out}^\eps}\frac{e^{\lambda t}}{\lambda^{2n-1}}R_A^\infty(\lambda)A^nx\rmd\lambda\Bigg\|\leq L^\rmc_\out(n,i,j)	e^{(\sup_{s\in\RR}g_*(s)+\eps)t}\|A^nx\|	
\end{equation}
for any $t\geq 0$, $i,j\in\NN$, $a\geq\ts_*$, $x\in\dom(A^n)\cap\Ker(Q_j-I_\bY)$, $\eps\in(0,\teps_0)$, $n\in\NN$, with $n\geq\max\{n_0,n_*\}$.
\end{lemma}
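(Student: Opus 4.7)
The strategy is to mirror the argument of Lemma~\ref{l3.15}(i), replacing the role played there by the function $g_0$ with $g_*$, and replacing estimate \eqref{RA-h1-h2-est} on $\|R_A(\lambda)\|$ with the seminorm estimate \eqref{RA-ext-h1-h2-est} on $\|Q_iR_A^\infty(\lambda)Q_j\|$. The key fact is that on $\mathrm{Range}(\tGamma_{a,\out}^\eps)$ we are to the left of the curve $\re\lambda=g_0(\im\lambda)$, so Hypothesis (H-ext)(iii) applies provided $a$ is large enough to exceed $\max_{1\le\ell\le p}|\im\nu_\ell|$, and because $b_*>-\infty$, $g_*'\in L^\infty$, the paths $\tGamma_{a,\out}^\eps$ stay in a strip $\{b_*\le\re\lambda\le\sup g_*+\eps\}$.

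The first step is to invoke Lemma~\ref{l4.10}(iii) and Lemma~\ref{l4.10}(iv) to write, for $x\in\dom(A^n)\cap\Ker(Q_j-I_\bY)$ and using $A^nx=Q_jA^nx$ (which follows from the invariance established in Lemma~\ref{l4.10}(iii)),
\begin{equation*}
\frac{e^{\lambda t}}{\lambda^{2n-1}}Q_iR_A^\infty(\lambda)A^nx
=\frac{e^{\lambda t}}{\lambda^{2n-3}}Q_iR_A^\infty(\lambda)Q_jA^{n-1}x-\frac{e^{\lambda t}}{\lambda^{2n-1}}Q_iA^{n-1}x,
\end{equation*}
for $\lambda\in\Omega_*\setminus(\cM\cup\cN\cup\{0\})$. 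Splitting the integral along $\tGamma_{a,\out}^\eps$ accordingly reduces the estimate to bounding two one-variable integrals over $|s|\ge a$ along the parametrization $\lambda=g_*(s)+\eps+\rmi s$.

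The second step is to transfer the change-of-variables estimate of Remark~\ref{r3.14} from $g_0$ to $g_*$. Since $g_*,g_*'\in L^\infty(\RR)$, the same computation produces a constant $\ts_*>\oa_*$ (analogous to $\os_0$ in \eqref{r3.14.1}, now formed from $\|g_*\|_\infty$, $\|g_*'\|_\infty$, $\teps_0$) such that $s\mapsto s^2+(g_*(s)+\eps)^2$ is monotone on $[\ts_*,\infty)$ and on $(-\infty,-\ts_*]$, and for any piecewise continuous $h:[0,\infty)\to[0,\infty)$ and any $\eta>0$,
\begin{equation*}
\int_{|s|\ge \ts_*}\frac{h\big(\sqrt{s^2+(g_*(s)+\eps)^2}\big)\,\rmd s}{\big(s^2+(g_*(s)+\eps)^2\big)^\eta}\le 8\int_1^\infty\frac{h(\xi)\,\rmd\xi}{\xi^{2\eta}}.
\end{equation*}
We take $\ts_*$ to be at least this value together with $\max_{1\le\ell\le p}|\im\nu_\ell|+1$, which ensures \eqref{RA-ext-h1-h2-est} is applicable on $\tGamma_{a,\out}^\eps$.

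The third step is the actual estimation. For the first integral, use \eqref{RA-ext-h1-h2-est} and the boundedness of $\tih_{i,j,2}$ on the bounded interval $[b_*,\sup g_*+\eps_0]$ (coming from Hypothesis (H-ext)(iv)) together with $e^{(\re\lambda)t}\le e^{(\sup g_*+\eps)t}$ on the contour, to bound
\begin{equation*}
\Big\|\int_{\tGamma_{a,\out}^\eps}\frac{e^{\lambda t}}{\lambda^{2n-3}}Q_iR_A^\infty(\lambda)Q_jA^{n-1}x\,\rmd\lambda\Big\|
\le C_{i,j}(\|g_*'\|_\infty+1)e^{(\sup g_*+\eps)t}\|A^{n-1}x\|\int_{|s|\ge\ts_*}\frac{\tih_{i,j,1}(|\lambda(s)|)}{|\lambda(s)|^{2n-3}}\,\rmd s,
\end{equation*}
and then apply the change of variables above with $h=\tih_{i,j,1}$ and $2\eta=2n-3$ (valid since $n\ge n_*$, using the extra margin $s^{-2}$ from $1/\lambda^{2n-3}$ in passing from $2n-3$ to the integrability exponent $2n_*-1$; if $n=n_*$ one just keeps $2n-3=2n_*-3$ and uses finiteness of $\int_1^\infty \tih_{i,j,1}(\xi)\xi^{-(2n_*-1)}\,\rmd\xi$ after one factor of $\xi^2$). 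For the second integral a direct bound $|\lambda|\ge|s|\ge\ts_*\ge 1$ gives convergence since $2n-1\ge 2$. Combining, we obtain a constant $L^\rmc_\out(n,i,j)$ that depends on $n,i,j$ through $\tih_{i,j,1},\tih_{i,j,2},\beta_{i,j}$ and on $\|g_*'\|_\infty,\|g_*\|_\infty,q_*$, establishing \eqref{Frechet-Est-cos-3}.

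The only real obstacle is the adaptation of Remark~\ref{r3.14} to $g_*$: since we no longer demand $\sup g_*<\omega$ and we must work with a contour that can lie far to the left, we must verify that $\ts_*$ can be chosen independently of $\eps\in(0,\teps_0)$ so that the monotonicity of $s\mapsto s^2+(g_*(s)+\eps)^2$ and the change-of-variables bound hold uniformly in $\eps$. This is handled by taking the supremum of the relevant quantities over $\eps\in(0,\teps_0)$ in the definition of $\ts_*$, just as in \eqref{r3.14.1}.
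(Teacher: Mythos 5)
Your overall plan correctly identifies the template (Lemma~\ref{l3.15}(i)), the role of the change-of-variables estimate of Remark~\ref{r3.14} with $g_0$ replaced by $g_*$, the need to push the cutoff $\ts_*$ high enough, and the observation that $A^nx=Q_jA^nx$ lets you insert $Q_j$. However, there is a genuine gap in the estimation step, and it stems from a decision the paper does not make: you apply the reduction formula
\[
\frac{e^{\lambda t}}{\lambda^{2n-1}}Q_iR_A^\infty(\lambda)A^nx
=\frac{e^{\lambda t}}{\lambda^{2n-3}}Q_iR_A^\infty(\lambda)Q_jA^{n-1}x-\frac{e^{\lambda t}}{\lambda^{2n-1}}Q_iA^{n-1}x
\]
before estimating. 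After the reduction, the leading term carries $|\lambda|^{-(2n-3)}$, and your change of variables with $2\eta=2n-3$ produces $\int_1^\infty \tih_{i,j,1}(\xi)\,\xi^{-(2n-3)}\,\rmd\xi$. When $n=n_*$ this is $\int_1^\infty\tih_{i,j,1}(\xi)\,\xi^{-(2n_*-3)}\,\rmd\xi$, which is strictly weaker (larger integrand) than the assumed finiteness of $\int_1^\infty\tih_{i,j,1}(\xi)\,\xi^{-(2n_*-1)}\,\rmd\xi$ and may diverge. Your parenthetical remark about an ``extra margin $s^{-2}$'' and ``one factor of $\xi^2$'' runs in the wrong direction: multiplying by $\xi^2$ hurts integrability at infinity rather than restoring it, so this does not close the gap.

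The reduction is also unnecessary here and in fact proves a different (stronger, and unattainable under the stated hypotheses) bound: it would give a right-hand side proportional to $\|A^{n-1}x\|$, whereas \eqref{Frechet-Est-cos-3} only claims $\|A^nx\|$. The paper does not split; it writes $Q_iR_A^\infty(\lambda)A^nx=Q_iR_A^\infty(\lambda)Q_jA^nx$, pulls out $\|A^nx\|$ with $\|Q_iR_A^\infty(\lambda)Q_j\|\le\tih_{i,j,1}(|\lambda|)\tih_{i,j,2}(\re\lambda)$ from (H-ext)(iii), absorbs $\tih_{i,j,2}(g_*(s)+\eps)$ using the exponential weight hypothesis (H-ext)(iv) together with $b_*>-\infty$, and then applies the $g_*$-analogue of Remark~\ref{r3.14} with $2\eta=2n-1$, which is exactly the exponent covered by the assumed integrability $\int_1^\infty\tih_{i,j,1}(\xi)\,\xi^{-(2n_*-1)}\,\rmd\xi<\infty$ once $n\ge n_*$. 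The reduction formula is the right tool for Lemma~\ref{l4.22}, where Hypothesis~(G) supplies the extra logarithmic decay of $g_*$ that offsets the lost power of $|\lambda|$; under the bounded-$g_*$ hypotheses of Lemma~\ref{l4.26}, that trade is not available and the direct estimate is both necessary and sufficient.
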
	
\begin{proof}
Fix $t\geq 0$, $i,j\in\NN$, $a\geq\oa_*$, $x\in\dom(A^n)\cap\Ker(Q_j-I_\bY)$, $\eps\in(0,\teps_0)$. From Lemma~\ref{l4.10}(iii) we have $A^nx\in\Ker(Q_j-I_\bY)$, which implies that $A^nx=Q_jA^nx$. From \eqref{l4.21.1} and \eqref{l4.21.2} it follows that
\begin{align}\label{l4.26.1}
&\Big\|\int_{\tGamma_{a,\out}^\eps}\frac{e^{\lambda t}}{\lambda^{2n-1}}Q_iR_A^\infty(\lambda)A^nx\rmd\lambda\Big\|=\Big\|\int_{\tGamma_{a,\out}^\eps}\frac{e^{\lambda t}}{\lambda^{2n-1}}Q_iR_A^\infty(\lambda)Q_jA^nx\rmd\lambda\Big\|\nonumber\\
&\leq\|A^nx\|\int_{|s|\geq a}\frac{e^{(g_*(s)+\eps)t}}{|g_*(s)+\eps+\rmi s|^{2n-1}}\tih_{i,j,1}(|g_*(s)+\eps+\rmi s|)\tih_{i,j,2}(g_*(s)+\eps)|g_*'(s)+\rmi|\rmd s\nonumber\\
&\leq\oM_*(2n-1,i,j)e^{-\beta_{i,j}b_*}(\|g_*\|_\infty+1)e^{(\sup_{s\in\RR}g_*(s)+\eps)t}\int_{|s|\geq a}
\frac{\tih_{i,j,1}(|g_*(s)+\eps+\rmi s|)}{|g_*(s)+\eps+\rmi s|^{2n-1}}\rmd s.
\end{align}
Next, we introduce
\begin{equation}\label{l4.26.2}
\ts_*=\max\big\{2(\|g_*\|_\infty+\teps_0)\|g_*'\|_\infty,\|g_*\|_\infty+\teps_0\big\}+\oa_*
\end{equation}
Replacing $g_0$ by $g_*$ in Remark~\ref{r3.14} we infer that
\begin{equation}\label{l4.26.3}
\int_{|s|\geq a}
\frac{\tih_{i,j,1}(|g_*(s)+\eps+\rmi s|)}{|g_*(s)+\eps+\rmi s|^{2n-1}}\rmd s\leq 8\int_1^\infty\frac{h_{i,j,1}(s)\rmd s}{s^{2n-1}}.	
\end{equation}	
Let $L^\rmc_\out:\NN_{\max\{n_0,n_*\}}\times\NN^2\to(0,\infty)$ be the function defined by
\begin{equation}\label{l4.26.4}
L^\rmc_\out(n,i,j)=8\oM_*(2n-1,i,j)e^{-\beta_{i,j}b_*}(\|g_*\|_\infty+1)\int_1^\infty\frac{h_{i,j,1}(s)\rmd s}{s^{2n-1}}.		
\end{equation}	
The lemma follows shortly from \eqref{l4.26.1}, \eqref{l4.26.3} and \eqref{l4.26.4}.
\end{proof}
To prove an estimate similar to Lemma~\ref{l4.26} for the integral term of the sine function given in Theorem~\ref{t4.19} we need to prove an estimate similar to the one given in Lemma~\ref{l3.7}. Since the operator-valued function $R_A^\infty(\cdot)$ does not satisfy the Hilbert resolvent identity, we need to look for a different extension of the operator-valued function $R_A(\cdot)$.
We define
\begin{equation}\label{def-X-mu}
\bX_\mu=R(\mu^2,A)\bX_\infty,\;\mbox{for}\; \mu\in\CC_\omega^+.
\end{equation}
In what follows we denote by $\CC^{\rmu\rml}_a=\{\lambda\in\CC:|\im\lambda|>a\}$ and recall the definition of $\oa_*$ in \eqref{def-overline-a-star}. In the remaining part of this subsection we make the following assumption:

\noindent\textbf{Hypothesis (H-add).}  We assume that there exists $\tbX_\infty$, a subspace of $\bX$, $\gamma_*>\omega$, such that
\begin{enumerate}
\item[(i)] $\bX_\infty\subset\tbX_\infty$ and $\bX_\mu\subset\tbX_\infty$ whenever $\re\mu=\gamma_*$;
\item[(ii)] There exists $\ta_*>\oa_*$ such that the operator-valued function $R_A(\cdot)_{|\tbX_\infty}:\Omega_0\setminus\cM\to\cC\cL(\tbX_\infty,\bY)$ has  an analytic extension to $\Omega_*\cap\CC^{\rmu\rml}_{\ta_*}$ denoted $\tR_A^\infty(\cdot)$;
\item[(iii)] For any $i\in\NN$ there exist two piecewise continuous functions $\tih_{i,3},\tih_{i,4}:[0,\infty)\to[0,\infty)$, such that
\begin{equation}\label{tRA-ext-h3-h4-est}
\|Q_i\tR_A^\infty(\lambda)\|\leq \tih_{i,3}(|\lambda|)\tih_{i,4}(\re\lambda)\;\mbox{for any}\;\lambda\in\Omega_*\cap\CC^{\rmu\rml}_{\ta_*}\setminus\Omega_0.	
\end{equation}
\item[(iv)]
$\lim\limits_{s\to\infty}\frac{h_1(s)}{s^{2n_*-1}}=0$, where $n_*$ is defined in \eqref{limit-n-star}. Moreover, the function $\tih_{i,2}$ is bounded on any compact interval.
\end{enumerate}
\begin{remark}\label{r4.27}
Since $\Omega_*\cap\{\lambda\in\CC:\pm\im\lambda>\ta_*\}$ is an open and connected subset of $\CC$, from the uniqueness property of analytic extensions we conclude that
\begin{equation}\label{r4.27.1}
\tR_A^\infty(\lambda)_{|\bX_\infty}=R_A^\infty(\lambda)\;\mbox{for any}\;\lambda\in\Omega_*\cap\CC^{\rmu\rml}_{\ta_*}.	
\end{equation}	
From Lemma~\ref{l4.2} and \eqref{r4.27.1} it follows that
\begin{equation}\label{r4.27.2}
\tR_A^\infty(\lambda)Q_j=R_A^\infty(\lambda)Q_j\;\mbox{for any}\;\lambda\in\Omega_*\cap\CC^{\rmu\rml}_{\ta_*},\,j\in\NN.	
\end{equation}	
\end{remark}
\begin{lemma}\label{l4.28}
Assume Hypotheses (H), (Q), (H-ext) and (H-add). Then, for any $x\in\bX_\infty$, $\lambda\in\Omega_*\cap\CC^{\rmu\rml}_{\ta_*}$,  $\mu\in\CC$ with $\re\mu=\gamma_*$ the following assertions hold true:
\begin{enumerate}
\item[(i)] $R_A^\infty(\lambda)x-R_A(\lambda)x=(\mu^2-\lambda^2)\tR_A^\infty(\lambda)R_A(\mu)x$;
\item[(ii)] $\Range(AR_A(\mu)_{|\bX_\infty})\subseteq\tbX_\infty$;
\item[(iii)]	$\tA R_A^\infty(\lambda)x-AR_A(\lambda)x=(\mu^2-\lambda^2)\tR_A^\infty(\lambda)AR_A(\mu)x$.
\end{enumerate}	
\end{lemma}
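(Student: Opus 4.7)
The strategy for parts (i) and (iii) is the classical two-step recipe: establish the identity on the overlap region $\CC_\omega^+\cap\CC^{\rmu\rml}_{\ta_*}$, where everything reduces to the usual resolvent $R_A(\cdot)$ acting in $\bX$, and then promote the identity to the full set $\Omega_*\cap\CC^{\rmu\rml}_{\ta_*}$ via an identity-principle argument for Frechet-valued holomorphic functions (the sort collected in Appendix~\ref{Appendix B} and already used, e.g., in the proofs of Lemma~\ref{l3.2} and Lemma~\ref{l4.10}). The choice $\ta_*>\oa_*$ from \eqref{def-overline-a-star}, combined with Hypothesis (H-ext)(ii), guarantees that no finitely meromorphic pole $\mu_k\in\cM$ or $\nu_\ell\in\cN$ lies in $\CC^{\rmu\rml}_{\ta_*}$, so both $R_A^\infty(\cdot)$ and $\tR_A^\infty(\cdot)$ are genuinely holomorphic on $\Omega_*\cap\CC^{\rmu\rml}_{\ta_*}$.

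For (i), fix $x\in\bX_\infty$ and $\mu\in\CC$ with $\re\mu=\gamma_*>\omega$. The classical resolvent identity for $A$ in $\bX$ yields
\[
R_A(\lambda)x-R_A(\mu)x=(\mu^2-\lambda^2)R_A(\lambda)R_A(\mu)x
\]
for every $\lambda\in\CC_\omega^+$. Since $R_A(\mu)x\in\bX_\mu\subseteq\tbX_\infty$ by Hypothesis (H-add)(i), and since $\tR_A^\infty(\lambda)$ coincides with $R_A(\lambda)_{|\tbX_\infty}$ on $\CC_\omega^+\cap\CC^{\rmu\rml}_{\ta_*}$, the right-hand side becomes $(\mu^2-\lambda^2)\tR_A^\infty(\lambda)R_A(\mu)x$; likewise $R_A(\lambda)x=R_A^\infty(\lambda)x$ on $\CC_\omega^+$ because $x\in\bX_\infty$. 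Both sides of the asserted identity are thus $\bY$-valued holomorphic functions of $\lambda$ on $\Omega_*\cap\CC^{\rmu\rml}_{\ta_*}$; the latter set splits into the upper and lower connected components (determined by the sign of $\im\lambda$), each of which meets $\CC_\omega^+$, so the identity principle propagates equality from the overlap to all of $\Omega_*\cap\CC^{\rmu\rml}_{\ta_*}$. For (ii), the elementary algebraic identity $AR_A(\mu)=\mu^2R_A(\mu)-I_\bX$ gives, for $x\in\bX_\infty$,
\[
AR_A(\mu)x=\mu^2R_A(\mu)x-x\in\tbX_\infty,
\]
since $R_A(\mu)x\in\bX_\mu\subseteq\tbX_\infty$ and $x\in\bX_\infty\subseteq\tbX_\infty$ by Hypothesis (H-add)(i), and $\tbX_\infty$ is a linear subspace of $\bX$.

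For (iii), repeat the computation of (i) but with $R_A(\mu)x$ replaced by $AR_A(\mu)x\in\tbX_\infty$ (admissible thanks to (ii)): manipulating the classical resolvent identity and using $AR_A(\mu)=\mu^2R_A(\mu)-I_\bX$ produces
\[
AR_A(\lambda)x-AR_A(\mu)x=(\mu^2-\lambda^2)R_A(\lambda)AR_A(\mu)x
\]
for $\lambda\in\CC_\omega^+$. On the overlap $\CC_\omega^+\cap\CC^{\rmu\rml}_{\ta_*}$ the left-hand side coincides with $\tA R_A^\infty(\lambda)x-AR_A(\mu)x$ (by Lemma~\ref{l4.10}(ii), which identifies $\tA R_A^\infty(\lambda)x$ with $\lambda^2R_A^\infty(\lambda)x-x=AR_A(\lambda)x$ there), while the right-hand side coincides with $(\mu^2-\lambda^2)\tR_A^\infty(\lambda)AR_A(\mu)x$ using (ii). Both sides extend holomorphically to $\Omega_*\cap\CC^{\rmu\rml}_{\ta_*}$—the holomorphy of $\lambda\mapsto\tA R_A^\infty(\lambda)x$ is exactly the content of Lemma~\ref{l4.10}(ii), while the right-hand side is holomorphic by Hypothesis (H-add)(ii)—and the identity principle finishes the job just as in (i).

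The only real technical subtlety is the vector-valued identity principle: one needs a version applicable to $\bY$-valued holomorphic functions on a connected open subset of $\CC$, which is precisely the sort of result gathered in Appendix~\ref{Appendix B} and relied on in Lemma~\ref{l3.2} and Lemma~\ref{l4.10}. Beyond this, the proof is mostly bookkeeping: the substantive inputs are the classical resolvent identity for $A$, Hypothesis (H-add)(i) securing the inclusions $\bX_\infty,\bX_\mu\subseteq\tbX_\infty$, and the $\tA$-analyticity conclusions of Lemma~\ref{l4.10}(ii). The algebraic identity $AR_A(\mu)=\mu^2R_A(\mu)-I_\bX$ does the structural work for both (ii) and (iii).
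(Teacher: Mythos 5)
Your proof is correct and, for (i) and (ii), takes essentially the same route as the paper: write the Hilbert resolvent identity on a region where both sides are genuinely $\bX$-valued, use Hypothesis (H-add)(i) to place $R_A(\mu)x$ and $AR_A(\mu)x$ in $\tbX_\infty$, and promote the identity to $\Omega_*\cap\CC^{\rmu\rml}_{\ta_*}$ by uniqueness of analytic continuation. For (iii) your route is a genuine variant. You re-derive, on $\CC_\omega^+$, the relation $AR_A(\lambda)x - AR_A(\mu)x = (\mu^2-\lambda^2)R_A(\lambda)AR_A(\mu)x$ and run a second analytic-continuation argument, invoking Lemma~\ref{l4.10}(ii) to establish the holomorphy of $\lambda\mapsto\tA R_A^\infty(\lambda)x$. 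The paper's proof of (iii) avoids any further continuation: it works directly on $\Omega_*\cap\CC^{\rmu\rml}_{\ta_*}$, writing $\tA R_A^\infty(\lambda)x - AR_A(\mu)x = \lambda^2 R_A^\infty(\lambda)x - \mu^2 R_A(\mu)x = \mu^2\big(R_A^\infty(\lambda)x - R_A(\mu)x\big) + (\lambda^2-\mu^2)R_A^\infty(\lambda)x$, then substituting (i) together with $R_A^\infty(\lambda)x = \tR_A^\infty(\lambda)x$ from Remark~\ref{r4.27} and factoring out $\tR_A^\infty(\lambda)$. The paper's approach is purely algebraic once (i) is in hand; yours duplicates the continuation machinery but is equally valid and arguably more self-contained.

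One remark on the statement itself: as printed, the left-hand sides of (i) and (iii) involve $R_A(\lambda)$ and $AR_A(\lambda)$, which are not defined for $\lambda\in\Omega_*\setminus\Omega_0$; the intended reading is $R_A(\mu)$ and $AR_A(\mu)$, and that is what both your argument and the paper's own proof actually establish. You silently repaired this in passing (your rewritten left-hand side is $R_A^\infty(\lambda)x - R_A(\mu)x$), which is the right move, but the mismatch between the lemma as stated and the identity you prove deserved an explicit flag.
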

\begin{proof}
Fix $x\in\bX_\infty$ and $\mu\in\CC$ such that $\re\mu=\gamma_*$. From the resolvent equation we know that
\begin{equation}\label{l4.28.1}
R_A(\lambda)x-R_A(\mu)x=(\mu^2-\lambda^2)R_A(\lambda)R_A(\mu)x\;\mbox{for any}\;\lambda\in\Omega_0\setminus\cM.
\end{equation}
Using the uniqueness property of analytic extensions to open connected sets, assertion (i) follows from Hypotheses (H-ext) and (H-add) and \eqref{l4.28.1}.

From Lemma~\ref{l3.2}(ii), Hypothesis (H-add)(i) we immediately infer that
\begin{equation}\label{l4.28.2}
AR_A(\mu)x=\mu^2R_A(\mu)x-x\in\bX_\mu-\bX_\infty\subseteq\tbX_\infty,
\end{equation}
proving assertion (ii). From (i), Lemma~\ref{l3.2}(ii), Lemma~\ref{l4.10}(ii) and \eqref{r4.27.1} it follows that
\begin{align}\label{l4.28.3}
\tA R_A^\infty(\lambda)x&-AR_A(\mu)x=\lambda^2R_A^\infty(\lambda)x-\mu^2R_A(\mu)x=\mu^2\big(R_A^\infty(\lambda)x-R_A(\mu)x\big)+(\lambda^2-\mu^2)R_A^\infty(\lambda)x\nonumber\\&=\mu^2(\mu^2-\lambda^2)\tR_A^\infty(\lambda)x+(\lambda^2-\mu^2)\tR_A^\infty(\lambda)x=(\mu^2-\lambda^2)\tR_A^\infty(\lambda)\big(\mu^2R_A(\mu)x-x\big)\nonumber\\
&=(\mu^2-\lambda^2)\tR_A^\infty(\lambda)AR_A(\mu)x,
\end{align}
proving assertion (iii).
\end{proof}
In the next lemma we prove an estimate crucial in obtaining the error estimate for the sine function.
\begin{lemma}\label{l4.29}
Assume Hypotheses (H), (Q), (H-ext) and (H-add) and $b_*=\inf_{s\in\RR}g_*(s)>-\infty$. Then,
\begin{equation}\label{est-Q-tildeA-R-infty}
\|Q_i\tA R_A^\infty(\lambda)w\|\leq\frac{M_1}{\gamma_*-\omega}\Bigl(q_*+(\gamma_*-b_*)\bigl(2|\lambda|+\gamma_*-b_*\bigr)\tih_{i,3}(|\lambda|)h_{i,4}(\re\lambda)\Bigr)\|w\|_\bW
\end{equation}
for any $\lambda\in\Omega_*\cap\CC_{\ta_*}^{\rmu\rml}\setminus\Omega_0.$, $w\in\bW$, $i\in\NN$. We recall the definition of $M_1$ in Lemma~\ref{l2.3} and of $q_*$ in \eqref{l4.2.2}.		
\end{lemma}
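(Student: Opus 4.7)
The strategy is to mimic the proof of Lemma~\ref{l3.7}, replacing the classical resolvent identity used there by the ``twisted'' resolvent identity for $\tR_A^\infty$ recorded in Lemma~\ref{l4.28}(iii). Fix $\lambda\in\Omega_*\cap\CC_{\ta_*}^{\rmu\rml}\setminus\Omega_0$, $i\in\NN$, and $w\in\bW$ (which we may take in $\bX_\infty$ so that $R_A^\infty(\lambda)w$ is defined). I would set
\begin{equation*}
\mu:=\gamma_*+\rmi\im\lambda,
\end{equation*}
so that $\re\mu=\gamma_*>\omega$ and $\mu\in\CC_\omega^+$; in particular $R_A(\mu)\in\cB(\bX)$. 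Since $w\in\bX_\infty$, Lemma~\ref{l4.28}(iii) then yields the key identity
\begin{equation*}
\tA R_A^\infty(\lambda)w=AR_A(\mu)w+(\mu^2-\lambda^2)\,\tR_A^\infty(\lambda)\bigl(AR_A(\mu)w\bigr),
\end{equation*}
where the application of $\tR_A^\infty(\lambda)$ to $AR_A(\mu)w$ makes sense by Lemma~\ref{l4.28}(ii), which guarantees $AR_A(\mu)w\in\tbX_\infty$.

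Next, I would apply $Q_i$ to both sides of this identity and use the triangle inequality. For the first summand, since $w\in\bW$ and $\re\mu=\gamma_*>\omega$, Remark~\ref{r2.6}(iii) (specifically the bound $\|AR_A(\mu)_{|\bW}\|_{\cB(\bW,\bX)}\leq M_1/(\re\mu-\omega)$) combined with $\|Q_i\|_{\cB(\bX)}\leq q_*$ (see \eqref{l4.2.2}) gives
\begin{equation*}
\|Q_iAR_A(\mu)w\|\leq\frac{q_*M_1}{\gamma_*-\omega}\|w\|_\bW.
\end{equation*}
For the second summand, Hypothesis (H-add)(iii) produces
\begin{equation*}
\|Q_i\tR_A^\infty(\lambda)AR_A(\mu)w\|\leq\tih_{i,3}(|\lambda|)\tih_{i,4}(\re\lambda)\|AR_A(\mu)w\|,
\end{equation*}
after which $\|AR_A(\mu)w\|$ is again controlled by $M_1(\gamma_*-\omega)^{-1}\|w\|_\bW$ via Remark~\ref{r2.6}(iii).

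It remains to control the scalar factor $|\mu^2-\lambda^2|=|\mu-\lambda|\cdot|\mu+\lambda|$. By construction $\im\mu=\im\lambda$, so $\mu-\lambda=\gamma_*-\re\lambda$ is real; since $\lambda\in\Omega_*$ forces $\re\lambda>g_*(\im\lambda)\geq b_*$, one obtains $|\mu-\lambda|\leq\gamma_*-b_*$, and then
\begin{equation*}
|\mu+\lambda|=|(\mu-\lambda)+2\lambda|\leq|\mu-\lambda|+2|\lambda|\leq(\gamma_*-b_*)+2|\lambda|.
\end{equation*}
Substituting these back and summing the two contributions produces exactly the claimed bound
\begin{equation*}
\|Q_i\tA R_A^\infty(\lambda)w\|\leq\frac{M_1}{\gamma_*-\omega}\Bigl(q_*+(\gamma_*-b_*)\bigl(2|\lambda|+\gamma_*-b_*\bigr)\tih_{i,3}(|\lambda|)\tih_{i,4}(\re\lambda)\Bigr)\|w\|_\bW.
\end{equation*}
The main (and essentially only) obstacle is a bookkeeping one: one must keep straight that $R_A(\mu)$ is the standard resolvent acting on $\bX$, that $AR_A(\mu)w$ lands in $\tbX_\infty$ (Lemma~\ref{l4.28}(ii)) and not merely in $\bX$, and that only on $\tbX_\infty$ does the extension $\tR_A^\infty(\lambda)$ act. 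Once this choreography is in place, the arithmetic of $|\mu^2-\lambda^2|$ is forced by the shape of the right-hand side.
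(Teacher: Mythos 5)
Your proposal is correct and essentially reproduces the paper's argument: both fix $\mu=\gamma_*+\rmi\im\lambda$, invoke Lemma~\ref{l4.28}(iii) as the ``twisted'' resolvent identity, split $\mu^2-\lambda^2=(\mu-\lambda)(\mu+\lambda)$ with $\mu-\lambda=\gamma_*-\re\lambda\le\gamma_*-b_*$ and $|\mu+\lambda|\le 2|\lambda|+\gamma_*-b_*$, and close with the bounds from Remark~\ref{r2.6}(iii), \eqref{l4.2.2} and Hypothesis (H-add)(iii). Your explicit remark that $AR_A(\mu)w\in\tbX_\infty$ (via Lemma~\ref{l4.28}(ii)) is a useful bookkeeping point that the paper leaves implicit, and you correctly read through the typo in the statement of Lemma~\ref{l4.28}(iii), where $AR_A(\lambda)x$ should be $AR_A(\mu)x$ as the proof there shows.
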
	
\begin{proof}
Fix $\lambda\in\Omega_*\cap\CC_{\ta_*}^{\rmu\rml}\setminus\Omega_0$, $w\in\bW$, $i\in\NN$ and let $\mu=\gamma_*+\rmi\im\lambda$. Since $\re\lambda\in[b_*,\omega]$ one can readily check that
\begin{equation}\label{l4.29.1}
|\lambda+\mu|=|2\lambda+\gamma_*-\re\lambda|\leq 2|\lambda|+\gamma_*-\re\lambda\leq 2|\lambda|+\gamma_*-b_*.
\end{equation}
From Lemma~\ref{l4.28}(iii), \eqref{r2.6.3}, \eqref{l4.2.2} and \eqref{l4.29.1} we conclude that
\begin{align}\label{l4.29.2}
\|Q_i\tA R_A^\infty(\lambda)w\|&\leq\|Q_iAR_A(\mu)w\|+(\re\mu-\re\lambda)|\lambda+\mu|\,\|Q_i\tA \tR_A^\infty(\lambda)AR_A(\mu)w\|\nonumber\\
&\leq\frac{M_1q_*}{\re\mu-\omega}\|w\|_{\bW}+(\gamma_*-b_*)(2|\lambda|+\gamma_*-b_*)\tih_{i,3}(|\lambda|)\tih_{i,4}(\re\lambda)\|AR_A(\mu)w\|\nonumber\\
&\leq\frac{M_1q_*}{\re\mu-\omega}\|w\|_{\bW}+(\gamma_*-b_*)(2|\lambda|+\gamma_*-b_*)\tih_{i,3}(|\lambda|)\tih_{i,4}(\re\lambda)\frac{M_1}{\re\mu-\omega}\|w\|_{\bW}\nonumber\\
&\leq\frac{M_1}{\gamma_*-\omega}\Bigl(q_*+(\gamma_*-b_*)\bigl(2|\lambda|+\gamma_*-b_*\bigr)\tih_{i,3}(|\lambda|)h_{i,4}(\re\lambda)\Bigr)\|w\|_\bW,
\end{align}
proving the lemma. 		
\end{proof}
\begin{lemma}\label{l4.30}
Assume Hypotheses (H), (Q), (H-ext) and (H-add),  $b:=\inf\limits_{s\in\RR}g_0(s)>-\infty$,  $b_*=\inf_{s\in\RR}g_*(s)>-\infty$, $g_*'\in L^\infty(\RR)$ and $\displaystyle\int_1^\infty\frac{\tih_{i,3}(s)\rmd s}{s^{2n_*-1}}<\infty$. Then, there exists $L^\rms_\out:\NN_{\max\{n_0,n_*\}}\times\NN\to(0,\infty)$ such that	
\begin{equation}\label{Frechet-Est-sin-3}
\Bigg\|Q_iA\int_{\tGamma_{a,\out}^\eps}\frac{e^{\lambda t}}{\lambda^{2n}}R_A^\infty(\lambda)A^nx\rmd\lambda\Bigg\|\leq L^\rms_\out(n,i)	e^{(\sup_{s\in\RR}g_*(s)+\eps)t}\|A^{n-1}w\|	
\end{equation}
for any $t\geq 0$, $i,j\in\NN$, $a\geq\ts_*$, $w\in\bW_{n-1}\cap\bX_\infty$, $\eps\in(0,\teps_0)$, $n\in\NN$, with $n\geq\max\{n_0,n_*\}$.
\end{lemma}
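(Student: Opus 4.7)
The strategy parallels that of Lemma~\ref{l4.26}, except that the bound controlling $\|Q_iR_A^\infty(\lambda)Q_j\|$ is replaced by Lemma~\ref{l4.29}, which controls $\|Q_i\tA R_A^\infty(\lambda)w\|$ for $w\in\bW$. Since $w\in\bW_{n-1}\cap\bX_\infty$, Lemma~\ref{l4.10}(iii) gives $A^{n-1}w\in\bW\cap\bX_\infty$, so there exists $j_0\in\NN$ with $A^{n-1}w\in\Ker(Q_{j_0}-I_\bY)$. The plan has three steps: (a) commute $Q_iA$ past the integral to produce an integrand of the form $Q_i\tA R_A^\infty(\lambda)A^{n-1}w$; (b) apply Lemma~\ref{l4.29} pointwise; (c) integrate the resulting scalar bound along $\tGamma_{a,\out}^\eps$ via the substitution technique of Remark~\ref{r3.14} adapted to $g_*$ in place of $g_0$.

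For step (a), Lemma~\ref{l4.10}(ii) yields $R_A^\infty(\lambda)A^{n-1}w\in\dom(\tA)$ for $\lambda\in\Omega_*\cap\CC_{\ta_*}^{\rmu\rml}\setminus\Omega_0$, and Lemma~\ref{l4.8}(i) ensures $Q_{i+1}R_A^\infty(\lambda)A^{n-1}w\in\dom(A)$. Writing $A^{n-1}w=Q_{j_0}A^{n-1}w$ and proceeding exactly as in Lemma~\ref{l4.23} --- combining closedness of $A$ and \cite[Proposition 1.1.7]{ABHN} with Hypothesis~($Q_7$) and Lemma~\ref{l4.8}(ii) --- gives the identity
\begin{equation*}
Q_iA\int_{\tGamma_{a,\out}^\eps}\frac{e^{\lambda t}}{\lambda^{2n}}R_A^\infty(\lambda)A^{n-1}w\,d\lambda=\int_{\tGamma_{a,\out}^\eps}\frac{e^{\lambda t}}{\lambda^{2n}}Q_i\tA R_A^\infty(\lambda)A^{n-1}w\,d\lambda.
\end{equation*}

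For step (b), Lemma~\ref{l4.29} applied to $A^{n-1}w\in\bW$ yields, for a constant $C_*>0$ depending only on $M_1,\gamma_*,\omega,b_*,q_*$,
\begin{equation*}
\|Q_i\tA R_A^\infty(\lambda)A^{n-1}w\|\leq C_*\bigl(1+|\lambda|\tih_{i,3}(|\lambda|)\tih_{i,4}(\re\lambda)\bigr)\|A^{n-1}w\|_\bW.
\end{equation*}
Parametrizing $\lambda=g_*(s)+\eps+\rmi s$, factoring out $e^{(\sup_{s\in\RR}g_*(s)+\eps)t}$, and using $|g_*'(s)+\rmi|\leq\|g_*'\|_\infty+1$ together with boundedness of $\tih_{i,4}$ on the compact interval $[b_*,\omega]$ (Hypothesis~(H-add)(iv)), step (c) reduces to bounding $\int_{|s|\geq a}|\lambda(s)|^{-2n}\,ds$ (finite since $|\lambda(s)|\geq|s|\geq\ts_*$) and $\int_{|s|\geq a}\tih_{i,3}(|\lambda(s)|)\,|\lambda(s)|^{-(2n-1)}\,ds$. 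Choosing $\ts_*>\oa_*$ large enough that $s\mapsto s^2+(g_*(s)+\eps)^2$ is monotone on $[\ts_*,\infty)$ and on $(-\infty,-\ts_*]$ --- possible since $g_*,g_*'\in L^\infty$, exactly as in Remark~\ref{r3.14} with $g_0$ replaced by $g_*$ --- the substitution $\xi=\sqrt{s^2+(g_*(s)+\eps)^2}$ majorizes the second integral by $8\int_1^\infty\tih_{i,3}(\xi)\xi^{-(2n-1)}\,d\xi$, which is finite by hypothesis (and monotonically controlled for $n\geq n_*$). Collecting the resulting constant as $L^\rms_\out(n,i)$ yields \eqref{Frechet-Est-sin-3}. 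The main obstacle is the commutation step: it requires the careful interplay of $(Q_7)$, closedness of $A$, and Lemmas~\ref{l4.8} and~\ref{l4.10}, with the role of $A^{n-1}w\in\bX_\infty$ being precisely to supply the index $j_0$ that enables transplanting the argument of Lemma~\ref{l4.23}; the remaining analytic bookkeeping is a routine adaptation of Remark~\ref{r3.14}.
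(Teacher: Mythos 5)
Your proof is correct and follows the same route as the paper's: both apply Lemma~\ref{l4.29} to $A^{n-1}w\in\bW$ to get the pointwise bound on $\|Q_i\tA R_A^\infty(\lambda)A^{n-1}w\|$ and then integrate along $\tGamma_{a,\out}^\eps$ via the monotone-substitution estimate of Remark~\ref{r3.14} transplanted to $g_*$. Your step (a) spells out the commutation of $Q_iA$ past the integral (via $(Q_7)$, Lemma~\ref{l4.8}, and \cite[Proposition~1.1.7]{ABHN}) more explicitly than the paper does, but this is a more careful presentation of the same argument rather than a different approach.
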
	
\begin{proof} Fix $t\geq0$, $w\in\bW_{n-1}\cap\bX_\infty$, $i\in\NN$, $\eps\in(0,\teps_0)$. From \eqref{def-bW-n} we have $A^{n-1}w\in\bW$. Therefore, from Lemma~\ref{l4.29} we infer that
\begin{equation}\label{l4.30.1}
\|Q_i\tA R_A^\infty(\lambda)A^{n-1}w\|\leq \Big(\tc_1+\tc_2|\lambda|\,\tih_{i,3}(|\lambda|)\tih_{i,4}(\re\lambda)\Bigr)\|A^{n-1}w\|_\bW
\end{equation}	
for any $\lambda\in\Omega_*\cap\CC^{\rmu\rml}_{\ta_*}\setminus\Omega_0$, where $\tc_1=\frac{M_1}{\gamma_*-\omega}$ and $\tc_2=M_1(2+\gamma_*-b_*)$. From  Hypothesis (H-add), \eqref{r3.14.6}, \eqref{def-tGamma-out}, \eqref{l4.26.2} and \eqref{l4.30.1} it follows that
\begin{align}\label{l4.30.2}	
\Big\|&Q_i\tA\int_{\Gamma_{a,\mi}^\eps}\frac{e^{\lambda t}}{\lambda^{2n}} R_A^\infty(\lambda)A^{n-1}w\rmd\lambda\Big\|=\Big\|\int_{\Gamma_{a,\mi}^\eps}\frac{e^{\lambda t}}{\lambda^{2n}}Q_i\tA R_A^\infty(\lambda)A^{n-1}w\rmd\lambda\Big\|\nonumber\\
&\leq\int_{|s|\geq\os_0}\frac{|e^{(g_*(s)+\eps+\rmi s)t}|}{|g_*(s)+\eps+\rmi s|^{2n}}\big\|Q_i\tA R_A^\infty\big(g_*(s)+\eps+\rmi s\big)A^{n-1}w\big\||g_*'(s)+\rmi|\rmd s\nonumber\\
&\leq\|A^{n-1}w\|\big(\|g_*'\|_\infty+1\big)\Bigg(\tc_2\int_{|s|\geq\ts_*}\frac{e^{(g_*(s)+\eps)t}}{|g_*(s)+\eps+\rmi s|^{2n-1}}\tih_{i,3}\big(|g_*(s)+\eps+\rmi s|\big)\tih_{i,4}\big(g_*(s)+\eps\big)\rmd s\nonumber\\
&\qquad\qquad\qquad\qquad\qquad+\tc_1\int_{|s|\geq\ts_*}\frac{e^{(g_*(s)+\eps)t}}{|g_*(s)+\eps+\rmi s|^{2n}}\Bigg)\nonumber\\
&\leq\|A^{n-1}w\|\big(\|g_*'\|_\infty+1\big)e^{(\sup_{s\in\RR}g_*(s)+\eps)t}\Bigg(\tc_2
\Big(\sup_{b_*\leq s\leq\omega}\tih_{i,4}(s)\Big)\int_{|s|\geq\os_0}\frac{\tih_{i,3}\big(\sqrt{s^2+(g_0(s)+\eps)^2}\big)}{\big(g_0(s)+\eps+\rmi s\big)^{\frac{2n-1}{2}}}\rmd s\nonumber\\
&\qquad\qquad\qquad\qquad\qquad+2c_1\int_{|s|\geq1}\frac{\rmd s}{|s|^{2n}}\Bigg)\leq L^\rms_\out(n,i)e^{(\sup_{s\in\RR}g_*(s)+\eps)t}\|A^{n-1}w\|,	
\end{align}
where $L^\rms_\out:\NN_{\max\{n_0,n_*\}}\times\NN\to(0,\infty)$ is defined by
\begin{equation}\label{l4.30.3}
L_\out^\rms(n,i)=\big(\|g_*'\|_\infty+1\big)\Bigg(8\tc_2
\Big(\sup_{b_*\leq s\leq\omega}\tih_{i,4}(s)\Big)\int_{1}^{\infty}\frac{\tih_{i,3}(\xi)\rmd\xi}{\xi^{2n-1}}
+\frac{2\tc_1}{2n-1}\Bigg).
\end{equation}	
The lemma follows shortly from \eqref{l4.30.2} and \eqref{l4.30.3}.
\end{proof}
\begin{theorem}\label{t4.31}
Assume Hypotheses (H), (Q) and (H-ext),  $b:=\inf\limits_{s\in\RR}g_0(s)>-\infty$, $b_*=\inf_{s\in\RR}g_*(s)>-\infty$, $g_*'\in L^\infty(\RR)$ and $\displaystyle\int_1^\infty\frac{\tih_{i,j,1}(s)\rmd s}{s^{2n_*-1}}<\infty$. Then, there exists $L^\rmc:\NN_{\max\{n_0,n_*\}}\times\NN^2\times(0,\teps)\to(0,\infty)$ and $\tcE^\rmc_n:\RR\to\cC\cL(\dom(A^{n-1})\cap\bX_\infty,\bY)$  such that	
\begin{align}\label{Frechet-Error-cos3}
C(t)x&=\displaystyle\sum\limits_{k=1}^{m} \sum_{\na=1}^{N_k}p_{1,k,\na}(t)e^{\mu_k t}(A-\mu_k^2 I_\bX)^{\na-1}P_kx+\displaystyle\sum\limits_{\nu_\ell\ne0} \sum_{\na=1}^{\tN_\ell}\tp_{1,\ell,\na}(t)e^{\nu_\ell t}(\tA-\nu_\ell^2 I_\bY)^{\na-1}\tP_\ell x \nonumber\\
&+\chi_{\{\nu_\ell:\ell=1,\dots,p\}}(0)\Bigg(\sum_{\na=1}^{\oN_\ell}\frac{t^{2\na-2}}{(2\na-2)!}\tA^{\na-1}\tP_\ell x+\sum_{\na=1}^{\oN_\ell}\frac{t^{2\na-1}}{(2\na-1)!}\tA^{\na-1}\oP_\ell x\Bigg)\nonumber\\
&+\chi_{[0,\infty)}(g_*(0))\displaystyle\sum\limits_{\na=0}^{n-1}\frac{t^{2\na}}{(2\na)!}A^\na x+\hcE^\rmc_n(t)x
\end{align}
\begin{equation}\label{Frechet-error-cos4}
\|Q_i\tcE^\rmc_n(t)x\|\leq 	L^\rmc(n,i,j,\eps)e^{(\sup_{s\in\RR}g_*(s)+\eps)t}(\|A^{n-1}x\|+\|A^nx\|)
\end{equation}	
for any $t\geq0$, $i,j\in\NN$, $x\in\dom(A^n)\cap\Ker(Q_j-I_\bY)$, $\eps\in(0,\teps_0)$, $n\in\NN$, with $n\geq\max\{n_0,n_*\}$.
\end{theorem}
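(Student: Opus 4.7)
The plan is to start from the integral representation of $C(t)x$ provided by Theorem~\ref{t4.18}(iv), identify the integral term with $\hcE^\rmc_n(t)x$, and then estimate it in the seminorm $\|Q_i\cdot\|$ by splitting the contour of integration into a compact middle piece and a tail piece, to which we apply Lemmas~\ref{l4.20}(i) and~\ref{l4.26} respectively.

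First, since the hypotheses include $b=\inf g_0>-\infty$ and $b_*=\inf g_*>-\infty$, and since $\dom(A^n)\cap\Ker(Q_j-I_\bY)\subseteq\dom(A^n)\cap\bX_\infty$ by \eqref{def-bX-infty}, Theorem~\ref{t4.18}(iv) applies. It yields precisely the first four terms appearing in the right-hand side of \eqref{Frechet-Error-cos3}, plus the residual
\begin{equation*}
\hcE^\rmc_n(t)x := \frac{1}{2\pi\rmi}\PV\int_{\Lambda_*^\eps}\frac{e^{\lambda t}}{\lambda^{2n-1}}R_A^\infty(\lambda)A^nx\,\rmd\lambda,
\end{equation*}
understood as a principal value integral convergent in $\bY$. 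This fixes the definition of $\hcE^\rmc_n(t)$ and establishes \eqref{Frechet-Error-cos3}. The remaining task is to verify the seminorm estimate \eqref{Frechet-error-cos4}.

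Next, fix $a=\ts_*$ (the constant from Lemma~\ref{l4.26}, which by construction exceeds $\oa_*\geq\max_\ell|\im\nu_\ell|+1$ from \eqref{def-overline-a-star}), so the contour $\Lambda_*^\eps$ splits as $\tGamma_{a,\mi}^\eps\cup\tGamma_{a,\out}^\eps$. Since $Q_i\in\cC\cL(\bY,\bX)$ by Lemma~\ref{l4.2}(vi), $Q_i$ commutes with the principal value limit; moreover $A^nx=Q_jA^nx$ because $A^nx\in\Ker(Q_j-I_\bY)$ by Lemma~\ref{l4.10}(iii). We obtain
\begin{equation*}
Q_i\hcE^\rmc_n(t)x=\frac{1}{2\pi\rmi}\int_{\tGamma_{a,\mi}^\eps}\frac{e^{\lambda t}}{\lambda^{2n-1}}Q_iR_A^\infty(\lambda)A^nx\,\rmd\lambda+\frac{1}{2\pi\rmi}\int_{\tGamma_{a,\out}^\eps}\frac{e^{\lambda t}}{\lambda^{2n-1}}Q_iR_A^\infty(\lambda)A^nx\,\rmd\lambda,
\end{equation*}
both integrals being absolutely convergent: the first trivially so since $\tGamma_{a,\mi}^\eps$ is compact and the integrand continuous, the second by Lemma~\ref{l4.26}. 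Applying Lemma~\ref{l4.20}(i) to the middle integral and Lemma~\ref{l4.26} to the tail integral, we get
\begin{equation*}
\|Q_i\hcE^\rmc_n(t)x\|\leq\frac{1}{2\pi}\Bigl(\tM_\mi^\rmc(n,i,j,\ts_*,\eps)\|A^{n-1}x\|+L^\rmc_\out(n,i,j)\|A^nx\|\Bigr)e^{(\sup g_*+\eps)t},
\end{equation*}
valid for all $t\geq 0$. Setting $L^\rmc(n,i,j,\eps):=(2\pi)^{-1}\max\{\tM_\mi^\rmc(n,i,j,\ts_*,\eps),\,L^\rmc_\out(n,i,j)\}$ yields \eqref{Frechet-error-cos4}.

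There is no serious obstacle: the heavy lifting has already been done in Theorem~\ref{t4.18}(iv) (establishing the representation and convergence of the PV integral in $\bY$), in Lemma~\ref{l4.20}(i) (the bounded-contour estimate), and in Lemma~\ref{l4.26} (the tail estimate, which is precisely where the assumption $\int_1^\infty \tih_{i,j,1}(s)s^{-(2n_*-1)}\rmd s<\infty$ combined with $g_*\in L^\infty(\RR)$ and $g_*'\in L^\infty(\RR)$ is exploited via the change-of-variables argument of Remark~\ref{r3.14}). The only subtle bookkeeping point is the interchange of $Q_i$ with the principal value limit, which is immediate from the continuity of $Q_i\in\cC\cL(\bY,\bX)$.
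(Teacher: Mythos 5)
Your proof is correct and follows exactly the route the paper takes: the paper's own proof of Theorem~\ref{t4.31} is a one-line citation of Theorem~\ref{t4.18}, Lemma~\ref{l4.20}, and Lemma~\ref{l4.26}, and your write-up fills in precisely the details that citation implicitly invokes (defining $\hcE^\rmc_n$ from the principal-value integral in Theorem~\ref{t4.18}(iv), splitting $\Lambda_*^\eps$ at $a=\ts_*$ into $\tGamma_{a,\mi}^\eps$ and $\tGamma_{a,\out}^\eps$, then applying Lemma~\ref{l4.20}(i) to the compact middle piece and Lemma~\ref{l4.26} to the tail).
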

\begin{proof}
The theorem follows from Theorem~\ref{t4.18}, Lemma~\ref{l4.20} and Lemma~\ref{l4.26}
\end{proof}	
We note that in the representation result Theorem~\ref{t4.24} holds for trajectories of the cosine family with initial condition in a bigger space than the result given in Theorem ~\ref{t4.31}, while the result in the latter theorem holds for any $t\geq0$.
\begin{theorem}\label{t4.32}
Assume Hypotheses (H), (Q), (H-ext) and (H-add),  $b:=\inf\limits_{s\in\RR}g_0(s)>-\infty$,  $b_*=\inf_{s\in\RR}g_*(s)>-\infty$, $g_*'\in L^\infty(\RR)$ and $\displaystyle\int_1^\infty\frac{\tih_{i,3}(s)\rmd s}{s^{2n-1}}<\infty$. Then, there exists $L^\rms:\NN_{\max\{n_0,n_*\}}\times\NN^2\times(0,\teps)\to(0,\infty)$ and $\hcE^\rms_n:\RR\to\cC\cL(\dom(W_{n-1}\cap\bX_\infty,\bY)$  such that	
\begin{align}\label{Frechet-Error-sin3}
S(t)w&=\displaystyle\sum\limits_{k=1}^{m} \sum_{\na=1}^{N_k}p_{0,k,\na}(t)e^{\mu_k t}(A-\mu_k^2 I_\bX)^{\na-1}P_kw+\displaystyle\sum\limits_{\nu_\ell\ne0} \sum_{\na=1}^{\tN_\ell}\tp_{0,\ell,\na}(t)e^{\nu_\ell t}(\tA-\nu_\ell^2 I_\bY)^{\na-1}\tP_\ell w \nonumber\\
&+\chi_{\{\nu_\ell:\ell=1,\dots,p\}}(0)\Bigg(\sum_{\na=1}^{\oN_\ell}\frac{t^{2\na-1}}{(2\na-1)!}\tA^{\na-1}\tP_\ell w+\sum_{\na=1}^{\oN_\ell}\frac{t^{2\na}}{(2\na)!}Q_i\tA^{\na-1}\oP_\ell w\Bigg)\nonumber\\
&+\chi_{[0,\infty)}(g_*(0))\displaystyle\sum\limits_{\na=0}^{n-1}\frac{t^{2\na+1}}{(2\na+1)!}A^\na w+\hcE^\rms_n(t)w
\end{align}
\begin{equation}\label{Frechet-error-sin4}
\|Q_i\hcE^\rms_n(t)w\|\leq L^\rms(n,i,j,\eps)e^{(\sup_{s\in\RR}g_*(s)+\eps)t}\|A^{n-1}w\|
\end{equation}	
for any $t\geq0$, $i,j\in\NN$, $x\in\bW_{n-1}\cap\Ker(Q_j-I_\bY)$, $\eps\in(0,\teps_0)$, $n\in\NN$, with $n\geq\max\{n_0,n_*\}$.
\end{theorem}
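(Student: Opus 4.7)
The plan is to follow the proof pattern of Theorem~\ref{t4.31} but now for the sine family: start from Theorem~\ref{t4.19}(iii), split the integration path $\Lambda_*^\eps$ into a compact middle piece $\tGamma_{a,\mi}^\eps$ and two infinite outer tails $\tGamma_{a,\out}^\eps$, and estimate each piece via Lemma~\ref{l4.20}(ii) and Lemma~\ref{l4.30} respectively.

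Since $\Ker(Q_j-I_\bY)\subseteq\bX_\infty$ by Lemma~\ref{l4.2}(ii), the hypothesis $w\in\bW_{n-1}\cap\Ker(Q_j-I_\bY)$ places $w$ in $\bW_{n-1}\cap\bX_\infty$, so Theorem~\ref{t4.19}(iii) applies and expresses $Q_iS(t)w$, for $t\geq\beta_{i,j}$, as the sum of the discrete-spectrum terms, the nonzero-resonance terms, the polynomial contributions (both from the zero resonance and from $\chi_{[0,\infty)}(g_*(0))$), and the integral remainder $\frac{1}{2\pi\rmi}Q_iA\,\PV\!\int_{\Lambda_*^\eps}\frac{e^{\lambda t}}{\lambda^{2n}}Q_{i+1}R_A^\infty(\lambda)A^{n-1}w\,\rmd\lambda$. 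We define $\hcE^\rms_n(t)w$ to be exactly this integral remainder, so that the target identity \eqref{Frechet-Error-sin3} holds by construction.

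For a fixed $a\geq\max\{\os_*,\ts_*\}$ we decompose $\Lambda_*^\eps=\tGamma_{a,\mi}^\eps\cup\tGamma_{a,\out}^\eps$ and bound the two resulting pieces separately. The middle piece is controlled directly by Lemma~\ref{l4.20}(ii), yielding a bound of the form $\tM^\rms_\mi(n,i,j,a,\eps)e^{(\sup g_*+\eps)t}\|A^{n-1}w\|$. For the outer piece, we first use $(Q_6)$, $(Q_7)$ and Lemma~\ref{l4.8}(ii), together with the fact that $R_A^\infty(\lambda)A^{n-1}w\in\dom(\tA)$ (Lemma~\ref{l4.10}(ii)), to move $Q_iA$ inside the integral and collapse $Q_iAQ_{i+1}R_A^\infty(\lambda)A^{n-1}w=Q_i\tA R_A^\infty(\lambda)A^{n-1}w$. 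The resulting integral $\int_{\tGamma_{a,\out}^\eps}\frac{e^{\lambda t}}{\lambda^{2n}}Q_i\tA R_A^\infty(\lambda)A^{n-1}w\,\rmd\lambda$ is exactly what Lemma~\ref{l4.30} bounds, giving $L^\rms_\out(n,i)e^{(\sup g_*+\eps)t}\|A^{n-1}w\|$. Combining the two contributions and setting $L^\rms(n,i,j,\eps):=(2\pi)^{-1}\bigl(\tM^\rms_\mi(n,i,j,a,\eps)+L^\rms_\out(n,i)\bigr)$ yields \eqref{Frechet-error-sin4} for $t\geq\beta_{i,j}$; extension to all $t\geq 0$ follows by absorbing the finite-time values into a larger constant.

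The principal technical hurdle, already dispatched in Lemmas~\ref{l4.28}--\ref{l4.30} under Hypothesis (H-add), is that $R_A^\infty$ does not satisfy a Hilbert resolvent identity, so one cannot control the unbounded $\tA$ in front of the integral by the naive $AR_A$-type estimate \eqref{ARA-est} from Lemma~\ref{l3.7}. The secondary extension $\tR_A^\infty$ to the larger subspace $\tbX_\infty$ furnishes the substitute identity $\tA R_A^\infty(\lambda)-AR_A(\mu)=(\mu^2-\lambda^2)\tR_A^\infty(\lambda)AR_A(\mu)$ (Lemma~\ref{l4.28}(iii)), from which the essential bound on $\|Q_i\tA R_A^\infty(\lambda)A^{n-1}w\|$ (Lemma~\ref{l4.29}) is extracted; integrating against the tail decay allowed by $g_*,g_*'\in L^\infty(\RR)$ and $\int_1^\infty\tih_{i,3}(s)s^{-(2n-1)}\,\rmd s<\infty$ then completes the estimate.
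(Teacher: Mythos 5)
Your decomposition — start from Theorem~\ref{t4.19}, split $\Lambda_*^\eps$ into $\tGamma_{a,\mi}^\eps$ and $\tGamma_{a,\out}^\eps$, estimate via Lemma~\ref{l4.20}(ii) and Lemma~\ref{l4.30} — is exactly the route the paper takes, and your identification of Hypothesis (H-add) / Lemmas~\ref{l4.28}--\ref{l4.30} as the device that replaces the unavailable resolvent identity for $R_A^\infty$ is correct. One small imprecision: since $b_*=\inf_{s\in\RR}g_*(s)>-\infty$ is assumed, you should invoke Theorem~\ref{t4.19}(iv), which already yields the identity for $S(t)w$ (not $Q_iS(t)w$) and for all $t\geq 0$, so that $\hcE^\rms_n(t)w$ is defined as the $\tA$-prefaced principal-value integral there; starting from Theorem~\ref{t4.19}(iii) gives only the $Q_i$-localized identity on $t\geq\beta_{i,j}$, and the ``absorb finite-time values into a larger constant'' step both leaves the non-localized identity unestablished and requires an extra (unstated) boundedness argument for the compact time interval — neither patch is needed once part (iv) is used. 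The rest of the argument (moving $Q_iA$ inside, collapsing $Q_iAQ_{i+1}R_A^\infty=Q_i\tA R_A^\infty$ via $(Q_7)$ and Lemma~\ref{l4.8}(ii), then applying the two tail/middle lemmas and summing the constants) matches the paper.
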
	
\begin{proof}
The theorem follows from Theorem~\ref{t4.19}, Lemma~\ref{l4.20} and Lemma~\ref{l4.30}
\end{proof}	
Again we point out that the difference between the representations of the sine family given in
Theorem~\ref{t4.25} and Theorem~\ref{t4.32} is that the latter result holds for any $t\geq0$, albeit under additional assumptions on the function $g_*$.

\section{Examples and Applications}\label{sec7}

In this section we discuss several applications of our results and we present certain examples that lead to the second order problems described in the previous sections. 

\noindent\textbf{Schr\"odinger operators with $\delta$-potentials.} The easiest way to illustrate our results is to look at the special case of  $\delta$-interaction of strength $\alpha$ centered at $\beta\in\RR$, following \cite{AGHKH}. This example can be seen as a generalization of the Schr\"odinger operators, where the potential is a multiple of the generalized Dirac delta function. However, we point out that the generator $A_{\alpha,\beta}$ defined below is not a bounded perturbation of the Laplacian. In this specific case we can compute the leading order terms in the expansions of the cosine and sine functions explicitly. 
We recall several immediate properties of cosine families acting on Hilbert spaces.
\begin{remark}\label{r7.2} Using the same ideas as in \cite[Example 3.14.16]{ABHN} one can immediately infer that
\begin{enumerate}	
\item[(i)]$M_\Psi$, the operator of multiplication by a measurable function $\Psi$, generates a cosine family on $L^2(\Xi,\mu)$ if and only if $|\Psi|+\re\Psi\in L^\infty(\Xi,\mu)$. Moreover, in this case the cosine family generated by $M_\Psi$, is given by $\{M_{\cosh{(t\sqrt{\Psi})}}\}_{t\in\RR}$;
\item[(ii)] If $A$ is a self-adjoint operator bounded from above on a Hilbert space $\bH$, then it generates a cosine family $\{C(t)\}_{t\geq 0}$ satisfying
\begin{equation}\label{est-cos-Hilbert}
\|C(t)\|\leq \cosh(|t|\gamma)\;\mbox{for any}\;t\in\RR, \;\mbox{where}\;\gamma^2=\sup\sigma(A).	
\end{equation}
\end{enumerate}  
\end{remark}
\begin{example}\label{delta-interaction}
Let $B_\beta:\{u\in H^2(\RR,\CC):u(\beta)=0\}\to L^2(\RR,\CC)$ be the linear operator defined by $B_\beta u=u''$. It is well known (see \cite[Theorem 3.1.1]{AGHKH}) that the self-adjoint extensions of $B_\beta$ are given by the family of linear operators
\begin{equation}\label{self-adjoint-extensions}
A_{\alpha,\beta}:\{u\in H^1(\RR)\cap H^2(\RR\setminus\{\beta\}):u(\beta^+)-u(\beta^-)=\alpha u(\beta)\}\to L^2(\RR,\CC),\;\;A_{\alpha,\beta} u=u'',	
\end{equation}	
with $-\infty<\alpha\leq\infty$. We will focus on the case $\alpha\in\RR\setminus\{0\}$. From \cite[Theorem 3.1.1]{AGHKH} we know that
\begin{equation}\label{spectrum-A-alpha-beta}
\sigma_{\mathrm{ess}}(A_{\alpha,\beta})=(-\infty,0],\,\sigma_{\mathrm{disc}}(A_{\alpha,\beta})=\left\{\begin{array}{ll} \{\alpha^2/4\},& \alpha<0,\\
\emptyset,&\alpha\geq 0. \end{array}\right.
\end{equation}	
Moreover, we have $\Ker(\alpha^2/4I_{L^2}-A_{\alpha,\beta})=\mathrm{Span}\{e^{\alpha|\cdot-\beta|/2}\}$. From \eqref{self-adjoint-extensions} and \eqref{spectrum-A-alpha-beta} it follows that $A_{\alpha,\beta}$ is self-adjoint operator, bounded from above, hence it generates a cosine family denoted $\{C_{\alpha,\beta}(t)\}_{t\in\RR}$. The associated sine family is denoted by $\{S_{\alpha,\beta}(t)\}_{t\in\RR}$. From Remark~\ref{r7.2}(ii) we infer that \eqref{exp-growth} is satisfied with $M=1$ and any $\omega>|\alpha|/2$. Next, we look at the resolvent of the operator $A_{\alpha,\beta}$. As shown in \cite[Theorem 3.1.2]{AGHKH} the following holds true:
\begin{equation}\label{resolvent-A-alpha-beta}
\CC_+\setminus\{-\alpha/2\}\subset\{\lambda\in\CC:\lambda^2\in\rho(A_{\alpha,\beta})\}\;\mbox{and}\; \Big(R(\lambda^2,A_{\alpha,\beta})f\Big)(x)=\int_{\RR}G_{\alpha,\beta}(x,y,\lambda)f(y)\rmd y,	
\end{equation}	
for any $\lambda\in\CC_+\setminus\{-\alpha/2\}$, $f\in L^2(\RR,\CC)$, where $G_{\alpha,\beta}:\RR^2\times\big(\CC\setminus\{0,-\alpha/2\}\big)\to\CC$ is the kernel defined by 
\begin{equation}\label{kernel-alpha-beta}
G_{\alpha,\beta}(x,y,\lambda)=\frac{1}{2\lambda}e^{-\lambda|x-y|}-\frac{\alpha}{2\lambda(2\lambda+\alpha)}e^{-\lambda\big(|x-\beta|+|\beta-y|\big)}.	
\end{equation}	
First, we check the hypotheses used in the current paper.
We denote by $G_0:\RR^2\times\big(\CC\setminus\{0\}\big)\to\CC$, $G_0(x,y,\lambda)=\frac{1}{2\lambda}e^{-\lambda|x-y|}$ the Green's kernel of the free Laplacian and by $R_0:\CC_+\to\mathcal{B}(L^2(\RR,\CC))$ the operator defined by
\begin{equation}\label{free-resolvent}
\big(R_0(\lambda)f\big)(x)=\int_{\RR}G_0(x,y,\lambda)f(y)\rmd y.	
\end{equation}	 
From \eqref{kernel-alpha-beta} and \eqref{free-resolvent} we obtain 
\begin{equation}\label{resolvent-decomposition}
R_{A_{\alpha,\beta}}(\lambda)f=R_0(\lambda)f-\frac{\alpha}{2\lambda+\alpha}G_0(\cdot,\beta,\lambda)(\lambda R_0(\lambda)f)(\beta)\;\mbox{for any}\;\lambda\in\CC_+\setminus\Big\{-\frac{\alpha}{2}\Big\}, f\in L^2(\RR,\CC).
\end{equation}
Thus, $R_{A_{\alpha,\beta}}(\cdot)$ is holomorphic on $\CC_+\setminus\{-\alpha/2\}$. 
We claim that if $\alpha<0$ then  $-\alpha/2$ is a finitely meromorphic point of $R_{A_{\alpha,\beta}}(\cdot)$. Indeed, by \eqref{resolvent-decomposition} $-\alpha/2$ is a pole of order $1$ of $R_{A_{\alpha,\beta}}(\cdot)$. Moreover, 
\begin{align}\label{proj-alpha-2}
P_{-\frac{\alpha}{2}}f&=\frac{1}{\pi\rmi}\int_{\partial D(-\frac{\alpha}{2},r)}\lambda R_{A_{\alpha,\beta}}(\lambda)f\rmd\lambda=-\frac{\alpha}{\pi\rmi}\int_{\partial D(-\frac{\alpha}{2},r)}\frac{\lambda G_0(\cdot,\beta,\lambda)}{2\lambda+\alpha}(\lambda R_0(\lambda)f)(\beta)\rmd\lambda\nonumber\\
&=-\alpha e^{|\cdot-\beta|/2}\int_\RR e^{|x-\beta|/2} f(x)\rmd x=(-\alpha/2)^{1/2} e^{\alpha|\cdot-\beta|/2}\big\langle f, (-\alpha/2)^{1/2} e^{\alpha|\cdot-\beta|/2}\big\rangle_{L^2} .
\end{align}	
for any $f\in L^2(\RR,\CC)$ and $r<|\alpha|/2$. This computation shows that the principal part of the expansion \eqref{finite-meromorphic} of $R_{A_{\alpha,\beta}}(\cdot)$ at $\lambda=-\alpha/2$ has only one term whose coefficient is the rank-$1$ projection $P_{-\alpha/2}$ evaluated in \eqref{proj-alpha-2}, proving the claim. 

One can readily check the following elementary inequalities:
\begin{equation}\label{elementary-estimates}
\|R_0(\lambda)\|\leq\frac{1}{|\lambda|\re\lambda},\;\|G_0(\cdot,\beta,\lambda)\|_2=\frac{1}{|\lambda|\sqrt{\re\lambda}},\;\Big|\lambda\big(R_0(\lambda)f\big)(\beta)\Big|\leq\frac{\|f\|_2}{\sqrt{\re\lambda}}	
\end{equation}	
for any $\lambda\in\CC_+$, $f\in L^2(\RR,\CC)$.  Also, 
\begin{equation}\label{note-elementary}
\mbox{if}\;|\im\lambda|\geq|\alpha|\;\mbox{then}\;|2\lambda+\alpha|\geq|\im(2\lambda+\alpha)|=2|\im\lambda|\geq2|\alpha|.
\end{equation}
Using \eqref{resolvent-decomposition}, \eqref{elementary-estimates} and \eqref{note-elementary} we obtain 
\begin{equation}\label{resolvent-estimate-1}
\|R_{A_{\alpha,\beta}}(\lambda)\|\leq \frac{3}{2|\lambda|\re\lambda}\;\mbox{for any}\;\lambda\in\CC_+,\,|\im\lambda|\geq|\alpha|.
\end{equation}	
Hence, Hypothesis (H) holds for $n_0=1$ and by choosing $g_0(s)=\eps_0$, for any small $\eps_0>0$. Example~\ref{e4.1} shows that Hypothesis (Q) holds as well, where $Q_n:L^2_{\mathrm{loc}}(\RR,\CC)\to L^2(\RR,\CC)$ is defined by $Q_nf=\varphi_n f$ and $\varphi_n$ is a smooth cutoff function. Next, we note that $G_{\alpha,\beta}(x,y,\cdot)$ is holomorphic on $\CC\setminus\{0,-\alpha/2\}$ for any $(x,y)\in\RR^2$. Moreover, $G_{\alpha,\beta}(x,y,\cdot)$ has a removable singularity at  $\lambda=0$, while $\lambda=-\alpha/2$ is a pole of order $1$. We infer that the function $R_{A_{\alpha,\beta}}(\cdot)$ can be extended meromorphically to $\CC$.  We denote this extension by $R_{A_{\alpha,\beta}}^\infty(\cdot)$. It is well known that the function $R_0(\cdot)$ can be extended holomorphically to the set $\CC\setminus\{0\}$ as a function taking values in $\cC\cL(L^2_{\mathrm{comp}}(\RR,\CC),L^2_{\mathrm{comp}}(\RR,\CC))$. We denote this extension by $R_0^\infty(\cdot)$. Using the holomorphic extension from \eqref{resolvent-decomposition} we infer that 
\begin{equation}\label{resolvent-decomposition-extension}
R_{A_{\alpha,\beta}}^\infty(\lambda)f=R_0^\infty(\lambda)f-\frac{\alpha}{2\lambda+\alpha}G_0(\cdot,\beta,\lambda)(\lambda R_0^\infty(\lambda)f)(\beta)\;\mbox{for any}\;\lambda\in\CC\setminus\Big\{0,-\frac{\alpha}{2}\Big\},
\end{equation}
$f\in L_{\mathrm{comp}}^2(\RR,\CC)$. Reasoning similar to \eqref{proj-alpha-2} shows that if $\alpha>0$ then
\begin{equation}\label{pseudo-proj-alpha-2} 
\tP_{-\frac{\alpha}{2}}f=\frac{1}{\pi\rmi}\int_{\partial D(-\frac{\alpha}{2},r)}\lambda R_{A_{\alpha,\beta}}^\infty(\lambda)f\rmd\lambda=(-\alpha/2)^{1/2} e^{\alpha|\cdot-\beta|/2}\big\langle f, (-\alpha/2)^{1/2} e^{\alpha|\cdot-\beta|/2}\big\rangle_{L^2}
\end{equation}
for any $f\in L^2_{\mathrm{comp}}(\RR,\CC)$ and $r<|\alpha|/2$. Again the principal part of the expansion \eqref{finite-meromorphic} of $R_{A_{\alpha,\beta}}^\infty(\cdot)$ at $\lambda=-\alpha/2$ has only one term whose coefficient is the rank-$1$ operator $\tP_{-\alpha/2}$ evaluated in \eqref{pseudo-proj-alpha-2}, which proves that $-\alpha/2$ is a finitely meromorphic point of $R_{A_{\alpha,\beta}}^\infty(\cdot)$ for any $\alpha>0$. 
	
As in \eqref{elementary-estimates} we estimate
\begin{align}\label{elementary-estimates-part2}
\|Q_iR_0^\infty(\lambda)Q_j\|&\leq\frac{|e^{-(i+j+2)\re\lambda}-1|}{|\lambda||\re\lambda|},\;\|Q_iG_0(\cdot,\beta,\lambda)\|_2\leq\frac{e^{-\re\lambda|\beta|\big|e^{-2\re\lambda(i+1)}-1\big|^{1/2}}}{\sqrt{2}|\lambda|\sqrt{|\re\lambda|}},\nonumber\\
\Big|\lambda\big(R_0(\lambda)Q_jf\big)(\beta)\Big|&\leq\frac{\sqrt{2}e^{-\re\lambda(|\beta|+(j+1)/2)}\big|e^{-\re\lambda(j+1)}-1\big|^{1/2}\|f\|_2}{\sqrt{|\re\lambda|}}\;\mbox{for any}\;\lambda\in\CC\setminus\{0\},\,i,j\in\NN	
\end{align}
and any $f\in L^2(\RR,\CC)$. From \eqref{note-elementary}, \eqref{resolvent-decomposition-extension} and \eqref{elementary-estimates-part2} we conclude that 
\begin{equation}\label{extension-estimates}
\|Q_iR_{A_{\alpha,\beta}}^\infty(\lambda)Q_j\|\leq\frac{|e^{-(i+j+2)\re\lambda}-1|}{|\lambda||\re\lambda|}+\frac{e^{-2\re\lambda|\beta|}}{2|\lambda||\re\lambda|}\Big((e^{-2\re\lambda(i+1)}-1)(e^{-2\re\lambda(j+1)}-e^{-\re\lambda(j+1)})\Big)^{1/2}.
\end{equation}	
for any $\lambda\in\CC\setminus\{0,-\alpha/2\}$, with $|\im\lambda|\geq|\alpha|$, $i,j\in\NN$. Thus, Hypothesis (H-ext) is satisfied with $n_*=1$ and $g_*(s)=-\eta-1-\widetilde{\eta}\ln{(1+|s|)}$, where $\eta,\widetilde{\eta}>0$. This implies that Hypothesis (G) is also satisfied. Thus,  we can apply Theorem~\ref{t4.24} and Theorem~\ref{t4.25}.  Evaluating the integrals  \eqref{polynomial-representation} and \eqref{polynomial-representation-resonances}, from \eqref{proj-alpha-2} and \eqref{pseudo-proj-alpha-2} we obtain that for any $\eta>0$, $i\in\NN$ there exist $\tcE^\rmc_{i,\alpha,\beta},\tcE^\rms_{i,\alpha,\beta}:\RR\to\cC\cL\big(L^2_{\mathrm{comp}}(\RR,\CC),L^2_{\mathrm{loc}}(\RR,\CC)\Big)$ and $t_{i,\eta}$ such that
\begin{align}\label{cos-sin-alpha-beta}
\varphi_iC_{\alpha,\beta}(t)f&=-\frac{\alpha}{4}\varphi_ie^{\alpha(|\cdot-\beta|-t)/2}\big\langle f, e^{\alpha|\cdot-\beta|/2}\big\rangle_{L^2} +\tcE^\rmc_{i,\alpha,\beta}(t)f,\nonumber\\
\varphi_iS_{\alpha,\beta}(t)f&=\frac{1}{2}\varphi_ie^{\alpha(|\cdot-\beta|-t)/2}\big\langle f, e^{\alpha|\cdot-\beta|/2}\big\rangle_{L^2} +\tcE^\rms_{i,\alpha,\beta}(t)f,\nonumber\\
&\|\varphi_i\tcE^\rmc_{i,\alpha,\beta}(t)f\|_2+\|\varphi_i\tcE^\rms_{i,\alpha,\beta}(t)f\|_2\lesssim e^{-\eta t}\|f\|_2
\end{align}	
for any $i\in\NN$, $t\geq t_{i,\eta}$, $f\in L^2_{\mathrm{comp}}(\RR,\CC)$. 
\end{example}

\noindent\textbf{Schr\"odinger operators in odd dimensions with complex-valued potentials.}
Next, we turn our attention to several interesting cases of Schr\"odinger operators. Let $\bX=L^2(\RR^r,\CC)$, where $r\in\NN$ is odd and consider $V\in L^\infty(\RR^r,\CC)$, a complex valued potential with compact support. We define the linear operator $A:H^2(\RR^r,\CC)\to L^2(\RR^r,\CC)$ by the formula $A=\Delta+V$. It is well-known that $A$ generates a cosine family. Indeed, since $A_0: H^2(\RR^r,\CC)\to L^2(\RR^r,\CC)$ defined by $A_0=\Delta$ is self-adjoint, it follows that $B_0:H^1(\RR^r,\CC)\to L^2(\RR^r,\CC)$ defined by $B_0=\rmi(-\Delta)^{\frac{1}{2}}$ is skew-symmetric, hence, $B_0$ generates a $C_0$-group on $L^2(\RR^r,\CC)$. By \cite[Example 3.14.15]{ABHN} $A_0=B_0^2$ is the generator of a cosine family on $L^2(\RR^r,\CC)$ whose phase-space is $\bW_0=\dom(B_0)=H^1(\RR^r,\CC)$. Furthermore, since the operator of multiplication by the potential $V$ is bounded from the phase space $\bW_0=H^1(\RR^r,\CC)$ to $L^2(\RR^r,\CC)$, by \cite[Corollary 3.14.13]{ABHN} $A=A_0+M_V$ generates a cosine family on $L^2(\RR^r,\CC)$ with the phase-space $H^1(\RR^r,\CC)$.

As shown in \cite[Chapters 2,3]{DZ} one can find explicit formulas for the free resolvent $(\lambda^2-\Delta)^{-1}$ as an integral operator and estimates of the resolvent kernel. Using the Birman-Schwinger principle, one can show that the resolvent $R_A(\lambda)=(\lambda^2-A)^{-1}$ satisfies Hypothesis (H). As shown in Example ~\ref{e4.1} Hypothesis (Q) is also satisfied by choosing the Frechet space $\bY=L^2_{\mathrm{loc}}(\RR^r,\CC)$. The sequence of linear operators $Q_n$ are defined as operators of multiplication by the functions $\varphi_n(|\cdot|)$, with $\varphi_n$ defined in Example~\ref{e4.1}. 

The resolvent function $R_A(\cdot)$ has a meromorphic continuation to the entire complex plane as a function taking values in the set of continuous linear operators from $L^2_{\mathrm{comp}}(\RR^r,\CC)$ to $L^2_{\mathrm{loc}}(\RR^r,\CC)$ (\cite[Theorem 2.2, 3.8]{DZ}, 
for $r=1$ and $r\geq 3$ odd, respectively). Moreover, one can 
find resonance free regions, see \cite[Theorem 2.10, 3.10]{DZ}. Hence, Hypothesis (H-ext) is satisfied for the function $g_*(s)=-\eta-\widetilde{\eta}\ln{(1+|s|)}$,  for some $\eta,\widetilde{\eta}>0$. Thus, Hypothesis (G) is also satisfied. It was already established in \cite[Theorem 2.9, Theorem 3.11]{DZ} that the solutions of the wave equation 
 \begin{equation}\label{Wave-Sch}
\left\{\begin{array}{ll}
u_{tt}=u_{xx}+V(x)u,\;t\in\RR,\\
u(0,x)=u_0,
u(0,x)=u_1.\end{array}\right.
\end{equation}
has a eigenvalue/resonance explicit expansion \cite[Formulas (2.3.3), (3.2.12)]{DZ} for the special case when the potential $V$ is real valued. This restriction is due to the fact that the proof uses  Stone's representation formula for self-adjoint linear operators. In the general case of complex-valued potentials when $A$ is not self-adjoint, we can use the representations of the cosine family generated by $A$ to achieve the same goal. By Theorem~\ref{t4.24} and Theorem~\ref{t4.25} the same expansions hold for the general case of complex-valued potentials. 

\begin{example}\label{e7.1} To illustrate the eigenvalue/resonance expansion, we now look at examples of potentials arising in Physics. An interesting example taken from \cite{LP7} is that of one dimensional Schr\"odinger operators with complex square-well potential of the form $A=\partial_x^2+\alpha\chi_{[-1,1]}$, where  $\alpha\in\CC\setminus\RR$ is such that $\sqrt{\alpha}\sinh{(\sqrt{1-\alpha})}+\sqrt{1-\alpha}=0$ and $\sqrt{\alpha}\cosh{(\sqrt{1-\alpha})}+1=0$. In this case we can explicitly compute the resolvent operator and its meromorphic extension to $\CC$ using the Jost solutions and the Jost function. The Jost solutions, denoted $U_\pm(\cdot,\lambda)$,
are defined as solutions of the equation
\begin{equation}\label{e7.1.3}
\partial_{xx}U(x)=(\lambda^2-\alpha)U(x) \text{ for $|x|\le1$, $\d_{xx}U(x)=\lambda^2U(x)$ for $|x|>1$,}	
\end{equation}	
which are such that $U_\pm(x,\lambda)=e^{\mp\lambda x}$ for any $\pm x>1$. If $\lambda\in\CC_+$ then $U_\pm(x,\lambda)$ is asymptotic to the ``plain wave" $e^{\mp x}$, as $\pm x\to\infty$, justifying the term ``Jost solution". One can readily check that $U_-(x,\lambda)=U_+(-x,\lambda)$ for any $x\in\RR$. Moreover, $U_+$ is given by
\begin{equation}\label{e7.1.4.} 
U_+(x,\lambda)= \begin{cases}
e^{-\lambda x},&  x>1, \\
e^{-\lambda}\Big(\cosh\big((x-1)\sqrt{\lambda^2-\alpha}\big)-\lambda (\lambda^2-\alpha)^{-1/2}\sinh\big((x-1)\sqrt{\lambda^2-\alpha}\big)\Big), &  |x|\le1,\\ 
h_+(\lambda)e^{-\lambda x}+h_-(\lambda)e^{\lambda x},  
&  x<-1,
\end{cases}	
\end{equation}
where 
\begin{align}\label{e7.1.5}
h_+(\lambda)&=e^{-2\lambda}
\big(
\cosh(2\sqrt{\lambda^2-\alpha})+\frac{1}{2\lambda}(\lambda^2-\alpha)^{-1/2}(2\lambda^2-\alpha)\sinh(2 \sqrt{\lambda^2-\alpha})\big),\nonumber\\ h_-(\lambda)&=\frac{\alpha}{2\lambda}(\lambda^2-\alpha)\sinh(2\sqrt{\lambda^2-\alpha}).
\end{align}	
The Jost functions is defined as the Wronskian
\begin{equation}\label{e7.1.6}
W(\lambda)=U_+(x,\lambda)\d_x (U_-(x,\lambda))-(\d_x U_+(x,\lambda)) U_-(x,\lambda),	
\end{equation}	
which is $x$-independent because $U_\pm(\cdot,\lambda)$ are solutions to \eqref{e7.1.3}. The function $W$ is given by
\begin{align}\label{e7.1.7}
W(\lambda)&=2e^{-2\lambda}\big(\lambda\cosh( \sqrt{\lambda^2-\alpha})+\sqrt{\lambda^2-\alpha}\sinh( \sqrt{\lambda^2-\alpha})\big)\nonumber\\
&\quad\times\big(\cosh(\sqrt{\lambda^2-\alpha})+\lambda (\sqrt{\lambda^2-\alpha})^{-1}\sinh( \sqrt{\lambda^2-\alpha})\big)\;\mbox{for}\;\lambda^2\ne\alpha.
\end{align}

The Jost functions $U_\pm(x,\cdot)$ and the Jost function $W$ are entire functions, see, e.g. \cite[Section 2]{LP7}. Next, we denote by $Z(W)=\{\lambda\in\CC:W(\lambda)=0\}$ and by $\tG:\RR^2\times\CC\to\CC$ the semi-separable kernel
\begin{equation}\label{e7.1.1-bis}
\tG(x,y,\lambda)=\begin{cases}U_+(x,\lambda)U_-(y,\lambda), &  x>y,\\ 
U_+(y,\lambda)U_-(x,\lambda), & x<y.\end{cases}	
\end{equation}	
We recall that 
\begin{equation}\label{e7.1.1}
\big(R_A(\lambda)f\big)(x)=\int_{-\infty}^{+\infty}G(x,y,\lambda)f(y)\, \rmd y,
\end{equation}
for any $x\in\RR$, $\lambda\in\CC_+\setminus Z(W)$ and $f\in\ L^2(\RR,\CC)$, where the Green kernel $G:\RR^2\times\big(\CC\setminus Z(W)\big)\to\CC$ is defined by 
\begin{equation}\label{e7.1.2}
G(x,y,\lambda)=\Big(W(\lambda)\Big)^{-1}\tG(x,y,\lambda).
\end{equation}	
Since $U_\pm(x,\cdot)$ and $W(\cdot)$ are entire functions we infer that $G(x,y,\cdot)$ is meromorphic on $\CC$. We note that formula \eqref{e7.1.1} holds whenever $\lambda\in\CC$ is such that $W(\lambda)\ne0$, $f\in L_{\mathrm{comp}}^2(\RR,\CC)$ and the left hand side is replaced by $R_A^\infty(\lambda)$. Moreover, we have $R_A^\infty(\lambda) \in\cC\cL\big(L^2_{\textrm{comp}}(\RR; \CC),H^2_{\loc}(\RR; \CC)\big)$ for any $\lambda\in\CC$ such that $W(\lambda)\ne 0$. 

By Lemma~\ref{l3.2}(iv) and Lemma~\ref{l4.11} (viii) the first two terms in representations \eqref{Frechet-Error-cos} and \eqref{Frechet-Error-sin} are given by the residues of the functions $\lambda\to\lambda^\kappa e^{\lambda t}R_A(\lambda)$ and 
$\lambda\to\lambda^\kappa e^{\lambda t}R_A^\infty(\lambda)$
at the zeros of $W$. Hence, it is crucial to know the order of the zeros of $W$. In the sequel we denote by  $\cR_\alpha$ the set of all zeros of $W$. From \cite[Lemma 2.5, Lemma 2.6, Lemma 2.7]{LP7} we have
\begin{equation}\label{e7.1.11} 0,\pm\sqrt{\alpha}\notin\cR_\alpha,\; \cosh{\sqrt{\lambda_0^2-\alpha}}\ne 0,\;\sinh{\sqrt{\lambda_0^2-\alpha}}\ne 0\;\mbox{for any}\;\lambda_0\in\cR_\alpha.
\end{equation}	
Moreover, from \cite[Lemma 5.10]{LP7} we know that $\oor(\lambda_0,W)=1$ if $\lambda_0\in\cR_\alpha\setminus\{-1\}$ and $\oor(-1,W)=2$.
Changing the order of integration, we infer that $\cK(t,\kappa,\lambda_0):=\Res\Bigl(\lambda^\kappa e^{\lambda t}R_A(\lambda),\lambda=\lambda_0\Bigr)$ is an integral operator on $L^2(\RR,\CC)$ having the integral kernel 
\begin{equation}\label{e7.1.9}
K(x,y,t,\kappa,\lambda_0)=\Res\Bigl(\lambda^\kappa e^{\lambda t}G(x,y,\lambda),\lambda=\lambda_0\Bigr)\;\mbox{for any}\;x,y,t\in\RR,\kappa=0,1,\lambda\in\cR_\alpha\cap\CC_+ .
\end{equation}
Similarly, $\cK^\infty(t,\kappa,\lambda_0):=\Res\Bigl(\lambda^\kappa e^{\lambda t}R_A(\lambda),\lambda=\lambda_0\Bigr)$ is an integral operator mapping $L_{\mathrm{comp}}^2(\RR,\CC)$ into
$L_{\mathrm{loc}}^2(\RR,\CC)$ having the integral kernel 
\begin{equation}\label{e7.1.10}
K^\infty(x,y,t,\kappa,\lambda_0)=\Res\Bigl(\lambda^\kappa e^{\lambda t}G(x,y,\lambda),\lambda=\lambda_0\Bigr)\;\mbox{for any}\;x,y,t\in\RR,\kappa=0,1,\lambda\in\cR_\alpha\setminus\CC_+ .
\end{equation}

We recall that 
\begin{equation}\label{e7.1.8}
\Res\Big(\frac{F}{W},\lambda=\lambda_0\Big)=\left\{\begin{array}{ll} \frac{F(\lambda_0)}{W'(\lambda_0)},& \oor(\lambda_0,W)=1,\\
\frac{2F'(\lambda_0)}{W''(\lambda_0)}-\frac{2F(\lambda_0)W'''(\lambda_0)}{3\big(W''(\lambda_0)\big)^2},&\oor(\lambda_0,W)=2, \end{array}\right.	
\end{equation}	
for any entire function $F$. Moreover, one can readily compute the derivatives of $W$ from \eqref{e7.1.7},
\begin{align}\label{e.7.1.12}
W'(\lambda_0)&=-\frac{\alpha^2(1+\lambda_0)\sinh{\Big(2\sqrt{\lambda_0^2-\alpha}\Big)}}{\lambda_0(\lambda_0^2-\alpha)^{3/2}e^{2\lambda_0}}\;\mbox{for any}\;\lambda_0\in\cR_\alpha\setminus\{-1\}\nonumber\\
W''(-1)&=\frac{2e^2\alpha}{1-\alpha},\;W'''(-1)=\frac{2e^2\alpha(6\alpha-1)}{(1-\alpha)^2}.
\end{align}
Applying \eqref{e7.1.8} we can evaluate the integral kernels from \eqref{e7.1.9} and \eqref{e7.1.10} as follows
\begin{equation}\label{e7.1.13}
\Res\Bigl(\lambda^\kappa e^{\lambda t}G(x,y,\lambda),\lambda=\lambda_0\Bigr)=-\frac{\lambda_0^{\kappa+1}(\lambda_0^2-\alpha)^{3/2}e^{(t+2)\lambda_0}}{\alpha^2(1+\lambda_0)\sinh{\Big(2\sqrt{\lambda_0^2-\alpha}\Big)}}\tG(x,y,\lambda_0).
\end{equation}	
for any $\lambda_0\in\cR_\alpha\setminus\{-1\}$, $x,y,t\in\RR$, $\kappa=0,1$. We note that formulas \eqref{e.7.1.12} and \eqref{e7.1.13} are well-defined due to \eqref{e7.1.11}. Similarly, we have
\begin{align}\label{e7.1.14}
\Res\Bigl(\lambda^\kappa &e^{\lambda t}G(x,y,\lambda),\lambda=-1\Bigr)=\frac{(-1)^\kappa(1-\alpha)}{\alpha}te^{-(t+2)}\tG(x,y,-1)\nonumber\\
&+(-1)^{\kappa+1}\Bigl(\frac{\kappa(1-\alpha)}{\alpha}+\frac{6\alpha-1}{3\alpha}\Bigr)e^{-(t+2)}\tG(x,y,-1)+	\frac{(-1)^\kappa(1-\alpha)}{\alpha}e^{-(t+2)}\partial_\lambda\tG(x,y,-1)
\end{align}	
for any $x,y,t\in\RR$, $\kappa=0,1$.

Next, we recall that Hypotheses (H), (Q), (H-ext) and (G) hold for the general case of Schr\"odinger operators with compactly supported potential, in particular for $A=\partial_x^2+\alpha\chi_{[-1,1]}$. Also, we recall the definition of the sequence of functions $\{\varphi_n\}_{n\in\NN}$ in Example~\ref{e4.1}. We denote by $\{C(t)\}_{t\in\RR}$ the cosine family generated by $A$, and by $\{S(t)\}_{t\in\RR}$ its associated sine family. 
From \cite[Theorem 2.10]{DZ} or \cite[Theorem 3.8]{LP7} we know that for any $\eta>0$ and any $\widetilde{\eta}>0$ sufficiently small there are only finitely many zeros of the function $W$ to the right of the curve $\re\lambda=-\eta-\widetilde{\eta}\ln{(1+|\im\lambda|)}$. We denote by 
\begin{equation}\label{e7.1.15}
\cR_{\alpha,\eta,\widetilde{\eta}}=\{\lambda\in\cR_\alpha:\re\lambda>-\eta-\widetilde{\eta}\ln{(1+|\im\lambda|)}\}.	
\end{equation}	
Finally, we denote by $\tcK, \tcJ:\CC\to\cC\cL\big(L^2_{\mathrm{comp}}(\RR,\CC),L^2_{\mathrm{loc}}(\RR,\CC)\Big)$ the integral operator-valued functions defined by
\begin{equation}\label{e7.1.17}
\big(\tcK(\lambda)f\big)(x)=\int_{-\infty}^{+\infty}\tG(x,y,\lambda)f(y)\, \rmd y,\;\big(\tcJ(\lambda)f\big)(x)=\int_{-\infty}^{+\infty}\partial_\lambda\tG(x,y,\lambda)f(y)\, \rmd y,
\end{equation}
for any $x\in\RR$, $\lambda\in\CC$ and $f\in\ L_{\mathrm{comp}}^2(\RR,\CC)$. From 
Theorem~\ref{t4.24} and Theorem~\ref{t4.25}, \eqref{e7.1.13} and \eqref{e7.1.14} we conclude that
for any $\eta>1$, $i\in\NN$ there exist $\tcE^\rmc_i,\tcE^\rms_i:\RR\to\cC\cL\big(L^2_{\mathrm{comp}}(\RR,\CC),L^2_{\mathrm{loc}}(\RR,\CC)\Big)$ and $t_{i,\eta}$ such that the components of the solution $u(t)=C(t)x_o+S(t)x_1$ of the initial value problem \eqref{abstract-Cauchy} satisfy
\begin{align}\label{e7.1.16}
\varphi_iC(t)f&=-\sum_{\lambda_0\in\cR_{\alpha,\eta,\widetilde{\eta}}\setminus\{-1\}}\frac{\lambda_0^2(\lambda_0^2-\alpha)^{3/2}e^{(t+2)\lambda_0}}{\alpha^2(1+\lambda_0)\sinh{\Big(2\sqrt{\lambda_0^2-\alpha}\Big)}}\tcK(\lambda_0)f+\frac{\alpha-1}{\alpha}te^{-(t+2)}\tcK(-1)f\nonumber\\
&+\frac{3\alpha+2}{3\alpha}e^{-(t+2)}\tcK(-1)f+	\frac{\alpha-1}{\alpha}e^{-(t+2)}\partial_\lambda\tcJ(-1)f+\tcE^\rmc_i(t)f,\nonumber\\
\varphi_iS(t)f&=-\sum_{\lambda_0\in\cR_{\alpha,\eta,\widetilde{\eta}}\setminus\{-1\}}\frac{\lambda_0(\lambda_0^2-\alpha)^{3/2}e^{(t+2)\lambda_0}}{\alpha^2(1+\lambda_0)\sinh{\Big(2\sqrt{\lambda_0^2-\alpha}\Big)}}\tcK(\lambda_0)f+\frac{1-\alpha}{\alpha}te^{-(t+2)}\tcK(-1)f\nonumber\\ &-\frac{6\alpha-1}{3\alpha}e^{-(t+2)}\tcK(-1)+\frac{1-\alpha}{2\alpha}e^{-(t+2)}\partial_\lambda\tcJ(-1)f+\tcE^\rms_i(t)f,\nonumber\\
&\|\varphi_i\tcE^\rmc_i(t)f\|+\|\varphi_i\tcE^\rms_i(t)f\|\lesssim e^{-\eta t}\|f\|_2
\end{align}	
for any $i\in\NN$, $t\geq t_{i,\eta}$, $f\in L^2_{\mathrm{comp}}(\RR,\CC)$. 
\end{example}

\begin{remark}\label{r7.7}
Another interesting example arises in the case when the potential $V$ in \eqref{Wave-Sch} is a matrix-valued square-well potential of the form $V=V_0\chi_{[-a,a]}$, for some $a>0$ and $V_0\in\CC^{d\times d}$ is a non symmetric matrix such that $0\notin\sigma(V_0)$ cf. \cite{LP7}. We can define the matrix $(\lambda^2 I_d-V_0)^{1/2}$ using 
Riesz-Dunford functional calculus, cf., e.g., \cite[Section VII.4]{Con}. Similar to the scalar case, we can explicitly compute the Jost solutions. The resolvent of $A=\partial_x^2 I_d+V_0\chi_{[-a,a]}$ is again an integral operator, whose integral kernel has the form \eqref{e7.1.2}. The function $W$ defined in \eqref{e7.1.7} can be defined in the matrix valued case as well and it is entire, c.f., \cite[Section 2]{LP7}. The Jost function in this case is defined as $\cW(\lambda)=\det_{d\times d}(W(\lambda))$. The zeros of the function $\cW$ determine the poles of the resolvent $R_A(\cdot)$ and its extension $R_A^\infty(\cdot)$. To obtain an expansion similar to \eqref{e7.1.16} we need to compute the residues of $\lambda\to \lambda^\kappa e^{\lambda t}G(x,y,\lambda)$ at the zeros of $\cW$. Similar to the scalar case the integral kernel has the representation $G(x,y,\lambda)=\big(\cW(\lambda)\big)^{-1}\overline{G}(x,y,\lambda)$, where the function $\overline{G}$ is defined in terms of the Jost solutions and the adjugate matrix of $W(\lambda)$, and hence it is a matrix-valued entire function. The order of the zeros of the Jost function $\cW$ can be computed using the \cite[Theorem 5.1]{LP7}. The only difference from the scalar case is that the order of a zero of $\cW$ might be bigger, thus requiring a generalization of formula \eqref{e7.1.8}, which makes the residue computation more tedious. However, the cosine and sine functions have computable expansions similar to \eqref{e7.1.16} above.
\end{remark}

\begin{remark}\label{r7.10}
The solutions of \eqref{abstract-Cauchy} admit a representation in terms of the eigenvalues and resonances of the cosine family generator $A$ also in the case when $-A$ is a Black box Hamiltonian, c.f., e.g \cite{CD1,DZ,Sjostrand}. We briefly recall that $-A$ is a black-box Hamiltonian if $A$ is a self-adjoint operator, upper semi-bounded and that there exists a ball, $B(0,R_0)=\{x\in\RR^r:|x|<R_0\}$, called the black box, such that 
\begin{enumerate}
\item[(i)] $\chi_{\RR^r\setminus B(0,R_0)}\dom(A)\subset H^2(\RR^r\setminus B(0,R_0))$;
\item[(ii)] $\chi_{\RR^r\setminus B(0,R_0)} Au=\Delta u_{|\RR^r\setminus B(0,R_0)}$ for any $u\in\dom(A)$;
\item[(iii)] If $u\in H^2(\RR^2)$ and $u_{|B(0,R_0+\eps)}=0$ for some $\eps>0$ then $u\in\dom(A)$;
\item[(iv)] $\chi_{B(0,R_0)}(\rmi I_{L^2}-A)^{-1}$ is compact.
\end{enumerate}
This general class of operators includes the obstacle scattering 
Dirichelet Laplacian, the Friedrichs extension of the Laplacian on finite volume surfaces, elliptic perturbations of the semiclassical Laplacian see, e.g. \cite[Examples, Section 4.1, Pages, 229, 246]{DZ}.  The desired expansion of solutions of \eqref{abstract-Cauchy} follows similarly to the case of Schr\"odinger operators in odd dimensions, see \cite[Theorem 4.44]{DZ}. A crucial tool in the proof of the expansion is a generalization of the Stone's formula for black box Hamiltonians. 
Under assumptions (i)-(iv) it is possible to prove that Hypotheses (H), (Q), (G) and (H-ext) hold true. This follows from the non-trivial results \cite[Theorem 4.41, Theorem 4.43]{DZ}. Thus, the expansion of solutions can also be obtained from Theorem~\ref{t4.24} and Theorem~\ref{t4.25}.

\end{remark}
In the next example, we illustrate the expansion of scattering wave solutions obtained by applying Theorem~\ref{t3.16}, which includes only the terms generated by the poles of the true resolvent $R_A(\cdot)$ defined as a $\mathcal{B}(\bX)$-valued analytic function. 
\begin{example}\label{e7.2}
Let $f:\RR\to\RR$ be a function of class at least $C^3$ and consider the equation
\begin{equation}\label{hyperbolic-pde-nonlinear}
u_{tt}=u_{xx}+f(u).	
\end{equation}	 
Time independent solutions of the nonlinear hyperbolic equation \eqref{hyperbolic-pde-nonlinear} satisfy the nonlinear pendulum equation
\begin{equation}\label{pendulum-nonlinear}
u_{xx}+f(u)=0.	
\end{equation}
We assume that $u_*$ is a homoclinic solution of \eqref{pendulum-nonlinear} that decays exponentially to $u_\infty\in\RR$ at $\pm\infty$. That is, the exist $N>0$, $c>0$ such that
\begin{equation}\label{exp-decay-infty}
|u_*(x)-u_\infty|\leq Ne^{-c|x|}\;\mbox{for any}\;x\in\RR.	
\end{equation}	
Using phase-plane analysis we infer that 
\begin{equation}\label{properties-at-infty}
f(u_\infty)=0,\;\;f'(u_\infty)<0.
\end{equation}
The linearization along the wave $u_*$ is given by 
\begin{equation}\label{hyperbolic-pde-linear}
u_{tt}=Au,	
\end{equation}
where $A=\partial_x^2+V(\cdot)$, $V:\RR\to\RR$ is defined by $V(x)=f'(u_*(x))$. The linear operator $A$ is a closed, densely defined, linear operator on $L^2(\RR,\CC)$ with domain $H^2(\RR,\CC)$. This linear operator is self-adjoint and bounded from above, hence,  by Remark~\ref{r7.2} the linear operator $A$ generates a cosine family $\{C(t)\}_{t\in\RR}$ and $\|C(t)\|=e^{\sqrt{\sup\sigma(A)}|t|}$ for any $t\in\RR$. Moreover, using the same argument as in the case of complex-valued Schr\"odinger operators in odd dimensions, the phase space in this case is $\bW=H^1(\RR,\CC)$. One can readily compute the essential spectrum of $A$ as follows:
\begin{equation}\label{ess-spectrum-linearized-hyperbplic}
\sigma_{\mathrm{ess}}(A)=\sigma_{\mathrm{ess}}(\partial_x^2+f'(u_\infty))=\sigma(\partial_x^2+f'(u_\infty))=(-\infty,f'(u_\infty)].	
\end{equation}	
Since $\lim_{x\to\pm\infty}u_*(x)=u_\infty$ it follows from \eqref{pendulum-nonlinear} and \eqref{properties-at-infty} that $\lim_{x\to\pm\infty}u_*''(x)=-f(u_\infty)=0$. Hence, $u_*''$ is bounded and thus, by Taylor's formula, we obtain $\lim_{x\to\pm\infty}u_*'(x)=0$. From \eqref{exp-decay-infty} we infer that $u_*'$ and $u_*''$ decay exponentially at $\pm\infty$. That is the positive constants $N$ and $c$ in \eqref{exp-decay-infty} can be chosen such that
\begin{equation}\label{extra-exp-decay-infty}
\max\{|u_*'(x)|, |u_*''(x)|\leq Ne^{-c|x|}\}\;\mbox{for any}\;x\in\RR.	
\end{equation}
Using Sturm-Liouville Theory, we conclude that $0\in\sigma_{\mathrm{disc}}(A)$. Moreover, $\Ker(A)=\mathrm{Span}\{u_*'\}$. Since $u_*'$ has at least one sign change, we know that $A$ has at least one positive eigenvalue. Actually, $\sigma_{\mathrm{disc}}(A)$ consists of a finite number of \textit{simple eigenvalues}, see, e.g., \cite[Theorem 2.3.3]{KP}. Therefore, there exist real numbers
$a_0>a_1>\dots>a_{p_1}>0>b_1>\dots>b_{p_2}>f'(u_\infty)$ such that 
\begin{equation}\label{spectrum-Schro-gap}
\{0,a_0\}\subseteq\sigma_{\mathrm{disc}}(A)\subseteq	\{0\}\cup\{a_j:j=0,\dots p_1\}\cup\{b_k:k=1,\dots p_2\}.
\end{equation} 
Since the linear operator $A$ is self-adjoint, 
\begin{equation}\label{first-resolvent-estimate}
 \|R(\mu,A)\|=\frac{1}{\mathrm{dist}(\mu,\sigma(A))}\;\mbox{for any}\;\mu\in\rho(A).
\end{equation}
Using \eqref{spectrum-Schro-gap} one can readily check that 
\begin{align}\label{est-distance-to-spectrum-an d-inclusion}
\mathrm{dist}(\mu,\sigma(A))=|\im\mu|\;&\mbox{whenever}\;\re\mu\leq f'(u_\infty);\nonumber\\
\{\lambda\in\CC:\lambda^2\in\sigma(A)\}\subseteq\{is:|s|\geq\sqrt{-f'(u_\infty)}\}&\cup\{0\}\cup\{\pm\sqrt{a_j}:j=0,\dots,p_1\}\nonumber\\
&\cup\{\pm\rmi\sqrt{-b_k}: k=1,\dots,p_2\}.	
\end{align}	
Next, we fix $\eps\in(0,\sqrt{a_{p_1}})$ and we define $\xi_0=\max\{\frac{2\eps\sqrt{-b_{p_2}}}{\sqrt{-f'(u_\infty)}-\sqrt{-b_{p_2}}}, \sqrt{a_0}+1\}$. We note that
\begin{equation}\label{inequalities2}
\xi_0>\frac{\eps\sqrt{-b_{p_2}}}{\sqrt{-f'(u_\infty)}-\sqrt{-b_{p_2}}}\;\;\mbox{or}\;\;\frac{\xi_0\sqrt{-f'(u_\infty)}}{\eps+\xi_0}>\sqrt{-b_{p_2}}.
\end{equation}
We define $g_0:\RR\to\RR$ by $g_0(s)=\eps$ whenever $|s|\geq\sqrt{-f'(u_\infty)}$ and $g_0(s)=\frac{|s|(\eps+\xi_0)}{\sqrt{-f'(u_\infty)}}-\xi_0$ whenever $|s|<\sqrt{-f'(u_\infty)}$. The function $g_0$ is piecewise of class $\cC^1$. From \eqref{inequalities2} and since $-\xi_0<-\sqrt{a_0}$ we have
\begin{align}\label{location-g_0-1}
\{\lambda\in\CC:\re\lambda>g_0(\im\lambda)\}&\setminus\Big(\{0\}\cup\{\pm\sqrt{a_j}:j=0,\dots,p_1\}\cup\{\pm\rmi\sqrt{-b_k}:j=1,\dots,p_2\}\Big)\nonumber\\
&\subset\{\lambda\in\CC:\lambda^2\in\rho(A)\}.	
\end{align}	
\begin{figure}[h]
	\begin{center}
		\includegraphics[width=0.6\textwidth]{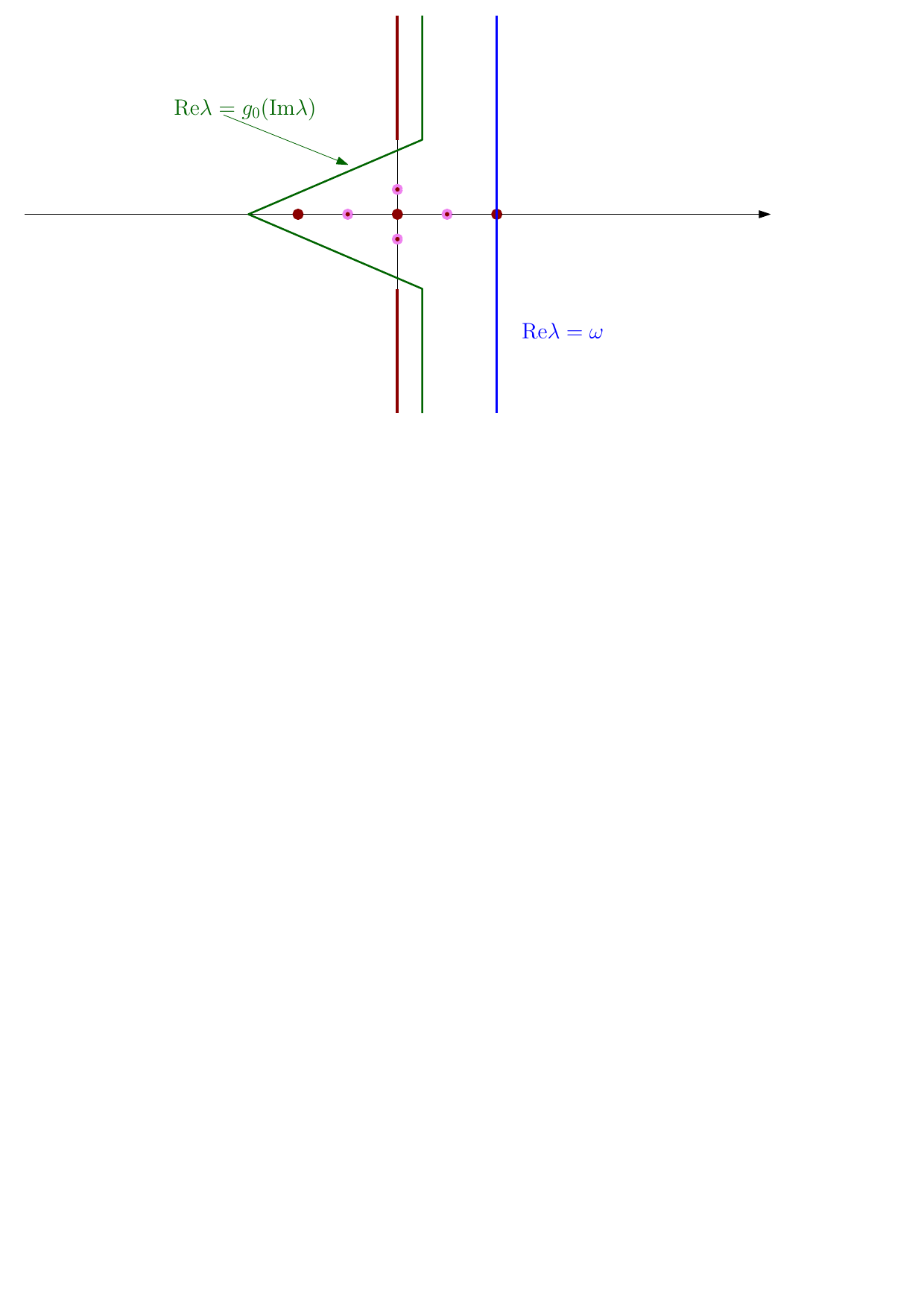}
		
		Figure 5. Depicted in dark red is the set $\{\lambda\in\CC:\lambda^2\in\sigma(A)\}$, while in dark green is the graph of the curve $\re\lambda_0=g_0(\im\lambda)$. 
	\end{center}
\end{figure}
Moreover, all the poles of the function $R_A(\lambda)=(\lambda^2I-A)^{-1}$ are located in the set $\{\lambda\in\CC:g_0(\im\lambda)<\re\lambda\leq\sqrt{a_0}\}$. Since $A$ has only semi-simple eigenvalues we infer that all the poles of $R_A(\cdot)$ are finitely meromorphic poles of order $1$. In addition,
all the projections defined by \eqref{spectral-projection} have rank $1$. If 
$\ker(A-a_jI)=\mathrm{Span}\{u_j\}$ and $\ker(A-b_kI)=\mathrm{Span}\{v_k\}$, $\|u_j\|_2=\|v_j\|_2=1$, then 
\begin{equation}\label{proj-j-k}
P_{\pm\sqrt{a_j}}=\langle\cdot,u_j\rangle u_j,\;P_{\pm\rmi\sqrt{-b_k}}=\langle\cdot,v_k\rangle u_k\;\mbox{for any}\;
j=0,\dots,p_1,\,k=1,\dots,p_2.
\end{equation}
Fix $\lambda\in\CC$ such that $g_0(\im\lambda)<\re\lambda\leq\sqrt{a_0}$ and $|\im\lambda|>\sqrt{a_0-f'(u_\infty)}$. Then, $\re\lambda\in[\delta_0,a_0]$ and
\begin{equation}\label{inequalities3}
\re(\lambda^2)=(\re\lambda)^2-(\im\lambda)^2\leq f'(u_\infty),\;|\lambda|^2\leq a_0+(\im\lambda)^2\leq a_0-f'(u_\infty)+(\im\lambda)^2\leq 2(\im\lambda)^2.
\end{equation}	
From \eqref{first-resolvent-estimate} and \eqref{inequalities3} we conclude that 
\begin{equation}\label{second-resolvent-estimate}
\|R_A(\lambda)\|=\frac{1}{|\im(\lambda^2)|}=\frac{1}{2\re\lambda|\im(\lambda)|}\leq\frac{1}{\delta_0\sqrt{2}|\lambda|}
\end{equation}
whenever $g_0(\im\lambda)<\re\lambda\leq\sqrt{a_0}$ and $|\im\lambda|>\sqrt{a_0-f'(u_\infty)}$, which proves that Hypothesis (H) holds true for $h_1(s)=\frac{1}{s}$ and $h_2(s)=\frac{1}{\delta_0\sqrt{2}}$. Evaluating integral \eqref{polynomial-representation}, applying Theorem ~\ref{t3.11} and Theorem~\ref{t3.12}, using \eqref{proj-j-k} and since $\sup_{s\in\RR}g_0(s)<\eps$, we infer that the following representation holds
\begin{align}\label{cosh-sinh-cos-sin-representation}
C(t)f&=\sum_{k=1}^{p_1}\cosh{(\sqrt{a_j} t)}\langle f,u_j\rangle u_j+\sum_{j=1}^{p_2}\cos{(\sqrt{-b_j} t)}\langle f,v_k\rangle v_k+\mathcal{O}(e^{2\eps t}),\nonumber\\
S(t)g&=\sum_{k=1}^{p_1}\frac{\sinh{(\sqrt{a_j} t)}}{\sqrt{a_j}}\langle g,u_j\rangle u_j+\sum_{j=1}^{p_2}\frac{\sin{(\sqrt{-b_j} t)}}{\sqrt{-b_j}}\langle g,v_k\rangle v_k+\mathcal{O}(e^{2\eps t})	
\end{align}	
for any $f\in H^2(\RR,\CC)$, $g\in H^1(\RR,\CC)$ and $t\in\RR$ and any $\eps>0$ small enough. 
\end{example}

\begin{example}\label{e7.4} Another class of the wave equations as in \eqref{abstract-Cauchy} that satisfy our setup are discussed in \cite{CDY1,CDY2}. The main examples include the obstacle scattering Dirichelet Laplacian in $L^2(\RR^2)$, Aharonov-Bohn Hamiltonians, Laplacians on cones and Schr\"odinger operators with real-valued, compactly supported potentials on $L^2(\RR^2)$. In all of these cases the cosine family generator $A$ is a self-adjoint operator on a Hilbert space $\bH$. Moreover, the expansion of the resolvent $R_A(\cdot)$ at $\lambda=0$ is given as a power series of $\lambda$ and $\log(\lambda)$, see \cite{CDY1}. The authors prove an expansion of solutions of \eqref{abstract-Cauchy} with $x_0=0$, in the spirit of Theorem~\ref{t4.24} and Theorem~\ref{t4.25}. Their representation has a term that counts the contribution of the discrete spectrum, an integral term that is the contribution of the zero-energy and a remainder term that decays exponentially, see \cite[Theorem 2.1]{CDY1}. Using our ``semigroup/cosine family" notation,  the assumptions of \cite{CDY1} are the following: (1) the spectrum of the cosine family generator $A$ consists of $\sigma_{\mathrm{ess}}(A)=(-\infty,0]$
and finitely many positive semi-simple discrete eigenvalues; (2) there exists a bounded linear operator $Q\in\mathcal{B}(\bH)$ and $c_0>0$	such that $QR_A(\cdot)Q$ continues holomorphically from $\{\lambda\in\CC:\re\lambda>0,\im\lambda\ne 0\}$ to $\{\lambda\in\CC:\re\lambda>-c_0,\im\lambda\ne 0\}$; (3) there exist $p,q\geq 0$ such that  $\|QR_A(\lambda)Q\|_{\dom(A^p)\to\dom(A^q)}\to 0$ as $|\im\lambda|\to\infty$, uniformly for $|\re\lambda|<c_0$ and $\int_1^\infty\|QR_A(\lambda+s)Q\|_{\dom(A^p)\to\dom(A^q)}\rmd \lambda$ for any $s>-c_0$; (4) There exist operators $A_{j,k}$ such that $QA_{j,k}Q\in\mathcal{B}(\dom(A^p),\dom(A^q))$ and numbers $\nu_j\in\RR$ and $b_{j,k}\in\CC\setminus (-\infty,0)$ such that $QR_A(\lambda)Q=\sum_{j=j_0}^\infty\sum_{k\in\ZZ}QA_{j,k}Q \lambda^{\nu_j}\log^k{(b_{j,k}\lambda)}$ uniformly on $\{\lambda\in\CC\setminus(-\infty,0]:|\re\lambda|<c_0,|\im\lambda|<c_0\}$. Assumptions (1)-(3) are in the spirit of Hypothesis (H-ext). The existence of the bounded linear operator $Q$ assumed above, is similar to Hypothesis (Q), especially since in all of the specific examples mentioned above it is a multiplication by a $C_0^\infty$ function on some $L^2$ space. 
	
Next, as a slight generalization of \cite{CDY1,CDY2}, we turn our attention to the case of Schr\"odinger operators with complex-valued, compactly supported potentials on $L^2(\RR^2)$. Arguing as in the case of Schr\"odinger operators in odd dimensions we can readily check that Hypothesis (H) holds true, and the function $g_0$ can be chosen to be some small positive constant. Using the same argument as in \cite[Subsection 3.4]{CDY1} we can check that Hypothesis (H-ext) holds true. The difference with the case of real-valued potentials is that we might have a finite number of poles of $R_A(\cdot)$ in $\CC_+$, corresponding to non-real discrete eigenvalues, and a finite number of poles of the extension $R_A^\infty(\cdot)$ in the vertical strip $\{\lambda\in\CC:-c_0<\re\lambda\leq 0\}$. In this case the function $g_*$ is as shown in Figure 6 below. The shape of the graph of $\re\lambda=g_*(\im\lambda)$ is chosen such that it is as close as possible to the boundary of the set $\{\lambda\in\CC:\re\lambda>-c_0\}\setminus(-\infty,0]$, which allows us to work on a set where the principal part of the complex logarithm function is analytic. By Example~\ref{e4.1} Hypothesis (Q) holds true if
the sequence of linear operators $\{Q_n\}_{n\in\NN}$ are defined as operators of multiplication by the functions $\varphi_n(|\cdot|)$, with $\varphi_n$ are smooth cutoff functions. Applying Theorem~\ref{t4.25} one can obtain a version of \cite[Theorem 1.4]{CDY1} for the case of complex-valued, compactly supported potentials, provided that 
an expansion similar to \cite[Equation (3.7)]{CDY1}	has been established. 
\begin{figure}[h]
	\begin{center}
		\includegraphics[width=0.6\textwidth]{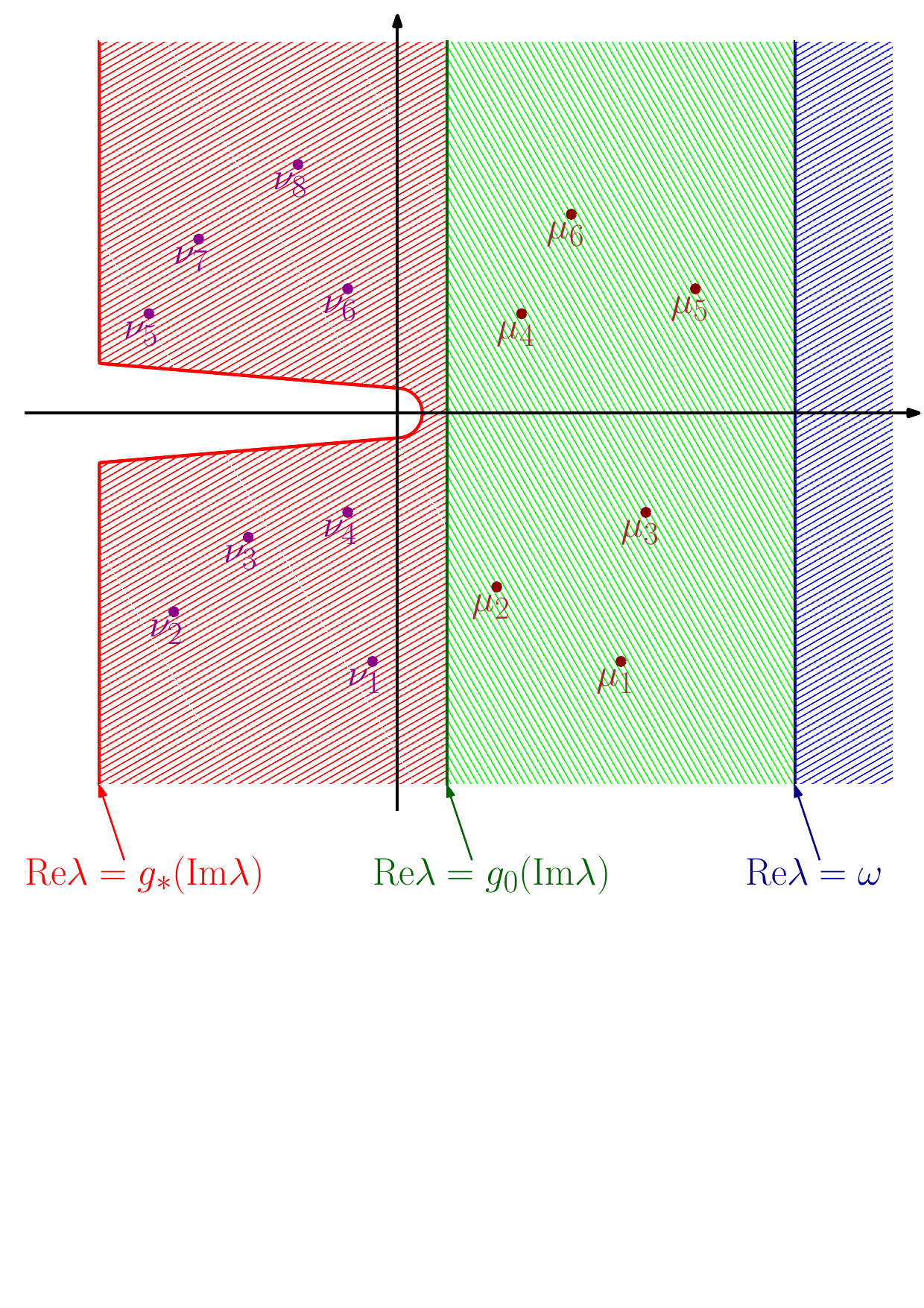}
		
		Figure 6.  The red curve represents the graph of the curve $\re\lambda=g_*(\im\lambda)$. The dark green vertical line is the graph of the constant function $g_0$. 
	\end{center}
\end{figure}
\end{example}

\appendix

\section{Laplace Transform Results}\label{Appendix A}
In this appendix we review certain results used in the Section~\ref{sec2} to invert the Laplace transform of the trajectories of cosine and sine functions using complex integration along vertical lines. Throughout this section we assume that $\bX$ is a Banach space.
\begin{lemma}\label{lAI4}
Let $f\in\mathcal{C}([0,\infty),\bX)$ be a function such that $f(0)=0$ and $\|f(t)\|\leq M_0e^{\omega t}$ for any $t\geq0$. If, in addition, $\displaystyle\int_\RR\|(\cL f)(\gamma+\rmi s)\|\rmd s<\infty$ for any $\gamma>\omega$, then
\begin{enumerate}
\item[(i)] $\displaystyle\int_{Re\lambda=\gamma}e^{\lambda t}(\cL f)(\lambda)\rmd\lambda$ is absolutely convergent for any $\gamma>\omega$, $t\geq 0$;
\item[(ii)] $f(t)=\frac{1}{2\pi\rmi}\displaystyle\int_{Re\lambda=\gamma}e^{\lambda t}(\cL f)(\lambda)\rmd\lambda$ for any $\gamma>\omega$, $t\geq 0$.
\end{enumerate} 	
\end{lemma}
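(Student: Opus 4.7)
The statement is a Banach-space valued version of the classical complex inversion formula for the Laplace transform, so the plan is to reduce everything to vector-valued Fourier inversion on $\RR$. Part~(i) is essentially immediate: on the line $\re\lambda=\gamma$ we have $|e^{\lambda t}|=e^{\gamma t}$, hence
\begin{equation*}
\int_{\re\lambda=\gamma}\|e^{\lambda t}(\cL f)(\lambda)\|\,|\rmd\lambda|=e^{\gamma t}\int_\RR\|(\cL f)(\gamma+\rmi s)\|\,\rmd s,
\end{equation*}
which is finite by hypothesis for every $\gamma>\omega$ and $t\geq 0$. The substance of the lemma is part (ii).

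For (ii), fix $\gamma>\omega$ and introduce the auxiliary function $g:\RR\to\bX$ defined by $g(t)=e^{-\gamma t}f(t)$ for $t\geq 0$ and $g(t)=0$ for $t<0$. The hypothesis $f(0)=0$ guarantees that $g$ is continuous on $\RR$ (including at the origin), while the growth bound $\|f(t)\|\leq M_0e^{\omega t}$ gives $\|g(t)\|\leq M_0e^{(\omega-\gamma)t}$ on $[0,\infty)$, so $g\in L^1(\RR,\bX)\cap\cC_\rmb(\RR,\bX)$. A direct computation of its Fourier transform yields
\begin{equation*}
\widehat{g}(s):=\int_\RR e^{-\rmi s t}g(t)\,\rmd t=\int_0^\infty e^{-(\gamma+\rmi s)t}f(t)\,\rmd t=(\cL f)(\gamma+\rmi s),
\end{equation*}
which, again by the standing hypothesis, belongs to $L^1(\RR,\bX)$.

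The key step is to invoke Fourier inversion for $g$ in the pointwise form
\begin{equation*}
g(t)=\frac{1}{2\pi}\int_\RR e^{\rmi s t}\widehat{g}(s)\,\rmd s,\qquad t\in\RR.
\end{equation*}
I plan to establish this by a Hahn–Banach reduction to the scalar case: for every $x^*\in\bX^*$ the scalar function $t\mapsto\langle g(t),x^*\rangle$ is continuous and integrable, with Fourier transform $s\mapsto\langle\widehat{g}(s),x^*\rangle$ that belongs to $L^1(\RR)$ (since $\widehat{g}\in L^1(\RR,\bX)$). The classical scalar Fourier inversion theorem therefore gives the identity for $\langle g(t),x^*\rangle$ at every $t\in\RR$, and since $x^*$ is arbitrary, the vector identity follows. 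Multiplying through by $e^{\gamma t}$ and changing variables $\lambda=\gamma+\rmi s$ (so $\rmd\lambda=\rmi\,\rmd s$) transforms the right-hand side into $\frac{1}{2\pi\rmi}\int_{\re\lambda=\gamma}e^{\lambda t}(\cL f)(\lambda)\,\rmd\lambda$, which is (ii).

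The only delicate point is ensuring that the inversion formula holds \emph{pointwise} rather than merely almost everywhere; this is precisely why the assumption $f(0)=0$ (yielding continuity of $g$ at the origin) is needed and why the scalar inversion theorem applied to the continuous integrable functions $\langle g(\cdot),x^*\rangle$ with integrable Fourier transform is available at every $t$. Everything else is bookkeeping: verifying integrability, exchanging the two integrals in the scalar Fourier inversion argument via Fubini, and tracking the change of variables $\lambda=\gamma+\rmi s$.
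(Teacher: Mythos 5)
Your proposal is correct and takes essentially the same route as the paper: both define the auxiliary function $e^{-\gamma t}f(t)$ extended by zero on $(-\infty,0)$, identify its Fourier transform with $(\cL f)(\gamma+\rmi\cdot)$, and invoke scalar Fourier inversion after pairing against an arbitrary $x^*\in\bX^*$ via Hahn--Banach. The only cosmetic difference is that the paper works directly with the scalar function $g_{x^*,\gamma}(t)=e^{-\gamma t}x^*(f(t))$ from the start, whereas you define the vector-valued $g(t)=e^{-\gamma t}f(t)$ first and then pair with $x^*$; this is a matter of presentation, not substance.
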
	
\begin{proof} (i) Fix $\gamma>\omega$. Since
\begin{equation}\label{lAI4.1}	
\int_{Re\lambda=\gamma}e^{\lambda t}(\cL f)(\lambda)\rmd\lambda=\rmi e^{\gamma t}\displaystyle\int_\RR e^{\rmi st}(\cL f)(\gamma+\rmi s)\rmd s\;\mbox{for any}\; t\geq 0,
\end{equation}
we immediately conclude from the hypothesis that assertion (i) holds true.
	
\noindent (ii) Next, we fix $x^*\in\bX^*$ and define $g_{x^*,\gamma}:\RR\to\CC$ by
\begin{equation}\label{lA4.6}
g_{x^*,\gamma}(t)=\left\{\begin{array}{ll} e^{-\gamma t}x^*(f(t)),& t\geq 0,\\
0,&t<0. \end{array}\right.	
\end{equation}	
Since $f\in\mathcal{C}([0,\infty),\bX)$ and $f(0)=0$ we obtain that $g_{x^*,\gamma}\in\mathcal{C}([0,\infty),\CC)$. Moreover, one can readily check that
\begin{equation}\label{lA4.7}
|g_{x^*,\gamma}(t)\leq M\|x^*\|e^{-(\gamma-\omega)|t|}\;\mbox{for any}\;t\in\RR,
\end{equation}
which implies that, in addition, $g_{x^*,\gamma}\in L^1(\RR)\cap L^2(\RR)$. Taking Fourier Transform, we have
\begin{equation}\label{lA4.8}
(\cF g_{x^*,\gamma})(\xi)=\int_\RR e^{-\rmi\xi t}g_{x^*,\gamma}(t)\rmd t=x^*\Bigg(\int_0^\infty e^{-(\gamma+\rmi\xi)t}f(t)\rmd t\Bigg)=x^*\Big((\cL f)(\gamma+\rmi\xi)\Big)
\end{equation}	
for any $\xi\in\RR$. Since $(\cL f)(\gamma+\rmi\cdot)\in L^1(\RR)$ and $x^*$ from \eqref{lA4.8} we infer that $\cF g_{x^*,\gamma}\in L^1(\RR)$. Since $g_{x^*,\gamma}\in\mathcal{C}([0,\infty),\CC)$ it follows that
\begin{equation}\label{lA4.9}
g_{x^*,\gamma}(t)=\frac{1}{2\pi}\int_\RR e^{\rmi\xi t}(\cF g_{x^*,\gamma})(\xi)\rmd\xi=x^*\Bigg(\frac{1}{2\pi}\int_\RR e^{\rmi\xi t}(\cL f)(\gamma+\rmi\xi)\rmd\xi\Bigg)
\end{equation}	
for any $t\in\RR$. Multiplying by $e^{\gamma t}$ in \eqref{lA4.9}, and since $x^*\in\bX^*$ was chosen random, we infer that assertion (ii) follows immediately by using \eqref{lAI4.1} and \eqref{lA4.6}.
\end{proof}
\begin{lemma}\label{lAI5}
Let $f\in L_{\mathrm{loc}}^1([0,\infty),\bX)$ be a function such that $\|f(t)\|\leq M_1e^{\omega_1 t}$ for any $t\geq0$, for some $M_1>0$ and $\omega_1\in\RR$, $\phi\in\mathcal{C}([0,\infty),\CC)$ be such that $|\phi(t)|\leq M_2e^{\omega_2 t}$ for any $t\geq0$, for some $M_2>0$ and $\omega_2\in\RR$. If, in addition,
$\displaystyle\int_\RR|(\cL\phi)(\gamma+\rmi s)|\rmd s<\infty$ for any $\gamma>\max\{\omega_1,\omega_2\}$, then
\begin{enumerate}
\item[(i)] $\displaystyle\int_{Re\lambda=\gamma}e^{\lambda t}(\cL\phi)(\lambda)(\cL f)(\lambda)\rmd\lambda$ is absolutely convergent for any $\gamma>\max\{\omega_1,\omega_2\}$, $t\geq 0$;
\item[(ii)] $(\varphi*f)(t)=\frac{1}{2\pi\rmi}\displaystyle\int_{Re\lambda=\gamma}e^{\lambda t}(\cL\phi)(\lambda)(\cL f)(\lambda)\rmd\lambda$ for any $\gamma>\max\{\omega_1,\omega_2\}$, $t\geq 0$.
\end{enumerate} 	
\end{lemma}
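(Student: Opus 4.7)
My plan is to reduce the statement to Lemma~\ref{lAI4} applied to the function $h:=\phi*f$ on $[0,\infty)$, defined by $h(t)=\int_0^t\phi(t-s)f(s)\,\rmd s$. The argument proceeds in four steps, and nothing in it is genuinely subtle; the only piece that requires care is the convolution identity for the Laplace transform in the Banach-space valued setting.

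First I would verify that $h$ falls within the framework of Lemma~\ref{lAI4}. Since $\phi$ is continuous and $f\in L^1_{\mathrm{loc}}([0,\infty),\bX)$, the Bochner integral defining $h(t)$ is well-defined for every $t\ge 0$; that $h(0)=0$ is immediate. Continuity of $h$ at an arbitrary $t_0\ge 0$ follows from Lebesgue's dominated convergence theorem applied along a sequence $t_n\to t_0$: writing $h(t_n)=\int_0^{\infty}\phi(t_n-s)\chi_{[0,t_n]}(s)f(s)\,\rmd s$, the integrands are dominated on any bounded interval $[0,T]$ containing all $t_n$ by the integrable scalar multiple $\big(\sup_{|\tau|\le T}|\phi(\tau)|\big)\|f(s)\|$ and converge pointwise a.e.\ to $\phi(t_0-s)\chi_{[0,t_0]}(s)f(s)$.

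Second I would establish the exponential bound $\|h(t)\|\le M_0 e^{\omega_0 t}$ for any $\omega_0>\max\{\omega_1,\omega_2\}$. Indeed,
\[
\|h(t)\|\le M_1M_2\int_0^t e^{\omega_2(t-s)}e^{\omega_1 s}\,\rmd s,
\]
and a direct computation (splitting into the cases $\omega_1=\omega_2$ and $\omega_1\ne\omega_2$) shows that the right-hand side is bounded by $C e^{\omega_0 t}$ for any chosen $\omega_0>\max\{\omega_1,\omega_2\}$.

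Third I would establish the convolution identity
\begin{equation}\label{conv-id-plan}
(\cL h)(\lambda)=(\cL\phi)(\lambda)\,(\cL f)(\lambda),\qquad \re\lambda>\max\{\omega_1,\omega_2\}.
\end{equation}
This is the classical identity, proved via a Fubini exchange on the region $\{(t,s):0\le s\le t\}$. The absolute integrability needed to justify Fubini follows from the bound $\|e^{-\lambda t}\phi(t-s)f(s)\|\le M_1M_2 e^{-(\re\lambda-\omega_2)t}e^{(\omega_1-\omega_2)s}$, which is integrable over the region provided $\re\lambda>\max\{\omega_1,\omega_2\}$. This is the one step that deserves a little care and is the closest thing to a main obstacle.

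Finally, I would combine \eqref{conv-id-plan} with the standard estimate $\|(\cL f)(\gamma+\rmi s)\|\le M_1/(\gamma-\omega_1)$, valid whenever $\gamma>\omega_1$, to obtain
\[
\int_\RR\|(\cL h)(\gamma+\rmi s)\|\,\rmd s\le \frac{M_1}{\gamma-\omega_1}\int_\RR|(\cL\phi)(\gamma+\rmi s)|\,\rmd s<\infty
\]
for each $\gamma>\max\{\omega_1,\omega_2\}$, by the hypothesis on $\cL\phi$. This proves assertion (i) and simultaneously verifies all the hypotheses of Lemma~\ref{lAI4} for $h$. Applying that lemma to $h$ yields
\[
(\phi*f)(t)=h(t)=\frac{1}{2\pi\rmi}\int_{\re\lambda=\gamma}e^{\lambda t}(\cL h)(\lambda)\,\rmd\lambda=\frac{1}{2\pi\rmi}\int_{\re\lambda=\gamma}e^{\lambda t}(\cL\phi)(\lambda)(\cL f)(\lambda)\,\rmd\lambda,
\]
which is exactly assertion (ii).
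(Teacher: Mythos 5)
Your proof is correct and follows essentially the same route as the paper: reduce to Lemma~\ref{lAI4} applied to $\phi*f$ after checking continuity, vanishing at $0$, the exponential bound, and the convolution identity $\cL(\phi*f)=(\cL\phi)(\cL f)$. You supply slightly more detail than the paper (explicit DCT argument for continuity and a Fubini justification for the convolution identity, which the paper invokes without comment), but the structure and key estimates are identical.
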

\begin{proof}
The main idea of the proof is to apply Lemma~\ref{lAI4} to the function $g=\phi*f$. We denote by $\omega_0=\max\{\omega_1,\omega_2\}$ and fix $\gamma>\omega_0$ and $\omega\in (0,\omega_0)$. Since $\phi\in\mathcal{C}([0,\infty),\CC)$, $f\in L_{\mathrm{loc}}^1([0,\infty),\bX)$ we infer that $g=\phi*f\in\mathcal{C}([0,\infty),\bX)$. Moreover,
\begin{align}\label{lAI5.1}
\|g(t)\|&=\Bigl\|\int_0^t\phi(t-s)f(s)\rmd s\Bigr\|\leq \int_0^t|\phi(t-s)|\|f(s)\|\rmd s\leq M_1M_2\int_0^t	e^{\omega_2(t-s)}e^{\omega_1 s}\rmd s\nonumber\\
&\leq M_1M_2te^{\omega_0 t}\leq M_1M_2 e^{\omega_0 t}\frac{1}{\omega-\omega_0}e^{(\omega-\omega_0)t}=\frac{M_1M_2}{\omega-\omega_0}e^{\omega t}\;\mbox{for any}\;t\geq 0.
\end{align}	
Moreover,  since $\|f(t)\|\leq M_1e^{\omega_1 t}$ for any $t\geq0$, we have that
\begin{equation}\label{lAI5.2}
\|(\cL f)(\lambda)\|\leq\frac{M_1}{\re\lambda-\omega_1}\;\mbox{for any}\;\lambda\in\CC_{\omega_1}^+,
\end{equation}
Since $\cL g=(\cL\phi)(\cL f)$ from \eqref{lAI5.2} we estimate
\begin{equation}\label{lAI5.3}
\int_\RR \|(\cL g)(\gamma+\rmi s)\|\rmd s=\int_\RR \|(\cL\phi)(\gamma+\rmi s)(\cL f)(\gamma+\rmi s)\|\rmd s\leq\frac{M_1}{\gamma-\omega_1}\int_\RR|(\cL\phi)(\gamma+\rmi s)|\rmd s<\infty. 	
\end{equation}
The lemma follows from \eqref{lAI5.1}, \eqref{lAI5.3}, Lemma~\ref{lAI4} and since $g\in\mathcal{C}([0,\infty),\bX)\subset L_{\mathrm{loc}}^1([0,\infty),\bX)$.
\end{proof}
\begin{remark}\label{rAI6}
If in Lemma~\ref{lAI5} we drop the assumption $\displaystyle\int_\RR\|(\cL f)(\gamma+\rmi s)\|\rmd s<\infty$ for any $\gamma>\omega$, we cannot infer, in general, the absolute convergence of the integral $\displaystyle\int_{Re\lambda=\gamma}e^{\lambda t}(\cL\phi)(\lambda)(\cL f)(\lambda)\rmd\lambda$. However, under a different assumption on the scalar function $\phi$, one can obtain a similar representation in the principal value sense, see \cite[Lemma 2.1]{Haase}.
\end{remark}
\begin{lemma}\label{lAI7}
Let $f\in\mathcal{C}([0,\infty),\bX)$ be a function such that $\|f(t)\|\leq M_0e^{\omega t}$ for any $t\geq0$, and let $k\in\NN$. Then,
\begin{enumerate}
\item[(i)] $\displaystyle\int_{Re\lambda=\gamma}\frac{e^{\lambda t}}{\lambda^{k+1}}(\cL f)(\lambda)\rmd\lambda$ is absolutely convergent for any $\gamma>\max\{\omega,0\}$, $t\geq 0$;
\item[(ii)] $\displaystyle\int_0^t(t-s)^kf(s)\rmd s=\frac{k!}{2\pi\rmi}\displaystyle\int_{Re\lambda=\gamma}\frac{e^{\lambda t}}{\lambda^{k+1}}(\cL f)(\lambda)\rmd\lambda$ for any $\gamma>\max\{\omega,0\}$, $t\geq 0$.
\end{enumerate} 	
\end{lemma}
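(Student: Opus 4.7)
The plan is to reduce this directly to Lemma~\ref{lAI5}. Define $\phi_k:[0,\infty)\to\CC$ by $\phi_k(t)=t^k/k!$. Then $\phi_k\in\mathcal{C}([0,\infty),\CC)$, and since $\phi_k$ is polynomially bounded, for any $\omega_2>0$ there is $M_2>0$ with $|\phi_k(t)|\leq M_2e^{\omega_2 t}$ for $t\geq 0$. A direct computation (or $k$ integrations by parts) gives $(\cL\phi_k)(\lambda)=1/\lambda^{k+1}$ for any $\lambda\in\CC_0^+$. Moreover, an elementary Fubini argument yields the convolution identity
\begin{equation*}
(\phi_k*f)(t)=\int_0^t\frac{(t-s)^k}{k!}f(s)\,\rmd s,\qquad t\geq 0,
\end{equation*}
so that $(\cL\phi_k)(\lambda)(\cL f)(\lambda)=\frac{1}{\lambda^{k+1}}(\cL f)(\lambda)$ for $\re\lambda>\max\{\omega,0\}$.

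The key hypothesis of Lemma~\ref{lAI5}, namely $\int_\RR|(\cL\phi_k)(\gamma+\rmi s)|\,\rmd s<\infty$ for $\gamma>\max\{\omega,\omega_2\}$, becomes
\begin{equation*}
\int_\RR\frac{\rmd s}{(\gamma^2+s^2)^{(k+1)/2}}<\infty,
\end{equation*}
which holds precisely because $k\in\NN$ forces $k+1\geq 2$; this is the only place where the assumption $k\geq 1$ is used, and it is what prevents the degenerate case $k=0$ (where $1/\lambda$ fails to be integrable on vertical lines). Given $\gamma>\max\{\omega,0\}$, pick $\omega_2\in(0,\gamma-\max\{\omega,0\})$ so that $\gamma>\max\{\omega,\omega_2\}=:\max\{\omega_1,\omega_2\}$.

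With these choices the hypotheses of Lemma~\ref{lAI5} are satisfied with $\phi=\phi_k$ and $\omega_1=\omega$. Assertion (i) of that lemma then delivers the absolute convergence of
\begin{equation*}
\int_{\re\lambda=\gamma}e^{\lambda t}(\cL\phi_k)(\lambda)(\cL f)(\lambda)\,\rmd\lambda=\int_{\re\lambda=\gamma}\frac{e^{\lambda t}}{\lambda^{k+1}}(\cL f)(\lambda)\,\rmd\lambda,
\end{equation*}
proving (i). Assertion (ii) of Lemma~\ref{lAI5} gives
\begin{equation*}
\int_0^t\frac{(t-s)^k}{k!}f(s)\,\rmd s=(\phi_k*f)(t)=\frac{1}{2\pi\rmi}\int_{\re\lambda=\gamma}\frac{e^{\lambda t}}{\lambda^{k+1}}(\cL f)(\lambda)\,\rmd\lambda,
\end{equation*}
and multiplying by $k!$ yields the claimed identity (ii). No step presents a genuine obstacle; the whole proof is a packaging of Lemma~\ref{lAI5} applied to the scalar kernel $\phi_k$, with the verification of the line-integrability of $(\cL\phi_k)$ being the only calculation requiring care.
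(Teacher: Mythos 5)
Your proposal is correct and takes essentially the same route as the paper: both reduce to Lemma~\ref{lAI5} with the scalar kernel $\phi_k(t)=t^k$ (the paper) or $t^k/k!$ (yours), verify $\int_\RR|(\cL\phi_k)(\gamma+\rmi s)|\,\rmd s<\infty$ via $k+1\geq2$, and use that $\phi_k$ satisfies a bound $|\phi_k(t)|\leq M_2 e^{\omega_2 t}$ for arbitrarily small $\omega_2>0$. The only cosmetic difference is that the paper applies Lemma~\ref{lAI5} for each $\eps>0$ and then lets $\eps\to0$, whereas you simply pick $\omega_2$ small enough relative to the given $\gamma$ — both variants work and avoid the non-issue of $\phi_k$ lacking its own exponential decay.
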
	
\begin{proof}
Let $\phi_k:[0,\infty)\to\RR$ be the function defined by $\phi_k(t)=t^k$. Clearly $\phi_k\in\mathcal{C}([0,\infty),\RR)$ and
$|\phi_k(t)|=t^k\leq\bigl(\frac{k}{\eps}\bigr)^k e^{\eps t}$ for any $t\geq 0$ and $\eps>0$. Moreover,
\begin{equation*}
\int_\RR |(\cL\phi_k)(\gamma+\rmi s)|\rmd s=\int_\RR\frac{k!}{|\gamma+\rmi s|^{k+1}}\rmd s=\int_\RR\frac{1}{(\gamma^2+s^2)^{\frac{k+1}{2}}}\rmd s<\infty\;\mbox{for any}\;\gamma>0.
\end{equation*}
From Lemma~\ref{lAI5} we have that assertion (i) and (ii) hold true for any $\gamma>\max\{\omega,\eps\}$ for any $\eps>0$. Passing to the limit as $\eps\to0$, the lemma follows shortly.  	 	
\end{proof}

\section{Frechet Space Valued Holomorphic Functions}\label{Appendix B}
In this section we present several results needed in Section~\ref{sec4}. We consider $\bY$ a Frechet space and we denote by $\bY^*$ its dual space. If $\bY_1$ is a closed subspace of $\bY$ we denote by
\begin{equation}\label{def-Y1-0}
\bY_1^{*,0}=\{y^*\in\bY^*:\langle y,y^*\rangle=0\;\mbox{for any}\;y\in\bY_1\}.
\end{equation}
\begin{remark}\label{rB1}
Since in a Frechet space the Hahn-Banach theorem holds, the following statement holds true:
\begin{equation}\label{rB1.1}
\mbox{if}\;y\in\bY\;\mbox{and}\;\langle y,y^*\rangle=0\;\mbox{for any}\;y^*\in\bY_1^{*,0}\;\mbox{then}\;y\in\bY_1.	
\end{equation}		
\end{remark}	
\begin{lemma}\label{lB2}
Let $\Omega\subseteq\CC$ be an open, connected set, $f:\Omega\to\bY$ an analytic function and $\bY_1$ a closed subspace of $\bY$. If the set $\{\lambda\in\Omega:f(\lambda)\in\bY_1\}$ has an accumulation point in $\Omega$ then $f(\lambda)\in\bY_1$ for any $\lambda\in\Omega$.	
\end{lemma}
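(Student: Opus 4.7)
The plan is to reduce the statement to the classical scalar identity theorem via the duality available in Frechet spaces. Concretely, I would use the annihilator characterization of $\bY_1$ recorded in Remark~\ref{rB1}: a vector $y\in\bY$ belongs to $\bY_1$ if and only if $\langle y,y^*\rangle=0$ for every $y^*\in\bY_1^{*,0}$. Therefore it suffices to prove that for each fixed $y^*\in\bY_1^{*,0}$ the scalar function
\begin{equation*}
g_{y^*}:\Omega\to\CC,\qquad g_{y^*}(\lambda):=\langle f(\lambda),y^*\rangle
\end{equation*}
vanishes identically on $\Omega$.

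First I would verify that $g_{y^*}$ is a scalar holomorphic function on $\Omega$. Since $f:\Omega\to\bY$ is analytic in the Frechet-space sense, at each $\lambda_0\in\Omega$ it has a locally convergent power series expansion $f(\lambda)=\sum_{n\geq 0}a_n(\lambda-\lambda_0)^n$ in $\bY$, with $a_n\in\bY$. Applying the continuous linear functional $y^*$ term-by-term (justified by its continuity and the convergence in $\bY$) produces a convergent scalar power series representation $g_{y^*}(\lambda)=\sum_{n\geq 0}\langle a_n,y^*\rangle(\lambda-\lambda_0)^n$, which shows that $g_{y^*}$ is holomorphic near $\lambda_0$, hence on all of $\Omega$.

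Next, by hypothesis the set $S:=\{\lambda\in\Omega:f(\lambda)\in\bY_1\}$ has an accumulation point $\lambda_*\in\Omega$. For every $\lambda\in S$ and every $y^*\in\bY_1^{*,0}$ we have $g_{y^*}(\lambda)=\langle f(\lambda),y^*\rangle=0$. Thus $g_{y^*}$ is a scalar holomorphic function on the open connected set $\Omega$ that vanishes on a set with an accumulation point in $\Omega$. The classical identity theorem for holomorphic scalar functions then forces $g_{y^*}\equiv 0$ on $\Omega$.

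Finally, since the functional $y^*\in\bY_1^{*,0}$ was arbitrary, for every $\lambda\in\Omega$ we have $\langle f(\lambda),y^*\rangle=0$ for all $y^*\in\bY_1^{*,0}$. Invoking Remark~\ref{rB1} (i.e. the Hahn--Banach based characterization \eqref{rB1.1}) we conclude $f(\lambda)\in\bY_1$ for every $\lambda\in\Omega$, as claimed. The only mildly subtle point is the term-by-term justification that $g_{y^*}$ is holomorphic, but this is immediate from the continuity of $y^*$ on $\bY$ and the local power series convergence of $f$ in the Frechet topology; no delicate estimate is needed.
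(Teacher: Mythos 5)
Your proof is correct and follows essentially the same route as the paper's: fix $y^*\in\bY_1^{*,0}$, observe that $\lambda\mapsto\langle f(\lambda),y^*\rangle$ is scalar holomorphic and vanishes on a set with an accumulation point, apply the identity theorem, and conclude via the Hahn--Banach characterization in Remark~\ref{rB1}. The only difference is cosmetic: you spell out the term-by-term justification of holomorphy of the composed scalar function, which the paper takes for granted.
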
	
\begin{proof}
Fix $y^*\in\bY_1^{*,0}$. Then, the function defined by $h_{y^*}:\Omega\to\CC$ defined by $h_{y^*}(\lambda)=\langle f(\lambda), y^*\rangle$ is analytic on $\Omega$. Moreover,
\begin{equation}\label{lB2.1}
\{\lambda\in\Omega:f(\lambda)\in\bY_1\}\subseteq\{\lambda\in\Omega:g_{y^*}(\lambda)=0\}	
\end{equation}
which implies that $\{\lambda\in\Omega:g_{y^*}(\lambda)=0\}$ has an accumulation point in $\Omega$. Hence, $\langle f(\lambda), y^*\rangle=g_{y^*}(\lambda)=0$ for any $\lambda\in\Omega$. From \eqref{rB1.1} we infer that $f(\lambda)\in\bY_1$ for any $\lambda\in\Omega$, proving the lemma. 	
\end{proof}
\begin{lemma}\label{lB3}
Let $\Omega\subseteq\CC$ be an open, connected set, $\Phi$ a nonempty, open subset of $\Omega$, $f:\Omega\to\bY$ an analytic function and $\bY_1$ a closed subspace of $\bY$. If  $f(\lambda)\in\bY_1$ for any $\lambda\in\Phi$ then $f(\lambda)\in\bY_1$ for any $\lambda\in\Omega$.	
\end{lemma}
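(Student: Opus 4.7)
The plan is to derive Lemma~\ref{lB3} as an essentially immediate corollary of Lemma~\ref{lB2}, whose hypothesis is precisely an accumulation-point condition on the set $\{\lambda\in\Omega:f(\lambda)\in\bY_1\}$. The only thing to verify is that our stronger hypothesis -- containment of an entire nonempty open set $\Phi$ in that set -- produces an accumulation point of that set in $\Omega$.

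First, I would pick any $\lambda_0\in\Phi$; since $\Phi$ is open in $\CC$, there exists $r>0$ with $D(\lambda_0,r)\subset\Phi\subset\Omega$. The open disc $D(\lambda_0,r)$ is a set of points of $\CC$ with no isolated points, so $\lambda_0$ is an accumulation point of $D(\lambda_0,r)$, and in particular an accumulation point of $\Phi$, inside $\Omega$. Because $f(\lambda)\in\bY_1$ for every $\lambda\in\Phi$ by assumption, we have the inclusion
\begin{equation*}
\Phi\subseteq\{\lambda\in\Omega:f(\lambda)\in\bY_1\},
\end{equation*}
so $\lambda_0$ is also an accumulation point of the right-hand set, and this accumulation point lies in $\Omega$.

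At this stage the hypothesis of Lemma~\ref{lB2} is satisfied verbatim: $\Omega$ is open and connected, $f$ is analytic, $\bY_1$ is closed, and the level set $\{\lambda\in\Omega:f(\lambda)\in\bY_1\}$ has an accumulation point in $\Omega$. Invoking Lemma~\ref{lB2} then yields $f(\lambda)\in\bY_1$ for every $\lambda\in\Omega$, which is exactly the conclusion of Lemma~\ref{lB3}.

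There is essentially no obstacle here; the statement is a packaging of Lemma~\ref{lB2} in the slightly more convenient form used in the body of the paper (see for instance its application in the proof of Lemma~\ref{l4.10}(ii), where one knows $\tF_x(\lambda)\in\Graph(\tA)$ on the open set $\Omega_0\setminus\cM\subset\Omega_*\setminus(\cM\cup\cN)$). The one minor point worth stating explicitly in the write-up is that every nonempty open subset of $\CC$ trivially has an interior point that is an accumulation point of the set itself, so no separate appeal to the identity-type theorem for $\bY$-valued analytic functions is needed beyond the one already packaged inside Lemma~\ref{lB2}.
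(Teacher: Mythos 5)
Your proof is correct and follows exactly the same route as the paper's: note that a nonempty open set has an accumulation point in $\Omega$, observe that $\Phi\subseteq\{\lambda\in\Omega:f(\lambda)\in\bY_1\}$ so the level set inherits that accumulation point, and then invoke Lemma~\ref{lB2}. The only difference is that you spell out the elementary topological fact in slightly more detail.
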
	
\begin{proof}
First, we note that $\Phi$ has accumulation points in $\Omega$. Since $\Phi\subseteq\{\lambda\in\Omega:f(\lambda)\in\bY_1\}\}$, we infer that the set $\{\lambda\in\Omega:f(\lambda)\in\bY_1\}$ has an accumulation point in $\Omega$. The lemma follows shortly from Lemma~\ref{lB2}.
\end{proof}

\end{document}